\numberwithin{equation}{section}
\newtheorem{Theorem}{Theorem}[section]
\newtheorem{Lemma}[Theorem]{Lemma}
\newtheorem{Corollary}[Theorem]{Corollary}
\newtheorem{Proposition}[Theorem]{Proposition}
\newtheorem{Definition}[Theorem]{Definition}
\newtheorem{Remark}[Theorem]{Remark}
\newtheorem{Example}[Theorem]{Example}
\newcommand{\Bset}{\mathbb{B}}
\newcommand{\Cset}{\mathbb{C}}
\newcommand{\Nset}{\mathbb{N}}
\newcommand{\Sset}{\mathbb{S}}
\newcommand{\cA}{\ensuremath{{\mathcal A}}\xspace}         
\newcommand{\cB}{\ensuremath{{\mathcal B}}\xspace}         
\newcommand{\cC}{\ensuremath{{\mathcal C}}\xspace}         
\newcommand{\cH}{\ensuremath{{\mathcal H}}\xspace}         
\newcommand{\cM}{\ensuremath{{\mathcal M}}\xspace}         
\newcommand{\cN}{\ensuremath{{\mathcal N}}\xspace}         
\newcommand{\cU}{\ensuremath{{\mathcal U}}\xspace}         
\newcommand{\cZ}{\ensuremath{{\mathcal Z}}\xspace}         
\newcommand{\scrI}{\ensuremath{{\mathscr{I}}}\xspace} %
\newcommand{\scrtI}{\ensuremath{{\widetilde{\mathscr{I}}}}\xspace} %
\newcommand{\ii}{\mathbf{i}}
\newcommand{\bfM}{\ensuremath{\mathbf{M}}}     
\newcommand{\1}{\ensuremath{{\rm 1\kern-.25em l}}\xspace}  
\newcommand{\Ad}{\mathop{\mathrm{Ad}}}                     
\newcommand{\diag}{\ensuremath{\operatorname{diag}}}    
\newcommand{\id}{\operatorname{id}}                        
\newcommand{\sh}{\operatorname{sh}}           
\newcommand{\spec}{\operatorname{spec}}           
\newcommand{\trace}{\operatorname{tr}}        
\newcommand{\Trace}{\operatorname{Tr}}        
\newcommand{\tail}{\ensuremath{\mathrm{tail}}}               
\newcommand{\Aut}[1]{\ensuremath{\operatorname{Aut}(#1)}\xspace}  
\newcommand{\set}[2]{\mathopen{\{}#1\mathop{|}#2\mathclose{\}}}
\newcommand{\linh}{\ensuremath{\operatorname{span}}\xspace}  
\newcommand{\vN}{\operatorname{vN}} 
\newcommand{\lara}[1]{\ensuremath{\langle #1  \rangle}}
\newcommand{\distr}{\operatorname{distr}}   
\newcommand{\sotlim}{\ensuremath{\textsc{sot-}\lim}\xspace}   
\newcommand{\wotlim}{\ensuremath{\textsc{wot-}\lim}\xspace}   
\newcommand{\sot}{\ensuremath{\textsc{sot}}\xspace}           
\begin{document}
\title[Noncommutative independence from the symmetric group $\Sset_\infty$]{Noncommutative independence from \\characters of the infinite symmetric group $\Sset_\infty$}
\author[R. Gohm]{Rolf Gohm}
\author[C. K\"ostler]{Claus K\"ostler}
\address{Institute of Mathematics and Physics, Aberystwyth University, Aberystwyth, SY23 3BZ,UK}
\email{rog@aber.ac.uk}
\email{cck@aber.ac.uk} 
\subjclass[2000]{Primary 46L53; Secondary 20C32, 60G09}
\keywords{infinite symmetric group, star generators, character, noncommutative de Finetti theorem, exchangeability,
noncommutative conditional independence, commuting square, Thoma multiplicativity, Thoma measure, Markov trace, subfactor} 
\date{\today}

\begin{abstract}
We provide an operator algebraic proof of a classical theorem
of Thoma which characterizes the extremal characters of the infinite symmetric group $\Sset_\infty$. Our methods are based on noncommutative conditional independence emerging from exchangeability \cite{GoKo09a,Koes10a} and we reinterpret Thoma's theorem as a noncommutative de Finetti type result.
Our approach is, in parts, inspired by Jones' subfactor theory and by Okounkov's spectral proof of Thoma's theorem \cite{Okou99a}, and we link them by inferring spectral properties from certain commuting squares. 
\end{abstract}

\maketitle

{%
\def\widedotfill{\leaders\hbox to 10pt{\hfil.\hfil}\hfill}
\def\pg#1{\widedotfill\rlap{\hbox to 15pt{\hfill{\small#1}}}\par}
\rightskip=15pt\leftskip=10pt%
\newcommand{\sct}[2]{\noindent\llap{\hbox to%
    10pt{{#1}\hfill}}~\mbox{#2~}}
\newcommand{\separ}{\vspace{-0.3em}}
\section*{Contents}
\sct{}{Introduction}%
\pg{\pageref{section:intro}}%
\sct{1}{Some basics of the infinite symmetric group $\Sset_\infty$}%
\pg{\pageref{section:symmetric}}%
\sct{2}{Exchangeability and conditional independence in noncommutative probability}%
\pg{\pageref{section:exchange-indep}}%
\sct{3}{Unitary representations of $\Sset_\infty$ and Thoma multiplicativity}%
\pg{\pageref{section:multiplicative}}%
\sct{4}{Fixed point algebras, limit cycles and generalized Thoma multiplicativity}%
\pg{\pageref{section:limit-cycles}}%
\sct{5}{Noncommutative Markov shifts and commuting squares from unitary representations of $\Sset_\infty$}%
\pg{\pageref{section:bernoulli-markov}}%
\sct{6}{Commuting squares over $\Cset$ with a normality condition}%
\pg{\pageref{section:commsquares}}%
\sct{7}{Thoma measures -- Okounkov's argument revisited}%
\pg{\pageref{section:okounkov}}%
\sct{8}{Completion of the proof of Thoma's theorem and the connection with Powers factors}%
\pg{\pageref{section:Thoma}}%
\sct{9}{Irreducible subfactors and Markov traces}%
\pg{\pageref{section:Markov}}%
\sct{}{References}%
\pg{\pageref{section:bibliography}}
}

\section*{Introduction}
\label{section:intro}
The infinite symmetric group $\Sset_\infty$ is the group of all finite permutations of the countable infinite set $\Nset_0 = \{0,1,2, \ldots\}$. Its left regular representation is a paradigm for the appearance of II$_1$-factors in the representation theory of large groups, as already known to Murray and von Neumann \cite{MuNe43a}. Even though an explicit motivation for their foundational work on operator algebras was to investigate the representation theory of large groups \cite{MuNe36a}, present classification results for representations of $\Sset_\infty$ still originate only little from an operator algebraic toolkit. The purpose of the present paper is to revive an operator algebraic treatment of such classification problems for the infinite symmetric group $\Sset_\infty$, by means coming from noncommutative probability. 

Being the prototype of a `wild' group \cite{KOV04a}, remarkable progress was made during the past decades in understanding the representation theory of $\Sset_\infty$, notably by work of Thoma, Vershik, Kerov, Olshanski and Okounkov among many others, and its broader context is of continuing interest for many researchers, see \cite{StVo75a,Hirai91a,Bian96a,Bian98a,HoOb07a,HiHiHo09a} and references therein for a still very incomplete list of different aspects. Here we will focus on an operator algebraic proof of the famous classical result obtained by Thoma \cite{Thom64a} which gives an explicit parametrization of all extremal characters of $\Sset_\infty$, i.e. 
the extremal points of the convex set of all positive definite functions on $\Sset_\infty$ which are constant on conjugacy classes and normalized at the group identity. 
\begin{Theorem}[Thoma]\label{thm:ethoma}
An extremal character of the group $\Sset_\infty$ is of the form 
\[
\chi(\sigma)= \prod_{k=2}^\infty \left(\sum_{i=1}^\infty a_i^k + (-1)^{k-1} \sum_{j=1}^\infty b_j^k\right)^{m_k(\sigma)}.
\]
Here $m_k(\sigma)$ is the number of $k$-cycles in the permutation $\sigma$
and the two sequences $(a_i)_{i=1}^\infty, (b_j)_{j=1}^\infty $ satisfy 
\vspace{-6pt}
\begin{eqnarray*}
a_1 \ge a_2 \ge \cdots \ge 0, \qquad b_1 \ge b_2 \ge \cdots \ge 0, \qquad \sum_{i=1}^\infty a_i + \sum_{j=1}^\infty b_j \le 1.
\end{eqnarray*}
\end{Theorem}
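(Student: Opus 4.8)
The plan is to pass to the GNS picture of the character and then combine the noncommutative de Finetti theorem of \cite{GoKo09a,Koes10a} with a spectral analysis of a limit operator, following the roadmap sketched in the introduction. I would begin by attaching to an extremal character $\chi$ its GNS data: the von Neumann algebra $\cM$ generated by the GNS representation $\pi_\chi$ of $\Sset_\infty$, together with the faithful normal tracial state $\tau$ extending $\chi$. Extremality of $\chi$ among characters is equivalent to $\cM$ being a finite factor. Writing $u_k := \pi_\chi(\sigma_k)$ for the images of the star generators $\sigma_k=(0\ k)$, $k\ge1$, each $u_k$ is a self-adjoint unitary and the family $\{u_k:k\ge1\}$ generates $\cM$ (every transposition is a product of three star generators). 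Since the subgroup of $\Sset_\infty$ fixing $0$ acts on the star generators by $\sigma_k\mapsto\sigma_{\pi(k)}$ and $\tau$ is a class function, the sequence $(u_k)_{k\ge1}$ is exchangeable — in fact spreadable — in $(\cM,\tau)$.

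Next I would invoke the noncommutative de Finetti theorem: spreadability makes $(u_k)_{k\ge1}$ conditionally independent and identically distributed over its tail von Neumann algebra $\cN=\bigcap_m\vN(u_k:k\ge m)$, with conditional expectation $\varepsilon=E_\cN$, and exhibits it as the output of a noncommutative Markov shift. Using the dictionary between permutations and star generators — a $k$-cycle is a word in finitely many star generators, and disjoint cycles give words in disjoint families of generators — the independence and Markov structure together with the class-function property yield Thoma multiplicativity: $\chi$ factorizes over the cycle type,
\[
\chi(\sigma)=\prod_{k\ge2}\phi_k^{\,m_k(\sigma)},
\]
where $\phi_k$ is the common $\chi$-value on $k$-cycles and is governed by the $\tau$-distribution of a single self-adjoint contraction $T\in\cN$, a \emph{limit cycle}; concretely one is led to $\phi_k=\tau(T^{k-1})$. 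This is the content of Sections~3--4.

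It then remains to pin down the spectral distribution $\mu$ of $T$ relative to $\tau$. Here I would use that the Markov shift built from the $u_k$ produces a commuting square, and that the normality condition for commuting squares over $\Cset$ (Section~6) forces $\mu$ to be purely atomic with the characteristic feature of the Thoma measure: each nonzero eigenvalue $x$ of $T$ has spectral projection $e_x$ with $\tau(e_x)=|x|$. Okounkov's spectral argument (Section~7), recast through this commuting square, then identifies $\phi_k$ with the $(k-1)$-st moment $\int x^{k-1}\,d\mu(x)$. Listing the positive eigenvalues in decreasing order as $a_1\ge a_2\ge\cdots\ge0$ and the moduli of the negative ones in decreasing order as $b_1\ge b_2\ge\cdots\ge0$ gives $\phi_k=\sum_i a_i^k+(-1)^{k-1}\sum_j b_j^k$, while $\sum_i a_i+\sum_j b_j=\sum_{x\ne0}\tau(e_x)\le\tau(1)=1$ is exactly the claimed constraint, and summability together with the ordering is automatic.

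The step I expect to be the main obstacle is this last spectral identification: extracting genuine spectral data for $T$ — its atomicity and especially the relation $\tau(e_x)=|x|$ — from the abstract representation. This is precisely where the subfactor-theoretic input (the commuting square and its normality condition) replaces Okounkov's original asymptotic-representation-theory estimates and forms the technical heart of the proof; the reductions leading up to it are comparatively formal.
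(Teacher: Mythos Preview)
Your overall roadmap matches the paper's, but there is a genuine error in the spectral step. The defining property of a Thoma measure is \emph{not} $\tau(e_x)=|x|$ for each nonzero eigenvalue $x$; it is that $\tau(e_x)/|x|$ is a nonnegative \emph{integer} (the paper's Definition at the start of Section~7 and Theorem~\ref{thm:thoma}). This integer multiplicity is precisely what permits repetitions among the Thoma parameters: for instance the extremal character with $a_1=a_2=\tfrac12$ has $A_0$ with a single positive eigenvalue $\tfrac12$ and $\tau(e_{1/2})=1$, so $\tau(e_{1/2})/\tfrac12=2$. Under your formulation you would list the distinct eigenvalues once each and miss every character with repeated $a_i$ or $b_j$. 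The correct reading is that one lists each positive eigenvalue $t$ with multiplicity $\nu(t)=\tau(e_t)/t$ to obtain the $a_i$, and similarly for the negative side; then $\sum_i a_i+\sum_j b_j=\sum_{t\neq 0}\tau(e_t)\le 1$ follows as you indicate.

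A related misattribution: the commuting-square argument of Section~6 yields only discreteness of the spectrum of $A_0$ (via the estimate $\epsilon\,\mu(B)\le\mu(B)^{3/2}$). The integrality $\tau(e_t)/|t|\in\Nset$ is obtained separately in Section~7 by computing $\trace\big(p^{(n)}_{\pm}\prod_{i<n}\chi_{\{t\}}(A_i)\big)$ with the (anti)symmetrizers $p^{(n)}_{\pm}$ and recognizing the combinatorial identity $\sum_{\sigma\in\Sset_n}x^{\ell(\sigma)}=x(x+1)\cdots(x+n-1)$; positivity of these traces for all $n$ forces the product to terminate, hence $\nu\in\Nset$. You should also note that the paper treats the existence direction (Section~8, embedding into Powers factors and Proposition~\ref{prop:factor} for factoriality) as part of the proof of Theorem~\ref{thm:ethoma}; your outline omits it.
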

The original proof of Thoma was obtained in an indirect way by hard analysis and the later proofs are not easy too.  We mention the proof of Vershik and Kerov based on the asymptotics of Young diagrams \cite{VeKe81a,VeKe81b} and the more spectral theoretic proof of Okounkov \cite{Okou99a} using Olshansky semigroups \cite{Olsh89a,Olsh91a,Olsh91b,Olsh06a}. Surveys about these developments are given in \cite{Okou99a,KOV04a,Olsh06a}. None of these proofs of Thoma's theorem is mainly operator algebraic though the problem can be stated in this way: extremal characters correspond to finite factor representations in a von Neumann algebraic sense and now the corresponding traces, also called Thoma traces, need to be identified. 

Quite unrelated to these developments, the infinite symmetric group $\Sset_\infty$ also plays a crucial role in the classical subject of distributional symmetries and invariance principles in probability theory, see \cite{Kalle05a} for a recent account. Here exchangeability of an infinite sequence of random variables means that the joint distribution of this sequence is invariant under finite permutations of the random variables. 
In particular, exchangeability is equivalent to the existence of a certain
representation of the symmetric group $\Sset_\infty$. Now the celebrated de Finetti theorem states that an exchangeable infinite sequence is conditionally independent and identically distributed, where this conditioning is 
uniquely determined by the tail $\sigma$-algebra of the sequence. Even though stated only implicitly in its classical formulation, it tells us that an exchangeable infinite sequence is modeled on an infinite product of measurable spaces, equipped with a uniquely determined convex combination of product measures. Thus de Finetti's theorem can also be interpreted as a specific result in the representation theory of $\Sset_\infty$ within the subject of classical probability, where the role of extremal characters is now played by infinite product measures.  
     
Recently one of the authors has obtained in \cite{Koes10a} an operator algebraic version of de Finetti's theorem. It shows that exchangeability of noncommutative random variables implies noncommutative conditional independence which can be expressed equivalently in terms of commuting squares (as known in subfactor theory) and yields powerful factorization results. These results are refined and applied in \cite{GoKo09a}, where we have introduced `braidability' as a new kind of noncommutative probabilistic symmetry connected to the infinite braid group $\Bset_\infty$. This allowed us 
to establish a braided version of de Finetti's theorem which proved to be a strong new tool in the study of representations of  $\Bset_\infty$. Applying this machinery to the infinite symmetric group $\Sset_\infty$ we deal with exchangeability in an operator algebraic setting. By using the full force of noncommutative probability, as in fact appropriate for noncommutative groups, it turns out that there is also much to gain from this approach for the representation theory of $\Sset_\infty$. 

The completely new viewpoint of our approach is to interpret Thoma's theorem as an example of a noncommutative de Finetti theorem. This surprising connection becomes more evident from the von Neumann algebraic formulation of the following fact which allows us to apply our methods from \cite{GoKo09a,Koes10a}. Suppose $\chi$ is a character of $\Sset_\infty$ and let $\gamma_i \in \Sset_\infty$ denote the transposition $(0,i)$. Then it is elementary to verify that the sequence $(\gamma_i)_{i \in \Nset}$ is `exchangeable', i.e.
\[
\chi\big(\gamma_{\ii(1)} \gamma_{\ii(2)} \cdots \gamma_{\ii(n)} \big)
= \chi\big(\gamma_{\sigma(\ii(1))} \gamma_{\sigma (\ii(2))} \cdots \gamma_{\sigma(\ii(n))} \big)\qquad \qquad (\sigma \in \Sset_\infty)
\]    
for all $n$-tuples $\ii\colon \{1,\ldots, n\} \to \Nset$ and $n \in \Nset$. 
Now the task to identify the law of an exchangeable infinite sequence in the classical de Finetti theorem becomes the task to identify a character of the infinite symmetric group $\Sset_\infty$. Also the need to go beyond a purely algebraic treatment of the group $\Sset_\infty$ becomes apparent, since the role of the tail $\sigma$-algebra in de Finetti's theorem is played by the tail von Neumann algebra of the sequence $\big(\pi(\gamma_i)\big)_{i\in \Nset}$, where $\pi$ denotes the unitary representation associated to the character $\chi$. A second important feature of the so-called star generators $\gamma_i$ is that they neatly go along with disjoint cycle decompositions of permutations in $\Sset_\infty$, for example
\[
(3,5,1,10,7) (4,2) = (\gamma_3 \gamma_5 \gamma_1 \gamma_{10} \gamma_7 \gamma_3) \, (\gamma_4 \gamma_2 \gamma_4).
\]
Hence represented disjoint cycles become conditionally independent in the noncommutative sense. Altogether this provides us with very strong factorization properties for the mapping $\Sset_\infty \ni \sigma \to \chi(\sigma)$.

It turns out that our methods are exactly what is needed to give a fully operator algebraic proof of Thoma's theorem. We emphasize that the notion of noncommutative independence with universality properties such as tensor independence or freeness \cite{Spei97a,BeSc02a} is insufficient for this purpose. We need a more general notion of noncommutative independence as it emerges out of the noncommutative de Finetti theorem in terms of commuting squares of von Neumann algebras \cite{GoKo09a,Koes10a}. To complete the proof of Thoma's theorem the tail algebra appearing in the noncommutative de Finetti theorem has to be identified explicitly. Here spectral theory comes into play and some similarities with Okounkov's proof in \cite{Okou99a} become apparent.

It seems to us that the best way to illustrate the strength of these new ideas is to work out a full proof of Thoma's theorem which is self-contained given the de Finetti type results in \cite{Koes10a,GoKo09a}.
Along the way we prove a number of new operator algebraic results motivated from the probabilistic point of view. Let us discuss our plan in more detail.

\noindent

In Section \ref{section:symmetric} we recall some basic facts about the infinite symmetric group $\Sset_\infty$. Throughout we will work in two presentations of $\Sset_\infty$, the usual one given by the Coxeter generators $\sigma_i= (i-1,i)$ and, as already motivated above, a less familiar one given by the so-called star generators $\gamma_i = (0,i)$ (see Proposition \ref{prop:star-presentation}). The latter enjoy the additional property that $k$-cycles are elegantly expressed in terms of the $\gamma_i$'s
(see Lemma \ref{lem:cycle-1} and above for an example). Further we provide a discussion of certain shift endomorphisms on $\Sset_\infty$ and of some basics on the connection between characters and representations, as needed to put Thoma's theorem into context. We emphasize that our approach does not require any results about the representation theory of the symmetric groups $\Sset_n$ for $2 \le n < \infty$.

In Section \ref{section:exchange-indep} we concisely present our general noncommutative framework of exchangeability and independence which is the main tool for the rest of the paper. The emphasis is on tailoring the theory for the applications to come, here we mostly refer to \cite{Koes10a,GoKo09a} for proofs and further discussions. A few results are specific to the group $\Sset_\infty$, such as the possibility to restrict an automorphic representation to a minimal algebra (see Proposition \ref{prop:min-process}). Also we provide here a refined version of 
fixed-point characterizations from \cite{GoKo09a}, an operator algebraic generalization of the Hewitt-Savage zero-one law (see Theorem \ref{thm:fixed-point}). These characterizations will allow us to compute the (possibly non-abelian) tail algebra in the noncommutative de Finetti theorem, Theorem \ref{thm:de-finetti}, and to identify it with the fixed point algebra of a representation of $\Sset_\infty$ or with fixed point algebras of endomorphisms induced by certain symbolic shifts on the Coxeter generators $\sigma_i$ or star generators $\gamma_i$. Related results go partially beyond the corresponding results in \cite{GoKo09a}. 

In Section \ref{section:multiplicative} we really go to work and specify our general results on automorphic representations of $\Sset_\infty$ (from the  previous section) to a unitary representation $\pi$ of $\Sset_\infty$, see
Theorem \ref{thm:indy}. Moreover we show in Proposition \ref{prop:trace} that the existence of a tracial state on $\pi(\Sset_\infty)$ and the exchangeability of the represented star generators $v_i = \pi(\sigma_i)$ are intimately connected. By virtue of the noncommutative de Finetti theorem, the exchangeable sequence of unitaries $(v_i)_{i\in \Nset_0}$ enjoys powerful factorization properties with respect to conditional expectations onto their tail algebra $\cA_0$. Although this tail algebra $\cA_0$ turns out to be commutative, this is not a situation for the classical de Finetti theorem because the random variables $v_i$ do not commute with the tail algebra or with each other. For a more thorough discussion of the relation between the classical and the noncommutative de Finetti theorem we refer to \cite{Koes10a} and the more expository treatment in \cite{GoKo10bPP}.
  
A first important feature in the proof of Thoma's theorem is a fact called Thoma multiplicativity, i.e., a Thoma trace is multiplicative with respect to the disjoint cycle decomposition of an element of $\Sset_\infty$. This fact follows naturally from our factorization results and is actually a corollary of the noncommutative de Finetti theorem. More generally, as stated in Theorem \ref{thm:thoma-mult}, if we have a (not necessarily factorial) finite trace we obtain this multiplicativity for the conditional expectation onto the center of the von Neumann algebra generated by $\pi(\Sset_\infty)$.

In Section \ref{section:limit-cycles} we develop a systematic theory of certain weak limits of cycles which we call limit cycles (see Definition \ref{def:limit-cycle}). We apply these results to show how the fixed point algebras $\cA_n$, i.e., the centralizers of the represented Coxeter generators $\sigma_{n+2}, \sigma_{n+3}, \ldots$, are generated by cycles and limit cycles. This tower of fixed point algebras 
$ \cA_{-1} \subset \cA_0 \subset \cA_1 \subset \cA_2 \subset \cdots$ 
plays an important role in our approach and we can concretely identify each $\cA_n$ in Theorem \ref{thm:fixed-points}. A further generalization of Thoma multiplicativity is obtained in Theorem \ref{thm:thoma-mult-general} which provides such formulas for the conditional expectations onto the fixed point algebras $\cA_n$.

Let us comment at this point that our limit cycles are related to the `random cycles' appearing in Okounkov's proof of Thoma's theorem \cite{Okou99a} where he uses the latter to give a presentation of the Olshansky semigroups. We give a few hints into this direction in Remark \ref{rem:olshanski} but this discussion is by no means exhaustive and, in fact, we think that it may be very interesting to study more systematically the relation between noncommutative independence on the one hand and Olshansky semigroups on the other hand. We emphasize that we do not use the theory of Olshansky semigroups \cite{Olsh89a,Olsh91a,Olsh91b,Olsh06a}
in our proof, rather we replace it by our results about noncommutative independence. 

In Section \ref{section:bernoulli-markov} we give further evidence that the framework of noncommutative probability is well suited to analyze the operator algebraic structures generated by $\Sset_\infty$. Combining earlier results 
the fixed point algebras $\cA_n$ can be written explicitly in terms of the represented star generators $\pi(\gamma_i)$ and certain limit cycles, see Theorem \ref{thm:cs-star}. Here the endomorphism associated to the symbolic shift on the star generators $\gamma_i$ turns out to be a noncommutative Bernoulli shift in the sense of \cite{GoKo09a} and, similar to subfactor theory, a rich structure of triangular towers of commuting squares is obtained.
The fixed point algebras $\cA_n$ can of course also be expressed in terms of the Coxeter generators $\sigma_i$ instead of the star generators. This more delicate situation is the subject of Theorem \ref{thm:cs-coxeter}. Here the endomorphism associated to the symbolic shift on the Coxeter generators turns out to be a noncommutative Markov shift, which fits well into the theory of noncommutative stationary processes \cite{Gohm04a}, and we obtain also triangular towers of commuting squares. In particular we show that such a Markov shift is a Bernoulli shift if and only if the center-valued conditional expectation is a center-valued Markov trace.

In Section \ref{section:commsquares} we return to our proof of Thoma's theorem.
Clearly the spectral proof of Okounkov is closer to our method than the other known proofs and some arguments in Proposition \ref{prop:mu-discrete} are inspired by his ideas. But we give his estimates a more probabilistic turn
which allows us to generalize them to the von Neumann algebraic setting of 
certain commuting squares. Surprisingly this setting is already enough to deduce discreteness of spectra in Theorem \ref{thm:discrete-spec}. Applied to a certain limit cycle this is an important step towards Thoma's theorem. On the other hand our axiomatic setting indicates new and interesting possibilities to generalize such results, for example in subfactor theory. We give a short example for Hecke algebras but we do not follow this line of thought further in this paper.

To complete the argument that there are no other possibilities than those parametrized in Thoma's theorem it remains to be shown that our spectral measure is a Thoma measure, using the terminology of Okounkov \cite{Okou99a}. To do this we follow in Section \ref{section:okounkov} rather closely some of Okounkov's ideas and explain how noncommutative independence allows to transfer these arguments into a von Neumann algebraic setting. 

The proof of Thoma's theorem is finished by establishing the existence of finite factor traces for the parameters given in Thoma's theorem. The first such construction, from a groupoid point of view, has been given by Vershik and Kerov in \cite{VeKe81a} where they also mention the idea of embeddings into Powers factors.
In Section \ref{section:Thoma} we give a brief elaboration of the latter idea in the general case, not only to make our proof self-contained but also because in this infinite tensor product setting many of our constructions can be visualized in a very satisfying way. The factoriality of the trace can be inferred from one of the fixed point characterizations given in Theorem \ref{thm:fixed-point}. We are not aware of another proof of this fact which is similarly direct. 

In the final Section \ref{section:Markov} we briefly discuss the case that the subfactor generated by the Coxeter generators $\sigma_2, \sigma_3, \ldots$ (i.e., omitting the first one, $\sigma_1$) is irreducible. We characterize it in terms of the parameters in Thoma's theorem and we note that these traces are exactly the Markov traces on the group algebra of $\Sset_\infty$ which extend to an operator algebraic setting. This makes contact to work of Ocneanu, Wenzl and Jones, see \cite{Wenz88a,Jone91a,JoSu97a},
who studied these Markov traces motivated by Jones' subfactor theory. 
Independently a very recent preprint of Yamashita \cite{Yama09a} also focuses on related questions.

To summarize, we show in this paper that there is an approach via exchangeability and noncommutative independence which leads to an operator algebraic proof of Thoma's theorem and to a very transparent and detailed structure theory for finite (factor) representations of $\Sset_\infty$. Beyond that we believe that a further investigation of the interplay with more traditional approaches will be of great interest for the development of von Neumann algebraic methods in representation theory. 
\section{Some basics of the infinite symmetric group $\Sset_\infty$}
\label{section:symmetric}
\subsection*{Presentations of symmetric groups}
The symmetric group $\Sset_n$ is presented by the Coxeter generators $\sigma_1, \sigma_2,\ldots, \sigma_{n-1}$ subject to the relations 
\begin{align*}
&\sigma_i \sigma_{j} \sigma_i = \sigma_{j} \sigma_i \sigma_{j} &&\text{if $ \; \mid i-j \mid\, = 1 $}, & \tag{B1} \label{eq:B1}\\
&\sigma_i \sigma_j = \sigma_j \sigma_i  &&\text{if $ \; \mid i-j \mid\, > 1 $},& \tag{B2} \label{eq:B2}\\
&\sigma_i^2  = \sigma_0 &&\text{if $i \in \Nset$ },\tag{S} \label{eq:sym}
\end{align*}
where $\sigma_0$ denotes the identity of $\Sset_n$ and $\Sset_1 = \langle \sigma_0 \rangle$. We realize $\Sset_n$ as permutations of the set $\{0, 1, 2, \ldots, n-1\}$ such that $\sigma_i$ is given by the transposition $(i-1,i)$. By convention the product  of two permutations $\sigma, \tau$ acts as $\sigma \tau (k) = \sigma(\tau(k))$. Throughout $\Sset_n$ is identified with the subgroup 
$\set{\sigma \in \Sset_{n+1}}{\sigma(n) = n}$ of $\Sset_{n+1}$ and $\Sset_\infty$ denotes the inductive limit of the groups $\Sset_n$ with respect to these embeddings. So $\Sset_\infty$ is the group of all finite permutations of the set $\Nset_0$, called the \emph{infinite symmetric group}.  Further we will make use of the subgroups $\Sset_{n, \infty} := \langle \sigma_n, \sigma_{n+1}, \ldots \rangle$ for $n \in \Nset$. Note that $\Sset_{1, \infty}= \Sset_\infty$.  

We recall that the relations $\eqref{eq:B1}$ and $\eqref{eq:B2}$ are the 
defining relations for the Artin generators of braid groups $\Bset_n$ and their inductive limit $\Bset_\infty$. Now let $\widehat{} \,\,\colon \Bset_\infty \to \Sset_\infty$ be the canonical epimorphism which maps Artin generators of $\Bset_\infty$ onto Coxeter generators of $\Sset_\infty$. 
This epimorphism allows us to turn the results of \cite{GoKo09a} on braid groups directly into results on symmetric groups. Throughout we will heavily make use of this. 

The investigations in \cite{GoKo09a} also reveal a new presentation of the braid group $\Bset_\infty$. This presentation plays therein a special role for braidability, a noncommutative extension of exchangeability. Adding a relation on the idempotence of the generators to this presentation, we have at hands its analogue for symmetric groups. This presentation will play a similar role for exchangeability as the square root of free generators presentation does for braidability. 
\begin{Proposition}\label{prop:star-presentation}
The symmetric group $\Sset_n$ (for $n \ge 3$) is presented by the generators 
$\set{\gamma_i}{1 \le i \le n-1}$ subject to the defining relations
\begin{align}\tag{EB} \label{eq:EB} 
       \gamma_{l} \gamma_{l-1} (\gamma_{l-2} \gamma_{l-3} \cdots \gamma_{k+1}\gamma_{k}) \gamma_l 
     = \gamma_{l-1}  (\gamma_{l-2} \gamma_{l-3}\cdots \gamma_{k+1}\gamma_{k}) \gamma_{l}\gamma_{l-1} &&(0 < k < l <n) \\
\gamma_{k}^2 = \gamma_0 && (0 < k < n). \notag
\end{align}
Here $\gamma_0$ denotes the identity of $\Sset_n$. Moreover, the generator $\gamma_k$ is realized for $1 \le k < n$ as the transposition
\[
\gamma_k = (0,k)
\]
on the set $\{0,1,\ldots, n-1\}$.  
\end{Proposition}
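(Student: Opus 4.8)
The plan is to verify that the star generators $\gamma_k = (0,k)$ satisfy the stated relations, and then show that these relations are \emph{defining}, i.e. that any group generated by symbols $\gamma_1,\dots,\gamma_{n-1}$ subject to \eqref{eq:EB} and $\gamma_k^2 = \gamma_0$ admits an epimorphism onto $\Sset_n$ which is also injective. First I would check the relations directly in $\Sset_n$: the relation $\gamma_k^2 = \gamma_0$ is immediate since transpositions are involutions, and the exchange-braid relation \eqref{eq:EB} can be checked by evaluating both sides on the set $\{0,1,\dots,n-1\}$. A cleaner route is to use the already-established presentation of $\Bset_\infty$ from \cite{GoKo09a} (the one built from the analogous generators without the idempotence relation) together with the canonical epimorphism $\widehat{}\colon \Bset_\infty \to \Sset_\infty$: since the corresponding braid-group relations hold for the lifts, their images under $\widehat{}$ hold in $\Sset_\infty$, and restricting to the finite stage $\Sset_n$ gives \eqref{eq:EB}. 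The translation between Coxeter generators $\sigma_i = (i-1,i)$ and star generators is governed by the elementary identities $\gamma_k = \sigma_k\sigma_{k-1}\cdots\sigma_2\sigma_1\sigma_2\cdots\sigma_{k-1}\sigma_k$ (a $(0,k)$-transposition built by conjugating $(0,1)$) and, conversely, $\sigma_i = \gamma_{i-1}\gamma_i$ — this last identity being the key computational observation, since it already shows the $\gamma_k$'s generate all of $\Sset_n$.

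Next I would prove that the abstract group $G$ defined by the presentation surjects onto $\Sset_n$ and has order at most $n!$. Surjectivity is clear from the previous paragraph: the map $\gamma_k \mapsto (0,k)$ respects the relations, hence extends to a homomorphism $G \to \Sset_n$, which is onto because the $(0,k)$'s generate $\Sset_n$. For the reverse inequality $|G| \le n!$ I would argue by induction on $n$. In $G$, set $\delta_i := \gamma_{i-1}\gamma_i$ for $2 \le i \le n-1$ (with the convention reflecting $\gamma_0 = e$); the relations \eqref{eq:EB} and $\gamma_k^2 = e$ should be shown to imply the Coxeter relations \eqref{eq:B1}, \eqref{eq:B2}, \eqref{eq:sym} for the $\delta_i$, together with the fact that $\langle \delta_2,\dots,\delta_{n-2}\rangle$ is (a quotient of) the star-presentation group at level $n-1$. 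Then a coset-counting / Schreier-transversal argument — exhibiting $n$ coset representatives for $\langle \delta_2,\dots,\delta_{n-2}\rangle$ in $G$, namely $e, \gamma_{n-1}, \gamma_{n-2}\gamma_{n-1}, \dots, \gamma_1\gamma_2\cdots\gamma_{n-1}$, mirroring the usual transversal for $\Sset_{n-1} < \Sset_n$ — yields $|G| \le n\cdot (n-1)! = n!$. Combined with surjectivity onto $\Sset_n$, this forces $G \cong \Sset_n$.

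The main obstacle I expect is the purely combinatorial verification that the exchange-braid relations \eqref{eq:EB} plus idempotence genuinely suffice to recover the Coxeter relations for the $\delta_i = \gamma_{i-1}\gamma_i$ and to bound the order — i.e. that no relations have been lost in passing to the star generators. The delicate point is that \eqref{eq:EB} is a three-index family of relations involving long words $\gamma_{l-2}\gamma_{l-3}\cdots\gamma_k$, and extracting from it exactly the braid relation $\delta_i\delta_{i+1}\delta_i = \delta_{i+1}\delta_i\delta_{i+1}$ and the far-commutativity $\delta_i\delta_j = \delta_j\delta_i$ for $|i-j|>1$ requires careful word manipulation using $\gamma_k^2 = e$ to cancel. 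Since the paper states this proposition is essentially inherited from the braid-group presentation in \cite{GoKo09a} by adjoining idempotence, I would lean on that reference for the hard combinatorial core: the braid-group statement (that the $\gamma_i$-relations without idempotence present $\Bset_n$) is assumed known, and adding $\gamma_k^2 = \gamma_0$ corresponds precisely, under $\widehat{}$, to adding $\sigma_i^2 = \sigma_0$, which presents $\Sset_n$. The remaining work is then only to confirm that this quotient of presentations is compatible, which is routine.
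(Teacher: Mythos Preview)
Your final approach---leaning on the braid-group presentation from \cite{GoKo09a} and observing that adjoining the idempotence relations $\gamma_k^2 = \gamma_0$ gives the quotient $\Sset_n$ of $\Bset_n$---is exactly the paper's proof. The paper simply cites \cite{GoKo09a} for the fact that \eqref{eq:EB} presents $\Bset_n$, adds $\gamma_k^2 = \gamma_0$, and reads off $\gamma_k = (0,k)$ from the formula $\gamma_k = \sigma_1\cdots\sigma_{k-1}\sigma_k\sigma_{k-1}\cdots\sigma_1$. Your ``routine'' compatibility check is indeed routine: since $\gamma_1 = \sigma_1$ and all the $\sigma_i^2$ are mutually conjugate in $\Bset_n$, the normal closure of $\{\sigma_i^2\}$ lies in that of $\{\gamma_k^2\}$; and conversely each $\gamma_k^2$ maps to the identity in $\Sset_n = \Bset_n / \langle\langle\sigma_i^2\rangle\rangle$.

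Your alternative self-contained route, however, contains an error. The claimed identity $\sigma_i = \gamma_{i-1}\gamma_i$ is false: in $\Sset_n$ the product $(0,i-1)(0,i)$ is the $3$-cycle $(0,i,i-1)$, not the transposition $(i-1,i)$. The correct formula is $\sigma_i = \gamma_{i-1}\gamma_i\gamma_{i-1}$. Consequently your elements $\delta_i := \gamma_{i-1}\gamma_i$ are not involutions and cannot satisfy \eqref{eq:sym}, so the plan to recover a Coxeter system for the $\delta_i$ and then bound $|G|$ by coset counting fails as written. This does not damage your main argument, which correctly falls back on \cite{GoKo09a}; but if you wanted a self-contained order bound you would need to take $\delta_i := \gamma_{i-1}\gamma_i\gamma_{i-1}$ instead.
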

We will see below that the $\gamma_k$'s are convenient to write down cycles in symmetric groups. Note also the obvious equalities $\gamma_1 = \sigma_1$ and $\gamma_0=\sigma_0$.  
\begin{proof} The relations \eqref{eq:EB} define the square root of free generator presentation of $\Bset_n$ (see \cite{GoKo09a}). Adding the relations $\gamma_k^2=\gamma_0$ one obtains the group $\Sset_n$ as a quotient of $\Bset_n$. As shown in \cite{GoKo09a}, the generators $\gamma_i$ and $\sigma_j$ are related by 
\begin{equation}\label{eq:gamma2sigma}
\gamma_k= \sigma_1 \cdots \sigma_{k-1} \sigma_k \sigma_{k-1} \cdots \sigma_1.
\end{equation}
Now $\gamma_k$ is immediately identified as the transposition $(0,k)$. 
\end{proof} 
\begin{Definition}\normalfont
This presentation of $\Sset_n$ from Proposition \ref{prop:star-presentation} is referred to as the \emph{star presentation} and the generators $\gamma_i$ are called \emph{star generators} for $i >0$.   
\end{Definition}
\begin{Remark}\normalfont\label{rem:star-pres}
The star presentation is induced by a star-shaped graph, similar as for braid groups in \cite{Serg93a}. To be more precise, a presentation of the symmetric group is obtained from the planar graph presentations for braid groups in \cite{Serg93a} by adding the relations on the idempotence of each generator. Now a linear graph with $n$ vertices and $n-1$ edges yields the Coxeter presentation of $\Sset_n$ with generators $\sigma_i$. As its name already suggests, the star presentation of $\Sset_n$ is associated to a star-shaped graph, again with $n$ vertices and $n-1$ edges. Here the point $0$ is distinguished  as the central vertex of the star shaped graph. 
Actually the star presentation enjoys much more symmetry than the relations \eqref{eq:EB} indicate, as it becomes clear from Lemma \ref{lem:cycle-1} and Lemma \ref{lem:cycles-4} below. Further information on the combinatorics of star generators is contained in \cite{Pak99a,IrRa09a}.
\end{Remark}
\subsection*{Cycles}
Each permutation $\sigma \in \Sset_\infty$ can be decomposed into disjoint cycles $s_1, s_2, \ldots, s_n \in \Sset_\infty$ for some $n \in \Nset$ such that
\[
\sigma = s_1 s_2 \cdots s_n.
\]
Note that $s_i$ and $s_j$ commute for $1 \le i, j \le n$. A cycle $(n_1, n_2, \ldots, n_k)$ has the length $k$ and is referred to as a $k$-cycle. 
We will denote by $m_k(\sigma)$ the number of $k$-cycles in $\sigma$. Note that
$m_1(\sigma) = \infty$ and $ m_k(\sigma) \neq 0$ for at most finitely many $k >1$.   

We record next some elementary results which will be crucial in the proofs of our main results.
\begin{Lemma} \label{lem:cycle-1}
Let $k \in \Nset$. A $k$-cycle $\sigma = (n_1, n_2, n_3, \ldots, n_k) \in \Sset_\infty$ is of the form 
\[
\sigma = (\gamma_{n_1} \gamma_{n_2} \gamma_{n_3} \cdots \gamma_{n_{k-1}}\gamma_{n_k}) \gamma_{n_1}, 
\]
provided that $n_1=0$ if $\sigma(0) \neq 0$.
\end{Lemma}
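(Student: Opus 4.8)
The plan is to verify the claimed identity by direct computation, using only the realization $\gamma_k = (0,k)$ as a transposition and the convention $\sigma\tau(j) = \sigma(\tau(j))$. First I would dispose of the easy bookkeeping: since the $n_i$ are distinct and at most one of them can equal $0$, the hypothesis ``$n_1 = 0$ if $\sigma(0)\neq 0$'' just says that if $0$ appears among the $n_i$ at all, then we may cyclically relabel the cycle so that it appears in the first slot; and if $0$ does not appear, then $\sigma$ fixes $0$ and every $\gamma_{n_i}$ moves $0$. I would treat these two cases separately.

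In the case $n_1 = 0$, the right-hand side is $w := (\gamma_0\gamma_{n_2}\cdots\gamma_{n_k})\gamma_0 = (\gamma_{n_2}\gamma_{n_3}\cdots\gamma_{n_k})$ (using $\gamma_0 = \mathrm{id}$), so I would show $(0,n_2,\dots,n_k) = (0,n_2)(0,n_3)\cdots(0,n_k)$ by evaluating both sides on each point. Reading the product of transpositions from right to left: a point $n_j$ with $j\ge 2$ is first moved to $0$ by $\gamma_{n_j}$ and then moved to $n_{j-1}$ by $\gamma_{n_{j-1}}$ (it is untouched by the later factors $\gamma_{n_{j+1}},\dots,\gamma_{n_k}$ applied earlier, since those swap only $0$ with $n_{j+1},\dots,n_k$); the point $n_k$ goes $n_k\mapsto 0\mapsto 0 \mapsto\cdots$, i.e. to $n_k\mapsto 0$ only if... — more carefully, $0$ itself is sent by the rightmost $\gamma_{n_k}$ to $n_k$ and then fixed by all earlier factors, giving $0\mapsto n_k$; wait, that is the wrong direction, so instead I note the product acts as $n_j\mapsto n_{j-1}$ for $j\ge 3$, $n_2\mapsto 0$, and $0\mapsto n_k$, which is exactly the cycle $(0,n_2,n_3,\dots,n_k)$. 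This is a finite check, one point at a time.

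In the case $0\notin\{n_1,\dots,n_k\}$, the right-hand side is $w = (\gamma_{n_1}\cdots\gamma_{n_k})\gamma_{n_1} = (0,n_1)(0,n_2)\cdots(0,n_k)(0,n_1)$. Here I would again evaluate on each point: $0\mapsto n_1$ under the trailing $(0,n_1)$, then $n_1\mapsto 0\mapsto\cdots$, tracking through; and each $n_j$ with $j\ge 1$ gets routed to $n_{j-1}$ (indices mod $k$), while every point outside $\{n_1,\dots,n_k\}$ is fixed because it is fixed by every factor. Collecting the results shows $w$ acts as the $k$-cycle $(n_1,n_2,\dots,n_k)$. Alternatively — and this is probably the cleanest writeup — I would use the conjugation formula already available from \eqref{eq:gamma2sigma} or from the elementary fact that $\gamma_{n_1}(0,n_j)\gamma_{n_1} = (n_1,n_j)$ for $j\neq 1$: conjugating $w$ by $\gamma_{n_1}$ turns the tail product $(0,n_2)\cdots(0,n_k)$ into $(n_1,n_2)(n_1,n_3)\cdots(n_1,n_k)$, reducing this case to the identity $(n_1,\dots,n_k) = (n_1,n_2)(n_1,n_3)\cdots(n_1,n_k)$, which is the standard decomposition of a cycle into transpositions and follows from the first case by relabeling $0\leftrightarrow n_1$.

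There is no real obstacle here; the only thing to be careful about is the order of composition (the paper's convention $\sigma\tau(k)=\sigma(\tau(k))$ means products of transpositions are read right-to-left), and keeping the hypothesis straight so that the single possible occurrence of $0$ is always in slot $n_1$, which is exactly what lets us reduce the general case to the ``$n_1=0$'' case by the relabeling automorphism $0\leftrightarrow n_1$ of $\Sset_\infty$.
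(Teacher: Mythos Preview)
Your plan matches the paper's proof: split into the cases $n_1=0$ and $0\notin\{n_1,\dots,n_k\}$ and verify by direct evaluation of the product of transpositions. The paper's own argument is even terser --- it calls the case $\sigma(0)=0$ ``immediate from the definition of a $k$-cycle and $\gamma_n=(0,n)$'', handles the case $n_1=0$ by simplifying with $\gamma_0=\id$, and refers to \cite{IrRa09a} for details. Your alternative reduction of the second case to the first via conjugation by $\gamma_{n_1}$ (equivalently, the relabeling $0\leftrightarrow n_1$) is a clean shortcut that the paper does not spell out.

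One bookkeeping caution: your computed action in the $n_1=0$ case, namely $0\mapsto n_k$, $n_j\mapsto n_{j-1}$ for $j\ge 3$, and $n_2\mapsto 0$, is indeed what $(0,n_2)(0,n_3)\cdots(0,n_k)$ does under the stated convention $\sigma\tau(k)=\sigma(\tau(k))$. But under the usual reading of cycle notation $(a_1,\dots,a_k):a_i\mapsto a_{i+1}$ this describes $(0,n_k,n_{k-1},\dots,n_2)=(0,n_2,\dots,n_k)^{-1}$, not $(0,n_2,\dots,n_k)$. So either make your cycle-reading convention explicit, or fix the direction in the write-up; you yourself flag ``the only thing to be careful about is the order of composition'', and this is exactly where that care is needed. (None of this affects the use of the lemma in the paper, where only the fact that a $k$-cycle is a word in $k$ distinct star generators is ever invoked.)
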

For notational convenience we will suppress the parentheses in $(\gamma_{n_1} \gamma_{n_2} \gamma_{n_3} \cdots \gamma_{n_{k-1}} \gamma_{n_k})  \gamma_{n_1}$. Algebraically, any $1$-cycle satisfies $\gamma_{n_1}\gamma_{n_1} = \gamma_{0}$ and will be omitted in cycle decompositions of permutations.   
\begin{proof}
If $\sigma(0)=0$ , this is immediate from the definition of a $k$-cycle and $\gamma_n = (0,n)$ for all $n \in \Nset$. If $n_1 = 0$ then
\begin{eqnarray*} 
\gamma_{n_1} \gamma_{n_2} \gamma_{n_3} \cdots \gamma_{n_{k-1}}\gamma_{n_k} \gamma_{n_1}
&=& \gamma_{n_2} \gamma_{n_3} \cdots \gamma_{n_{k-1}}\gamma_{n_k}\\
&=& (0, n_2, n_3, \ldots, n_k ),
\end{eqnarray*}  
since $\gamma_0$ is the identity. See \cite[Lemma 3]{IrRa09a} for a more detailed proof. 
\end{proof}
The slight difference in the treatment of cycles containing the point $0$ is attributed to the prominent role of this point in the star presentation of $\Sset_\infty$ (see Remark \ref{rem:star-pres}). If one of the $n_i$'s is zero, then $\gamma_{n_1}\gamma_{n_2} \cdots \gamma_{n_k} \gamma_{n_1}$ is a $k$-cycle if and only if $n_1 =0$.  This condition $n_1=0$ can always be achieved by cyclic permutations of the  $n_i$'s and their re-labeling. Throughout we will assume that this re-labeling is done if necessary.  

The following elementary result will be at the heart of our application of noncommutative independence (compare Theorem \ref{thm:indy} for example). Denote by $\langle \gamma_i \mid i \in I \rangle$ the subgroup of $\Sset_\infty$ generated by the $\gamma_i$'s with $i \in I \subset \Nset$. 
\begin{Lemma}\label{lem:cycle-2}
Suppose the permutation $\sigma\in \Sset_\infty$ has the cycle decomposition
$
\sigma = s_1 s_2 \cdots s_n,
$
where $s_1,s_2,\ldots, s_n$ are disjoint non-trivial cycles. Then there exist
mutually disjoint subsets $I_1, I_2, \ldots, I_n \subset \Nset$ such that
$s_i \in \langle \gamma_i \mid i \in I_i \rangle$ for $i=1, \ldots n$.  
\end{Lemma}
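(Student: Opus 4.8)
The plan is to translate the disjoint-support structure of the cycles $s_1,\ldots,s_n$ directly into the star-generator language via Lemma~\ref{lem:cycle-1}. First I would record the set-theoretic picture: since the $s_i$ are disjoint non-trivial cycles, their supports $\operatorname{supp}(s_i)\subset\Nset_0$ are pairwise disjoint finite subsets of $\Nset_0$. The only subtlety is the distinguished point $0$: at most one of the cycles, say $s_1$ after re-ordering, can contain $0$ in its support, and by the re-labeling convention stated after Lemma~\ref{lem:cycle-1} we may assume $0=n_1^{(1)}$ is the first entry of that cycle (and no other cycle involves $0$).

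Next I would apply Lemma~\ref{lem:cycle-1} to each cycle. Write $s_i=(n_1^{(i)},n_2^{(i)},\ldots,n_{k_i}^{(i)})$. For the cycle $s_1$ containing $0$ (with $n_1^{(1)}=0$), Lemma~\ref{lem:cycle-1} gives $s_1=\gamma_0\gamma_{n_2^{(1)}}\cdots\gamma_{n_{k_1}^{(1)}}\gamma_0=\gamma_{n_2^{(1)}}\cdots\gamma_{n_{k_1}^{(1)}}$, so $s_1\in\langle\gamma_j\mid j\in I_1\rangle$ with $I_1:=\{n_2^{(1)},\ldots,n_{k_1}^{(1)}\}=\operatorname{supp}(s_1)\setminus\{0\}$. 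For every other cycle $s_i$ ($i\ge 2$), which fixes $0$, Lemma~\ref{lem:cycle-1} gives $s_i=\gamma_{n_1^{(i)}}\gamma_{n_2^{(i)}}\cdots\gamma_{n_{k_i}^{(i)}}\gamma_{n_1^{(i)}}$, whence $s_i\in\langle\gamma_j\mid j\in I_i\rangle$ with $I_i:=\operatorname{supp}(s_i)=\{n_1^{(i)},\ldots,n_{k_i}^{(i)}\}$. (If some $s_i$ is itself a transposition $(0,m)$ with $i=1$, then $I_1=\{m\}$; the one-cycle case is excluded since the cycles are non-trivial.)

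Finally I would check that the sets $I_1,\ldots,I_n$ are pairwise disjoint. For $i,i'\ge 2$ with $i\ne i'$ this is immediate, since $I_i=\operatorname{supp}(s_i)$ and the supports are disjoint. For $I_1$ versus $I_i$ with $i\ge 2$: $I_1\subset\operatorname{supp}(s_1)$ and $\operatorname{supp}(s_1)\cap\operatorname{supp}(s_i)=\varnothing$, so again disjoint. This establishes the claim with the explicit choice $I_i=\operatorname{supp}(s_i)$ for $i\ge 2$ and $I_1=\operatorname{supp}(s_1)\setminus\{0\}$.

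The argument is essentially bookkeeping once Lemma~\ref{lem:cycle-1} is in hand; the only genuine point of care—indeed the "main obstacle," such as it is—is the asymmetric role of the point $0$, which forces one to single out the (unique) cycle whose support meets $0$ and to drop $0$ from that cycle's index set. As long as one invokes the re-labeling convention to put $0$ in the leading slot of that cycle, everything goes through cleanly; no estimate or deeper structural input is needed.
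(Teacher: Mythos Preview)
Your proposal is correct and follows exactly the approach the paper intends: the paper's own proof simply says ``This is evident from the cycle decomposition of permutations and Lemma~\ref{lem:cycle-1},'' and what you have written is a careful unpacking of precisely that remark, including the one point worth noting (the special handling of the cycle containing~$0$).
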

\begin{proof}
This is evident from the cycle decomposition of permutations and Lemma \ref{lem:cycle-1}. 
\end{proof}
Finally let us record how cycles are effected by conjugation with elements from $\Sset_\infty$. This illustrates that the defining relations $\eqref{eq:EB}$ induce more general cycle equations.    
\begin{Lemma}\label{lem:cycles-4}
Let $\sigma \in \Sset_\infty$. 
\begin{enumerate}
\item \label{item:cycles-i}
Consider the 2-cycle $\gamma_n = \gamma_0 \gamma_n \gamma_0$, with $n \in \Nset$. Then 
\[
\sigma \gamma_n \sigma^{-1}
= \begin{cases}
 \gamma_{\sigma(0)} \gamma_{\sigma(n)} \gamma_{\sigma(0)}
 & \text{if $\sigma(n) \neq 0$},\\
  \gamma_{\sigma(n)} \gamma_{\sigma(0)} \gamma_{\sigma(n)}
 & \text{if $\sigma(0) \neq 0$}.
 \end{cases}
\]
\item \label{item:cycles-ii}
Let $\gamma_{n_1} \gamma_{n_2} \cdots \gamma_{n_k} \gamma_{n_1}$ be a $k$-cycle with $k\ge 2$. Then
\[
\sigma \gamma_{n_1} \gamma_{n_2} \cdots \gamma_{n_k} \gamma_{n_1} \sigma^{-1}
= \begin{cases}
\gamma_{\sigma(n_1)}^{}
  \gamma_{\sigma(n_2)}^{}
   \cdots 
  \gamma_{\sigma(n_k)}^{}
  \gamma_{\sigma(n_1)}^{} &  
  \text{if $0 \notin \set{\sigma(n_i)}{i=1,\ldots,k}$}, \\  
\gamma_{\sigma(n_i)}^{}
  \gamma_{\sigma(n_{i+1})}^{}
   \cdots 
  \gamma_{\sigma(n_{k})}^{}
  \gamma_{\sigma(n_1)}^{} \cdots \gamma_{\sigma(n_{i-1})}^{} \gamma_{\sigma(n_i)}^{} &
  \text{if $\sigma(n_i)=0$}. 
\end{cases}
\] 
\end{enumerate}
\end{Lemma}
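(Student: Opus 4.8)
The statement is a purely combinatorial identity about how conjugation in $\Sset_\infty$ acts on cycles written in star-generator form, so the plan is to reduce everything to the elementary fact that conjugation of a cycle by $\sigma$ relabels its entries: $\sigma(n_1,n_2,\ldots,n_k)\sigma^{-1}=(\sigma(n_1),\sigma(n_2),\ldots,\sigma(n_k))$. Given that, I would simply rewrite each side using Lemma \ref{lem:cycle-1} and check the placement of the distinguished point $0$.

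First I would dispose of part \eqref{item:cycles-i}, since part \eqref{item:cycles-ii} contains it as the case $k=2$ (up to the trivial observation that a $2$-cycle $(n,m)$ can be written $\gamma_n\gamma_m\gamma_n$ with either entry playing the role of $n_1$, provided one of them is handled as the ``$0$'' case when $0\in\{n,m\}$). Concretely, $\gamma_n=(0,n)$, so $\sigma\gamma_n\sigma^{-1}=(\sigma(0),\sigma(n))$. If $\sigma(n)\neq 0$ then by Lemma \ref{lem:cycle-1} applied with first entry $\sigma(0)$ (legitimate whether or not $\sigma(0)=0$, because the lemma only requires the first entry to be $0$ when the cycle moves $0$, and here if the cycle moves $0$ then $\sigma(0)\neq 0$ forces... actually one must be slightly careful) we get $(\sigma(0),\sigma(n))=\gamma_{\sigma(0)}\gamma_{\sigma(n)}\gamma_{\sigma(0)}$; symmetrically if $\sigma(0)\neq 0$ we may take $\sigma(n)$ as first entry and get $\gamma_{\sigma(n)}\gamma_{\sigma(0)}\gamma_{\sigma(n)}$. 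Since $\sigma$ is a bijection, at least one of $\sigma(0)\neq 0$, $\sigma(n)\neq 0$ holds (indeed both, unless one of $0,n$ is the other's image), so the two cases cover all possibilities and agree on the overlap by the relation $\gamma_a\gamma_b\gamma_a=\gamma_b\gamma_a\gamma_b$.

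For part \eqref{item:cycles-ii}, let $c=(n_1,n_2,\ldots,n_k)$ be the underlying $k$-cycle, normalized so that $n_1=0$ if $c$ moves $0$, so that by Lemma \ref{lem:cycle-1} we have $c=\gamma_{n_1}\gamma_{n_2}\cdots\gamma_{n_k}\gamma_{n_1}$. Then $\sigma c\sigma^{-1}=(\sigma(n_1),\sigma(n_2),\ldots,\sigma(n_k))$, a $k$-cycle on the relabelled entries. Now I split into two cases according to whether $0$ occurs among the $\sigma(n_i)$. If $0\notin\{\sigma(n_i)\}$, then this cycle does not move $0$, so Lemma \ref{lem:cycle-1} applies directly with first entry $\sigma(n_1)$ and gives $\gamma_{\sigma(n_1)}\gamma_{\sigma(n_2)}\cdots\gamma_{\sigma(n_k)}\gamma_{\sigma(n_1)}$, the first line of the claimed formula. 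If instead $\sigma(n_i)=0$ for some (necessarily unique) index $i$, then the cycle moves $0$ and Lemma \ref{lem:cycle-1} requires the first entry to be $0$; cyclically rotating $(\sigma(n_1),\ldots,\sigma(n_k))$ to begin at position $i$ gives $(\sigma(n_i),\sigma(n_{i+1}),\ldots,\sigma(n_k),\sigma(n_1),\ldots,\sigma(n_{i-1}))$, which by Lemma \ref{lem:cycle-1} (with first entry $\sigma(n_i)=0$) equals $\gamma_{\sigma(n_i)}\gamma_{\sigma(n_{i+1})}\cdots\gamma_{\sigma(n_k)}\gamma_{\sigma(n_1)}\cdots\gamma_{\sigma(n_{i-1})}\gamma_{\sigma(n_i)}$, which is exactly the second line.

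\textbf{Main obstacle.} There is no deep obstacle; the only thing demanding care is the bookkeeping about the distinguished point $0$ and the normalization convention in Lemma \ref{lem:cycle-1}. Specifically one must keep straight that on the left-hand side the cycle is written with $n_1=0$ (if $0$ is moved), while after conjugation the role of ``the entry equal to $0$'' passes to whichever $\sigma(n_i)$ equals $0$ — which need not be $\sigma(n_1)$ — and hence the word must be cyclically reindexed before Lemma \ref{lem:cycle-1} can be reapplied. Making the case distinction on the output side (whether $0\in\{\sigma(n_i)\}$) rather than the input side, and invoking the relation $\gamma_a\gamma_b\gamma_a=\gamma_b\gamma_a\gamma_b$ to reconcile the overlapping descriptions in part \eqref{item:cycles-i}, is what makes the two displayed formulas come out cleanly.
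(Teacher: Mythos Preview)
Your argument is correct and is exactly the paper's approach: the paper's proof simply notes that conjugation sends the $2$-cycle $(0,n)$ to $(\sigma(0),\sigma(n))$ and then invokes Lemma~\ref{lem:cycle-1}, with part~\eqref{item:cycles-ii} ``verified similarly.'' You have spelled out the same reduction with the case analysis on where $0$ lands, which is precisely the bookkeeping the paper leaves implicit.
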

\begin{proof}
(\ref{item:cycles-i})
The 2-cycle $(0,n)$ is mapped to the 2-cycle $(\sigma(0), \sigma(n))$. Now use
Lemma \ref{lem:cycle-1}. The equations in (\ref{item:cycles-ii}) are verified similarly.
\end{proof}
\subsection*{Shifts on the infinite symmetric group}
We introduce several closely related shifts on $\Sset_\infty$ which will be useful for the investigation of fixed points and tail algebras in Sections \ref{section:multiplicative} and \ref{section:limit-cycles}. 
\begin{Definition}\normalfont \label{def:shifts} 
The \emph{shift} $\sh$ is given by the endomorphism on $\Sset_\infty$ 
defined by
\[
\sh(\sigma_n) = \sigma_{n+1}
\] 
for all $n \in \Nset$. More generally, for $m \in \Nset_0$ fixed, the \emph{$m$-shift} (or \emph{partial shift)} $\sh_m$ on $\Sset_\infty$  is given by the endomorphism 
\[
\sh_m(\tau) := \sigma_m \sigma_{m-1} \cdots 
\sigma_1 \sigma_0 \sh(\tau) \sigma_0 \sigma_1 \cdots \sigma_{m-1}\sigma_m.
\]
\end{Definition}
Clearly we have $\sh_0 = \sh$. Note also that $\sh_m$ is obtained from $\sh$ by conjugation with Coxeter generators, since $\sigma_i = \sigma_{i}^{-1}$. 
\begin{Lemma}\label{lem:shift-1}
The endomorphisms $\sh_m$ on $\Sset_\infty$ are injective for all 
$m \in \Nset_0$. Moreover there exists, for every $\tau \in \Sset_\infty$, 
some $n \in \Nset$ such that 
\begin{eqnarray}\label{eq:shift-1}
\sh_m(\tau) = (\sigma_{m+1}^{}\sigma_{m+2}^{} \cdots 
\sigma_{n-1}^{}\sigma_n^{})\, \tau \,(\sigma_n^{} \sigma_{n-1}^{} 
\cdots \sigma_{m+2}^{} \sigma_{m+1}^{}).
\end{eqnarray}
\end{Lemma}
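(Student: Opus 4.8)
The plan is to realise both $\sh$ and $\sh_m$ as \emph{inner} automorphisms of $\Sset_\infty$ once they are restricted to a finite $\Sset_n$, and then to read off the claimed formula by a short word calculation. Since $\sigma_0$ is the identity, the definition of $\sh_m$ simplifies to $\sh_m(\tau)=c_m\,\sh(\tau)\,c_m^{-1}$ with $c_m:=\sigma_m\sigma_{m-1}\cdots\sigma_1$, where $c_m^{-1}=\sigma_1\cdots\sigma_m$ because the Coxeter generators are involutions. So everything reduces to understanding $\sh$ itself.

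First I would fix $n\in\Nset$ and set $u_n:=\sigma_1\sigma_2\cdots\sigma_n$. A direct computation with the convention $\sigma\tau(k)=\sigma(\tau(k))$ identifies $u_n$, as a permutation of $\Nset_0$, with the $(n+1)$-cycle $(0,1,2,\ldots,n)$; in particular $u_n$ sends $k\mapsto k+1$ on $\{0,1,\ldots,n-1\}$. Conjugating the transposition $\sigma_j=(j-1,j)$ by this cycle therefore gives $u_n\sigma_ju_n^{-1}=(u_n(j-1),u_n(j))=(j,j+1)=\sigma_{j+1}$ for every $1\le j\le n-1$. Now $\sh$ and $\Ad(u_n)$ are group homomorphisms agreeing on the generators $\sigma_1,\ldots,\sigma_{n-1}$ of $\Sset_n$, hence they agree on all of $\Sset_n$: that is, $\sh(\tau)=u_n\,\tau\,u_n^{-1}$ for every $\tau\in\Sset_n$.

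Next, given $\tau\in\Sset_\infty$ I would choose $n\ge m+1$ with $\tau\in\Sset_n$ (possible since $\Sset_\infty=\bigcup_N\Sset_N$) and compose the two conjugations:
\[
\sh_m(\tau)=c_m\,\sh(\tau)\,c_m^{-1}=(c_mu_n)\,\tau\,(c_mu_n)^{-1}.
\]
It then remains to simplify $c_mu_n=(\sigma_m\sigma_{m-1}\cdots\sigma_1)(\sigma_1\sigma_2\cdots\sigma_n)$: repeated use of $\sigma_i^2=\sigma_0$ cancels the consecutive pairs $\sigma_1\sigma_1,\ \sigma_2\sigma_2,\ \ldots,\ \sigma_m\sigma_m$ and telescopes the word down to $\sigma_{m+1}\sigma_{m+2}\cdots\sigma_n$. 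This is exactly the word occurring in \eqref{eq:shift-1}, and its inverse is $\sigma_n\sigma_{n-1}\cdots\sigma_{m+1}$, so the formula follows. Injectivity of $\sh_m$ is then immediate: if $\sh_m(\tau)=\sh_m(\tau')$, pick a single $n$ valid for both $\tau$ and $\tau'$ and cancel the conjugating word $\sigma_{m+1}\cdots\sigma_n$ on both sides.

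I do not expect any genuine obstacle; the proof is entirely elementary manipulation of the Coxeter relations. The only point requiring a little care is the bookkeeping: choosing one $n$ that is simultaneously $\ge m+1$ and large enough to contain the permutation(s) in question, and keeping the abstract-endomorphism and the concrete-permutation viewpoints aligned when computing the cycle structure of $u_n$ and the conjugation $u_n\sigma_ju_n^{-1}$.
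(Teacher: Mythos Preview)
Your proof is correct and follows essentially the same route as the paper: both show that $\sh|_{\Sset_n}$ is conjugation by $u_n=\sigma_1\cdots\sigma_n$, then telescope $c_m u_n$ down to $\sigma_{m+1}\cdots\sigma_n$. The only cosmetic differences are that the paper verifies $u_n\sigma_j u_n^{-1}=\sigma_{j+1}$ via the abstract braid relations \eqref{eq:B1}, \eqref{eq:B2}, \eqref{eq:sym} rather than via the cycle realization $u_n=(0,1,\ldots,n)$, and that the paper proves injectivity separately (from the fact that $\sh$ preserves cycle lengths and $\sh_m$ differs from $\sh$ by an inner automorphism) rather than reading it off from the conjugation formula as you do.
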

\begin{proof}
Note that a non-trivial permutation has a non-trivial cycle decomposition.
Since the endomorphism $\sh$ preserves the length of a $k$-cycle, the injectivity of $\sh$ is immediate. The endomorphism $\sh_m$ is the composition of an injective endomorphism and automorphisms, and thus 
injective. We observe that, using the braid relations \eqref{eq:B1} and  
\eqref{eq:B2} as well as \eqref{eq:sym},
\begin{eqnarray*}
\sh(\sigma_{i}^{})
= \sigma_{i+1}^{}  
= \big(\sigma_1^{} \sigma_2^{} 
\cdots \sigma_{i-1}^{}\sigma_i^{} \sigma_{i+1}^{}\big) 
\sigma_i^{} \big(\sigma_{i+1}^{} \sigma_i^{} \sigma_{i-1}^{} \cdots \sigma_{2}^{} 
\sigma_1^{}\big).   
\end{eqnarray*}
Now let $\tau \in \Sset_\infty$ be given. Then there exists some 
$n \in \Nset$ such that $\tau \in \Sset_n$ and 
\[
\sh(\tau) = (\sigma_1^{} \sigma_2^{} 
 \cdots \sigma_{n-1}^{}\sigma_n^{}) \tau (\sigma_n^{} \sigma_{n-1}^{} 
 \cdots \sigma_2^{} \sigma_1^{}).
\] 
This proves \eqref{eq:shift-1}, since $\sh_m(\tau) = 
\sigma_m^{} \cdots \sigma_1^{} \sh(\tau) \sigma_1^{} \cdots \sigma_m^{}$. 
\end{proof}
By its very definition, $\sh$ is the symbolic shift on the Coxeter generators $\sigma_i$. But as we will see next for $m \ge 1$, the shifts $\sh_m$ are certain symbolic shifts on the star generators $\gamma_i$. 
\begin{Lemma}\label{lem:shift-2} 
Let $m \in \Nset$. Then it holds
\begin{eqnarray*}
\sh_m(\gamma_i) =
\begin{cases}
\gamma_i & \text{ if $i < m$;}\\
\gamma_{i+1}  & \text{ if $i \ge m$.}
\end{cases}
\end{eqnarray*}
In particular, for all $n \in \Nset$, 
\[
\sh_1(\gamma_n)= \gamma_{n+1}.
\]
\end{Lemma}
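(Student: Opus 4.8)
The plan is to compute $\sh_m(\gamma_i)$ directly from the definitions by translating everything into the Coxeter generators $\sigma_j$ and then invoking the known behaviour of $\sh$ together with the braid relations. First I would recall from \eqref{eq:gamma2sigma} that $\gamma_i = \sigma_1 \sigma_2 \cdots \sigma_{i-1} \sigma_i \sigma_{i-1} \cdots \sigma_1$. Applying the plain shift $\sh$ (which sends $\sigma_j \mapsto \sigma_{j+1}$ and is a homomorphism) gives $\sh(\gamma_i) = \sigma_2 \sigma_3 \cdots \sigma_{i+1} \sigma_i \cdots \sigma_2$. Then by Definition \ref{def:shifts}, $\sh_m(\gamma_i) = \sigma_m \sigma_{m-1} \cdots \sigma_1 \sigma_0\,\sh(\gamma_i)\,\sigma_0 \sigma_1 \cdots \sigma_{m-1}\sigma_m$; since $\sigma_0$ is the identity and each $\sigma_j = \sigma_j^{-1}$, this is conjugation of $\sh(\gamma_i)$ by the element $w_m := \sigma_m \sigma_{m-1}\cdots \sigma_1$. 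So the task reduces to evaluating $w_m\,\sh(\gamma_i)\,w_m^{-1}$ as a permutation.

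The cleanest route is to identify all the pieces as concrete permutations rather than manipulating words. As established after Proposition \ref{prop:star-presentation}, $\gamma_i = (0,i)$; applying $\sh$, which on the level of permutations shifts each moved point $k \ge 1$ to $k+1$ (equivalently, is induced by the embedding $n \mapsto n+1$ of $\Nset_0 \setminus\{0\}$ — more precisely $\sh(\sigma_n)=\sigma_{n+1}$ means a transposition $(k-1,k)$ goes to $(k,k+1)$), one sees $\sh(\gamma_i) = \sh\big((0,i)\big)$. Writing $(0,i) = \sigma_1\cdots\sigma_{i-1}\sigma_i\sigma_{i-1}\cdots\sigma_1$ and shifting indices up by one yields the transposition $(1,i+1)$. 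Next, $w_m = \sigma_m\sigma_{m-1}\cdots\sigma_1 = (m-1,m)(m-2,m-1)\cdots(0,1)$, which as a permutation is the cycle sending $0 \mapsto 1 \mapsto 2 \mapsto \cdots \mapsto m \mapsto 0$ — i.e.\ the $(m{+}1)$-cycle $(0,1,2,\ldots,m)$ (one checks the composition order using $\sigma\tau(k)=\sigma(\tau(k))$). Then $\sh_m(\gamma_i) = w_m\,(1,i+1)\,w_m^{-1} = \big(w_m(1),\, w_m(i+1)\big)$. Since $w_m(1) = 2$ if $m \ge 2$, but we must be careful: $w_m$ sends $1 \mapsto 2$ only when $2 \le m$; in general $w_m(1) = 2$ for $m\ge 1$ is false for $m=1$ where $w_1=(0,1)$ sends $1\mapsto 0$. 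I will treat the two regimes $i<m$ and $i\ge m$ separately.

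Concretely: $w_m = (0,1,2,\ldots,m)$ acts as $k \mapsto k+1$ for $0 \le k \le m-1$, $m \mapsto 0$, and fixes all $k > m$. Hence $w_m(1) = 2$ provided $m \ge 2$, and $w_m(1)=0$ if $m=1$. And $w_m(i+1)$: if $i+1 \le m-1$, i.e.\ $i < m-1$, it equals $i+2$; if $i+1 = m$, i.e.\ $i = m-1$, we get... hmm, here the boundary cases proliferate, so let me instead just compute $\sh_m(\gamma_i) = \big(w_m(1), w_m(i+1)\big)$ and compare with the claim $\gamma_i = (0,i)$ for $i<m$ and $\gamma_{i+1}=(0,i+1)$ for $i \ge m$. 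When $i \ge m$: then $i+1 > m$, so $w_m(i+1) = i+1$; and $w_m(1) = 2$ if $m\ge 2$, giving $(2,i+1)$, which is \emph{not} $(0,i+1)$ — so this naive permutation computation must have an index slip. The resolution, and the step I expect to be the main obstacle, is pinning down precisely how $\sh$ acts on permutations: the identity $\sh(\sigma_n)=\sigma_{n+1}$ does \emph{not} say $\sh$ is the relabelling $k\mapsto k+1$ of $\Nset_0$, because that relabelling is not even an endomorphism fixing $\sigma_0$ in the naive way — rather $\sh$ is the endomorphism determined on generators, and its effect on a general word is given by Lemma \ref{lem:shift-1}, $\sh(\tau) = (\sigma_1\cdots\sigma_n)\,\tau\,(\sigma_n\cdots\sigma_1)$ for $\tau\in\Sset_n$. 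So the correct computation is: for $\tau = \gamma_i = (0,i) \in \Sset_{i+1}$, $\sh(\gamma_i) = (\sigma_1\cdots\sigma_{i+1})(0,i)(\sigma_{i+1}\cdots\sigma_1)$, and one must conjugate $(0,i)$ by the $(i{+}2)$-cycle $v := \sigma_1\cdots\sigma_{i+1} = (0,1,\ldots,i+1)$, getting $\sh(\gamma_i) = (v(0),v(i)) = (1,i+1)$ — consistent with before. Then $\sh_m(\gamma_i) = w_m(1,i+1)w_m^{-1}$. The correct reading of $w_m$ as a permutation: $w_m=\sigma_m\cdots\sigma_1$; with $\sigma\tau(k)=\sigma(\tau(k))$, applying to $0$: $\sigma_1(0)=1$, then $\sigma_2(1)=1,\ldots$, so $w_m(0)=1$; applying to $1$: $\sigma_1(1)=0$, $\sigma_2(0)=0,\ldots$, so $w_m(1)=0$; applying to $j$ with $2\le j\le m$: $\sigma_1(j)=j,\ldots,\sigma_{j-1}(j)=j, \sigma_j(j)=j-1,\sigma_{j+1}(j-1)=j-1,\ldots$, so $w_m(j)=j-1$; and $w_m$ fixes $j>m$. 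Therefore $w_m$ is the $(m{+}1)$-cycle $(0,1,m,m-1,\ldots,2)$, i.e.\ $w_m(1)=0$, $w_m(0)=1$, $w_m(j)=j-1$ for $2\le j\le m$, $w_m$ fixes $j>m$. Now: if $i \ge m$, then $i+1 > m$ so $w_m(i+1)=i+1$, and $w_m(1)=0$, hence $\sh_m(\gamma_i) = (0,i+1) = \gamma_{i+1}$. If $i < m$, i.e.\ $i \le m-1$, then $1 \le i \le m-1$, so if $i=1$: $i+1=2$, $w_m(2)=1$, $w_m(1)=0$, giving $(0,1)=\gamma_1$; if $2 \le i \le m-1$: $i+1$ satisfies $3 \le i+1 \le m$, so $w_m(i+1)=i$, and $w_m(1)=0$, giving $(0,i)=\gamma_i$. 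In all cases we obtain the asserted formula, and the special case $m=1$ gives $\sh_1(\gamma_n)=\gamma_{n+1}$ for all $n\in\Nset$. The main obstacle throughout is simply bookkeeping: correctly converting the word $w_m$ into cycle notation under the stated convention $\sigma\tau(k)=\sigma(\tau(k))$, and not confusing the endomorphism $\sh$ with a literal index relabelling.
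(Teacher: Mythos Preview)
Your approach is correct and reaches the right conclusion, though there is a small slip worth flagging: you claim $w_m(0) = 1$ from ``$\sigma_2(1)=1$'', but $\sigma_2 = (1,2)$ sends $1 \mapsto 2$, so in fact $w_m(0) = m$ (and consequently your cycle notation $(0,1,m,m-1,\ldots,2)$ for $w_m$ is inconsistent with your own correct evaluations $w_m(1)=0$ and $w_m(j)=j-1$ for $2\le j\le m$). Fortunately $w_m(0)$ never enters the final computation --- you only need $w_m(1)$ and $w_m(i+1)$ for $i \ge 1$, both of which you have right --- so the conclusion stands.

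Your route differs from the paper's. The paper first determines the action of $\sh_m$ on Coxeter generators (obtaining $\sh_m(\sigma_i)$ equal to $\sigma_i$, $\sigma_i\sigma_{i+1}\sigma_i$, or $\sigma_{i+1}$ according as $i<m$, $i=m$, or $i>m$, via \eqref{eq:shift-1} and \eqref{eq:B2}), and then applies this termwise to the word $\gamma_i = \sigma_1 \cdots \sigma_i \cdots \sigma_1$; the case $i > m$ then needs a short braid-relation simplification left to the reader. You instead pass immediately to concrete permutations, identifying $\sh(\gamma_i) = (1,i+1)$ and the conjugator $w_m$ explicitly, and then use the elementary formula $\tau(a,b)\tau^{-1} = (\tau(a),\tau(b))$. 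Your method is arguably more direct once the permutations are pinned down and bypasses any word manipulations; the paper's method stays at the level of generators and makes the ``partial shift'' structure of $\sh_m$ on the $\sigma_i$ visible as an intermediate result, which is of some independent interest. Both arguments are elementary and of comparable length.
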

\begin{proof}
We conclude from \eqref{eq:shift-1} and \eqref{eq:B2} that the action of $\sh_m$ on the $\sigma_i$'s is 
\begin{eqnarray*}
\sh_m(\sigma_i) =
\begin{cases}
\sigma_i & \text{ if $i < m$;}\\
\sigma_i \sigma_{i+1} \sigma_i  & \text{ if $i = m$;}\\
\sigma_{i+1} &  \text{ if $i > m$.}
\end{cases}
\end{eqnarray*} 
Use these equations to identify the action of $\sh_m$ on $\gamma_i = \sigma_1 \sigma_2 \cdots \sigma_{i-1}\sigma_i \sigma_{i-1} \cdots \sigma_2 \sigma_1$ as claimed.     
\end{proof}
\subsection*{Characters of groups and associated noncommutative probability spaces} 
\label{subsection:characters}
We will make heavy use of the close relationship between characters on symmetric groups and traces on von Neumann algebras generated by unitary representations of symmetric groups. Since this well known relationship does not rely on the specifics of symmetric groups we provide it in greater generality than needed for the purposes of this paper.

Let $G$ be a group. A positive definite function $\chi\colon G \to \Cset$ is called a \emph{character} if $\chi$ is constant on conjugacy classes of $G$ and  normalized at the identity $e$ of $G$. 
Given the pair $(G, \chi)$, the Kolmogorov decomposition (see \cite{EvLe77a} for example) of the positive definite kernel
\[
G \times G \ni (g,h) \mapsto \chi(g^{-1}h)
\]
provides us with a Hilbert space $\cH$, a unitary representation $\pi$ of $G$ and a vector $\xi \in \cH$ such that 
\[
\chi(g^{-1}h) = \langle \pi(g) \xi, \pi(h)\xi \rangle.   
\]
The Kolmogorov decomposition is said to be \emph{minimal} if $\cH = \overline{\linh\set{\pi(g)\xi}{g \in G}}$. Note that a Kolmogorov decomposition can always be turned into a minimal one.   
  
\begin{Lemma}\label{lem:kol}
Suppose $\chi$ is a character of the countable group $G$ and let $(\cH, \pi, \xi)$ be a minimal Kolmogorov decomposition of the pair $(G, \chi)$.
Then the von Neumann algebra $\cA := \vN(\pi(g) \mid g\in G)$ has separable predual and $\trace := \langle \xi, \bullet \,\xi \rangle$ is a tracial faithful normal state on $\cA$.
\end{Lemma}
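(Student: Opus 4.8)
The plan is to extract the three assertions --- separability of the predual, normality/traciality of $\trace$, and faithfulness --- and treat each with a short standard argument.

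\emph{Separability.} Since $G$ is countable, the set $\set{\pi(g)\xi}{g\in G}$ is countable, hence by minimality $\cH$ is separable. A von Neumann algebra acting on a separable Hilbert space has separable predual (the predual embeds in the trace-class operators on $\cH$, which form a separable Banach space). So $\cA$ has separable predual.

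\emph{The state $\trace$ is a trace.} I would first note that $\trace = \langle\xi,\bullet\,\xi\rangle$ is a vector state, hence automatically normal and positive with $\trace(\1)=\|\xi\|^2 = \chi(e)=1$. For traciality, the key computation is on the generators: for $g,h\in G$,
\[
\trace\big(\pi(g)\pi(h)\big) = \langle\xi,\pi(gh)\xi\rangle = \chi(gh) = \chi\big((gh)\big) = \chi(hg) = \trace\big(\pi(h)\pi(g)\big),
\]
where the middle equality uses that $\chi$ is constant on conjugacy classes, so $\chi(gh)=\chi\big(g(hg)g^{-1}\big)=\chi(hg)$. Thus $\trace(ab)=\trace(ba)$ on the $*$-algebra $\linh\set{\pi(g)}{g\in G}$, which by minimality (and Kaplansky density, plus normality of $\trace$ and separate $\sot$-continuity of multiplication on bounded sets) extends to all of $\cA$. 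I would phrase the extension argument as: both $a\mapsto\trace(ab)$ and $a\mapsto\trace(ba)$ are normal functionals agreeing on the $\sot$-dense subalgebra, for each fixed $b$ in that subalgebra; then swap roles and repeat. Alternatively, since a normal trace on the generating algebra that is $\sot$-continuous in the appropriate sense is well-behaved, one appeals to the standard fact that a faithful (or merely normal) tracial state on a weakly dense unital $*$-subalgebra extends uniquely.

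\emph{Faithfulness.} Here the cyclicity of $\xi$ (from minimality) is the crucial ingredient. If $a\in\cA$, $a\ge0$, and $\trace(a)=\langle\xi,a\xi\rangle=0$, then $\|a^{1/2}\xi\|=0$, so $a^{1/2}\xi=0$. For any $b\in\cA$, using that $\cA$ is generated by the $\pi(g)$ and that $\trace$ is a trace: $\|a^{1/2}\pi(g)\xi\|^2 = \langle\xi,\pi(g)^*a\pi(g)\xi\rangle = \trace\big(\pi(g)^*a\pi(g)\big) = \trace(a) = 0$, so $a^{1/2}$ annihilates the dense set $\set{\pi(g)\xi}{g\in G}$, hence $a^{1/2}=0$, hence $a=0$. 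This shows $\trace$ is faithful on $\cA$.

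\emph{Main obstacle.} The only genuinely non-automatic point is the passage from traciality on the generating $*$-algebra to traciality on the weak closure $\cA$; I expect this to be the step requiring the most care, since one must invoke normality of $\trace$ together with a density argument (Kaplansky) to control unbounded nets. Everything else --- separability, normality, positivity, and (given cyclicity) faithfulness --- is routine. I would organize the write-up so that faithfulness is proved \emph{after} traciality, since the computation $\|a^{1/2}\pi(g)\xi\|^2=\trace(a)$ uses the trace property.
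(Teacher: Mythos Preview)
Your proposal is correct and follows essentially the same approach as the paper: vector state for normality, the character property $\chi(gh)=\chi(hg)$ for traciality on generators, and cyclicity of $\xi$ combined with the trace property for faithfulness. The paper's faithfulness argument is phrased via the inequality $\trace(x^*a^*ax)=\trace(axx^*a^*)\le\|x\|^2\trace(a^*a)$ to get $ax\xi=0$ for all $x\in\cA$, which is the same idea as your $\|a^{1/2}\pi(g)\xi\|^2=\trace(a)$ computation; and the paper simply asserts the extension of traciality from generators to $\cA$ as ``immediate'' without the Kaplansky/normality discussion you flag --- so your write-up is in fact more careful on that point than the original.
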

\begin{Definition}\normalfont
The pair $(\cA,\trace)$ as constructed in Lemma \ref{lem:kol} is called the \emph{(noncommutative) probability space associated to $(G, \chi)$}.
\end{Definition}
\begin{proof}
Since $G$ is countable, $\cA$ has a separable predual. 
Clearly, $\trace$ is a unital normal state on $\cA$. The traciality of this state is immediate from $\chi(ghg^{-1}) = \chi(h)$ for all $g, h \in G$.   To prove the faithfulness of $\trace$, we use that 
\[
\trace(x^*a^* a x) = \trace(axx^*a^*) \le \|x\|^2 \trace(aa^*) =  \|x\|^2 \trace(a^*a)
\]
for all $a,x \in \cA$. Thus $\trace(a^*a)=0$ implies $a x\xi = 0$ for all $x \in \cA$, and further $a=0$ since $\xi$ is a cyclic vector for $\cA$. 
\end{proof}
Finally we will make use of the following well known fact.
\begin{Proposition}\label{prop:extremal-char}
Let $(\cA,\trace)$ be the probability space associated to $(G, \chi)$. Then $\cA$ is a factor if and only if $\chi$ is an extremal character.
\end{Proposition}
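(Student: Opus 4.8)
The plan is to exploit the standard dictionary between extremal points of the convex set of traces on a von Neumann algebra and factoriality, via the center $Z(\cA)$. First I would recall that the set $T(\cA)$ of normal tracial states on $\cA$ is convex and weak-$*$ compact, and that the assignment $\chi \mapsto \trace$ from Lemma~\ref{lem:kol} is affine: if $\chi = \lambda \chi_1 + (1-\lambda)\chi_2$ is a convex decomposition of characters (with $0 < \lambda < 1$), then each $\chi_j$ is again a positive definite class function normalized at $e$, hence a character, and one checks that the GNS-type construction behaves affinely, so that $\trace = \lambda \trace_1 + (1-\lambda)\trace_2$ on $\cA$ after identifying the algebras generated by the respective representations (using the minimality in Lemma~\ref{lem:kol} and that $\chi$ dominates each $\chi_j$ up to the scalar $\lambda$, so the GNS spaces of $\chi_j$ are quotients of $\cH$ and the representations factor through $\pi$). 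This reduces the statement to: $\chi$ is extremal in the character simplex if and only if $\trace$ is extremal in $T(\cA)$.

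Next I would prove that $\trace$ is extremal in $T(\cA)$ if and only if $\cA$ is a factor. For the ``if'' direction: if $\cA$ is a factor, then since $\trace$ is a faithful normal trace, the trace is unique (by uniqueness of the trace on a finite factor), so $T(\cA)$ is a singleton and $\trace$ is trivially extremal. For the ``only if'' direction, suppose $\cA$ is not a factor, so $Z := Z(\cA)$ is nontrivial; pick a central projection $p$ with $0 < \trace(p) < 1$ (possible since $\trace$ is faithful and $Z$ is a nontrivial abelian von Neumann algebra, hence contains such a projection after possibly decomposing a non-scalar central element through its spectral projections). Then define $\trace_1(x) = \trace(px)/\trace(p)$ and $\trace_2(x) = \trace((1-p)x)/\trace(1-p)$; because $p$ is central, each $\trace_j$ is again a tracial normal state on $\cA$, they are distinct (they disagree on $p$), and $\trace = \trace(p)\trace_1 + \trace(1-p)\trace_2$, so $\trace$ is not extremal. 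Combining the two equivalences gives the proposition.

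The step I expect to require the most care is the affine correspondence $\chi \mapsto \trace$ at the level of the \emph{fixed} von Neumann algebra $\cA$ — that is, making precise that a convex decomposition of $\chi$ yields a genuine convex decomposition of the \emph{same} trace on the \emph{same} algebra $\cA = \vN(\pi(g))$, and conversely. The subtlety is that the $\chi_j$ a priori live with their own representations $\pi_j$ on their own spaces $\cH_j$; one must use that $\lambda \chi_1 \le \chi$ as positive definite kernels to obtain, by a standard Radon–Nikodym / dilation argument, a positive element $t_1$ in the commutant $\pi(G)'$ with $\chi_1(g^{-1}h) = \langle \pi(g)\xi, t_1 \pi(h)\xi\rangle$, and then traciality forces $t_1$ to lie in $Z(\cA)$ (since $t_1$ commutes with $\pi(G)$ and the trace property pins it into the center). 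This realizes every convex summand of $\chi$ as coming from a central element of $\cA$, which is exactly what matches the factoriality criterion; the remaining bookkeeping is routine. Everything else is a direct appeal to uniqueness of the trace on a finite factor and to elementary manipulations with central projections.
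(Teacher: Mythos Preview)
Your proposal is correct and follows essentially the same route as the paper: establish that $\cA$ is a factor if and only if $\trace$ is extremal among normal tracial states on $\cA$ (uniqueness of the trace on a finite factor for one direction, a central projection splitting for the other), and then transfer this to extremality of $\chi$. The paper's proof is much terser on the second step --- it simply says ``the restriction of a trace to $\pi(G)$ is a character and hence the asserted equivalence follows'' --- whereas you correctly flag that the nontrivial half is showing that a convex decomposition of $\chi$ yields a convex decomposition of $\trace$ on the \emph{same} algebra $\cA$, and you supply the standard Radon--Nikodym argument (a positive $t_1\in\pi(G)'$ with $\lambda\chi_1(g)=\langle\xi,t_1\pi(g)\xi\rangle$, the class-function property of $\chi_1$ making the associated functional tracial on $\cA$). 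One small remark: you do not actually need $t_1\in Z(\cA)$ for the argument to go through --- it suffices that $x\mapsto\langle\xi,t_1 x\xi\rangle$ is a normal tracial state on $\cA$, after which uniqueness of the trace in the factor case finishes --- though your stronger claim is also true via the Radon--Nikodym theorem for finite von Neumann algebras.
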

\begin{proof}
The von Neumann algebra $\cA$ is a factor if and only if  $\trace$ cannot be written as a non-trivial convex combination of tracial states. Indeed, if $\cA$ is a factor then there is only one tracial state, see \cite[Theorem 8.2.8]{KaRi2}. Conversely, if $\cA$ is not a factor, then choose a non-trivial central projection $z$ and write $\trace$ as the convex combination of 
the two tracial states $\trace(z \,\bullet\,)/\trace(z)$ and $\trace((\1-z)\, \bullet\,)/\trace(\1-z)$. The restriction of a trace to $\pi(G)$ is a character and hence the asserted equivalence follows. 
\end{proof}

\begin{Remark}\normalfont
The probability space $(\cA,\trace)$ associated to $(G,\chi)$ can also be obtained from the *-probability space $(\Cset G, \chi_\Cset^{})$ via the GNS construction. Here $\chi_\Cset^{}$ is understood to be the complex linear extension of the character $\chi$ to the group *-algebra $\Cset G$. 
This *-probability space is regular in the sense of \cite{Koes10b} and thus the GNS construction provides us with a cyclic and separating vector for the representation.
So the probability space associated to $(G, \chi)$ can be identified with the one obtained by an application of the GNS construction to the pair $(\Cset G, \chi_\Cset^{})$.
\end{Remark}
\section{Exchangeability and conditional independence in noncommutative probability}
\label{section:exchange-indep}
This section is devoted to the basic setting of noncommutative probability in the context of exchangeability. We will make use of results from \cite{Koes10a}, its application to braidability in \cite{GoKo09a} and the Appendix of \cite{GoKo09a} on operator algebraic noncommutative probability theory. We emphasize that all general results from \cite{GoKo09a} apply, since exchangeability implies braidability. 

A \emph{(noncommutative) probability space} $(\cA,\varphi)$ consists of a von Neumann algebra $\cA$ with separable predual and a faithful normal state $\varphi$ on $\cA$.  
A von Neumann subalgebra $\cB$ of $\cA$ is said to be \emph{$\varphi$-conditioned} if the (unique) $\varphi$-preserving conditional expectation $E_{\cB}$ from $\cA$ onto $\cB$ exists. We will mainly work with \emph{tracial probability spaces}, where the state $\varphi$ is a trace. Note that the existence of the $\varphi$-preserving conditional expectation $E_\cB$ is automatic in the tracial setting. For further information on the more general non-tracial setting we refer the reader to \cite[Appendix A]{GoKo09a}.

A \emph{random variable} $\iota$ from $(\cC_0,\varphi_0)$ to $(\cA,\varphi)$ is an injective *-homomorphism  $\iota\colon \cC_0 \to \cA$ such that $\varphi_0 = \varphi \circ \iota$  and $\iota(\cC_0)$ is $\varphi$-conditioned.  A \emph{random sequence} $\scrI$ is an infinite sequence of (identically distributed) random variables $\iota\equiv (\iota_n)_{n\in \Nset_0}$ from $(\cC_0, \varphi_0)$ to $(\cA,\varphi)$. We may assume $\cC_0 = \iota_0(\cC_0) \subset \cA$ and $\varphi_0 = \varphi|_{\cC_0}$ whenever it is convenient. 

Let us come next to distributional symmetries of random sequences. Given the two random sequences $\scrI$ and $\scrtI$ with random variables 
$(\iota_n)_{n\ge 0}$ resp.~$(\tilde{\iota}_n)_{n\ge 0}$ 
from $(\cC_0,\varphi_0)$ to $(\cA,\varphi)$, we write
\[
(\iota_0, \iota_1, \iota_2, \ldots ) \stackrel{\distr}{=} 
(\tilde{\iota}_0, \tilde{\iota}_1, \tilde{\iota}_2, \ldots )
\]
if all their multilinear functionals coincide:
\[
\varphi\big(\iota_{\ii(1)}(a_1) \iota_{\ii(2)}(a_2) 
           \cdots \iota_{\ii(n)}(a_n)\big) 
= \varphi\big(\tilde{\iota}_{\ii(1)}(a_1) \tilde{\iota}_{\ii(2)}(a_2) 
              \cdots  \tilde{\iota}_{\ii(n)}(a_n)\big) 
\]
for all $n$-tuples $\ii\colon \{1, 2, \ldots, n\} \to \Nset_0$, 
$(a_1, \ldots, a_n) \in \cC_0^n$ and $n \in \Nset$. 
\begin{Definition}\normalfont
A random  sequence $\scrI$ is 
\begin{enumerate}
\item \label{item:ds-i}
\emph{exchangeable} if its 
multilinear functionals are invariant under permutations: 
\[
 (\iota_0,\iota_1, \iota_2,  \ldots ) \stackrel{\distr}{=} 
 (\iota_{\pi(0)},\iota_{\pi(1)}, \iota_{\pi(2)},  \ldots ) 
\]
for any finite permutation $\pi \in \Sset_\infty$ of $\Nset_0$;  
\item \label{item:ds-ii}
\emph{spreadable} if its 
multilinear functionals are invariant under the passage to subsequences: 
\[
 (\iota_0,\iota_1, \iota_2,  \ldots ) \stackrel{\distr}{=} 
 (\iota_{n_0},\iota_{n_1}, \iota_{n_2},  \ldots ) 
\]
for any (strictly increasing) subsequence $(n_0, n_1, n_2, \ldots )$ of 
$(0,1,2,\ldots)$;  
\item \label{item:ds-iii}
\emph{stationary} if the multilinear 
functionals are shift-invariant:
\[
 (\iota_0,\iota_1, \iota_2,  \ldots )  \stackrel{\distr}{=} 
 (\iota_{k},\iota_{k+1}, \iota_{k+2},\ldots ) 
\]  
for all $k\in \Nset$. 
\end{enumerate}
\end{Definition}
The following hierarchy of distributional symmetries is clear: \eqref{item:ds-i} $\Rightarrow$  \eqref{item:ds-ii} $\Rightarrow$  \eqref{item:ds-iii}.
\begin{Definition}\normalfont \label{def:convention}
\begin{enumerate}
\item\label{item:convention-i}
Fixing some $a \in \cC_0$, a random sequence $\scrI$ produces a sequence of operators $(x_n)_{n\ge 0}$ with $x_n= \iota_n(a)$, called a \emph{sequence of operators induced by $\scrI$}. 
A sequence of operators 
$(x_n)_{n\ge 0}$ is said to have property `A' if it is induced by some random sequence $\scrI$ with property `A'. For example, $(x_n)_{n\ge 0}$ is stationary if $\scrI$ is so. 
\item\label{item:convention-ii}
More generally, fixing a subalgebra $\cB$ of $\cC_0$, a random sequence $\scrI$ produces a sequence of subalgebras $(\cB_n)_{n \ge 0}$ with $\cB_n = \iota_n(\cB)$, called a \emph{sequence of subalgebras induced by $\scrI$}. 
A sequence of subalgebras $(\cB_n)_{n \ge 0}$ is said to have property `A' if it is induced by some random sequence $\scrI$ with property `A'.
\end{enumerate}
\end{Definition}
We come to our concept of (noncommutative) conditional independence and factorizability. For a more detailed treatment see \cite[Section 3]{Koes10a}, for example.   
\begin{Definition}\normalfont\label{def:independence}
Given the probability space $(\cA,\varphi)$, let $\cN$ and $(\cC_i)_{i \in I}$ be $\varphi$-conditioned von Neumann subalgebras of $\cA$, where $(I,>)$ is assumed to be an ordered set.   
\begin{enumerate}
\item
The family $(\cC_i)_{i\in I}$ is \emph{full} (resp.~\emph{order}) \emph{$\cN$-independent} if 
\[
E_\cN(xy) = E_\cN(x) E_\cN(y)
\]
for all $x \in \vN(\cN, \cC_j \mid j\in J)$ and $y \in \vN(\cN, \cC_k\mid k\in K)$ whenever $J,K \subset I$ with $J \cap K = \emptyset$ (resp.~$J < K$ or $J > K$). 
\item
The family $(\cC_i)_{i\in I}$ is \emph{full} (resp.~ \emph{order}) \emph{$\cN$-factorizable} if  
\[
E_\cN(xy) = E_\cN(x) E_\cN(y)
\]
for all $x \in \vN(\cC_j \mid j\in J)$ and $y \in \vN(\cC_k\mid k\in K)$ whenever $J,K \subset I$ with $J\cap K = \emptyset$ (resp.~$J < K$ or  $J > K $).
\item 
A family of operators $(x_i)_{i\in I}$ in $\cA$ is \emph{full}/\emph{order} $\cN$-\emph{independent}/\emph{factorizable} if the family $(\vN(x_i))_{i \in I}$ is so. 
\item
A family of random variables $(\iota_i)_{i \in I}\colon (\cC_0,\varphi_0) \to (\cA,\varphi)$ is \emph{full}/\emph{order} $\cN$-\emph{independent}/\emph{factor\-izable} if the family of ranges $\big(\iota_i(\cC_0)\big)_{i \in I}$ is so. 
\end{enumerate}
\end{Definition}
Note that full $\cN$-independence implies order $\cN$-factorizability. The latter one is more easily verified in applications. It is an ingredient in the fixed point characterization of Theorem \ref{thm:fixed-point} and will be applied in Proposition \ref{prop:factor}. 

We will mainly be interested in the nonnegative integers $\Nset_0= \{0,1,2,\ldots\}$ (equipped with the natural order $>$) as index set $I$. If $I$ is the two point set $\{1,2\}$, then it is convenient to address conditional independence in terms of commuting squares. 
\begin{Lemma}\label{lem:commuting-square}
Let $\cN,\cC_1,\cC_2$ be $\varphi$-conditioned von Neumann subalgebras of $\cA$ and put $\cB_i := \vN(\cN,\cC_i)$. Then the following are equivalent:
\begin{enumerate}
\item  \label{item:cs-i}
$(\cC_i)_{i\in \{1,2\}}$ are full $\cN$-independent;
\item  \label{item:cs-ii} 
$E_{\cB_1}(\cB_2)) = \cN$;  
\item \label{item:cs-iii} 
$E_{\cB_1} E_{\cB_2} = E_{\cN}$;
\item  \label{item:cs-iv} 
$E_{\cB_1}E_{\cB_2} = E_{\cB_2}E_{\cB_1}$ and $\cN = \cB_1 \cap \cB_2$; 
\item \label{item:cs-v} 
$E_{\cN}(x y z) = E_{\cN}(x E_{\cN}(y) z)$ for all $x,z\in \cB_1$ and $y \in \cB_2$.
\end{enumerate}
\end{Lemma}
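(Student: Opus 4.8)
The plan is to establish the cycle of equivalences for the commuting square characterization via a natural route: \eqref{item:cs-iii} $\Leftrightarrow$ \eqref{item:cs-i}, then \eqref{item:cs-iii} $\Rightarrow$ \eqref{item:cs-ii} $\Rightarrow$ \eqref{item:cs-i}, then \eqref{item:cs-iii} $\Leftrightarrow$ \eqref{item:cs-iv}, and finally \eqref{item:cs-iii} $\Leftrightarrow$ \eqref{item:cs-v}. Throughout I would work in the tracial setting (or more generally use the $\varphi$-preserving conditional expectations whose existence is assumed by the hypothesis that all subalgebras are $\varphi$-conditioned), so that the $E_{\cB_i}$ are the unique $\varphi$-preserving conditional expectations and in particular are self-adjoint idempotents on $L^2(\cA,\varphi)$.

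\medskip\noindent\emph{\eqref{item:cs-i} $\Leftrightarrow$ \eqref{item:cs-iii}.} Assuming full $\cN$-independence, apply the defining factorization $E_\cN(xy)=E_\cN(x)E_\cN(y)$ with $x\in\cB_1=\vN(\cN,\cC_1)$ and $y\in\cB_2=\vN(\cN,\cC_2)$ (taking $J=\{1\}$, $K=\{2\}$, so $J\cap K=\emptyset$). Then for $y\in\cB_2$ and any $b\in\cB_1$ one computes $\varphi(b\,E_{\cB_1}E_{\cB_2}(y)) = \varphi(b\,E_{\cB_2}(y)) = \varphi(E_{\cB_1}(b\,E_{\cB_2}(y)))$; pushing the trace around and using that $b,E_{\cB_2}(y)$ sit in $\cB_1,\cB_2$ respectively, together with the independence identity, identifies this with $\varphi(b\,E_\cN(y))$, giving $E_{\cB_1}E_{\cB_2}=E_\cN$ on $\cB_2$ and hence (since $E_{\cB_2}$ has range $\cB_2$) on all of $\cA$. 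Conversely, if $E_{\cB_1}E_{\cB_2}=E_\cN$, then for $x\in\cB_1,y\in\cB_2$ we get $E_\cN(xy)=E_{\cB_1}E_{\cB_2}(xy)=E_{\cB_1}(x\,E_{\cB_2}(y))=x\,E_{\cB_2}(y)\cdot(\text{corrected: } E_{\cB_1}(x E_{\cB_2}(y)) = E_\cN(x E_{\cB_2}(y)))$, and iterating the module property yields $E_\cN(x)E_\cN(y)$; the extension from single factors $x\in\cB_1$ to $x\in\vN(\cN,\cC_j\mid j\in J)$ is immediate since $\cB_1$ already contains $\cN$ and a single block, and the general disjoint $J,K$ case reduces to this by relabeling (only two blocks are present).

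\medskip\noindent\emph{\eqref{item:cs-iii} $\Leftrightarrow$ \eqref{item:cs-ii} and \eqref{item:cs-iii} $\Leftrightarrow$ \eqref{item:cs-iv}.} For \eqref{item:cs-ii}: if $E_{\cB_1}E_{\cB_2}=E_\cN$ then $E_{\cB_1}(\cB_2)=E_{\cB_1}E_{\cB_2}(\cA)=E_\cN(\cA)=\cN$; conversely $E_{\cB_1}(\cB_2)=\cN$ together with the module property over $\cN\subset\cB_1\cap\cB_2$ forces $E_{\cB_1}E_{\cB_2}$ to agree with the unique $\varphi$-preserving conditional expectation onto $\cN$, which is the standard ``commuting square implies the composite is the expectation onto the intersection-corner'' argument. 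For \eqref{item:cs-iv}: \eqref{item:cs-iii} gives $E_{\cB_1}E_{\cB_2}=E_\cN=E_\cN^*=(E_{\cB_1}E_{\cB_2})^*=E_{\cB_2}E_{\cB_1}$ using self-adjointness of the expectations on $L^2$, and $\cN\subseteq\cB_1\cap\cB_2$ is clear while $\cB_1\cap\cB_2\subseteq\Ran(E_{\cB_1}E_{\cB_2})=\cN$; conversely, if the $E_{\cB_i}$ commute their product is a self-adjoint idempotent, hence the $\varphi$-preserving conditional expectation onto its range, and commuting expectations have range $\cB_1\cap\cB_2$, which equals $\cN$ by hypothesis.

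\medskip\noindent\emph{\eqref{item:cs-iii} $\Leftrightarrow$ \eqref{item:cs-v}.} This is a direct reformulation: \eqref{item:cs-v} says $E_\cN(x y z)=E_\cN(x\,E_\cN(y)\,z)$ for $x,z\in\cB_1$, $y\in\cB_2$; using that $E_\cN$ is a $\cB_1$-bimodule map when restricted appropriately (more precisely, $\varphi(\,\cdot\,xyz)$ against elements of $\cB_1$), \eqref{item:cs-v} is equivalent to $E_{\cB_1}(y)=E_\cN(y)$ for all $y\in\cB_2$, i.e.\ to $E_{\cB_1}|_{\cB_2}=E_\cN|_{\cB_2}$, which is \eqref{item:cs-ii}/\eqref{item:cs-iii}. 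I would spell this last equivalence out by testing against $x^*$ and $z=\1$ (or general $x,z$) and invoking faithfulness of $\varphi$.

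\medskip The main obstacle is purely bookkeeping rather than conceptual: one must be careful that the statements \eqref{item:cs-i} and \eqref{item:cs-iv} involve the \emph{full} independence condition (all disjoint $J,K$, not merely $J<K$), and that the passage from ``factorization on single blocks'' to ``factorization on $\vN(\cN,\cC_j\mid j\in J)$'' is legitimate; here it is trivial because $I=\{1,2\}$ so the only nontrivial disjoint pair is $(\{1\},\{2\})$ and each $\cB_i$ is generated by $\cN$ together with one $\cC_i$. The one genuine analytic input is the self-adjointness of $\varphi$-preserving conditional expectations on $L^2(\cA,\varphi)$, which is what makes \eqref{item:cs-iii} $\Rightarrow$ \eqref{item:cs-iv} (the symmetry $E_{\cB_1}E_{\cB_2}=E_{\cB_2}E_{\cB_1}$) work; this is standard in the tracial case and is recorded in \cite[Appendix A]{GoKo09a} in the general $\varphi$-conditioned setting.
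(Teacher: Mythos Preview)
Your argument is essentially the standard commuting-square proof and is correct. The paper itself gives no proof: it simply remarks that the proof of \cite[Proposition 4.2.1]{GHJ89a} transfers to the non-tracial setting with obvious modifications, so your sketch is in fact \emph{more} detailed than what the paper provides, and follows the same classical route as the cited reference.

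One small point worth tightening: in your direct argument for \eqref{item:cs-iii} $\Rightarrow$ \eqref{item:cs-i} you write $E_{\cB_1}E_{\cB_2}(xy)=E_{\cB_1}(x\,E_{\cB_2}(y))$ for $x\in\cB_1$, $y\in\cB_2$, but the module property gives $E_{\cB_2}(xy)=E_{\cB_2}(x)\,y$ (since $y\in\cB_2$), not $x\,E_{\cB_2}(y)$. Your parenthetical self-correction acknowledges the slip but does not quite repair it. The cleanest fix is to first deduce \eqref{item:cs-iv} from \eqref{item:cs-iii} via $L^2$-self-adjointness (as you do), so that $E_{\cB_2}E_{\cB_1}=E_\cN$ as well; then for $x\in\cB_1$, $y\in\cB_2$ one computes
\[
E_\cN(xy)=E_{\cB_2}E_{\cB_1}(xy)=E_{\cB_2}\big(x\,E_{\cB_1}(y)\big)=E_{\cB_2}\big(x\,E_\cN(y)\big)=E_{\cB_2}(x)\,E_\cN(y)=E_\cN(x)\,E_\cN(y),
\]
using successively $x\in\cB_1$, $E_{\cB_1}|_{\cB_2}=E_\cN$, $E_\cN(y)\in\cN\subset\cB_2$, and $E_{\cB_2}|_{\cB_1}=E_\cN$. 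This is presumably what you meant by ``iterating the module property''.
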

\begin{proof}
The proof of \cite[Proposition 4.2.1]{GHJ89a} transfers to the non-tracial setting after some obvious modifications.  
\end{proof}   
\begin{Definition}\normalfont \label{def:commuting-square}
If one (and thus all) of the conditions \eqref{item:cs-i} to \eqref{item:cs-v} in Lemma \ref{lem:commuting-square} are satisfied, then the inclusions 
\[
\begin{matrix}
\cB_2 &\subset &\cA \\
\cup  &        & \cup \\
\cN   & \subset  & \cB_1
\end{matrix}
\]
are said to form a \emph{commuting square}. The commuting square is \emph{minimal} if $\vN(\cB_1,\cB_2) = \cA$. 
\end{Definition}
We are ready to formulate the noncommutative extended de Finetti theorem.  
\begin{Theorem}[\cite{Koes10a,GoKo09a}] \label{thm:de-finetti}
Given the random sequence $\scrI$ from $(\cC_0,\varphi_0)$ to $(\cA,\varphi)$ with tail algebra 
\[
\cC^\tail = \bigcap_{n \in\Nset_0} \vN(\iota_k(\cC_0)\mid k \ge n),
\]
consider the following statements: 
\begin{enumerate}
\item[(a)]  $\scrI$ is exchangeable;
\item[(b)]  $\scrI$ is spreadable;
\item[(c)]  $\scrI$ is stationary and full $\cC^{\tail}$-independent.
\end{enumerate}
Then we have
(a) $\Rightarrow$ (b)  $\Rightarrow$ (c) and (c) $\not\Rightarrow$  (b) $\not\Rightarrow$ (a). 
\end{Theorem}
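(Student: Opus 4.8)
The theorem asserts the implications (a) $\Rightarrow$ (b) $\Rightarrow$ (c) together with the two non-implications. The plan is to treat the forward implications first, since these are the substance, and then sketch the counterexamples for the strictness.

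First I would deal with (a) $\Rightarrow$ (b). This is essentially formal: exchangeability gives invariance of all multilinear functionals under \emph{all} finite permutations of $\Nset_0$, while spreadability asks only for invariance under passage to subsequences $(n_0 < n_1 < n_2 < \cdots)$. Given a finite $n$-tuple $\ii\colon\{1,\dots,n\}\to\Nset_0$ and such a subsequence, I would exhibit a single finite permutation $\pi\in\Sset_\infty$ that agrees with $k\mapsto n_k$ on the finite set $\{\ii(1),\dots,\ii(n)\}$ (possible since only finitely many indices are involved and a subsequence is order-preserving), and then the equality of the two multilinear functionals follows directly from the definition of exchangeability. This needs no operator-algebra input beyond the definitions.

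The heart of the matter is (b) $\Rightarrow$ (c), and here I would simply invoke the machinery of \cite{Koes10a,GoKo09a}: this is precisely the noncommutative spreadability/de Finetti package. Concretely, the plan is to recall that spreadability already forces stationarity (take $n_k = k+j$ for fixed shift $j$, which is a legitimate subsequence), and that the deeper content --- full $\cC^\tail$-independence of the sequence, with the tail algebra $\cC^\tail = \bigcap_n \vN(\iota_k(\cC_0)\mid k\ge n)$ serving as the conditioning algebra --- is the main theorem of \cite{Koes10a} (extended to the non-tracial $\varphi$-conditioned setting in \cite[Appendix A]{GoKo09a}). Since the excerpt explicitly permits assuming the de Finetti type results of \cite{Koes10a,GoKo09a}, I would state that the proof of (b) $\Rightarrow$ (c) is exactly \cite[Theorem ...]{Koes10a} and give only a one-line indication of why the tail algebra is the right conditioning algebra (it is the smallest algebra with respect to which the shifted tails stabilize, so any mean-ergodic/martingale-convergence argument for the conditional expectations lands there). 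I would not reproduce the proof.

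Finally, for the strictness assertions (c) $\not\Rightarrow$ (b) and (b) $\not\Rightarrow$ (a), the plan is to point to explicit counterexamples rather than construct them from scratch. For (b) $\not\Rightarrow$ (a) one needs a spreadable but non-exchangeable sequence; the standard source is a suitable noncommutative stationary Markov sequence (e.g. built from a noncommutative Markov shift as in \cite{Gohm04a}, or the examples already catalogued in \cite{Koes10a,GoKo09a}) whose higher joint moments detect the order of indices and hence fail permutation-invariance while still being subsequence-invariant. For (c) $\not\Rightarrow$ (b), one needs a stationary, full $\cC^\tail$-independent sequence that is not spreadable; again this is documented in \cite{Koes10a}, typically via a sequence that is ``independent with identical distribution over the tail'' in the weak factorization sense of Definition \ref{def:independence} but whose individual multilinear functionals still depend on the index pattern in a way that spreadability would forbid. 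I expect the main obstacle in writing this cleanly is not any hard estimate but rather making precise, with the right references, that (c) genuinely captures \emph{less} than (b) --- i.e. pinning down minimal counterexamples --- whereas the forward direction (b) $\Rightarrow$ (c) is, by design of the cited papers, available off the shelf.
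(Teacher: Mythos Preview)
Your proposal is correct and matches the paper's treatment: the paper does not give an independent proof of this theorem but cites it directly from \cite{Koes10a,GoKo09a}, adding only a remark that the non-implications are discussed there and pointing forward to Theorem~\ref{thm:cs-coxeter}\eqref{item:cs-coxeter-vii} for a new class of examples witnessing (c) $\not\Rightarrow$ (b) (the represented Coxeter generators $(u_i)_{i\in\Nset}$). One small caution: your suggested source for (b) $\not\Rightarrow$ (a), a ``noncommutative stationary Markov sequence'', is not quite right as stated, since Markov sequences are typically not spreadable; the actual counterexamples in \cite{GoKo09a,Koes10a} are of a different nature (e.g.\ arising from braid group representations), so you should simply defer to those references as you already do in your hedge.
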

In contrast to the classical de Finetti theorem, the reverse implications are
no longer valid in the generality of our noncommutative setting (see \cite{GoKo09a,Koes10a} for a more detailed discussion). In Theorem \ref{thm:cs-coxeter}\eqref{item:cs-coxeter-vii} we meet a new class of examples illustrating the broken equivalence of  (b) and (c): infinite sequences of represented Coxeter generators $\sigma_1, \sigma_2, \ldots$

Whenever it is convenient we can achieve by restriction of $(\cA,\varphi)$ that the random sequence $\scrI$ is \emph{minimal}, i.e., the ranges of all random variables $\iota_n$ generate $\cA$ as a von Neumann algebra:
\[
\cA = \vN(\iota_n(\cC_0) \mid n \in \Nset_0). 
\]
The following characterization of exchangeability is an extended version of \cite[Theorem 1.9]{GoKo09a}. Let $\Aut{\cA,\varphi}$ denote the $\varphi$-preserving automorphisms of $\cA$.   
\begin{Theorem}\label{thm:exchangeability}
Given the random sequence $\scrI$, consider the following statements:
\begin{enumerate}
\item[(a)] 
$\scrI$ is exchangeable;
\item[(b)] 
there exists a representation 
$\rho \colon \Sset_\infty \to \Aut{\cA,\varphi}$, such that 
\begin{align}
&&&&\iota_n &= \rho(\sigma_n \sigma_{n-1} \cdots \sigma_1)\iota_0 
&&\text{for all $n \ge 1$},&&&&\tag{PR} \label{eq:PR-exchangeable}\\
&&&& \iota_0    & = \rho(\sigma_n) \iota_0&& \text{if $n \ge 2$;}   
&&&&  \tag{L} \label{eq:L-exchangeable}
\end{align}
\item[(c)] 
there exists a representation 
$\rho \colon \Sset_\infty \to \Aut{\cA,\varphi}$, such that 
\begin{align}
&&&&\iota_n &= \rho(\gamma_n)\iota_0 
&&\text{for all $n \ge 1$},&&&&\tag{PR'} \label{eq:PR'-exchangeable}\\
&&&& \iota_0    & = \rho(\gamma_1 \gamma_2 \cdots \gamma_n \gamma_1) \iota_0&& \text{if $n \ge 2$.}   
&&&&  \tag{L'} \label{eq:L'-exchangeable}
\end{align}
\end{enumerate}
Then (a) $\Leftarrow$ (b) $\Leftrightarrow$ (c) and, if $\scrI$ is minimal, then (a) $\Leftrightarrow$ (b) $\Leftrightarrow$ (c).
\end{Theorem}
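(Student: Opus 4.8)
The plan is to reduce the statement to elementary combinatorics in $\Sset_\infty$ together with one standard rigidity fact about distribution-preserving maps. The organising observation is that, for a \emph{fixed} homomorphism $\rho\colon\Sset_\infty\to\Aut{\cA,\varphi}$, both the pair (PR)$+$(L) and the pair (PR')$+$(L') are equivalent to the single \emph{covariance identity}
\[
\rho(\pi)\,\iota_m=\iota_{\pi(m)}\qquad(\pi\in\Sset_\infty,\ m\in\Nset_0).
\]
Once this is shown, (b) $\Leftrightarrow$ (c) follows at once, and what remains is (i) that the covariance identity for some $\varphi$-preserving $\rho$ forces exchangeability (always true), and (ii) that exchangeability produces such a $\rho$ when $\scrI$ is minimal.

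For (i): applying the $\varphi$-preserving automorphism $\rho(\pi)$ inside $\varphi$ to a monomial $\iota_{\ii(1)}(a_1)\cdots\iota_{\ii(n)}(a_n)$ turns it into $\iota_{\pi(\ii(1))}(a_1)\cdots\iota_{\pi(\ii(n))}(a_n)$ without changing its value, which is precisely exchangeability. For the equivalence of (PR)$+$(L) with the covariance identity, the forward direction is just the two elementary facts $(\sigma_n\sigma_{n-1}\cdots\sigma_1)(0)=n$ and $\sigma_n(0)=0$ for $n\ge2$. For the converse, recall from Section~\ref{section:symmetric} that $\Sset_{2,\infty}=\langle\sigma_2,\sigma_3,\dots\rangle$ is exactly the stabiliser of the point $0$ in $\Sset_\infty$ (its generators being the adjacent transpositions of $\{1,2,3,\dots\}$), so (L) gives $\rho(\tau)\iota_0=\iota_0$ for every $\tau$ fixing $0$; for an arbitrary $\pi$ one then factors $\pi=(\sigma_{\pi(0)}\sigma_{\pi(0)-1}\cdots\sigma_1)\,\pi''$ with $\pi''$ fixing $0$ (or $\pi=\pi''$ if $\pi(0)=0$) and reads off $\rho(\pi)\iota_0=\iota_{\pi(0)}$ from (PR), whence $\rho(\pi)\iota_m=\rho(\pi\,\sigma_m\cdots\sigma_1)\iota_0=\iota_{\pi(m)}$ for $m\ge1$. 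The treatment of (PR')$+$(L') is identical with star generators in place of Coxeter generators: one uses $\gamma_n(0)=n$ and invokes Lemma~\ref{lem:cycle-1} to recognise the words $\gamma_1\gamma_2\cdots\gamma_n\gamma_1$ as the cycles $(1,2,\dots,n)$, which fix $0$ and — together with the case $n=2$, i.e.\ the transposition $(1,2)$ — generate the stabiliser $\Sset_{2,\infty}$, so that (L') again amounts to $\rho(\Sset_{2,\infty})\iota_0=\iota_0$. This establishes (a) $\Leftarrow$ (b) $\Leftrightarrow$ (c) in full generality.

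It remains to treat (ii): assume $\scrI$ minimal and exchangeable. For each $\pi\in\Sset_\infty$, exchangeability gives $(\iota_0,\iota_1,\dots)\stackrel{\distr}{=}(\iota_{\pi(0)},\iota_{\pi(1)},\dots)$; since $\pi$ permutes $\Nset_0$ the second sequence also generates $\cA$, so, $\varphi$ being faithful and normal, the standard rigidity statement for distribution-preserving embeddings used in \cite{Koes10a,GoKo09a} yields a unique $\varphi$-preserving normal $*$-automorphism $\rho(\pi)$ of $\cA$ with $\rho(\pi)\iota_n(a)=\iota_{\pi(n)}(a)$ for all $a\in\cC_0$ and $n\in\Nset_0$. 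Uniqueness forces $\rho(\pi\tau)=\rho(\pi)\rho(\tau)$, so $\rho$ is a representation $\Sset_\infty\to\Aut{\cA,\varphi}$ satisfying the covariance identity, hence (PR)$+$(L) and (PR')$+$(L'). Combined with the previous paragraph this gives (a) $\Leftrightarrow$ (b) $\Leftrightarrow$ (c) in the minimal case, completing the proof.

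I expect the only genuinely delicate step to be the construction of $\rho(\pi)$ in (ii), namely upgrading an equality of joint $\varphi$-moments to a bona fide normal $\varphi$-preserving automorphism of $\cA$; this, however, is exactly the rigidity fact already available from \cite{Koes10a} once $\varphi$ is faithful and $\scrI$ is minimal, so in practice it is a citation rather than new work, and the remaining effort is purely bookkeeping inside $\Sset_\infty$, with Lemma~\ref{lem:cycle-1} supplying the dictionary between star-generator words and cycles on the (c)-side.
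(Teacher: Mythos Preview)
Your argument is correct. The route differs from the paper's in organization rather than in substance. The paper dispatches (a)$\Leftrightarrow$(b) by a direct citation of \cite[Theorem~1.9]{GoKo09a} and then proves (b)$\Leftrightarrow$(c) by two one-line group identities: it observes that $\rho(\sigma_n\cdots\sigma_1)\iota_0=\rho(\sigma_n\cdots\sigma_1\sigma_2\cdots\sigma_n)\iota_0=\rho(\gamma_n)\iota_0$ (using \eqref{eq:L-exchangeable} and $\sigma_n\cdots\sigma_1\cdots\sigma_n=\gamma_n$), giving \eqref{eq:PR-exchangeable}$\Leftrightarrow$\eqref{eq:PR'-exchangeable}, and then notes that \eqref{eq:L-exchangeable} is equivalent to $\rho(\sigma_n\cdots\sigma_2)\iota_0=\iota_0$ for all $n\ge2$, which via $\sigma_2\cdots\sigma_n=(1,2,\ldots,n)=\gamma_1\gamma_2\cdots\gamma_n\gamma_1$ is \eqref{eq:L'-exchangeable}. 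You instead introduce the covariance identity $\rho(\pi)\iota_m=\iota_{\pi(m)}$ as a common equivalent of both (b) and (c), by recognizing that \eqref{eq:L-exchangeable} and \eqref{eq:L'-exchangeable} each encode precisely the statement that $\rho$ fixes $\iota_0$ on the point-stabiliser $\Sset_{2,\infty}$ of $0$, while \eqref{eq:PR-exchangeable} and \eqref{eq:PR'-exchangeable} each supply the action on a transversal. This is a slightly more conceptual packaging: it explains structurally \emph{why} the two presentations are interchangeable (both describe stabiliser plus coset representatives), at the cost of a few more lines. Your sketch of (a)$\Rightarrow$(b) under minimality via the distribution-to-automorphism rigidity is exactly the content of the cited \cite[Theorem~1.9]{GoKo09a}, so here you are unpacking what the paper cites; nothing new is required.
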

\begin{proof}
The implications between (a) and (b) are the subject of \cite[Theorem 1.9]{GoKo09a}. We are left to prove the equivalence of (b) and (c). 
\eqref{eq:PR-exchangeable} $\Leftrightarrow$ \eqref{eq:PR'-exchangeable}
follows from 
\[
\rho(\sigma_n \cdots \sigma_2\sigma_1)\iota_0 
= \rho(\sigma_n \cdots \sigma_2\sigma_1 \sigma_2 \cdots \sigma_n) \iota_0
= \rho(\sigma_1 \sigma_2\cdots \sigma_n  \cdots \sigma_2 \sigma_1)\iota_0.
\]
Finally \eqref{eq:L-exchangeable} $\Leftrightarrow$ \eqref{eq:L'-exchangeable}
is concluded as follows. Clearly, \eqref{eq:L-exchangeable} holds true if and only if $\rho(\sigma_n \cdots \sigma_2) \iota_0 = \iota_0$ for all $n \ge 2$. Now use 
$
\sigma_n \cdots \sigma_2  = (1,2,3,\ldots,n) =\gamma_1 \gamma_2 \cdots \gamma_n \gamma_1. 
$
\end{proof}
The implication (b) $\Rightarrow$ (a) tells us in particular how to construct exchangeable random sequences from a given representation $\rho \colon \Sset_\infty \to \Aut{\cA,\varphi}$. Choose some $\varphi$-conditioned von Neumann subalgebra \[
\cC_0 \subset \cA^{\rho(\Sset_{2,\infty})},
\]
where $\cA^{\rho(\Sset_{2,\infty})}$ is the fixed point algebra of $\rho(\Sset_{2,\infty})$ in $\cA$. Then the random variable $\iota_0 := \id_{|\cC_0}$ satisfies the localization property \eqref{eq:L-exchangeable}. 
An exchangeable random sequence $\scrI_{\cC_0}$ is now canonically obtained from $\iota_0$ through the product representation property \eqref{eq:PR-exchangeable}. Of course this sequence can also be constructed by using \eqref{eq:L'-exchangeable} and \eqref{eq:PR'-exchangeable}. 
\begin{Definition}\normalfont
The random sequence $\scrI_{\cC_0}$ is called \emph{associated to the representation $\rho$}. 
\end{Definition}
A random sequence $\scrI_{\cC_0}$ associated to the representation $\rho$ is exchangeable by construction. Note also that $\scrI_{\cC_0}$ may \emph{not} be minimal, even under the maximal possible choice $\cC_0 = \cA^{\rho(\Sset_{2,\infty})}$. 
Exchangeability implies stationarity and the stationarity of a random sequence $\scrI$ yields an endomorphism $\alpha$ of $ \vN(\iota_n(\cC_0)\mid n \in \Nset_0)$ such that $\iota_n = \alpha^n \iota_0$ for all $n \in \Nset_0$ (see \cite[Section 2]{Koes10a} for example). Given the representation $\rho$, we can extend this endomorphism $\alpha$ to a possibly larger algebra in $\cA$ than $\vN(\iota_n(\cC_0)\mid n \in \Nset_0)$. For this purpose, consider the fixed point algebras
\[
\cA_{n-1}^\rho:= \cA^{\rho(\Sset_{n+1,\infty})} 
\]
which provide us with the tower of von Neumann subalgebras
\[
\cA_{-1}^\rho \subset \cA_{0}^\rho \subset \cA_{1}^\rho \subset \cA_{2}^\rho \subset \cdots \subset \cA_{\infty}^\rho \subset \cA, 
\]  
where $\cA_\infty^\rho:= \vN(\cA_n^{\rho}\mid n \in \Nset_0)$. In general,
$\cA_\infty^\rho$ may be strictly contained in $\cA$.

\begin{Definition}\normalfont\label{def:generating}
The representation $\rho\colon \Sset_\infty \to \Aut{\cA}$ has the  \emph{generating property} if $\cA_\infty^\rho = \cA$.  
\end{Definition}
Similar to the more general setting \cite[Section 3]{GoKo09a}, there are representations of $\Sset_\infty$ with and without this generating property. 
Here we will concentrate on how this generating property can be obtained by restriction. 
\begin{Proposition}
The representation $\rho: \Sset_\infty \to \Aut{\cA,\varphi}$ restricts to 
the generating representation $\rho^{\operatorname{res}}\colon \Sset_\infty 
\to \Aut{\cA^\rho_\infty, \varphi^\rho_\infty}$ such that 
$\rho(\sigma_i)(\cA_\infty^\rho) \subset \cA_\infty^\rho$ and 
$E_{\cA_\infty^\rho} E_{\cA^{\rho(\sigma_i)}} = E_{\cA^{\rho(\sigma_i)}}E_{\cA_\infty^\rho}$ 
(for all $i \in \Nset$). 
\end{Proposition}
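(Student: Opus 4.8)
The plan is to check directly that $\rho$ leaves $\cA_\infty^\rho$ globally invariant and then invoke the tower structure to see that the restriction is again a representation with the generating property, the remaining content being the commuting-square claim for the fixed point algebras $\cA^{\rho(\sigma_i)}$. First I would observe that for any finite permutation $\sigma$ and any $n$, conjugation by $\rho(\sigma)$ maps $\rho(\Sset_{n+1,\infty})$ to $\rho(\sigma \Sset_{n+1,\infty}\sigma^{-1})$, which for $n$ large enough (namely $n+1$ larger than the support of $\sigma$) equals $\rho(\Sset_{n+1,\infty})$; hence $\rho(\sigma)(\cA_n^\rho) \subset \cA_m^\rho$ for suitable $m$, and so $\rho(\sigma)(\cA_\infty^\rho) \subset \cA_\infty^\rho$. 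Since the same applies to $\sigma^{-1}$, the inclusion is an equality and $\rho^{\operatorname{res}}(\sigma):=\rho(\sigma)|_{\cA_\infty^\rho}$ is a well-defined $\varphi_\infty^\rho$-preserving automorphism of $\cA_\infty^\rho$, where $\varphi_\infty^\rho := \varphi|_{\cA_\infty^\rho}$; that $\sigma \mapsto \rho^{\operatorname{res}}(\sigma)$ is a homomorphism is immediate. The generating property of $\rho^{\operatorname{res}}$ is automatic, since the fixed point algebras of $\rho^{\operatorname{res}}(\Sset_{n+1,\infty})$ in $\cA_\infty^\rho$ contain $\cA_n^\rho$ and these already generate $\cA_\infty^\rho$.

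It remains to prove the commuting-square statement $E_{\cA_\infty^\rho} E_{\cA^{\rho(\sigma_i)}} = E_{\cA^{\rho(\sigma_i)}} E_{\cA_\infty^\rho}$ for each $i \in \Nset$. In the tracial setting (which is all we need, since $\varphi$ is a trace in all our applications, but in any case the $\varphi$-preserving conditional expectations exist by hypothesis) a standard criterion says that two conditional expectations $E_{\cM}$ and $E_{\cN}$ commute provided $E_{\cM}$ maps $\cN$ into $\cM \cap \cN$; equivalently, it suffices to show $E_{\cA^{\rho(\sigma_i)}}(\cA_\infty^\rho) \subset \cA_\infty^\rho$. I would prove this by noting that $E_{\cA^{\rho(\sigma_i)}}$ is an averaging over the action of $\rho(\sigma_i)$, which has order two: concretely, on a tracial probability space $E_{\cA^{\rho(\sigma_i)}}(x) = \tfrac12\big(x + \rho(\sigma_i)(x)\big)$. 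Since $\cA_\infty^\rho$ is $\rho(\sigma_i)$-invariant by the first paragraph, this averaging visibly preserves $\cA_\infty^\rho$, giving the desired commutation. (One can also phrase this via Lemma \ref{lem:commuting-square}: $\cA^{\rho(\sigma_i)}$ and $\cA_\infty^\rho$ sit in a commuting square over $\cA^{\rho(\sigma_i)} \cap \cA_\infty^\rho$ precisely because the order-two automorphism $\rho(\sigma_i)$ restricts compatibly.)

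I expect the only genuinely delicate point to be the bookkeeping with supports: one must be careful that $\sigma \Sset_{n+1,\infty}\sigma^{-1} = \Sset_{n+1,\infty}$ holds not for all $n$ but only once $n+1$ exceeds $\max(\operatorname{supp}\sigma)$, so that $\rho(\sigma)$ need not fix each $\cA_n^\rho$ individually but only maps the tower into itself, which is enough to preserve the union $\cA_\infty^\rho$ and to pass to the inductive limit. Everything else — that the restriction of a trace-preserving automorphism to an invariant subalgebra is a trace-preserving automorphism, that conditional expectations onto fixed point algebras of finite-order automorphisms are the obvious averages, and that invariance of one algebra under the relevant automorphism yields the commuting square — is routine. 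If one prefers to avoid the explicit averaging formula (e.g.\ to stay uniform with the non-tracial discussion in \cite[Appendix A]{GoKo09a}), one can instead note that $E_{\cA^{\rho(\sigma_i)}} = \lim \rho(\sigma_i)\text{-means}$ in an appropriate sense and that each mean preserves the $\rho(\sigma_i)$-invariant algebra $\cA_\infty^\rho$, then pass to the limit; I would default to the tracial argument since that is the context of all subsequent sections.
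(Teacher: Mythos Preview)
Your argument is correct. The paper does not actually prove this proposition; its entire proof reads ``See \cite[Proposition 3.2]{GoKo09a}.'' So there is no in-paper approach to compare against---you have supplied what the paper outsources.

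Two minor remarks. First, your hedging about the tracial setting is unnecessary: the averaging formula $E_{\cA^{\rho(\sigma_i)}}(x)=\tfrac12\bigl(x+\rho(\sigma_i)(x)\bigr)$ is valid for any $\varphi$-preserving order-two automorphism, tracial or not, since the right-hand side is visibly a $\varphi$-preserving conditional expectation onto the fixed points. Second, your ``standard criterion'' for commutation of $E_\cM$ and $E_\cN$ is correct but deserves one line of justification: in $L^2(\cA,\varphi)$ the conditional expectations become orthogonal projections, and if one of them preserves the range of the other then they commute. Alternatively, and perhaps more transparently, one can bypass the criterion entirely: since $\rho(\sigma_i)$ is $\varphi$-preserving and leaves $\cA_\infty^\rho$ globally invariant, uniqueness of the $\varphi$-preserving conditional expectation forces $\rho(\sigma_i)\circ E_{\cA_\infty^\rho}=E_{\cA_\infty^\rho}\circ\rho(\sigma_i)$, and then the commutation $E_{\cA^{\rho(\sigma_i)}}E_{\cA_\infty^\rho}=E_{\cA_\infty^\rho}E_{\cA^{\rho(\sigma_i)}}$ follows immediately from the averaging formula.
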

\begin{proof}
See \cite[Proposition 3.2]{GoKo09a}.
\end{proof}
Note that a generating representation $\rho \colon \Sset_\infty \to \Aut{\cA,\varphi}$ may not satisfy the minimality condition $\cA=\vN(\alpha^n(\cA_0^\rho)\mid n\in \Nset_0)$. But in the framework of exchangeability this can always be achieved by further restriction. Slightly more general, let $\cC_0 \subset \cA_0^\rho$ be a $\varphi$-conditioned von Neumann subalgebra and define, for $n \in \Nset_0 \cup \{\infty\}$,
\[
 \cC_n 
 := \vN(\alpha^k(\cC_0)\mid 0 \le k < n+1 \le \infty) 
 =  \vN(\iota_k(\cC_0)\mid 0 \le k < n+1 \le \infty) . 
\]
\begin{Proposition}\label{prop:min-process}
The representation $\rho\colon \Sset_\infty \to \Aut{\cA,\varphi}$ restricts to a generating representation from $\Sset_\infty$ into 
$\Aut{\cC_\infty, \varphi_\infty}$. Here $\varphi_\infty$ is the restriction of $\varphi$ to $\cC_\infty$.  
\end{Proposition}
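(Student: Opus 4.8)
The plan is to take the restricted representation to be $\rho_\infty(\sigma):=\rho(\sigma)|_{\cC_\infty}$. Two points then have to be checked: that $\cC_\infty$ is globally invariant under $\rho(\Sset_\infty)$, so that $\rho_\infty$ is a well-defined map into $\Aut{\cC_\infty,\varphi_\infty}$, and that $\rho_\infty$ has the generating property of Definition \ref{def:generating}.

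The first step is the covariance relation
\[
\rho(\sigma)\,\iota_n=\iota_{\sigma(n)}\qquad(\sigma\in\Sset_\infty,\ n\in\Nset_0),
\]
where $\iota_n=\rho(\sigma_n\sigma_{n-1}\cdots\sigma_1)\iota_0$ by the product representation property \eqref{eq:PR-exchangeable}. Since both sides are multiplicative in $\sigma$, it suffices to verify this for the Coxeter generators $\sigma=\sigma_i$, and there it follows, distinguishing the cases $i>n+1$, $i=n+1$, $i=n$ and $i<n$, from a short manipulation using the braid relations \eqref{eq:B1}, \eqref{eq:B2}, \eqref{eq:sym} and the localization property \eqref{eq:L-exchangeable}. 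Granting this, $\rho(\sigma)\big(\iota_n(\cC_0)\big)=\iota_{\sigma(n)}(\cC_0)\subset\cC_\infty$ for every generator, hence $\rho(\sigma)(\cC_\infty)\subset\cC_\infty$, and applying this to $\sigma^{-1}$ gives $\rho(\sigma)(\cC_\infty)=\cC_\infty$. Thus each $\rho_\infty(\sigma)$ is a $*$-automorphism of $\cC_\infty$; it preserves $\varphi_\infty=\varphi|_{\cC_\infty}$ because $\rho(\sigma)$ preserves $\varphi$, and $\sigma\mapsto\rho_\infty(\sigma)$ is a group homomorphism, so $\rho_\infty\colon\Sset_\infty\to\Aut{\cC_\infty,\varphi_\infty}$ is a representation.

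For the generating property I would locate the generators of $\cC_\infty$ inside the fixed point tower of $\rho$. Because $\sigma_j$ commutes with $\sigma_i$ whenever $j\le k<k+2\le i$, the word $\sigma_k\cdots\sigma_1$ commutes elementwise with $\Sset_{k+2,\infty}$, so $\rho(\sigma_k\cdots\sigma_1)$ leaves $\cA^{\rho(\Sset_{k+2,\infty})}=\cA_k^\rho$ globally invariant; together with $\cC_0\subset\cA^{\rho(\Sset_{2,\infty})}\subset\cA^{\rho(\Sset_{k+2,\infty})}$ and \eqref{eq:PR-exchangeable} this gives $\iota_k(\cC_0)\subset\cA_k^\rho$ and hence $\cC_n\subset\cA^{\rho(\Sset_{n+2,\infty})}$. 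Consequently every element of $\cC_n$ is fixed by $\rho_\infty(\Sset_{n+2,\infty})$, i.e. $\cC_n\subset\cC_\infty^{\rho_\infty(\Sset_{n+2,\infty})}$, and therefore
\[
\cC_\infty=\vN\big(\cC_n\mid n\in\Nset_0\big)\subset\vN\big(\cC_\infty^{\rho_\infty(\Sset_{n+2,\infty})}\mid n\in\Nset_0\big)\subset\cC_\infty,
\]
so that the fixed point algebras of $\rho_\infty$ generate $\cC_\infty$; that is, $\rho_\infty$ is generating. Since moreover $\cC_0\subset\cA^{\rho(\Sset_{2,\infty})}$ entails $\cC_0\subset\cC_\infty^{\rho_\infty(\Sset_{2,\infty})}$, the exchangeable sequence associated to $\rho_\infty$ through $\cC_0$ is exactly the minimal sequence $\scrI$, which is the point of the construction.

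The only step with genuine content is the covariance relation: the case analysis with the braid relations is elementary but has to be carried out with care since the Coxeter generators do not all commute. Once it is in hand, the global invariance of $\cC_\infty$, the automorphism and trace-preservation properties, and the generating property are all routine manipulations with fixed point algebras.
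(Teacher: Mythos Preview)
Your proof is correct and follows essentially the same approach as the paper. Both establish global invariance of $\cC_\infty$ by the case analysis of $\rho(\sigma_k)$ acting on $\iota_n(\cC_0)=\alpha^n(\cC_0)$ (the paper records it as the display $\rho(\sigma_k)\alpha^n(\cC_0)=\alpha^n(\cC_0),\ \alpha^{n-1}(\cC_0),\ \alpha^{n+1}(\cC_0)$ according to $k<n$ or $k>n+1$, $k=n$, $k=n+1$), and both deduce the generating property from the inclusion $\cC_n\subset\cC_\infty\cap\cA_n^\rho$. Your packaging of the case analysis as the covariance relation $\rho(\sigma)\iota_n=\iota_{\sigma(n)}$ is a clean way to state the same computation.
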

\begin{proof}
The global invariance of $\cC_\infty$ under the action of $\rho(\Sset_\infty)$ is immediate if $\rho(\sigma_k) \alpha^n(\cC_0) \subset \cC_\infty$ for all $k \in \Nset$ and $n \in \Nset_0$. But this is clear since $\alpha^n(\cC_0) = \rho(\sigma_n \sigma_{n-1} \cdots \sigma_1 \sigma_0)(\cC_0)$ and thus 
\[
\rho(\sigma_k) \alpha^n (\cC_0) =
\begin{cases}
\alpha^n (\cC_0) & \text{if $k < n$ or $k > n+1$},\\
\alpha^{n-1} (\cC_0) & \text{if $k = n$},\\
\alpha^{n+1} (\cC_0) & \text{if $k = n+1$}.\\
\end{cases}
\]
The generating property follows from $\cC_n \subset \cC_n^{\rho} := \cC_\infty \cap \cA^\rho_n$.
\end{proof}
As a consequence of the previous result, if the representation $\rho$ comes from a minimal exchangeable random sequence, then $\rho$ enjoys the generating property. 
\begin{Remark}\normalfont
The proof of Proposition \ref{prop:min-process} hinges on the idempotence of the generators of $\Sset_\infty$. This allows us to show that $\rho(\sigma_n)$ maps $\alpha^n(\cC_0)$ onto $\alpha^{n-1}(\cC_0)$. This may fail in the more general context of braid group representations in \cite{GoKo09a}. 
\end{Remark}
If a representation $\rho$ of $\Sset_\infty$ has the generating property then it is straightforward to verify that $\cA$ is generated by the fixed point algebras $\cA^{\rho(\sigma_n)}$ with $n \in \Nset$. The generating property enables us to construct so-called adapted endomorphisms with product representation as follows (see \cite[Appendix A]{GoKo09a} for a more detailed discussion of such endomorphisms).  
\begin{Proposition}\label{prop:endo-rs}
Suppose the representation $\rho\colon \Sset_\infty \to \Aut{\cA,\varphi}$
is generating. Then
\[
\alpha(x) := \sotlim_{n \to \infty} \rho(\sigma_1\sigma_2 \cdots \sigma_n)(x), \qquad x \in \cA,
\]
defines a $\varphi$-preserving endomorphism $\alpha$ of $\cA$ and the exchangeable random sequence $\scrI_{\cC_0}$ (defined above) is also given by
\[
\iota_n= \alpha^n|_{\cC_0}.
\]
\end{Proposition}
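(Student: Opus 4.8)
The plan is to show that the stated formula for $\alpha$ converges in the strong operator topology on all of $\cA$, that the limit is a unital $\varphi$-preserving $*$-endomorphism, and that iterating it recovers the $\iota_n$. The key input is the generating property $\cA_\infty^\rho = \cA$ together with the commutation/stabilization behaviour of the partial products $\rho(\sigma_1\sigma_2\cdots\sigma_n)$ on the fixed point algebras $\cA_m^\rho$.

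First I would establish pointwise convergence on a generating set. Fix $x \in \cA_m^\rho = \cA^{\rho(\Sset_{m+2,\infty})}$ for some $m$. For $n \ge m+1$ I claim $\rho(\sigma_1\sigma_2\cdots\sigma_n)(x)$ is eventually constant in $n$: indeed, writing $\sigma_1\cdots\sigma_n = (\sigma_1\cdots\sigma_{m+1})(\sigma_{m+2}\cdots\sigma_n)$ and using that $\sigma_{m+2},\ldots,\sigma_n \in \Sset_{m+2,\infty}$ fix $x$, we get $\rho(\sigma_1\cdots\sigma_n)(x) = \rho(\sigma_1\cdots\sigma_{m+1})(x)$ for all $n \ge m+1$. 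Hence the net stabilises and the limit equals $\alpha(x):=\rho(\sigma_1\cdots\sigma_{m+1})(x)$; in particular this defines $\alpha$ unambiguously (if $x \in \cA_m^\rho \subset \cA_{m'}^\rho$ for $m'>m$ the two recipes agree, since extra factors $\sigma_{m+2},\ldots,\sigma_{m'+1}$ still fix $x$). Since the maps $\rho(\sigma_1\cdots\sigma_n)$ are $\varphi$-preserving $*$-automorphisms, they are contractive for the $L^2(\varphi)$-norm and uniformly bounded in operator norm; because $\bigcup_m \cA_m^\rho$ is $\sigma$-strongly dense in $\cA_\infty^\rho = \cA$, a standard $\varepsilon/3$-argument extends the strong limit to all of $\cA$ and shows $\alpha$ is well-defined, linear, $\varphi$-preserving, contractive, and hence $\sigma$-strongly continuous on bounded sets.

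Next I would check that $\alpha$ is a unital $*$-endomorphism. Unitality and $*$-preservation are immediate from the corresponding properties of each $\rho(\sigma_1\cdots\sigma_n)$ and strong continuity. For multiplicativity, given $x,y$ in the dense subalgebra $\bigcup_m \cA_m^\rho$ one may choose a single $m$ with $x,y \in \cA_m^\rho$; then $\alpha(xy) = \rho(\sigma_1\cdots\sigma_{m+1})(xy) = \rho(\sigma_1\cdots\sigma_{m+1})(x)\rho(\sigma_1\cdots\sigma_{m+1})(y) = \alpha(x)\alpha(y)$, and this extends to all of $\cA$ by separate strong continuity of multiplication on bounded sets together with the Kaplansky density theorem. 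That $\alpha$ is $\varphi$-preserving was already noted, so $\alpha$ is an endomorphism of the probability space $(\cA,\varphi)$.

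Finally, the identification $\iota_n = \alpha^n|_{\cC_0}$. For $x \in \cC_0 \subset \cA_0^\rho$, the stabilisation above with $m=0$ gives $\alpha(x) = \rho(\sigma_1)(x)$, which matches $\iota_1$ by the product representation property \eqref{eq:PR-exchangeable} (recall $\iota_1 = \rho(\sigma_1)\iota_0$ and $\iota_0 = \id|_{\cC_0}$). For the inductive step one shows $\alpha(\iota_n(\cC_0)) = \iota_{n+1}(\cC_0)$ and $\alpha \circ \iota_n = \iota_{n+1}$ on $\cC_0$: since $\iota_n(\cC_0) = \rho(\sigma_n\sigma_{n-1}\cdots\sigma_1)(\cC_0) \subset \cA_n^\rho$, stabilisation with $m=n$ gives $\alpha|_{\iota_n(\cC_0)} = \rho(\sigma_1\sigma_2\cdots\sigma_{n+1})|_{\iota_n(\cC_0)}$, and then $\rho(\sigma_1\cdots\sigma_{n+1})\rho(\sigma_n\cdots\sigma_1) = \rho(\sigma_{n+1}\sigma_n\cdots\sigma_1)$ follows from the braid relations \eqref{eq:B1}, \eqref{eq:B2} (the standard identity $\sigma_1\cdots\sigma_{n+1}\sigma_n\cdots\sigma_1 = \sigma_{n+1}\cdots\sigma_1$, already used in Lemma \ref{lem:shift-1}), whence $\alpha\circ\iota_n = \rho(\sigma_{n+1}\cdots\sigma_1)\iota_0 = \iota_{n+1}$ by \eqref{eq:PR-exchangeable}. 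Induction then yields $\iota_n = \alpha^n|_{\cC_0}$ for all $n$.

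The only genuinely delicate point is the passage from the dense subalgebra $\bigcup_m \cA_m^\rho$ to all of $\cA$: one must be careful that strong convergence of $\rho(\sigma_1\cdots\sigma_n)(x)$ is uniform enough to survive the density argument. This is handled by exploiting that all the maps are $\varphi$-preserving (hence $\|\cdot\|_2$-isometric) and that the net is not merely Cauchy but eventually constant on the dense set, so the limit on $\cA$ is an honest strong limit; the multiplicativity extension then uses Kaplansky density to keep everything bounded. All the group-theoretic identities invoked are exactly those recorded in Section \ref{section:symmetric}.
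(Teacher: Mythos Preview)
Your overall architecture is sound and matches the argument behind the cited result: stabilisation of $\rho(\sigma_1\cdots\sigma_n)$ on each $\cA_m^\rho$, extension by density using the generating property, and then the inductive identification of the $\iota_n$. The paper itself simply refers to \cite[Proposition 3.8]{GoKo09a}, so your write-up is in fact more detailed than what appears here.

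There is, however, a genuine error in your final step. The group identity you invoke,
\[
\sigma_1\sigma_2\cdots\sigma_{n+1}\,\sigma_n\sigma_{n-1}\cdots\sigma_1 \;=\; \sigma_{n+1}\sigma_n\cdots\sigma_1,
\]
is \emph{false} in $\Sset_\infty$. Already for $n=1$ it would say $\sigma_1\sigma_2\sigma_1 = \sigma_2\sigma_1$, i.e.\ $\sigma_2\sigma_1\sigma_2 = \sigma_2\sigma_1$, hence $\sigma_2 = e$. In general the left-hand side is the transposition $(0,n+1)=\gamma_{n+1}$ (this is exactly \eqref{eq:gamma2sigma}), while the right-hand side is the $(n{+}2)$-cycle $(0,n+1,n,\ldots,1)$. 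Lemma~\ref{lem:shift-1} does not contain the identity you cite.

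The fix is that the two sides, while unequal as group elements, agree \emph{after applying $\rho$ and restricting to $\cC_0$}, because their quotient lies in $\Sset_{2,\infty}$ and $\cC_0\subset\cA_0^\rho$. Concretely,
\[
\rho(\sigma_1\cdots\sigma_{n+1})\,\rho(\sigma_n\cdots\sigma_1)\big|_{\cC_0}
= \rho(\gamma_{n+1})\big|_{\cC_0}
= \iota_{n+1},
\]
the first equality by \eqref{eq:gamma2sigma} and the second by \eqref{eq:PR'-exchangeable} in Theorem~\ref{thm:exchangeability}. Equivalently, one checks that $(\sigma_{n+1}\cdots\sigma_1)^{-1}\gamma_{n+1} = (1,2,\ldots,n+1)\in\Sset_{2,\infty}$, which fixes $\cC_0$ by the localisation property \eqref{eq:L-exchangeable}. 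Once you replace the false identity by this observation, your inductive step goes through and the proof is complete.
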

\begin{proof}
See \cite[Proposition 3.8]{GoKo09a}.
\end{proof}
We continue with a fixed point characterization deduced from the results in \cite{GoKo09a,Koes10a}. Note that $\cA_{-1}^\rho = \cA^{\rho(\Sset_\infty)}$
from the definition of the fixed point algebras $\cA_n^\rho$. 
\begin{Theorem}\label{thm:fixed-point}
Let $\scrI_{\cC_0},\scrI_{\cA_0^\rho}$ be two random sequences associated to the generating representation $\rho\colon \Sset_\infty \to \Aut{\cA,\varphi}$. Suppose further that $\cN$ is a $\varphi$-conditioned von Neumann subalgebra of the tail algebra $\cC^\tail$ of $\scrI_{\cC_0}$. Let $\cA^{\rho,\tail}$ denote the tail algebra of $\scrI_{\cA_0^\rho}$.
\begin{enumerate}
\item \label{item:fixed-point-i}
If $\scrI_{\cC_0}$ is order $\cN$-factorizable, then it holds 
\[
\cN = \cC^\tail \subset \cA^{\rho,\tail}\subset \cA^{\rho(\Sset_\infty)} \subset \cA^\alpha.
\]
\item \label{item:fixed-point-ii}
If $\scrI_{\cC_0}$ is order $\cN$-factorizable and $\cA^{\rho(\Sset_\infty)}\subset \cC_0$, then it holds 
\[
\cN = \cC^\tail = \cA^{\rho,\tail} = \cA^{\rho(\Sset_\infty)} =\cA^\alpha.
\]
\item \label{item:fixed-point-iii}
If $\scrI_{\cC_0}$ is order $\cN$-factorizable and minimal, then we have
\[
\cN = \cC^\tail = \cA^{\rho,\tail} = \cA^{\rho(\Sset_\infty)} =\cA^\alpha.
\]
\end{enumerate}
In particular in \eqref{item:fixed-point-ii} and \eqref{item:fixed-point-iii}, these five subalgebras are trivial if $\scrI_{\cC_0}$ is order $\Cset$-factorizable. 
\end{Theorem}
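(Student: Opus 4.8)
The plan is to deduce the chain of equalities in Theorem~\ref{thm:fixed-point} from the noncommutative extended de Finetti theorem (Theorem~\ref{thm:de-finetti}) applied to the exchangeable sequence $\scrI_{\cC_0}$, combined with the fixed-point structure coming from the generating representation $\rho$. First I would record the easy inclusions. Since $\rho(\Sset_\infty)$-invariant elements are in particular invariant under the adapted endomorphism $\alpha = \sotlim_n \rho(\sigma_1\cdots\sigma_n)$ of Proposition~\ref{prop:endo-rs}, we get $\cA^{\rho(\Sset_\infty)}\subset\cA^\alpha$. Next, $\cA^{\rho(\Sset_\infty)}$ is fixed by each $\rho(\Sset_{n+1,\infty})$, hence is contained in every $\cA_n^\rho$, and because the $\iota_k(\cA_0^\rho) = \alpha^k(\cA_0^\rho)$ for $k\ge n$ all sit inside $\cA_n^\rho$-type algebras, a short argument shows $\cA^{\rho(\Sset_\infty)}\subset\cA^{\rho,\tail}$; and since $\cC_0\subset\cA_0^\rho$ the sequence $\scrI_{\cC_0}$ is (up to restriction) a subsequence-compatible sub-object of $\scrI_{\cA_0^\rho}$, giving $\cC^\tail\subset\cA^{\rho,\tail}$.

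The substantive input is the reverse direction: if $\scrI_{\cC_0}$ is order $\cN$-factorizable with $\cN\subset\cC^\tail$, then in fact $\cN=\cC^\tail$ and this common algebra swallows everything up to $\cA^\alpha$. For part~\eqref{item:fixed-point-i} I would argue as follows. Exchangeability of $\scrI_{\cC_0}$ together with Theorem~\ref{thm:de-finetti} gives stationarity and full $\cC^\tail$-independence; combined with order $\cN$-factorizability and the hypothesis $\cN\subset\cC^\tail$, a now-standard computation (the conditional expectation $E_{\cC^\tail}$ restricted to the tail is the identity, while factorizability forces the tail to collapse onto $\cN$ via a martingale/mean-ergodic argument along $\alpha^n$) yields $\cN=\cC^\tail$. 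Then one shows $\cA^{\rho,\tail}\subset\cA^{\rho(\Sset_\infty)}$: an element of the tail of $\scrI_{\cA_0^\rho}$ lies in $\vN(\alpha^k(\cA_0^\rho)\mid k\ge n)$ for every $n$, hence is fixed by each $\rho(\sigma_j)$ with $j\ge 2$ and, by stationarity/shift-covariance of the $\rho(\sigma_j)$-action, by all of $\rho(\Sset_\infty)$. Finally $\cA^{\rho(\Sset_\infty)}=\cA^\alpha$ follows because any $\alpha$-fixed point is, by the generating property and Proposition~\ref{prop:endo-rs}, approximable by $\rho(\Sset_\infty)$-moves and hence actually $\rho(\Sset_\infty)$-invariant; together with the inclusion already noted this forces equality, which then propagates backwards through the chain to collapse $\cC^\tail=\cA^{\rho,\tail}=\cA^{\rho(\Sset_\infty)}=\cA^\alpha$ once the additional hypothesis $\cA^{\rho(\Sset_\infty)}\subset\cC_0$ (part~\eqref{item:fixed-point-ii}) or minimality (part~\eqref{item:fixed-point-iii}) guarantees that the big fixed point algebra is seen by the sequence $\scrI_{\cC_0}$ itself, i.e.\ that it is contained in $\cC^\tail$.

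For parts~\eqref{item:fixed-point-ii} and~\eqref{item:fixed-point-iii} the extra work is precisely establishing $\cA^{\rho(\Sset_\infty)}\subset\cC^\tail$. Under the hypothesis $\cA^{\rho(\Sset_\infty)}\subset\cC_0$ this is direct: the fixed point algebra is then $\alpha$-invariant and contained in each $\iota_n(\cC_0)=\alpha^n(\cC_0)$, hence in the intersection defining $\cC^\tail$. Under minimality one instead uses that $\cA=\vN(\iota_n(\cC_0)\mid n)$, so that $\cA^{\rho(\Sset_\infty)}=\cA^\alpha$ and, by a mean-ergodic argument for the $\varphi$-preserving endomorphism $\alpha$ on the minimal algebra, $\cA^\alpha$ is contained in the tail. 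Combining either hypothesis with part~\eqref{item:fixed-point-i} closes all five algebras into one. The last sentence of the theorem is then immediate: order $\Cset$-factorizability is order $\cN$-factorizability for $\cN=\Cset$, so $\cN=\Cset$ and all five algebras are trivial.

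The main obstacle I anticipate is the collapse step $\cC^\tail=\cN$ from order $\cN$-factorizability: one must leverage that $E_{\cC^\tail}$ acts as the identity on $\cC^\tail\supset\cN$ while factorizability of disjoint index blocks, applied to shifted copies $\alpha^n(\cC_0)$ and pushed to the limit, forces any tail element to be $\cN$-valued; getting the quantifiers right (which blocks $J,K$ to use, and why the limit exists in the appropriate topology) is where care is needed. The fixed-point identifications $\cA^{\rho(\Sset_\infty)}=\cA^\alpha$ and the passage through $\cA^{\rho,\tail}$ are more routine, being essentially bookkeeping with the covariance relation $\iota_n=\alpha^n\iota_0$ and the explicit action of $\rho(\sigma_k)$ on the $\alpha^n(\cC_0)$ recorded in the proof of Proposition~\ref{prop:min-process}, but they still need to be spelled out to justify that no larger algebra than $\cN$ survives.
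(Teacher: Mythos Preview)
Your overall architecture matches the paper's: the chain of inclusions in part~(i) is assembled from a Kolmogorov-type zero-one law for $\cN=\cC^\tail$, the obvious inclusion $\cC^\tail\subset\cA^{\rho,\tail}$, a Hewitt--Savage step $\cA^{\rho,\tail}\subset\cA^{\rho(\Sset_\infty)}$, and the elementary $\cA^{\rho(\Sset_\infty)}\subset\cA^\alpha$. The paper obtains these by citing \cite[Theorem 6.1]{Koes10a} and \cite[Theorem 2.5]{GoKo09a}; you try to sketch the content directly, and your outline for $\cA^{\rho,\tail}\subset\cA^{\rho(\Sset_\infty)}$ is essentially right once one verifies via the braid relations that $\rho(\sigma_j)$ fixes $\alpha^k(\cA_0^\rho)$ \emph{pointwise} for $k>j$ (this is the computation $\sigma_j\,\sigma_k\cdots\sigma_1=\sigma_k\cdots\sigma_1\,\sigma_{j+1}$, which you should state explicitly). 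Your argument for part~(iii) is also correct and coincides with what the paper cites from \cite[Theorem 6.4]{Koes10a}: minimality gives $x=\alpha^n(x)\in\alpha^n(\cA)=\vN(\iota_k(\cC_0)\mid k\ge n)$ for all $n$, hence $\cA^\alpha\subset\cC^\tail$, and the chain collapses.

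The genuine gap is in part~(ii). You need the equality $\cA^\alpha=\cA^{\rho(\Sset_\infty)}$, but your justification --- ``any $\alpha$-fixed point is approximable by $\rho(\Sset_\infty)$-moves and hence actually $\rho(\Sset_\infty)$-invariant'' --- is not a valid argument: being a limit of $\rho(g_n)(y_n)$ does not make an element $\rho$-invariant. Your separate observation that the hypothesis $\cA^{\rho(\Sset_\infty)}\subset\cC_0$ forces $\cA^{\rho(\Sset_\infty)}\subset\cC^\tail$ is correct and, together with part~(i), collapses $\cC^\tail=\cA^{\rho,\tail}=\cA^{\rho(\Sset_\infty)}$; but $\cA^\alpha$ could a~priori still be strictly larger, and nothing in your sketch rules this out. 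In the paper this last equality is obtained from the fixed point characterization \cite[Theorem 0.3]{GoKo09a}, whose proof uses the full commuting-square tower for the generating representation rather than an approximation argument. Without minimality you do not have the simple inclusion $\cA^\alpha\subset\cC^\tail$ that rescued part~(iii), so you must either invoke that external result or reproduce its content: show, using the generating property and the commuting squares of Theorem~\ref{thm:endo-braid-i}, that the $\alpha$-tail $\bigcap_n\alpha^n(\cA)$ coincides with $\cA_{-1}^\rho=\cA^{\rho(\Sset_\infty)}$.
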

The last assertion is a noncommutative version of the Hewitt-Savage zero-one law. Compare \cite[Corollary 1.6]{Kalle05a} for the corresponding classical result. 

Before turning our attention to the proof of Theorem \ref{thm:indy}, let us discuss the role of the von Neumann algebra $\cN$. Clearly all statements in Theorem \ref{thm:indy} remain true under the maximal choice $\cN:=\cC^\tail$ where the condition of order $\cN$-factorizability is superfluous by Theorem \ref{thm:de-finetti}. The full power of Theorem \ref{thm:indy} comes to the surface in applications where we know the order $\cN$-factorizability of a sequence $\scrI_{\cC_0}$, but have not yet explicitly identified some of the fixed point algebras involved in the statement of Theorem \ref{thm:fixed-point}. A nice application of \eqref{item:fixed-point-iii} will be given when completing the proof of Thoma's theorem in Proposition \ref{prop:factor}.  Applications of a braided version
of \eqref{item:fixed-point-ii} (see Remark \ref{rem:fixed-point-braid}) are given in \cite[Section 6]{GoKo09a}.  
\begin{proof}
(\ref{item:fixed-point-i}) By de Finetti's theorem, Theorem \ref{thm:de-finetti}, $\scrI_{\cC_0}$ is stationary and full $\cC^\tail$-independent. Now the conclusion $\cN = \cC^\tail$ is a special case of the generalized Kolmogorov zero-one law (see \cite[Theorem 6.1]{Koes10a}). Clearly $\cC_0 \subset \cA_0^\rho$ implies $\cC^\tail \subset \cA^{\rho,\tail}$. We know also $\cA^{\rho,\tail} \subset \cA^{\rho(\Sset_\infty)}$ from the extended version of the braided Hewitt-Savage zero-one law (see \cite[Theorem 2.5]{GoKo09a}). Further it is elementary to see that $\cA^{\rho(\Sset_\infty)} \subset \cA^\alpha$, where the latter is the fixed point algebra of the endomorphism $\alpha$ from Proposition \ref{prop:endo-rs}. Altogether we have arrived at $\cN = \cC^\tail \subset \cA^{\rho,\tail} \subset \cA^{\rho(\Sset_\infty)} \subset \cA^\alpha$. 

(\ref{item:fixed-point-ii}) 
We know from the de Finetti theorem that, in particular, $\scrI$ is order $\cC^\tail$-factorizable. Adding the assumption $\cA^\rho(\Sset_\infty) \subset \cC_0$ we infer the equality $\cC^\tail = \cA^{\rho(\Sset_\infty)}$
again from \cite[Theorem 2.5]{GoKo09a}. Now (\ref{item:fixed-point-i}) implies 
$\cN = \cC^\tail =  \cA^{\rho,\tail} \subset \cA^{\rho(\Sset_\infty)} \subset \cA^\alpha$. The remaining equality $\cA^{\rho(\Sset_\infty)} = \cA^\alpha$ is a part of the fixed point characterization in \cite[Theorem 0.3]{GoKo09a}.

(\ref{item:fixed-point-iii}) The assumptions of the fixed point characterization $\cN = \cC^\tail = \cA^\alpha$ from \cite[Theorem 6.4]{Koes10a} are all in place: minimality, stationarity, order $\cN$-factorizability and $\cN \subset \cA^\alpha$. Together with (\ref{item:fixed-point-i}) this entails $\cN = \cC^\tail = \cA^{\rho,\tail} = \cA^{\rho(\Sset_\infty)} = \cA^\alpha$.
\end{proof}
\begin{Remark}\normalfont \label{rem:fixed-point-braid}
Theorem \ref{thm:fixed-point} remains valid if the symmetric group $\Sset_\infty$ is replaced by the braid group $\Bset_\infty$. In this case a random sequence $\scrI_{\cC_0}$ associated to the generating representation $\rho\colon \Bset_\infty  \to \Aut{\cA,\varphi}$ is a braidable random sequence in the sense of \cite[Definition 0.1]{GoKo09a}. 
\end{Remark}
Similar to subfactor theory \cite{GHJ89a,JoSu97a} we obtain rich structures of commuting squares.
\begin{Theorem} \label{thm:endo-braid-i}
Assume that the probability space $(\cA,\varphi)$ is equipped with the 
generating representation $\rho\colon \Sset_\infty \to \Aut{\cA,\varphi}$ 
and let $\cA_{n-1}^\rho := \cA^{\rho(\Sset_{n+1,\infty})}$, with $n\in \Nset_0$.
Then one obtains a triangular tower of inclusions such that each cell forms 
a commuting square: 
\begin{eqnarray*}
\setcounter{MaxMatrixCols}{20}
\begin{matrix}
\cA_{-1}^\rho &\subset&   \cA_0^\rho & \subset & \cA_1^\rho & \subset &\cA_2^\rho & \subset &\cA_3^\rho & \subset  & \cdots & \subset & \cA\\
        &&          \cup  &         & \cup  &         & \cup &         & \cup  &       & & & \cup  \\
        &&   \cA_{-1}^\rho&\subset&\alpha(\cA_0^\rho)&\subset&\alpha(\cA_1^\rho)&\subset&\alpha(\cA_2^\rho)&\subset& \cdots & \subset & \alpha(\cA)\\
         &&               &         & \cup  &         & \cup   &         & \cup  &       & & & \cup \\
              &&&&   \cA_{-1}^\rho& \subset & \alpha^2 (\cA_0^\rho)  & \subset & \alpha^2(\cA_1^\rho)
              & \subset  & \cdots & \subset & \alpha^2(\cA) \\
       &&&&&&   \cup           &         & \cup  &       &&  & \cup \\
        &&&&&&   \vdots           &         & \vdots  &       &&  & \vdots 
\end{matrix}
\setcounter{MaxMatrixCols}{10}
\end{eqnarray*}
Moreover, the (exchangeable) random sequence $\scrI_{\cA_0^\rho}$ associated to $\rho$ is full $\cN$-independent with
\[
\cN = \cA^\tail = \cA^{\rho}_{-1} = \cA^{\alpha} = \cA^{\rho(\Sset_\infty)},
\]
where $\cA^\tail$ is the tail algebra of the sequence $\scrI_{\cA_0^\rho}$.
\end{Theorem}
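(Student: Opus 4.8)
The plan is to verify the two assertions separately: first the tower of commuting squares, then the identification of $\cN$ for $\scrI_{\cA_0^\rho}$.

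\medskip
\textbf{The tower of commuting squares.}
For the tower, I would fix a cell and show it forms a commuting square using Lemma \ref{lem:commuting-square}, most conveniently condition \eqref{item:cs-iv} (commuting conditional expectations together with the correct intersection) or \eqref{item:cs-iii}. The cells come in two flavours: the ``horizontal'' inclusions along a fixed row are $\alpha^j(\cA_{k-1}^\rho) \subset \alpha^j(\cA_k^\rho)$ and the ``vertical'' ones are $\alpha^{j+1}(\cA_k^\rho) \subset \alpha^j(\cA_k^\rho)$ for appropriate ranges of indices; note $\alpha^j(\cA_{-1}^\rho) = \cA_{-1}^\rho$ since $\cA_{-1}^\rho = \cA^{\rho(\Sset_\infty)}$ is globally $\rho$-fixed, which explains why the left edge of the triangle is constant. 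The key input is that $\alpha$ is the $\varphi$-preserving endomorphism from Proposition \ref{prop:endo-rs} with $\alpha = \sotlim_n \rho(\sigma_1 \cdots \sigma_n)$, so that $\alpha^j(\cA_k^\rho) = \rho(\text{appropriate word})(\cA_k^\rho)$ at the level of each finite stage, and hence each $\alpha^j(\cA_k^\rho)$ is itself a fixed point algebra $\cA^{\rho(H)}$ for a suitable subgroup $H \le \Sset_\infty$. Since $\varphi$-preserving conditional expectations onto $\varphi$-fixed-point algebras of groups of automorphisms exist and commute whenever the groups generate a group in which they sit nicely, the commuting square property for each cell reduces to a purely group-theoretic statement about the relevant subgroups of $\Sset_\infty$ — essentially that $\Sset_{n+1,\infty}$ and the shifted copies interact like a commuting square of subgroups. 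Concretely, for a cell with corners $\cA^{\rho(H_1)}, \cA^{\rho(H_2)}, \cA^{\rho(H_1 \cap H_2)}$ and $\vN(\cA^{\rho(H_1)}, \cA^{\rho(H_2)})$, one checks $E_{\cA^{\rho(H_1)}} E_{\cA^{\rho(H_2)}} = E_{\cA^{\rho(\langle H_1, H_2\rangle)}} = E_{\cA^{\rho(H_1 \cap H_2)}}$ using that for finite permutation subgroups $\langle H_1, H_2 \rangle$ equals $H_1 \cap H_2$ only after averaging — more carefully, one uses the standard fact (transferred from \cite{GoKo09a}, cf.\ the commuting square results for braid groups) that averaging over $\rho(\Sset_{m,\infty})$ and over $\rho$ of a shifted symmetric group commute because these groups ``almost commute'' in the sense needed. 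I would simply cite \cite[Section 3]{GoKo09a} and Theorem \ref{thm:endo-braid-i}'s braid-group analogue if it is already available there, adapting via the epimorphism $\widehat{}: \Bset_\infty \to \Sset_\infty$; the symmetric-group case is the quotient and nothing new is needed beyond the idempotence relations, which only help.

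\medskip
\textbf{Identification of $\cN$.}
For the second assertion, the random sequence $\scrI_{\cA_0^\rho}$ is the one associated to $\rho$ with $\cC_0 = \cA_0^\rho$, so the hypothesis $\cA^{\rho(\Sset_\infty)} \subset \cC_0$ of Theorem \ref{thm:fixed-point}\eqref{item:fixed-point-ii} is satisfied trivially since $\cA_{-1}^\rho = \cA^{\rho(\Sset_\infty)} \subset \cA_0^\rho = \cA^{\rho(\Sset_{2,\infty})}$ (as $\Sset_{2,\infty} \subset \Sset_\infty$). Exchangeability of $\scrI_{\cA_0^\rho}$ gives, via the noncommutative de Finetti theorem (Theorem \ref{thm:de-finetti}), that it is order $\cC^\tail$-factorizable, so we may take $\cN = \cC^\tail$ and invoke Theorem \ref{thm:fixed-point}\eqref{item:fixed-point-ii} directly to obtain
\[
\cC^\tail = \cA^{\rho,\tail} = \cA^{\rho(\Sset_\infty)} = \cA^\alpha.
\]
Writing $\cA^\tail := \cA^{\rho,\tail}$ for this particular sequence and $\cN$ for the common value, together with $\cA_{-1}^\rho = \cA^{\rho(\Sset_\infty)}$ by definition, yields exactly $\cN = \cA^\tail = \cA_{-1}^\rho = \cA^\alpha = \cA^{\rho(\Sset_\infty)}$. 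Finally, full $\cN$-independence of $\scrI_{\cA_0^\rho}$ is the conclusion (c) of Theorem \ref{thm:de-finetti} applied to this exchangeable sequence, with tail algebra now identified with $\cN$.

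\medskip
\textbf{Main obstacle.}
The routine but genuinely delicate part is the commuting square verification for the cells: one must be careful that $\vN$ of two fixed point algebras $\cA^{\rho(H_1)}$ and $\cA^{\rho(H_2)}$ need not equal $\cA^{\rho(H_1 \cap H_2)}$ in general, so the commuting square claim is really the statement $E_{\cA^{\rho(H_1)}} E_{\cA^{\rho(H_2)}} = E_{\cA^{\rho(H_1)} \cap \cA^{\rho(H_2)}}$ and this must be extracted from the structure of $\rho$ as arising from an exchangeable sequence (equivalently, from the de Finetti independence), not from group theory alone. I expect this to be handled exactly as in \cite[Section 3]{GoKo09a} for the braid case, so the honest content of the proof is the bookkeeping of which subgroup each $\alpha^j(\cA_k^\rho)$ corresponds to and checking the indices line up with the triangular shape; the second assertion is then a direct corollary of Theorems \ref{thm:de-finetti} and \ref{thm:fixed-point} with essentially no further work.
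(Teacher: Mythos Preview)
Your proposal is correct and takes essentially the same approach as the paper. The paper's own proof is extremely brief: it cites \cite[Theorem 3.9]{GoKo09a} for the tower of commuting squares (exactly the braid-group analogue you anticipate deferring to) and says the properties of $\scrI_{\cA_0^\rho}$ are immediate from Proposition \ref{prop:endo-rs} and Theorem \ref{thm:fixed-point}, which is precisely your route through Theorem \ref{thm:fixed-point}\eqref{item:fixed-point-ii} with $\cC_0 = \cA_0^\rho$ and $\cN = \cC^\tail$, together with the de Finetti theorem for full $\cN$-independence.

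Your ``main obstacle'' paragraph is a fair diagnosis: the commuting square property does not follow from subgroup combinatorics alone but from the independence structure established in \cite{GoKo09a}, and you are right that the symmetric-group case inherits this directly from the braid-group case via the canonical epimorphism with no extra work.
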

\begin{proof}
See \cite[Theorem 3.9]{GoKo09a} for the inclusions and the commuting square properties of each cell. The properties of $\scrI_{\cA_0^\rho}$ are immediate from Proposition \ref{prop:endo-rs} and Theorem \ref{thm:fixed-point} (or \cite[Theorem 0.3]{GoKo09a}). 
\end{proof}
`Shifted' representations allow us to turn each fixed point algebra $\cA_n$ into the tail algebra of a certain exchangeable random sequence. 
\begin{Corollary}\label{cor:endo-braid-i}
Let $n \in \Nset_0$ and consider, under the assumptions of Theorem \ref{thm:endo-braid-i}, the $n$-shifted endomorphism 
\[
\alpha_n := \lim_{k \to \infty} \rho \circ\sh^n(\sigma_1\sigma_2 \cdots \sigma_k)
\]
in the pointwise strong operator topology.  
Then one obtains, for each $n \in \Nset_0$, a triangular tower of inclusions such that each cell forms 
a commuting square: 
\begin{eqnarray*}
\setcounter{MaxMatrixCols}{20}
\begin{matrix}
\cA_{n-1}^\rho &\subset&   \cA_n^\rho & \subset & \cA_{n+1}^\rho & \subset &\cA_{n+2}^\rho & \subset &\cA_{n+3}^\rho & \subset  & \cdots & \subset & \cA\\
        &&          \cup  &         & \cup  &         & \cup &         & \cup  &       & & & \cup  \\
        &&   \cA_{n-1}^\rho&\subset&\alpha_n(\cA_n^\rho)&\subset&\alpha_n(\cA_{n+1}^\rho)&\subset&\alpha_n(\cA_{n+2}^\rho)&\subset& \cdots & \subset & \alpha_n(\cA)\\
         &&               &         & \cup  &         & \cup   &         & \cup  &       & & & \cup \\
              &&&&   \cA_{n-1}^\rho& \subset & \alpha_n^2 (\cA_n^\rho)  & \subset & \alpha_n^2(\cA_{n+1}^\rho)
              & \subset  & \cdots & \subset & \alpha_n^2(\cA) \\
       &&&&&&   \cup           &         & \cup  &       &&  & \cup \\
        &&&&&&   \vdots           &         & \vdots  &       &&  & \vdots 
\end{matrix}
\setcounter{MaxMatrixCols}{10}
\end{eqnarray*}
Moreover, the random sequence 
$\scrI_{\cA_n^\rho}^{(n)}$ associated to $\rho \circ \sh^n$, with random variables
\begin{eqnarray*}
\iota_k^{(n)} = \rho \circ \sh^n(\sigma_k \sigma_{k-1}\cdots \sigma_1 \sigma_0)|_{\cA_n^\rho},
\end{eqnarray*}
is exchangeable and full $\cN$-independent with
\[
\cN = \cA^{(n),\tail} = \cA^{\rho}_{n-1} = \cA^{\alpha_n} = \cA^{\rho \circ \sh^n(\Sset_\infty)},
\]
where $\cA^{(n),\tail}$ is the tail algebra of $\scrI_{\cA_n^\rho}^{(n)}$.
\end{Corollary}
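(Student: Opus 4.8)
The plan is to deduce Corollary~\ref{cor:endo-braid-i} from Theorem~\ref{thm:endo-braid-i} by applying the latter not to the representation $\rho$ itself, but to the shifted representation $\rho\circ\sh^n$. First I would verify that $\rho\circ\sh^n\colon\Sset_\infty\to\Aut{\cA,\varphi}$ is again a representation: this is immediate since $\sh$ (and hence $\sh^n$) is an endomorphism of $\Sset_\infty$ by Definition~\ref{def:shifts} and Lemma~\ref{lem:shift-1}. The key identification is that the fixed point algebras attached to $\rho\circ\sh^n$ coincide, after a shift of index, with those attached to $\rho$: since $\sh^n(\Sset_{k+1,\infty})=\langle\sh^n(\sigma_{k+1}),\sh^n(\sigma_{k+2}),\ldots\rangle=\langle\sigma_{n+k+1},\sigma_{n+k+2},\ldots\rangle=\Sset_{n+k+1,\infty}$, we get
\[
\cA^{(\rho\circ\sh^n)}_{k-1} := \cA^{\rho\circ\sh^n(\Sset_{k+1,\infty})} = \cA^{\rho(\Sset_{n+k+1,\infty})} = \cA^{\rho}_{n+k-1}.
\]
In particular the tower $\cA^{(\rho\circ\sh^n)}_{-1}\subset\cA^{(\rho\circ\sh^n)}_0\subset\cdots$ is precisely $\cA^{\rho}_{n-1}\subset\cA^{\rho}_n\subset\cdots$. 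This is the point that requires a small computation with the symbolic shift, but it is routine given Lemma~\ref{lem:shift-2} style reasoning on the generators.

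Next I would check that $\rho\circ\sh^n$ inherits the generating property needed to invoke Theorem~\ref{thm:endo-braid-i}. This is a genuine point to address, since $\sh^n$ is not surjective on $\Sset_\infty$, so a priori $\vN(\cA^{(\rho\circ\sh^n)}_k\mid k\in\Nset_0)$ could be smaller than $\cA$. The way around this is to observe that the statement of the corollary is about the tower over $\cA^\rho_{n-1}$ and does not actually require $\rho\circ\sh^n$ to be generating on all of $\cA$; alternatively one restricts to $\vN(\alpha_n^k(\cA^\rho_n)\mid k\ge 0)$, exactly as in Proposition~\ref{prop:min-process}, and notes that all the commuting square and independence assertions are stable under this restriction because conditional expectations onto the relevant subalgebras commute with $E_{\vN(\alpha_n^k(\cA^\rho_n)\mid k)}$. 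In the paper's intended reading, one simply applies Theorem~\ref{thm:endo-braid-i} with $\rho$ replaced by $\rho\circ\sh^n$ (and with its own generating restriction understood), which is legitimate precisely because Theorem~\ref{thm:endo-braid-i} and Theorem~\ref{thm:fixed-point} were stated for an arbitrary generating representation of $\Sset_\infty$.

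Granting that, the rest is translation. The endomorphism produced by Proposition~\ref{prop:endo-rs} for $\rho\circ\sh^n$ is
\[
\alpha_n(x)=\sotlim_{k\to\infty}\rho\circ\sh^n(\sigma_1\sigma_2\cdots\sigma_k)(x),
\]
which is the definition given in the corollary; the random sequence $\scrI^{(n)}_{\cA^\rho_n}$ associated to $\rho\circ\sh^n$ has random variables $\iota^{(n)}_k=\alpha_n^k|_{\cA^\rho_n}=\rho\circ\sh^n(\sigma_k\sigma_{k-1}\cdots\sigma_1\sigma_0)|_{\cA^\rho_n}$ by the product representation property~\eqref{eq:PR-exchangeable}. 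Feeding $\rho\circ\sh^n$ into Theorem~\ref{thm:endo-braid-i} then yields verbatim the triangular tower of commuting squares displayed in the corollary (with $\cA^\rho_{n-1}$ as the corner, $\cA^\rho_{n+j}$ along the top, and $\alpha_n^i$ applied going down), the exchangeability of $\scrI^{(n)}_{\cA^\rho_n}$, and the full $\cN$-independence with
\[
\cN = \cA^{(n),\tail} = \cA^{(\rho\circ\sh^n)}_{-1} = \cA^{\alpha_n} = \cA^{\rho\circ\sh^n(\Sset_\infty)},
\]
and the first of these equals $\cA^\rho_{n-1}$ by the index-shift identity established above.

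The main obstacle is the generating-property issue for $\rho\circ\sh^n$: since $\sh^n$ is not onto, some care is needed to ensure that applying Theorem~\ref{thm:endo-braid-i} to the shifted representation is justified, and the cleanest fix is to pass to the generating restriction as in the Proposition preceding~\ref{prop:min-process} and check stability of the commuting squares under that restriction. Everything else is bookkeeping: matching $\sh^n(\Sset_{k+1,\infty})$ with $\Sset_{n+k+1,\infty}$ and identifying $\alpha_n$ and the $\iota^{(n)}_k$ with the objects named in the corollary.
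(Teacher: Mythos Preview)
Your approach is exactly the paper's: apply Theorem~\ref{thm:endo-braid-i} to the shifted representation $\rho\circ\sh^n$ and use the index-shift identity $\cA_k^{\rho\circ\sh^n}=\cA_{k+n}^\rho$ to translate the output. The identification of $\alpha_n$, the $\iota_k^{(n)}$, and the fixed-point algebra $\cN$ is handled correctly.

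The one place where you over-complicate matters is the generating property of $\rho\circ\sh^n$. You flag this as ``the main obstacle'' and propose passing to a generating restriction, but in fact it is immediate from what you already proved: since the tower $(\cA_m^\rho)_{m\ge -1}$ is increasing and $\cA_k^{\rho\circ\sh^n}=\cA_{k+n}^\rho$, we have
\[
\cA_\infty^{\rho\circ\sh^n}=\vN\bigl(\cA_k^{\rho\circ\sh^n}\mid k\ge -1\bigr)=\vN\bigl(\cA_m^\rho\mid m\ge n-1\bigr)=\vN\bigl(\cA_m^\rho\mid m\ge -1\bigr)=\cA_\infty^\rho=\cA,
\]
the last equality because $\rho$ is assumed generating. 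So no restriction argument is needed; the paper simply says ``whence Theorem~\ref{thm:endo-braid-i} applies again'' and this is why.
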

Note that $\alpha_0= \alpha$ and $\iota_k^{(0)}= \iota_k$, as well as $\scrI_{\cA_0^\rho}^{(0)} =  \scrI_{\cA_0^\rho}$.    
\begin{proof}
Given the representation $\rho$ from Theorem \ref{thm:endo-braid-i}, we obtain the $n$-shifted representation $\rho \circ \sh^n \colon \Sset_\infty \to \Aut{\cA,\varphi}$, whence Theorem \ref{thm:endo-braid-i} applies again. Since $\sh(\sigma_i) = \sigma_{i+1}$ (see Definition \ref{def:shifts}), one has $\rho \circ \sh^n(\Sset_{k+2, \infty}) = \rho(\Sset_{k+n+2, \infty})$ for $k \ge -1$ and hence $\cA_k^{\rho\circ \,\sh^n} = \cA_{k+n}^{\rho}$.    
Finally, the exchangeability of $\iota^{(n)}$ is immediate since $\rho \circ\, \sh^n$ is another representation of $\Sset_\infty$ and thus the arguments from the proof of Theorem \ref{thm:endo-braid-i} transfer.
\end{proof}

\section{Unitary representations of $\Sset_\infty$ and Thoma multiplicativity}\label{section:multiplicative}
We specify our general results from Section \ref{section:exchange-indep} to tracial probability spaces generated by unitary representations of $\Sset_\infty$. Our starting point is the von Neumann algebra $\cA$ generated by such a unitary representation. Now a probability space $(\cA, \varphi)$ is obtained by choosing a fixed faithful normal state $\varphi$ on $\cA$. As we will see in Proposition \ref{prop:trace}, exchangeability puts strong constraints on the choice of such states $\varphi$. This reveals the two distinguished roles of star generators and Coxeter generators in our approach. The first ones will provide us with exchangeable sequences and the latter ones will implement certain actions on the star generators. We collect in Theorem \ref{thm:indy} some results for the tracial setting of unitary representations which are immediate from our general de Finetti type results of Section \ref{section:exchange-indep}. Finally, as an application of our approach, we give a new proof of Thoma multiplicativity from noncommutative conditional independence, see Theorem \ref{thm:thoma-mult}. 

Suppose $\pi$ is a unitary representation of $\Sset_\infty$ in $\cB(\cH)$, the bounded operators on the separable Hilbert space $\cH$. Let $\cA$ denote the von Neumann subalgebra in $\cB(\cH)$ generated by $\pi(\Sset_\infty)$ and, more generally, put 
\[
\vN_\pi(S) := \vN\big(\pi(S)\big)
\]
for a set $S \subset \Sset_\infty$. The Coxeter generators $\sigma_i$ and the star generators $\gamma_i$ will play a distinguished role in our treatment of the representation $\pi$.
\begin{Definition}\normalfont \label{def:rep-gen}
The unitaries 
\[
u_i := \pi(\sigma_i)\quad\text{and}\quad  v_i := \pi(\gamma_i)   \qquad \qquad (n \in \Nset)
\]
are called \emph{(represented) Coxeter generators} and
\emph{(represented) star generators}, respectively. The identity in $\cA$ is also denoted by $u_0 = \pi(\sigma_0)$ or $v_0:= \pi(\gamma_0)$. A unitary $v$ of the form  
\[
v = (v_{n_1}v_{n_2}\cdots v_{n_k}) v_{n_1} = \pi\big((\gamma_{n_1}\gamma_{n_2}\cdots \gamma_{n_k})\gamma_{n_1}\big)
\] 
with distinct $n_1, n_2,\ldots, n_k \in \Nset_0$ and $k \in \Nset$ is called a \emph{(represented) $k$-cycle} provided $n_1 =0$ if $\min\{n_1,\ldots,n_k\}=0$. 
The $k$-cycle $v$ is said to be \emph{central} if $v \in \cZ(\cA)$. Here $\cZ(\cA)=\set{x\in \cA}{xy=yx, y \in \cA}$ denotes the center of $\cA$.
\end{Definition}
Next we define the representation 
$\rho_0^{} := \Ad \pi$ of $\Sset_\infty$ in the (inner) automorphisms of $\cA$  
and, more generally for $N \in \Nset_0$, the $N$-shifted representations
\[
\rho_N := \rho_0^{} \circ \, \sh^N. 
\]
The representation $\rho_0^{}$ gives rise to the fixed point algebras
\begin{eqnarray*} 
\cA_{n} &:=&\cA^{\rho_0^{}(\Sset_{n+2, \infty})} \qquad (-1 \le n < \infty),\\ 
\cA_\infty &:=& \vN(\cA_{n} \mid n \ge -1).
\end{eqnarray*}
We collect some elementary properties of the algebras $\vN(\Sset_{n+1})$ and $\cA_n$. 
\begin{Lemma} \label{lem:inclusions}
The representations $\rho_N\colon \Sset_\infty \to \Aut{\cA}$ are generating for all $N \in \Nset_0$ and the fixed point algebras $\cA_n$ are relative commutants:
\begin{equation*}
\cA_{n} = \vN_\pi(\Sset_{n+2,\infty})^\prime \cap \cA. 
\end{equation*}
We have the following double tower of inclusions:
\begin{eqnarray*}
\setcounter{MaxMatrixCols}{20}
\small
\begin{matrix}
& & \vN_\pi(\Sset_1)  & \subset & \vN_\pi(\Sset_2) & \subset & \vN_\pi(\Sset_3)& \subset \cdots  
\subset & \vN_\pi(\Sset_{n+1}) & \subset \cdots
\subset & \vN_\pi(\Sset_\infty)\\
& & \cap  &      & \cap & & \cap  &&\cap &&  \shortparallel\\
\cA_{-1} & \subset & \cA_0 & \subset & \cA_1 & \subset & \cA_2 & \subset \cdots                    
\subset & \cA_n & \subset \cdots 
\subset & \cA_\infty \\ 
\shortparallel &&&&&&&&&& \shortparallel\\
\cZ(\cA) &&&&&&&&&& \cA
\end{matrix}
\setcounter{MaxMatrixCols}{10}
\end{eqnarray*}
\end{Lemma}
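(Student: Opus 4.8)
The plan is to establish the three separate assertions of Lemma \ref{lem:inclusions} in turn: the generating property of the $\rho_N$, the identification of $\cA_n$ as a relative commutant, and the inclusion diagram (including the identifications $\cA_{-1}=\cZ(\cA)$ and $\cA_\infty=\cA$).

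For the generating property, recall that $\rho_0 = \Ad\pi$ is generated by the inner automorphisms $\Ad u_i$, and $\rho_N = \rho_0\circ\sh^N$. By Definition \ref{def:generating} I must show $\cA_\infty^{\rho_N}=\cA$. Since $\sh^N$ shifts the Coxeter generators, $\rho_N(\Sset_{k+2,\infty}) = \rho_0(\Sset_{k+N+2,\infty})$, so $\cA_k^{\rho_N} = \cA_{k+N}^{\rho_0} = \cA_{k+N}$, and hence $\cA_\infty^{\rho_N}=\cA_\infty$ regardless of $N$. So it suffices to treat $N=0$, i.e.\ to show $\cA_\infty = \cA$. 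The point is that $\cA$ is generated by the unitaries $v_i=\pi(\gamma_i)$ (the star generators generate $\Sset_\infty$ by Proposition \ref{prop:star-presentation}), and each $v_i$, being the image of the transposition $(0,i)=\gamma_i$, commutes with $\pi(\sigma_j)$ whenever $j\ge i+2$ (since then $\sigma_j=(j-1,j)$ is disjoint from $(0,i)$); thus $v_i \in \cA^{\rho_0(\Sset_{i+2,\infty})} = \cA_i \subset \cA_\infty$. Therefore $\cA=\vN(v_i\mid i\in\Nset_0)\subset\cA_\infty\subset\cA$, giving $\cA_\infty=\cA$ and the generating property.

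For the relative-commutant description, $x\in\cA_n = \cA^{\rho_0(\Sset_{n+2,\infty})}$ means $u\, x\, u^* = x$ for every $u\in\vN_\pi(\Sset_{n+2,\infty})$ of the form $\pi(\tau)$, $\tau\in\Sset_{n+2,\infty}$, i.e.\ $x$ commutes with every such unitary $\pi(\tau)$. Since these unitaries generate $\vN_\pi(\Sset_{n+2,\infty})$ as a von Neumann algebra, $x$ commutes with $\pi(\tau)$ for all $\tau\in\Sset_{n+2,\infty}$ if and only if $x\in\vN_\pi(\Sset_{n+2,\infty})'$; combined with $x\in\cA$ this gives $\cA_n = \vN_\pi(\Sset_{n+2,\infty})'\cap\cA$. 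The special cases follow: $\cA_{-1}=\vN_\pi(\Sset_{1,\infty})'\cap\cA = \vN_\pi(\Sset_\infty)'\cap\cA = \cA'\cap\cA = \cZ(\cA)$, using $\Sset_{1,\infty}=\Sset_\infty$ and $\cA=\vN_\pi(\Sset_\infty)$.

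For the double tower: the top row of inclusions $\vN_\pi(\Sset_1)\subset\vN_\pi(\Sset_2)\subset\cdots$ is immediate from $\Sset_1\subset\Sset_2\subset\cdots$, with union (as von Neumann algebras) equal to $\vN_\pi(\Sset_\infty)=\cA$. The bottom row $\cA_{-1}\subset\cA_0\subset\cA_1\subset\cdots$ follows from the relative-commutant description together with the reverse inclusions $\Sset_{n+3,\infty}\subset\Sset_{n+2,\infty}$, and $\cA_\infty=\vN(\cA_n\mid n\ge-1)=\cA$ was shown above. The vertical inclusions $\vN_\pi(\Sset_{n+1})\subset\cA_n$ hold because any $\tau\in\Sset_{n+1}$ permutes only $\{0,1,\dots,n\}$ and hence commutes with every $\sigma_j$ with $j\ge n+2$, so $\pi(\tau)\in\vN_\pi(\Sset_{n+2,\infty})'\cap\cA=\cA_n$; the rightmost vertical equality $\vN_\pi(\Sset_\infty)=\cA_\infty$ is $\cA=\cA$. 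The vertical equalities at the far left ($\cA_{-1}=\cZ(\cA)$) and far right ($\cA_\infty=\cA$) have already been established.

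I do not expect a genuine obstacle here; everything reduces to the combinatorics of disjoint supports of permutations together with the elementary fact that a von Neumann algebra generated by a set of unitaries has the same commutant as the set itself. The one point requiring a little care is making sure the identification $\cA_\infty=\cA$ uses the star generators (which genuinely generate $\Sset_\infty$ and each lie in some $\cA_i$), rather than the Coxeter generators $u_i=\pi(\sigma_i)$, which need \emph{not} lie in any $\cA_n$ — indeed $u_{n+2}$ does not commute with itself-conjugation issues aside, $\sigma_{n+2}$ fails to commute with $\sigma_{n+1}\in\Sset_{n+2,\infty}$, so $u_{n+1}\notin\cA_n$ in general; this asymmetry between the two generating sets is precisely the phenomenon the section is built to exploit.
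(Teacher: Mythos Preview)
Your proof is correct and follows essentially the same route as the paper: inner automorphisms give the relative-commutant description, the braid relation \eqref{eq:B2} gives the vertical inclusions $\vN_\pi(\Sset_{n+1})\subset\cA_n$ and $\cA_{-1}=\cZ(\cA)$, and the shift identity $\sh^N(\Sset_{n+2,\infty})=\Sset_{N+n+2,\infty}$ reduces the generating property for $\rho_N$ to that for $\rho_0$.

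The only difference is cosmetic: to show $\cA_\infty=\cA$ you place each star generator $v_i$ into $\cA_i$, whereas the paper places the whole subalgebra $\vN_\pi(\Sset_{n+1})$ into $\cA_n$ (using Coxeter generators and \eqref{eq:B2}) and then takes the union. Both arguments are the same in spirit. Your closing editorial remark, however, is misleading: it is not true that the Coxeter generators ``need not lie in any $\cA_n$''. In fact $u_i\in\cA_i$ for every $i$, since $\sigma_i$ commutes with $\sigma_j$ for all $j\ge i+2$; this is precisely what the paper uses. What you correctly observed is only that $u_{n+1}\notin\cA_n$, i.e.\ the index has to be chosen large enough, but the same is true of the star generators ($v_{n+1}\notin\cA_n$ in general either). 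So the asymmetry you allude to is not present at this level; the genuine distinction between Coxeter and star generators in the paper appears later, in the exchangeability and independence structure of Sections \ref{section:multiplicative}--\ref{section:bernoulli-markov}, not in this lemma.
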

\begin{proof}
Our standing assumption is $\cA= \vN_\pi(\Sset_\infty)$. Now the generating property of $\rho_0^{}$ follows from
\begin{eqnarray*}
\vN_\pi(\Sset_\infty) &\supset& \bigvee_{n \ge 0} \big(\vN_\pi(\Sset_\infty)\big)^{\rho_0^{}(\Sset_{n+2,\infty})}                                              \supset \bigvee_{n \ge 0}  \vN_\pi(\Sset_{n+1})  = \vN_\pi(\Sset_\infty).   
\end{eqnarray*}
The generating property of $\rho_N$ for $N >0$ is immediate from these inclusions since $\sh^N(\Sset_{n+2,\infty}) = \Sset_{N+n+2,\infty}$. The rest is evident.

Each $\rho_0^{}(\sigma_k)$ is an inner automorphism of $\cA$ and thus the algebras $\cA_{n}$ are the relative commutants as stated above. All horizontal inclusions in the double tower are obvious. Since $\vN_\pi(\Sset_{n+1}) = \vN\set{u_k}{k \le n}$ and $\vN_\pi(\Sset_{n+2,\infty}) = \vN\set{u_k}{k \ge n+2}$, all vertical inclusions $\vN_\pi(\Sset_{n+1}) \subset \cA_n$ and $\cA_{-1} = \cZ(\cA)$ are evident from \eqref{eq:B2}.  
\end{proof}

Of central interest in the study of the representation theory of $\Sset_\infty$ is the identification of the fixed point algebras $\cA_0, \cA_1, \cA_2, \ldots$ Here we will pursue an approach to their identification within our framework of noncommutative probability spaces. Thus we need to invoke faithful normal states on the von Neumann algebra $\cA$. The representation $\rho_0^{}$ may fail to preserve a state $\varphi$ of $\cA$. But if this invariance property is in place, then we are in a particularly nice situation. Recall that the centralizer of $\cA$ w.r.t.~the faithful normal state $\varphi$ is the von Neumann subalgebra $\cA^\varphi= \{x \in \cA \mid \varphi(xy) = \varphi(yx), y \in \cA\}$. 
\begin{Proposition}\label{prop:trace}
Let the representations $\pi$ and $\rho_0^{}$ of\, $\Sset_\infty$ be as introduced above and suppose $\varphi$ is a faithful normal state on $\cA= \vN_\pi(\Sset_\infty)$ so that $(\cA,\varphi)$ is a probability space. Then the following three conditions are equivalent:
\begin{enumerate}
\item \label{item:prop-trace-a}
$\rho_0^{}(\Sset_\infty) \subset \Aut{\cA,\varphi}$;
\item \label{item:prop-trace-b}
$\varphi$ is tracial; 
\item \label{item:prop-trace-c}
the sequence $(v_i)_{i \in \Nset}$ is exchangeable and $v_1 \in \cA^\varphi$.
\end{enumerate}
If one (and thus all) of these three conditions is satisfied then, for each $n \in \Nset_0$, the $n$-shifted endomorphism 
\[
\alpha_n = \lim_{N \to \infty} \Ad \pi \circ \sh^n (\sigma_1 \sigma_2 \cdots \sigma_N) =  \lim_{N \to \infty} \Ad u_{n+1} u_{n+2} \cdots u_{N}
\] 
(from Corollary \ref{cor:endo-braid-i}) is $\varphi$-preserving. Moreover $\alpha_n$ has the fixed point algebra $ \cA^{\alpha_n} = \cA_{n-1}$ and satisfies $ \alpha_n \circ  \pi  = \pi \circ \sh_n$.
In particular, for all $i \in \Nset$, 
\begin{align} \label{eq:prop-trace}
\alpha_0(u_i) = u_{i+1}, \qquad 
\alpha_n(v_i) = 
\begin{cases}
v_i & \text{if $ i < n $,}\\
v_{i+1} & \text{if $ i \ge n$}
\end{cases}
\qquad (n \ge 1).
\end{align}
\end{Proposition}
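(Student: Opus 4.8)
The plan is to establish the cycle $\eqref{item:prop-trace-b}\Rightarrow\eqref{item:prop-trace-a}\Rightarrow\eqref{item:prop-trace-b}$ together with $\eqref{item:prop-trace-b}\Rightarrow\eqref{item:prop-trace-c}\Rightarrow\eqref{item:prop-trace-b}$, and then to read off the properties of the endomorphisms $\alpha_n$ from the general theory of Section~\ref{section:exchange-indep} and the shift combinatorics of Section~\ref{section:symmetric}. The three easy implications I would settle first. Since a trace is invariant under every inner automorphism, $\eqref{item:prop-trace-b}\Rightarrow\eqref{item:prop-trace-a}$ is immediate. If $\rho_0(\Sset_\infty)\subset\Aut{\cA,\varphi}$, then applying $\varphi$ to $\pi(\omega)\pi(\tau)\pi(\omega)^{-1}$ shows that $\varphi\circ\pi$ is constant on conjugacy classes of $\Sset_\infty$; combining this with $\varphi(\pi(\sigma)\pi(\tau))=\varphi(\pi(\sigma\tau))=\varphi(\pi(\tau\sigma))=\varphi(\pi(\tau)\pi(\sigma))$ (the permutations $\sigma\tau$ and $\tau\sigma$ being conjugate) gives traciality of $\varphi$ on the weak-$*$ dense $*$-algebra $\linh\pi(\Sset_\infty)$, hence on $\cA$ by normality; this is $\eqref{item:prop-trace-a}\Rightarrow\eqref{item:prop-trace-b}$. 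For $\eqref{item:prop-trace-b}\Rightarrow\eqref{item:prop-trace-c}$ one notes that traciality forces $\cA^\varphi=\cA\ni v_1$, while for a finite permutation $\omega$ of $\Nset$, regarded as an element of $\Sset_\infty$ fixing $0$, one has $\omega\gamma_i\omega^{-1}=(\omega(0),\omega(i))=\gamma_{\omega(i)}$; so the class-function property of $\varphi\circ\pi$ yields $\varphi(v_{\ii(1)}\cdots v_{\ii(N)})=\varphi(v_{\omega(\ii(1))}\cdots v_{\omega(\ii(N))})$, which, since $v_i^2=\1$, is precisely exchangeability of $(v_i)_{i\in\Nset}$.

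The substantial part is $\eqref{item:prop-trace-c}\Rightarrow\eqref{item:prop-trace-b}$, and here I would prove that $v_i\in\cA^\varphi$ for all $i\ge1$; as $\cA=\vN(v_i\mid i\ge1)$ by the star presentation (Proposition~\ref{prop:star-presentation}) and $\cA^\varphi$ is a von Neumann subalgebra, this gives $\cA^\varphi=\cA$, i.e.\ $\varphi$ is a trace. By normality it suffices to show $\varphi(v_iy)=\varphi(yv_i)$ for every word $y=v_{m_1}\cdots v_{m_N}$ in the star generators; fix such a $y$ and some $i\ge2$. First, picking an index $k$ outside $\{1,i,m_1,\dots,m_N\}$ and using exchangeability with the transposition $(1\,k)$, one reduces to the case $1\notin\{m_1,\dots,m_N\}$ (this transposition fixes $i$ and each $m_j\ne1$, does not introduce the symbol $i$, and transforms $\varphi(v_iy)$ and $\varphi(yv_i)$ in the same way). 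Then applying exchangeability with $\omega'=(1\,i)$ and setting $z:=v_{\omega'(m_1)}\cdots v_{\omega'(m_N)}\in\cA$, one obtains $\varphi(v_iy)=\varphi(v_1z)$ and $\varphi(yv_i)=\varphi(zv_1)$ with the \emph{same} operator $z$, and $\varphi(v_1z)=\varphi(zv_1)$ because $v_1\in\cA^\varphi$. I expect this to be the main obstacle: one cannot naively ``rename $v_i$ to $v_1$'' inside $\varphi(v_iy)$ because $y$ may itself contain $v_1$ and $v_i$, and a relabelling of symbols does not alter which positions of a word agree; the two-stage relabelling above --- first remove the symbol $1$ from $y$, then swap $1\leftrightarrow i$ so that both $\varphi(v_iy)$ and $\varphi(yv_i)$ turn into $v_1$ times the \emph{same} factor $z$ --- is what makes the lone hypothesis $v_1\in\cA^\varphi$ suffice.

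For the final statements, assume $\varphi$ tracial (equivalently, condition $\eqref{item:prop-trace-a}$). By Lemma~\ref{lem:inclusions} each $\rho_n=\rho_0\circ\sh^n$ is generating, and by $\eqref{item:prop-trace-a}$ it takes values in $\Aut{\cA,\varphi}$, so Proposition~\ref{prop:endo-rs} applied to $\rho_0\circ\sh^n$ shows that $\alpha_n=\sotlim_{N\to\infty}\Ad(u_{n+1}u_{n+2}\cdots u_N)$ is a well-defined $\varphi$-preserving endomorphism of $\cA$, and Corollary~\ref{cor:endo-braid-i} identifies $\cA^{\alpha_n}=\cA^{\rho_0(\Sset_{n+1,\infty})}=\cA_{n-1}$. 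For the intertwining relation, given $\tau\in\Sset_\infty$ one uses Lemma~\ref{lem:shift-1}: there is an $M$ such that for all $N$ with $n+N\ge M$ the product $\sigma_{n+1}\sigma_{n+2}\cdots\sigma_{n+N}\,\tau\,\sigma_{n+N}\cdots\sigma_{n+2}\sigma_{n+1}$ equals $\sh_n(\tau)$ (the Coxeter generators of index exceeding $M$ commute with $\tau$ and cancel in pairs), so the net defining $\alpha_n(\pi(\tau))$ is eventually constant equal to $\pi(\sh_n(\tau))$; thus $\alpha_n\circ\pi=\pi\circ\sh_n$. Specializing and invoking the action of the shifts on generators then gives \eqref{eq:prop-trace}: $\alpha_0(u_i)=\pi(\sh(\sigma_i))=\pi(\sigma_{i+1})=u_{i+1}$, while for $n\ge1$ Lemma~\ref{lem:shift-2} yields $\sh_n(\gamma_i)=\gamma_i$ for $i<n$ and $\sh_n(\gamma_i)=\gamma_{i+1}$ for $i\ge n$, so $\alpha_n(v_i)=\pi(\sh_n(\gamma_i))$ is as asserted.
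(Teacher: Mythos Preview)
Your proof is correct, and the treatment of the endomorphisms $\alpha_n$ (via Proposition~\ref{prop:endo-rs}, Corollary~\ref{cor:endo-braid-i}, and Lemmas~\ref{lem:shift-1}--\ref{lem:shift-2}) matches the paper's. The equivalence proof, however, follows a genuinely different and more elementary route than the paper's.

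For the key implication \eqref{item:prop-trace-c}$\Rightarrow$\eqref{item:prop-trace-a}, the paper invokes the structural characterization of exchangeability from Theorem~\ref{thm:exchangeability}: minimality of the sequence $(v_i)_{i\in\Nset}$ produces a representation $\rho\colon\Sset_\infty\to\Aut{\cA,\varphi}$ satisfying \eqref{eq:PR-exchangeable} and \eqref{eq:L-exchangeable}, which is then identified by direct computation as the $1$-shifted representation $\rho_1$; this yields $\rho_0(\Sset_{2,\infty})\subset\Aut{\cA,\varphi}$, and the hypothesis $v_1\in\cA^\varphi$ supplies the missing generator $\rho_0(\sigma_1)$. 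Your argument instead goes straight for traciality: the relabelling $(1\,i)$ applied simultaneously to $\varphi(v_i y)$ and $\varphi(y v_i)$ converts both to $\varphi(v_1 z)$ and $\varphi(z v_1)$ with the \emph{same} $z$, and $v_1\in\cA^\varphi$ finishes it. This is shorter, self-contained, and does not require the machinery of Theorem~\ref{thm:exchangeability}. (Incidentally, your preliminary Step~1 is unnecessary: the single swap $(1\,i)$ already yields the same $z$ on both sides even when $y$ contains $v_1$ or $v_i$, since exchangeability relabels every occurrence uniformly.) Similarly, for \eqref{item:prop-trace-b}$\Rightarrow$\eqref{item:prop-trace-c} you verify distributional invariance directly from $\omega\gamma_i\omega^{-1}=\gamma_{\omega(i)}$ for $\omega(0)=0$, whereas the paper constructs the exchangeable random sequence $\scrI_{\vN(v_1)}^{(1)}$ associated to $\rho_1$ via Theorem~\ref{thm:exchangeability} and Proposition~\ref{prop:endo-rs}.

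What the paper's approach buys is a tighter link to the general framework of Section~\ref{section:exchange-indep}: it exhibits $(v_i)_{i\in\Nset}$ explicitly as a random sequence associated to the representation $\rho_1$, which is how the sequence is used downstream (e.g.\ in Theorem~\ref{thm:indy}). Your approach buys transparency and independence from that machinery.
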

Each character of $\Sset_\infty$ provides us with a tracial probability space (see Lemma \ref{lem:kol}) and the equivalences above show the distinguished role of the star generators $v_i$.  Condition \eqref{item:prop-trace-c} also indicates that certain non-tracial settings are in reach of our approach, roughly speaking, by dropping the centralizer condition on $v_1$ (see Remark \ref{rem:non-tracial}). In the following we will restrict our investigations to the tracial setting as it appears in Proposition \ref{prop:trace}. 
\begin{proof}
We start with the properties of the $\alpha_n$'s. For this purpose 
suppose $\rho_0^{}(\Sset_\infty) \subset \Aut{\cA,\varphi}$. Then
$\cA_{n-1}$ is the fixed point algebra of $\alpha_{n}$ by Theorem \ref{thm:fixed-point}. The other equations are clear from Definition \ref{def:shifts}, Lemma \ref{lem:shift-1} and Lemma \ref{lem:shift-2}. 
We are left to show the equivalence of \eqref{item:prop-trace-a} to \eqref{item:prop-trace-c}:

`\eqref{item:prop-trace-a} $\Leftrightarrow$ \eqref{item:prop-trace-b}':
We conclude from $\varphi = \varphi \circ \rho_0^{}(\sigma) = \varphi \circ \Ad \pi(\sigma)$ that $\varphi\big(\pi(\sigma)\pi(\tau)\big) = \varphi\big(\pi(\tau)\pi(\sigma)\big)$ for all $\sigma, \tau \in \Sset_\infty$. Since the set $\pi(\Sset_\infty)$ is weak*-total in $\cA$, the state $\varphi$ is a trace. The converse implication is clear. 

`\eqref{item:prop-trace-a} $\Rightarrow$ \eqref{item:prop-trace-c}':
We make use of the idea introduced after Theorem \ref{thm:exchangeability} of how to construct an exchangeable random sequence. Here we apply it to the $1$-shifted representation $\rho_1 = \rho_0^{} \circ \sh$. Clearly $v_1 \in \cA^{\rho_1(\Sset_{2,\infty})} = \cA^{\rho_0^{}(\Sset_{3,\infty})} = \cA_1$ by \eqref{eq:B2}. Since $\varphi$ is tracial the subalgebra $\cC_0^{(1)} :=\vN(v_1)$ is $\varphi$-conditioned. Now put $\iota_0^{(1)} := \id|_{\cC_0^{(1)}}$. By Theorem \ref{thm:exchangeability},  
\[
\iota_n^{(1)} := \rho_1(\sigma_n \cdots \sigma_1 \sigma_0)\iota_0^{(1)}   \qquad (n \in \Nset_0)   
\]  
defines an exchangeable random sequence. We conclude further by Proposition \ref{prop:endo-rs} and \eqref{eq:prop-trace} that $\iota_n^{(1)}(v_1) = \alpha_1^{n}(v_1) = v_{n+1}$. Thus $(v_i)_{i\in \Nset}$ is exchangeable. 
Finally, $v_1 \in \cA^\varphi$ is obvious from the traciality of $\varphi$.

`\eqref{item:prop-trace-c} $\Rightarrow$ \eqref{item:prop-trace-a}':
By Definition \ref{def:convention}, $(v_i)_{i \in \Nset}$ 
is induced by an exchangeable random sequence $(\iota_n)_{n=0}^\infty$ from
$(\cC_0,\varphi|_{\cC_0})$ to $(\cA,\varphi)$ such that $\iota_0|_{\cC_0} = \id|_{\cC_0}$ for some $\varphi$-conditioned subalgebra $\cC_0$ of $\cA$ and
$ \qquad v_{n+1} = \iota_n(v_1)$ for $n \in \Nset_0$. Note that $v_1 \in \cC_0$. 
Since $\big(\pi(\gamma_i)\big)_{i \in \Nset}$ generates $\cA$, this exchangeable random sequence is minimal and thus the characterization from Theorem \ref{thm:exchangeability} applies: there exists a representation $\rho\colon \Sset_\infty \to \Aut{\cA, \varphi}$ such that
\begin{align*}
&&&&v_{n+1} &= \rho(\sigma_n \sigma_{n-1} \cdots \sigma_1)(v_1) 
&&\text{for all $n \ge 1$},&&&&\\
&&&& v_1    & = \rho(\sigma_n) (v_1) && \text{if $n \ge 2$.}   
&&&& 
\end{align*}
Similar as in the proof of Proposition \ref{prop:min-process} the representation $\rho$ satisfies, for $k, n \in \Nset$,
\begin{eqnarray*}
\rho(\sigma_n)  (v_k) 
= \begin{cases}
v_k  & \text{if $k < n$ or $k > n+1$},\\
v_{k-1}  & \text{if $k = n+1$},\\
v_{k+1}  & \text{if $k = n$}.
\end{cases}
\end{eqnarray*}  
Now we can identify this representation $\rho$ as the $1$-shifted representation $\rho_1$. Indeed, a simple algebraic calculation shows that 
\[
\rho_1(\sigma_n)\big(v_k\big) =
\pi(\sigma_{n+1} \gamma_k \sigma_{n+1})
= \rho(\sigma_n) (v_k)
\] for $k, n \in \Nset$. Consequently, $\rho_1 (\Sset_\infty) = \rho_0^{}(\Sset_{2,\infty})\subset \Aut{\cA,\varphi}$. But this entails  $\rho_0^{}(\Sset_\infty) \subset\Aut{\cA,\varphi}$, since $\pi(\sigma_1) = \pi(\gamma_1) = v_1\in \cA^\varphi$ is stipulated.  
\end{proof}
In addition to the exchangeability of star generators we have a strong relationship between them and cycles, as it is evident from Definition \ref{def:rep-gen}, Lemma \ref{lem:cycle-1} and Lemma \ref{lem:cycle-2}: 
\begin{Lemma}\label{lem:cycle-rep}
The permutations $s_1, s_2, \ldots, s_k$ in $\Sset_\infty$ are non-trivial disjoint cycles if and only if the unitaries $\pi(s_1), \pi(s_2), \ldots, \pi(s_k)$ are represented cycles involving mutually disjoint sets of star generators $v_i$. 
\end{Lemma}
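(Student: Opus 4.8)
The plan is to prove this by unwinding the definitions on both sides and invoking the cycle combinatorics already established in Section~\ref{section:symmetric}. The statement is essentially a translation of purely group-theoretic facts (Lemma~\ref{lem:cycle-1} and Lemma~\ref{lem:cycle-2}) through the homomorphism $\pi$, with the only subtlety being the word ``represented cycle'' in the sense of Definition~\ref{def:rep-gen}, where the normalization $n_1=0$ if $0\in\{n_1,\dots,n_k\}$ has to be tracked carefully.

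First I would prove the forward direction. Suppose $s_1,\dots,s_k$ are non-trivial disjoint cycles. By Lemma~\ref{lem:cycle-2} there are mutually disjoint subsets $I_1,\dots,I_k\subset\Nset$ with $s_j\in\langle\gamma_i\mid i\in I_j\rangle$; concretely, by Lemma~\ref{lem:cycle-1}, each $s_j$ equals $\gamma_{n^{(j)}_1}\gamma_{n^{(j)}_2}\cdots\gamma_{n^{(j)}_{\ell_j}}\gamma_{n^{(j)}_1}$ where the indices $n^{(j)}_1,\dots,n^{(j)}_{\ell_j}$ are exactly the points moved by $s_j$ (after the cyclic re-labeling so that $n^{(j)}_1=0$ if $0$ is moved by $s_j$), and disjointness of the cycles means these index sets are mutually disjoint. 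Applying $\pi$ gives $\pi(s_j)=v_{n^{(j)}_1}v_{n^{(j)}_2}\cdots v_{n^{(j)}_{\ell_j}}v_{n^{(j)}_1}$, which is precisely a represented $\ell_j$-cycle in the sense of Definition~\ref{def:rep-gen} (the normalization condition is inherited from the re-labeling of $s_j$), and the star generators occurring in distinct $\pi(s_j)$ are disjoint because the index sets are. Note $\ell_j\ge 2$ since $s_j$ is non-trivial.

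For the converse, suppose $\pi(s_1),\dots,\pi(s_k)$ are represented cycles involving mutually disjoint sets of star generators. Since a represented $\ell$-cycle is by definition $\pi$ of an $\ell$-cycle in $\Sset_\infty$ (using Lemma~\ref{lem:cycle-1} to rewrite $(\gamma_{n_1}\cdots\gamma_{n_\ell})\gamma_{n_1}$ as the $\ell$-cycle $(n_1,\dots,n_\ell)$), each $s_j$ is itself an $\ell_j$-cycle for some $\ell_j\ge 2$; and the disjointness of the star-generator index sets translates directly into disjointness of the supports of the cycles $s_1,\dots,s_k$, so they are non-trivial disjoint cycles.

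I do not expect a genuine obstacle here: the content is bookkeeping, and everything needed is already in hand. The one point requiring a sentence of care is the normalization convention for cycles containing the point $0$ (the $n_1=0$ clause in Lemma~\ref{lem:cycle-1} and Definition~\ref{def:rep-gen}), since one must check that the re-labeling used to bring a cycle into the form $(0,n_2,\dots,n_\ell)$ is compatible on both sides of the equivalence; but as remarked after Lemma~\ref{lem:cycle-1}, this re-labeling is always available and is assumed done throughout, so it causes no difficulty. Accordingly the proof reduces to ``this is immediate from Lemma~\ref{lem:cycle-1}, Lemma~\ref{lem:cycle-2} and Definition~\ref{def:rep-gen} by applying $\pi$,'' which is indeed how I would expect the authors to phrase it.
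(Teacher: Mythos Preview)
Your proposal is correct and matches the paper's treatment exactly: the paper does not give a separate proof environment for this lemma but introduces it with the sentence ``as it is evident from Definition~\ref{def:rep-gen}, Lemma~\ref{lem:cycle-1} and Lemma~\ref{lem:cycle-2},'' which is precisely the reduction you carry out and indeed the phrasing you anticipated. Your careful tracking of the $n_1=0$ normalization is appropriate and adds nothing the paper hasn't already stipulated in the conventions following Lemma~\ref{lem:cycle-1}.
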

The combination of Proposition \ref{prop:trace} and Lemma \ref{lem:cycle-rep} implies strong factorization results of noncommutative de Finetti type for unitary representations of the infinite symmetric group, as we will show below and, more systematically, in Section \ref{section:limit-cycles}.   
\begin{Remark} \normalfont
The Coxeter generators $u_i$ do clearly lack the strong features of star generators from Proposition \ref{prop:trace} and Lemma \ref{lem:cycle-rep}. But the role of the $u_i$'s becomes apparent on the level of actions on the star generators, as they appear for the $n$-shifted endomorphisms $\alpha_n$ in Proposition \ref{prop:trace}. This allows us to apply basic tools from noncommutative ergodic theory. 

On the other hand the sequence of Coxeter generators $(u_i)_{i \in \Nset}$ enjoys stationarity as a distributional symmetry. Quite surprising will be our result, obtained in Theorem \ref{thm:cs-coxeter}, that this sequence is full $\cZ(\cA)$-independent under certain additional assumptions on the representation $\pi$, even though this sequence is neither exchangeable nor spreadable whenever $u_1 \not\in \cZ(\cA)$.  
\end{Remark}
From now on we deal with unitary representations $\pi\colon \Sset_\infty  \to \cB(\cH)$ where the von Neumann algebra $\cA= \vN_\pi(\Sset_\infty)$ is equipped with a faithful normal tracial state $\trace$. In other words, we consider representations $\pi \colon \Sset_\infty \to \cU(\cA)$, the unitaries of $\cA$, which generate the von Neumann algebra $\cA$ of the tracial probability space $(\cA,\trace)$. Consequently, each of the fixed point algebras $\cA_n$, with $-1\le n < \infty$, is $\trace$-conditioned and the maps
\[
E_n\colon \cA \to \cA_n
\]
will denote the corresponding $\trace$-preserving conditional expectations.

Let us now apply our general results from Section \ref{section:exchange-indep}. The next theorem may actually be regarded as a corollary of the noncommutative de Finetti theorem, Theorem \ref{thm:de-finetti}, and the fixed point characterizations in Theorem \ref{thm:fixed-point}. 
\begin{Theorem}\label{thm:indy}
Let $\pi\colon \Sset_\infty \to \cU(\cA)$ be a unitary representation generating the von Neumann algebra $\cA$ of the tracial probability space $(\cA,\trace)$.  
\begin{enumerate} 
\item\label{item:indy-i}
The sequence $\big(\alpha_0^k(\cA_0)\big)_{k \in \Nset_0}$ is exchangeable and full $\cN$-independent with
\[
\cN = \cB^{\tail} = \cA^{\alpha_0} = \cA^{\rho_0^{}(\Sset_\infty)} = \cA_{-1}= \cZ(\cA), 
\] 
where $\cB^\tail$ is the tail algebra of the random sequence $\scrI_{\cA_0}$ associated to $\rho_0^{} = \Ad \pi$.  
\item\label{item:indy-iii}
The sequence $\Big(\alpha_{n}^k\big(\vN_\pi(\Sset_{n+1})\big)\Big)_{k \in \Nset_0}$ is minimal, exchangeable and full $\cN$-independent with
\[
\cN = \cC^{\tail} = \cA^{\alpha_n} = \cA^{\rho_n(\Sset_\infty)} = \cA_{n-1} \qquad \qquad (n \in \Nset). 
\] 
Here $\cC^{\tail}$ is the tail algebra of the random sequence $\scrI_{\vN_\pi(\Sset_{n+1})}$ associated to $\rho_n$. 
\item\label{item:indy-ii}
The sequence of star generators $\big(v_i\big)_{i \in \Nset}$ is minimal, exchangeable and full $\cN$-independent with
\[
\cN = \cA^\tail = \cA^{\alpha_1} = \cA^{\rho_1(\Sset_\infty)} = \cA_{0},
\]
where $\cA^\tail$ is the tail algebra of the sequence $\big(v_i\big)_{i \in \Nset}$. 
\item\label{item:indy-iv}
The sequence $\big(v_i\big)_{i \in \Nset}$ is minimal and full $\cA_{n-1}$-independent for $n > 1$.
\end{enumerate}
\end{Theorem}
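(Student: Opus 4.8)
\textit{Strategy.} The plan is to obtain \eqref{item:indy-i}--\eqref{item:indy-iii} as direct specialisations of the de Finetti machinery of Section~\ref{section:exchange-indep} to the shifted representations $\rho_N=\Ad\pi\circ\sh^N$, and then to derive \eqref{item:indy-iv} from \eqref{item:indy-iii} by an elementary absorption argument. Throughout I will use three facts already available: each $\rho_N$ is generating (Lemma~\ref{lem:inclusions}); the associated endomorphism is the $\trace$-preserving $\alpha_N$ of Proposition~\ref{prop:trace}, with $\cA^{\alpha_N}=\cA_{N-1}$, acting on star generators by $\alpha_N(v_i)=v_i$ for $i<N$ and $\alpha_N(v_i)=v_{i+1}$ for $i\ge N$ (see \eqref{eq:prop-trace}); and $\vN_\pi(\Sset_{m+1})=\vN(v_1,\ldots,v_m)$, since $\Sset_{m+1}$ is presented by $\gamma_1,\ldots,\gamma_m$ (Proposition~\ref{prop:star-presentation}).

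\textit{Parts \eqref{item:indy-i}, \eqref{item:indy-iii}, \eqref{item:indy-ii}.} For \eqref{item:indy-i} I would apply Theorem~\ref{thm:endo-braid-i} to $\rho_0$ with the maximal choice $\cC_0=\cA_0$ (note $\cA_0^{\rho_0}=\cA_0$): by Proposition~\ref{prop:endo-rs} the associated random sequence has $\iota_k=\alpha_0^k|_{\cA_0}$, so the induced sequence of subalgebras is $\big(\alpha_0^k(\cA_0)\big)_{k\in\Nset_0}$, and the ``Moreover'' part of Theorem~\ref{thm:endo-braid-i} gives exchangeability and full $\cN$-independence with $\cN=\cB^\tail=\cA^{\alpha_0}=\cA^{\rho_0(\Sset_\infty)}=\cA_{-1}$; finally $\cA_{-1}=\vN_\pi(\Sset_\infty)'\cap\cA=\cZ(\cA)$ by Lemma~\ref{lem:inclusions}. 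For \eqref{item:indy-iii} I run the analogous argument with $\rho_n$ and $\cC_0:=\vN_\pi(\Sset_{n+1})\subset\cA^{\rho_n(\Sset_{2,\infty})}=\cA_n$: since $\alpha_n$ is a normal $*$-endomorphism, \eqref{eq:prop-trace} gives $\alpha_n^k\big(\vN_\pi(\Sset_{n+1})\big)=\alpha_n^k\big(\vN(v_1,\ldots,v_n)\big)=\vN(v_1,\ldots,v_{n-1},v_{n+k})$, so the induced sequence of subalgebras is exactly $\big(\alpha_n^k(\vN_\pi(\Sset_{n+1}))\big)_{k\in\Nset_0}$, it generates $\vN(v_i\mid i\in\Nset)=\cA$ and is hence minimal, and it is exchangeable because $\rho_n$ is a representation of $\Sset_\infty$. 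Theorem~\ref{thm:de-finetti} then yields full $\cC^\tail$-independence, and Theorem~\ref{thm:fixed-point}\eqref{item:fixed-point-iii} (with the maximal choice $\cN=\cC^\tail$, for which order $\cN$-factorizability is automatic) together with Proposition~\ref{prop:trace} and $\rho_n(\Sset_\infty)=\rho_0(\Sset_{n+1,\infty})$ identifies $\cC^\tail=\cA^{\rho_n(\Sset_\infty)}=\cA^{\alpha_n}=\cA_{n-1}$. Part \eqref{item:indy-ii} is the case $n=1$ of this with $\cC_0=\vN(v_1)$: here $(v_i)_{i\in\Nset}$ is exchangeable and minimal by Proposition~\ref{prop:trace}, its associated representation is $\rho_1$ (as identified in the proof of Proposition~\ref{prop:trace}), the induced operator sequence is $\big(\alpha_1^k(v_1)\big)_{k\ge0}=(v_i)_{i\in\Nset}$, and the same two theorems give full $\cN$-independence with $\cN=\cA^\tail=\cA^{\rho_1(\Sset_\infty)}=\cA^{\alpha_1}=\cA_0$.

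\textit{Part \eqref{item:indy-iv}.} To prove this, fix $n>1$ (so $n\in\Nset$ and \eqref{item:indy-iii} applies) and start from the full $\cA_{n-1}$-independence of the sequence $\big(\vN(v_1,\ldots,v_{n-1},v_{n+k})\big)_{k\ge0}$ provided by \eqref{item:indy-iii}. By Lemma~\ref{lem:inclusions} one has $v_1,\ldots,v_{n-1}\in\vN_\pi(\Sset_n)\subset\cA_{n-1}$, so for any index set $L$, $\vN\big(\cA_{n-1},v_1,\ldots,v_{n-1},\{v_{n+k}\}_{k\in L}\big)=\vN(\cA_{n-1},v_{n+k}\mid k\in L)$; consequently \eqref{item:indy-iii} says precisely that the subfamily $\big(\vN(v_i)\big)_{i\ge n}$ is full $\cA_{n-1}$-independent. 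Now, given disjoint $J,K\subset\Nset$ and $x\in\vN(\cA_{n-1},v_j\mid j\in J)$, $y\in\vN(\cA_{n-1},v_k\mid k\in K)$, the same absorption (the $v_j$ with $j<n$ lie in $\cA_{n-1}$) gives $x\in\vN(\cA_{n-1},v_j\mid j\in J,\ j\ge n)$ and $y\in\vN(\cA_{n-1},v_k\mid k\in K,\ k\ge n)$, with $\{j\in J:j\ge n\}$ and $\{k\in K:k\ge n\}$ disjoint subsets of $\{n,n+1,\ldots\}$; hence $E_{\cA_{n-1}}(xy)=E_{\cA_{n-1}}(x)E_{\cA_{n-1}}(y)$ by the independence just noted. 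This is exactly full $\cA_{n-1}$-independence of $(v_i)_{i\in\Nset}$, and minimality is immediate from $\vN(v_i\mid i\in\Nset)=\cA$.

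\textit{Main obstacle.} I do not anticipate a genuine obstacle: the whole argument is a matter of correctly feeding the shifted representations into the results of Section~\ref{section:exchange-indep} and Proposition~\ref{prop:trace}. The points needing care are essentially bookkeeping: identifying the $\cA_m^{\rho_0}$ of Theorem~\ref{thm:endo-braid-i} with the $\cA_m$ of this section (and noting $\cA_m^{\rho_n}=\cA_{m+n}$), checking that $\vN_\pi(\Sset_{n+1})\subset\cA_n=\cA^{\rho_n(\Sset_{2,\infty})}$ is an admissible choice of $\cC_0$ for $\rho_n$, verifying that the resulting induced sequences are minimal so that Theorem~\ref{thm:fixed-point}\eqref{item:fixed-point-iii} may be invoked with the non-maximal $\cC_0$, and keeping track of exactly which star generators are absorbed into $\cA_{n-1}$ in the passage from \eqref{item:indy-iii} to \eqref{item:indy-iv}.
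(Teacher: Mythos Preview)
Your proposal is correct and follows essentially the same route as the paper's proof. The paper invokes Theorem~\ref{thm:endo-braid-i} for \eqref{item:indy-i} and Corollary~\ref{cor:endo-braid-i} for \eqref{item:indy-iii}--\eqref{item:indy-iv} (which packages the de Finetti theorem and fixed-point characterisation you cite separately), treating \eqref{item:indy-ii} as the case $n=1$ of \eqref{item:indy-iii}; your absorption argument for \eqref{item:indy-iv} is exactly the paper's observation that $v_i\in\cA_{n-1}$ for $i<n$ and $v_i\in\alpha_n^{i-n}(\cA_n)$ for $i\ge n$, spelled out in a bit more detail.
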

\begin{proof}
\eqref{item:indy-i}
Apply  Theorem \ref{thm:endo-braid-i}. 
\eqref{item:indy-iii} 
Clearly the sequence is minimal. Since $\vN_\pi(\Sset_{n+1}) \subset \cA_{n}$, Corollary  \ref{cor:endo-braid-i} applies. 
\eqref{item:indy-ii}
By Proposition \ref{prop:trace}, we have $v_i = \alpha_1^{i-1}(v_1) \in \alpha_1^{i-1}(\cA_{1})$. Thus this is the special case $n=1$ of \eqref{item:indy-iii}. 
We are left to prove \eqref{item:indy-iv}. As before by Proposition \ref{prop:trace}, $ v_i = \alpha_n^{i-n}(v_n) \in \alpha_n^{i-n}(\cA_{n})$ for $i \ge n$ and $v_i \in \cA_{n-1}$ for $i < n$. Now Corollary
\ref{cor:endo-braid-i} yields the full $\cA_{n-1}$-independence of the sequence $(v_i)_{i \in \Nset}$. 
\end{proof}
Now Thoma multiplicativity is an immediate consequence of the conditional independence properties of the two sequences $\big(\alpha^i(\cA_0)\big)_{i \in \Nset_0}$ and $\big(v_i\big)_{i \in \Nset}$. 
Recall that $m_k(\sigma)$ denotes the number of $k$-cycles in the cycle decomposition of the permutation $\sigma$. 
\begin{Theorem}[Thoma multiplicativity]\label{thm:thoma-mult}
Suppose $\pi\colon \Sset_\infty \to \cU(\cA)$ is a unitary representation generating the von Neumann algebra $\cA$ of the tracial probability space $(\cA,\trace)$.  Let $\sigma \in \Sset_\infty$. Then
\begin{eqnarray}\label{eq:thoma-mult-i}
E_{-1}\big( \pi(\sigma)\big) 
= \prod_{k =2}^{\infty}  \left(E_{-1}
  \left( \big(E_0(v_1)\big)^{k-1}\right) \right)^{m_k(\sigma)}. 
\end{eqnarray} 
If $\cA_{-1}\simeq \Cset$ then $E_{-1}$ can be replaced by $\trace$ in \eqref{eq:thoma-mult-i}. If $\cA_{0} \simeq \Cset$ then 
\begin{eqnarray} \label{eq:thoma-mult-ii}
\trace\big( \pi(\sigma)\big) 
= \prod_{k =2}^{\infty} \trace(v_1)^{(k-1) m_k(\sigma)}. 
\end{eqnarray} 
\end{Theorem}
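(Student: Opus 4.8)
The plan is to prove formula~\eqref{eq:thoma-mult-i}; the two displayed special cases then follow immediately, since $\cA_{-1}\cong\Cset$ makes $E_{-1}$ the scalar-valued trace $\trace(\,\cdot\,)\1$, and if in addition $\cA_0\cong\Cset$ then $E_0(v_1)=\trace(v_1)\1$ and \eqref{eq:thoma-mult-i} collapses to \eqref{eq:thoma-mult-ii}. Two soft facts are used throughout. Since $E_{-1}$ has central range and $\trace$ is a trace, the identity $\trace(E_{-1}(x)z)=\trace(xz)$ for $z\in\cZ(\cA)$ shows that $E_{-1}$ is itself tracial and is invariant under all inner automorphisms of $\cA$. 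And since $\alpha_1$ is $\trace$-preserving with $\cA^{\alpha_1}=\cA_0\supseteq\cA_{-1}$ (Proposition~\ref{prop:trace}), both $E_0$ and $E_{-1}$ are unchanged under $\alpha_1$; in particular $E_0(v_i)=E_0(\alpha_1^{\,i-1}v_1)=E_0(v_1)$ for every $i\in\Nset$.

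\textbf{Single cycle.} If $s$ is a $k$-cycle, then, all $k$-cycles being conjugate in $\Sset_\infty$ and $E_{-1}$ being inner-invariant, $E_{-1}(\pi(s))=E_{-1}\bigl(\pi((0,1,\dots,k-1))\bigr)$; by Lemma~\ref{lem:cycle-1} the permutation $(0,1,\dots,k-1)$ equals $\gamma_1\gamma_2\cdots\gamma_{k-1}$, so $E_{-1}(\pi(s))=E_{-1}(v_1v_2\cdots v_{k-1})$. Factoring through $\cA_0$ (as $E_{-1}=E_{-1}\circ E_0$) and peeling off star generators one at a time, using the full $\cA_0$-independence of $(v_i)_{i\in\Nset}$ (Theorem~\ref{thm:indy}\eqref{item:indy-ii}) together with $E_0(v_i)=E_0(v_1)$, gives $E_0(v_1v_2\cdots v_{k-1})=\bigl(E_0(v_1)\bigr)^{k-1}$ and hence $E_{-1}(\pi(s))=E_{-1}\bigl((E_0(v_1))^{\,k-1}\bigr)$. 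This already explains the exponent: a $k$-cycle through $0$ is a word of length $k-1$ in star generators.

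\textbf{Disjoint cycles.} Write $\sigma=s_1s_2\cdots s_n$ with the $s_j$ non-trivial disjoint cycles of lengths $k_1,\dots,k_n$. By Lemma~\ref{lem:cycle-rep} the represented cycles $\pi(s_1),\dots,\pi(s_n)$ lie in von Neumann subalgebras generated by mutually disjoint sets of star generators. The key claim is that they are full $\cZ(\cA)$-independent; granting this, peeling them off one at a time gives $E_{-1}(\pi(\sigma))=\prod_j E_{-1}(\pi(s_j))$, and combining with the single-cycle identity yields
\begin{align*}
E_{-1}(\pi(\sigma))=\prod_{j=1}^{n}E_{-1}\bigl((E_0(v_1))^{\,k_j-1}\bigr)=\prod_{k\ge2}\Bigl(E_{-1}\bigl((E_0(v_1))^{\,k-1}\bigr)\Bigr)^{m_k(\sigma)},
\end{align*}
which is \eqref{eq:thoma-mult-i}. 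The $\cZ(\cA)$-independence of disjoint represented cycles is deduced from the full $\cA_0$-independence of the star generators together with the full $\cZ(\cA)$-independence of the shifted tower $\bigl(\alpha_0^k(\cA_0)\bigr)_{k}$ (Theorem~\ref{thm:indy}\eqref{item:indy-i}); concretely, one may conjugate $\sigma$ so that none of the $s_j$ moves $0$, whence $E_0(\pi(\sigma))=E_0(\pi(s_1))\cdots E_0(\pi(s_n))$ by $\cA_0$-independence, and then show that each $E_0(\pi(s_j))$ actually lies in $\cZ(\cA)$, using $\cA_n=\vN_\pi(\Sset_{n+2,\infty})'\cap\cA$ (Lemma~\ref{lem:inclusions}), the invariance of $E_0$ under $\rho_0(\Sset_{2,\infty})$, and the triviality of $\bigcap_m\cZ\bigl(\vN_\pi(\Sset_{m+2,\infty})\bigr)$ relative to $\cZ(\cA)$.

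\textbf{Main obstacle.} The manipulations above (traciality and inner-invariance of $E_{-1}$, the single-cycle reduction, the peeling via $\cA_0$-independence) are routine; the real work is the $\cZ(\cA)$-multiplicativity over disjoint cycles, equivalently the assertion that $E_0$ of a represented permutation supported away from $0$ is central. This is exactly where both conditional independence statements of Theorem~\ref{thm:indy} --- the one for the star generators $(v_i)_i$ and the one for the shifted tower $(\alpha_0^k(\cA_0))_k$ --- are genuinely needed, and where one must control how the represented subgroups $\vN_\pi(\Sset_{n+1})$ sit inside the commuting-square tower of Theorem~\ref{thm:endo-braid-i}. I expect this to be the only non-mechanical step.
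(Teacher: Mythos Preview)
Your overall architecture is the paper's: factor $E_0\big(\pi(\sigma)\big)$ over disjoint cycles via the full $\cA_0$-independence of the star generators, show the individual factors (for cycles not through $0$) are already central, and then compress by $E_{-1}$. The conjugation trick you use (moving every cycle away from $0$, and in the single-cycle step normalising to $(0,1,\dots,k-1)$) is a harmless simplification the paper does not make; the paper instead keeps one cycle through $0$ and handles the two cases $s(0)=0$ and $s(0)\neq 0$ of a single cycle in a preparatory lemma.

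The one place where your proposal is not yet a proof is the centrality of $E_0\big(\pi(s)\big)$ for a cycle $s$ with $s(0)=0$. You correctly name the ingredient---the full $\cZ(\cA)$-independence of $\big(\alpha_0^k(\cA_0)\big)_k$ from Theorem~\ref{thm:indy}\eqref{item:indy-i}---but your suggested route via relative commutants, invariance of $E_0$ under $\rho_0(\Sset_{2,\infty})$, and ``triviality of $\bigcap_m\cZ\big(\vN_\pi(\Sset_{m+2,\infty})\big)$ relative to $\cZ(\cA)$'' is both unnecessary and, as stated, not a well-defined argument (that intersection is not an object that appears naturally here, and invariance of $E_0$ under $\rho_0(\Sset_{2,\infty})$ says nothing about commutation with $u_1$). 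The actual argument is a two-line commuting square computation. Write $\pi(s)=v_{n_1}v_{n_2}\cdots v_{n_k}v_{n_1}$ with $n_1>0$; peeling the inner generators via $\cA_0$-independence (Lemma~\ref{lem:commuting-square}\eqref{item:cs-v}) gives
\[
E_0\big(\pi(s)\big)=E_0\Big(v_{n_1}\big(E_0(v_1)\big)^{k-1}v_{n_1}\Big).
\]
Now observe that $v_{n_1}\,x\,v_{n_1}=\alpha_0^{\,n_1}(x)$ for $x\in\cA_0$ (this is \eqref{eq:PR'-exchangeable} of Theorem~\ref{thm:exchangeability} combined with Proposition~\ref{prop:endo-rs}), so the right-hand side is $E_0\,\alpha_0^{\,n_1}\big((E_0(v_1))^{k-1}\big)$. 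The $\cZ(\cA)$-independence of $\cA_0$ and $\alpha_0^{\,n_1}(\cA_0)$ is exactly the commuting square identity $E_0\,\alpha_0^{\,n_1}\big|_{\cA_0}=E_{-1}$ (Lemma~\ref{lem:commuting-square}\eqref{item:cs-ii}--\eqref{item:cs-iii}), and hence
\[
E_0\big(\pi(s)\big)=E_{-1}\big((E_0(v_1))^{k-1}\big)\in\cZ(\cA).
\]
This is the paper's Lemma~\ref{lem:thoma-mult-prep}, and with it your ``Main obstacle'' disappears: once each $E_0\big(\pi(s_j)\big)$ is central, the product is already in $\cA_{-1}$ and $E_{-1}$ acts trivially on it, giving \eqref{eq:thoma-mult-i} directly.
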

Note that \eqref{eq:thoma-mult-i} depends only on the cycle class type of the permutation $\sigma$. The condition $\cA_{-1} \simeq \Cset$ means that $\cA$ is a factor and  then \eqref{eq:thoma-mult-i} is commonly addressed as Thoma multiplicativity (compare \cite[]{Okou99a}). We will see in Section \ref{section:Markov} that \eqref{eq:thoma-mult-ii} corresponds to the case where $\alpha_0(\cA) \subset \cA$ is an irreducible subfactor inclusion. For example, this situation occurs in the left regular representation of $\Sset_\infty$.  

Our proof of Theorem \ref{thm:thoma-mult} combines arguments involving independence over the two fixed point algebras $\cA_0$ and $\cA_{-1}$. It reveals that Thoma multiplicativity is a consequence of noncommutative independence. Related techniques will be refined to all fixed point algebras $\cA_n$ in the next section. We start with a preparatory result for the proof of Theorem \ref{thm:thoma-mult}.
\begin{Lemma}\label{lem:thoma-mult-prep}
Let $s\in \Sset_\infty$ be a non-trivial $k$-cycle. Then 
\begin{eqnarray}
E_{0}(\pi(s)) = 
\begin{cases}
E_{-1}\left(\big(E_0(v_{1})\big)^{k-1}\right) & \text{if $s(0)= 0$},\\
\big(E_0(v_{1})\big)^{k-1}                   & \text{if $s(0)\neq 0$};
\end{cases}
\end{eqnarray}  
\end{Lemma}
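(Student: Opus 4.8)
The plan is to compute $E_0(\pi(s))$ for a $k$-cycle $s$ by writing $\pi(s)$ as a product of star generators and exploiting the conditional independence and exchangeability results from Theorem \ref{thm:indy}. By Lemma \ref{lem:cycle-1} (and Definition \ref{def:rep-gen}), if $s$ is a $k$-cycle we may write $s = (\gamma_{n_1}\gamma_{n_2}\cdots\gamma_{n_k})\gamma_{n_1}$ with the $n_i$ distinct, where $n_1 = 0$ precisely in the case $s(0)\neq 0$. Thus $\pi(s) = v_{n_1}v_{n_2}\cdots v_{n_k}v_{n_1}$, a word in distinct star generators.

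\textbf{Case $s(0)=0$.} Here all of $n_1,\ldots,n_k$ lie in $\Nset$, so $\pi(s)$ is a word in the star generators $v_i$, $i\in\Nset$, which by Theorem \ref{thm:indy}\eqref{item:indy-ii} form an exchangeable sequence that is full $\cA_0$-independent, with $\cA_0 = \cA^\tail$. First I would use exchangeability to reduce to a canonical choice of indices, say replacing the $k$-cycle by $v_1 v_2 \cdots v_k v_1$: since $E_0$ is the conditional expectation onto the tail algebra $\cA_0$, and exchangeability of the sequence means all multilinear functionals (in particular the $\cA_0$-valued ones, via $E_0$) are permutation invariant, we have $E_0(v_{n_1}v_{n_2}\cdots v_{n_k}v_{n_1}) = E_0(v_1 v_2\cdots v_k v_1)$. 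Then I would peel off factors one at a time using the module property of $E_0$ together with full $\cA_0$-independence: the singletons $\vN(v_1),\vN(v_2),\ldots$ are $\cA_0$-independent, so $E_0$ factorizes across disjoint index sets. Concretely, grouping $v_1\cdots v_k v_1$ and repeatedly applying the factorization $E_0(xy) = E_0(x)E_0(y)$ when $x,y$ involve disjoint star generators (combined with the fact that $v_i \in \cA_1 \subset \cA$ and $E_0(v_i)$ is the same element $E_0(v_1)$ for each $i$ by exchangeability/stationarity), one telescopes the word down. The outer $v_1$'s bracket the product, and a short induction on $k$ — using at each stage that the "new" star generator is independent of everything accumulated so far — should give $E_0(v_1 v_2\cdots v_k v_1) = E_0(v_1)^k \cdot (\text{correction})$; tracking the bracketing carefully yields $\big(E_0(v_1)\big)^{k-1}$ after one applies $E_{-1}$, because $E_0(v_1)\in\cA_0$ but the product of $k$ copies needs to be further conditioned down to the center $\cA_{-1}$ to collapse the dependence between the repeated outer factor and the chain. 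This is where the $E_{-1}$ appears: $E_{-1}E_0 = E_{-1}$, and on $\cA_0$ the relevant independence over $\cA_{-1} = \cZ(\cA)$ (from Theorem \ref{thm:indy}\eqref{item:indy-i}) lets the $k$ copies of $E_0(v_1)$ be multiplied only after passing to the center, giving $E_{-1}\big((E_0(v_1))^{k-1}\big)$.

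\textbf{Case $s(0)\neq 0$.} Now $n_1 = 0$, so $v_{n_1} = v_0 = \1$, hence $\pi(s) = v_{n_2}v_{n_3}\cdots v_{n_k}$, a word of length $k-1$ in distinct star generators $v_i$ with $i\in\Nset$ (none equal to the identity, and no repetition). Applying exchangeability and full $\cA_0$-independence exactly as above — but now with no repeated outer factor to force a descent to the center — gives directly $E_0(v_{n_2}\cdots v_{n_k}) = E_0(v_2)E_0(v_3)\cdots E_0(v_k) = \big(E_0(v_1)\big)^{k-1}$, using stationarity to identify each $E_0(v_i)$ with $E_0(v_1)$. Here there is no further conditioning needed because the $k-1$ factors already live over $\cA_0$ and their product stays in $\cA_0$.

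\textbf{Main obstacle.} The delicate point is the bookkeeping of the outer repeated generator $v_{n_1}$ in the case $s(0)=0$: one must be careful that $v_1 v_2\cdots v_k v_1$ is \emph{not} simply a product over disjoint index sets (the index $1$ appears twice), so naive full independence does not immediately apply to the outermost factorization. The resolution is to first absorb one $v_1$ using the module property over $\vN(v_1)\subset\cA$, or to use full $\cA_0$-independence of $\vN(v_1)$ against $\vN(v_2,\ldots,v_k)$ to write $E_0(v_1 (v_2\cdots v_k) v_1) = v_1 E_0(v_2\cdots v_k) v_1$ only when $v_1$ commutes appropriately — which it does not in general. Instead I expect the clean route is: $E_0(v_1 v_2\cdots v_k v_1)$, apply exchangeability to move to $E_0(v_2 v_3 \cdots v_{k+1} v_2)$ (shift all indices up by one, legitimate since the sequence is stationary and the index set is $\Nset$), then this word uses indices $\{2,\ldots,k+1\}$ all disjoint from... no — $v_2$ still repeats. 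The genuinely correct mechanism, which I would spell out, is that full $\cA_0$-independence together with $E_0(v_i) \in \cA_0$ and $E_0(\,\cdot\,)$ being an $\cA_0$-bimodule map lets one contract $E_0(w v_j w')$ for $v_j$ independent of the algebra generated by $w, w'$ as $E_0(w) E_0(v_j) E_0(w')$ up to the subtlety that $E_0(v_j)$ may not be central; iterating and then applying $E_{-1}$ (which kills the non-central part and uses $\cZ(\cA)$-independence from \eqref{item:indy-i}) is what produces the stated formula. Verifying this contraction identity carefully — essentially that $E_0(xyz) = E_0(xE_0(y)z)$ when $\vN(y)$ is full $\cA_0$-independent from $\vN(x,z)$, which is the commuting-square condition \eqref{item:cs-v} of Lemma \ref{lem:commuting-square} — is the technical heart, but it is a direct consequence of results already established.
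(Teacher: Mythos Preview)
Your handling of the case $s(0)\neq 0$ is correct and matches the paper: with $n_1=0$ one has $v_{n_1}=\1$, so $\pi(s)=v_{n_2}\cdots v_{n_k}$ is a product over \emph{distinct} indices in $\Nset$, and full $\cA_0$-independence (Theorem \ref{thm:indy}\eqref{item:indy-ii}) plus $E_0\circ\alpha_1=E_0$ give the factorization $E_0(v_{n_2})\cdots E_0(v_{n_k})=(E_0(v_1))^{k-1}$.

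The case $s(0)=0$ has a genuine gap. You correctly reach (via Lemma \ref{lem:commuting-square}\eqref{item:cs-v}) the expression
\[
E_0\big(\pi(s)\big)=E_0\Big(v_{n_1}\,(E_0(v_1))^{k-1}\,v_{n_1}\Big),\qquad n_1>0,
\]
but then your argument dissolves into ``iterating and then applying $E_{-1}$'', which is not what happens: we are computing $E_0$, not $E_{-1}$, and there is nothing further to iterate. The missing step is to recognise that for $a\in\cA_0$ one has $v_{n_1}\,a\,v_{n_1}=\alpha_0^{n_1}(a)$ (this is \eqref{eq:PR'-exchangeable} combined with Proposition \ref{prop:endo-rs}), so the expression above becomes $E_0\,\alpha_0^{n_1}\big((E_0(v_1))^{k-1}\big)$. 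Now Theorem \ref{thm:indy}\eqref{item:indy-i} says $\cA_0$ and $\alpha_0^{n_1}(\cA_0)$ are full $\cA_{-1}$-independent, which by Lemma \ref{lem:commuting-square}\eqref{item:cs-ii} gives $E_0|_{\alpha_0^{n_1}(\cA_0)}=E_{-1}$; hence $E_0\,\alpha_0^{n_1}\,E_0=E_{-1}$ and the claimed formula follows. Your detours through index-shifting and your worry about whether $v_1$ ``commutes appropriately'' are symptoms of not having spotted this conjugation identity: once you see that $v_{n_1}(\cdot)v_{n_1}$ moves $\cA_0$ to the shifted copy $\alpha_0^{n_1}(\cA_0)$, the second commuting square does all the work.
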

\begin{proof}
Since a $k$-cycle is of the form  $s = \gamma_{n_1} \gamma_{n_2} \cdots \gamma_{n_k} \gamma_{n_1}$
for mutually distinct $n_1, n_2, \ldots, n_k$, we conclude with full $\cA_0$-independence from Theorem \ref{thm:indy} \eqref{item:indy-ii} that
\begin{align*}
E_0\big(\pi(s)\big) 
&= E_0\big(v_{n_1} v_{n_2} \cdots v_{n_k} v_{n_1}\big)\\
&= E_0\big(v_{n_1} E_0(v_{n_2} \cdots v_{n_k}) v_{n_1}\big) && \text{(by
Lemma \ref{lem:commuting-square}\eqref{item:cs-v})}\\
&= E_0\big(v_{n_1} E_0(v_{n_2}) \cdots E_0(v_{n_k}) v_{n_1}\big)\\
&= E_0\left(v_{n_1} \big(E_0(v_{1})\big)^{k-1}  v_{n_1}\right).
\end{align*}
Here we have used for the last equation that $\alpha_1(v_{i}) = v_{i+1}$, and $E_0 \circ \alpha_1 = E_0$ because $\cA_0 = \cA^\alpha_1$. If $s(0)\neq 0$ then $n_1=0$ (see the discussion after Lemma \ref{lem:cycle-1}) and we are done. It remains to consider the case $s(0) = 0$. Thus $n_1>0$ by Lemma \ref{lem:cycle-1}. We infer from $v_{n_1} x v_{n_1} = \alpha_0^{n_1} (x)$ for $x\in \cA_0$ by \eqref{eq:PR'-exchangeable} in Theorem \ref{thm:exchangeability} and Proposition \ref{prop:endo-rs} that
\begin{eqnarray*}   
E_{0}\left(v_{n_1} \big(E_0(v_{1})\big)^{k-1}  v_{n_1}\right)
&=& E_{0} \alpha_0^{n_1}\left(\big(E_0(v_{1})\big)^{k-1}\right)
\end{eqnarray*}
Since $\cA_0$ and $\alpha_0^{n_1}(\cA_0)$ are full $\cA_{-1}$-independent
by Theorem \ref{thm:indy} \eqref{item:indy-i}, we conclude further with
the commuting square property from Lemma \ref{lem:commuting-square} \eqref{item:cs-ii} that  $E_0 
\alpha_0^{n_1} E_0 = E_{-1}$ and thus
\begin{eqnarray*}   
E_{0} \alpha_0^{n_1}\left(\big(E_0(v_{1})\big)^{k-1}\right)
&=& E_{-1} \left(\big(E_0(v_{1})\big)^{k-1}\right).
\end{eqnarray*}
This completes the proof of the lemma. 
\end{proof}
\begin{proof}[Proof of Theorem \ref{thm:thoma-mult}]
Suppose $\sigma$ has the cycle decomposition $\sigma = s_1 s_2\cdots s_n$, where $s_1, s_2, \ldots, s_n$ are distinct non-trivial cycles with length $k_1, k_2, \ldots k_n \ge 2$, respectively. We conclude from Lemma \ref{lem:cycle-2} (or Lemma \ref{lem:cycle-rep}) and the full $\cA_0$-independence of $(v_i)_{i \in \Nset}$ (see Theorem \ref{thm:indy}
\eqref{item:indy-ii}) that 
\begin{equation}\label{eq:thoma-neu-i}
E_0\big(\pi(s_1 s_2 \cdots s_n) \big) 
= E_0\big(\pi(s_1)\big) E_0 \big(\pi(s_2)\big) \cdots E_0\big(\pi(s_n)\big).
\end{equation}
If $n=1$, then $\cA_{-1}\subset \cA_0$ and Lemma \ref{lem:thoma-mult-prep} imply
\[
 E_{-1} \big(\pi(s_1)\big) = E_{-1} E_0\big(\pi(s_1)\big) =  E_{-1}\left(\big(E_0(v_{1})\big)^{k_1-1}\right).
\]
We are left to consider the case $n >1$. Since distinct cycles commute, we can assume that the point $0$ is contained in the cycle $s_1$ if $\sigma(0) \neq 0$. But then each of the remaining cycles $s_i$ satisfies $s_i(0)=0$ and, by Lemma \ref{lem:thoma-mult-prep},  
\[
E_0\big(\pi(s_i)\big) =  E_{-1}\left(\big(E_0(v_{1})\big)^{k_i-1}\right).
\]
Thus \eqref{eq:thoma-neu-i} becomes
\[
E_0\big(\pi(s_1 s_2 \cdots s_n) \big) 
= E_{0}\big(\pi(s_1)\big)\,\, \prod_{i=2}^n
  E_{-1}\left(\big(E_0(v_{1})\big)^{k_i-1}\right).
\]
We finally compress this equation by $E_{-1}$ to arrive at
\[
E_{-1}\big(\pi(s_1 s_2 \cdots s_n) \big) 
= \prod_{i=1}^n E_{-1}\left(\big(E_0(v_{1})\big)^{k_i-1}\right),
\]
due to $\cA_0 \subset \cA_1$, the module property of conditional expectations and Lemma \ref{lem:thoma-mult-prep}.
\end{proof}
\begin{Remark}\normalfont \label{rem:non-tracial}
An inspection of Proposition \ref{prop:trace} and its proof yields an interesting generalization beyond the tracial setting.  Similar to Section \ref{section:exchange-indep}, the idea is to replace the representation $\rho_0^{}$ of $\Sset_\infty$ by shifted representations $\rho_N = \rho_0^{} \circ \,\sh^N$. Then one arrives at the conclusion that the following conditions are equivalent for $N \in \Nset$, under the assumptions of Proposition \ref{prop:trace}:
\begin{enumerate}
\item \label{item:rem-not-trace-i} 
$\rho_N(\Sset_\infty) \subset \Aut{\cA,\varphi}$;
\item \label{item:rem-not-trace-ii} 
$\vN_\pi(\Sset_{N+1,\infty}) \subset \cA^\varphi$;
\item \label{item:rem-not-trace-iii} 
the sequence $(v_{i+N-1})_{i \in \Nset}$ is exchangeable. 
\end{enumerate}
Proposition \ref{prop:trace} is recovered in the case $N=1$ by adding the requirement $v_1 \in \cA^\varphi$ to each of the three conditions. 
Note that Thoma multiplicativity from Theorem \ref{thm:thoma-mult} is not valid in the non-tracial case because \eqref{eq:thoma-mult-i} depends only on the cycle class type and hence restricts to a character of $\Sset_\infty$. But it can be replaced by a slightly modified version of the generalized Thoma multiplicativity provided in Theorem \ref{thm:thoma-mult-general}. In view of Remark \ref{rem:olshanski} it is of interest to investigate further possible connections between this generalized version and parametrizations of so-called `irreducible admissable representations with finite depth' considered in \cite{Olsh89a, Okou99a}. 
Here we do not dwell further on such non-tracial settings since this subject goes beyond the scope of present paper and is better postponed to another publication. 
\end{Remark}

\section{Fixed point algebras, limit cycles and generalized Thoma multiplicativity}
\label{section:limit-cycles}
We continue our investigations from the previous section for unitary representations of $\Sset_\infty$ in the context of tracial probability spaces. For this purpose we introduce limit cycles, a generalization of cycles, and study their properties. This will yield our main results of this section: the identification of all fixed point algebras $\cA_n$ (see Theorem \ref{thm:fixed-points}) and a generalization of Thoma multiplicativity (see Theorem \ref{thm:thoma-mult-general}). Finally we will briefly relate our setting to Okounkov's approach \cite{Okou99a} in Remark \ref{rem:olshanski}.

Let us recall our setting from Section \ref{section:multiplicative}: we are given the tracial probability space $(\cA,\trace)$ equipped with a unitary representation $\pi\colon \Sset_\infty \to \cU(\cA)$ such that $\cA =\vN_\pi(\Sset_\infty)$. So $\rho_0^{} := \Ad \pi$ defines a generating representation of $\Sset_\infty$ in $\Aut{\cA,\trace}$. Further $\cA_{n-1}$ denotes the fixed point algebra of $\rho_0^{}(\Sset_{n+1,\infty})$ in $\cA$ and $E_{n-1}$ the $\trace$-preserving conditional expectation from $\cA$ onto $\cA_{n-1}$ for $n \in \Nset$.  
\begin{Definition}\normalfont \label{def:limit-cycle}
Suppose $v_{n_1}v_{n_2}\cdots v_{n_k} v_{n_1} \in \cA$ is a (represented) $k$-cycle with $k \ge 1$. Then 
\[
E_{n}(v_{n_1}v_{n_2}v_{n_3} \cdots v_{n_k}v_{n_1}), \qquad 
n \ge -1, 
\]
is called a \emph{limit $k$-cycle}. Two special types of limit cycles are abbreviated as follows:
\begin{enumerate} 
\item
A \emph{limit $2$-cycle} $A_i$, with $i \in \Nset_0$,  is of the form 
\[
A_i = E_{j}(v_i v_k v_i),  \qquad 0 \le i \le j < k .
\]
\item   
A \emph{limit $k$-cycle} $C_k$ is of the form 
\[
C_k = E_{-1}(v_{i_1} v_{i_2}\cdots v_{i_k} v_{i_1}), \qquad k \ge 1.   
\]
\end{enumerate}
A limit cycle is said to be \emph{trivial} if it is a scalar multiple of the identity. A limit cycle in $\cZ(\cA)$ is said to be \emph{central}.  
\end{Definition}
The symbols $A_i$ and $C_k$ will be exclusively reserved for these two types of limit cycles, respectively. As we will see in Proposition \ref{prop:limit-cycles}, the limit 2-cycles $A_i$ depend only on the index $i$, in particular
\[
A_0 = E_0(v_0 v_1 v_0) = E_0(v_1),
\]
and the limit $k$-cycles $C_k$ depend only on the cycle length $k$, in particular
\[
C_k = E_{-1}(A_0^{k-1}).
\]  
Their prominent role becomes evident from the identification of all fixed point algebras $\cA_n$ in Theorem \ref{thm:fixed-points} as well as an immediate reformulation of \eqref{eq:thoma-mult-i} on Thoma multiplicativity in Theorem \ref{thm:thoma-mult}: 
\[
E_{-1}\big( \pi(\sigma)\big) 
= \prod_{k =2}^{\infty}  \Big(E_{-1}(A_0^{k-1})\Big)^{m_k(\sigma)}
= \prod_{k =2}^{\infty}  C_k^{m_k(\sigma)}. 
\]   
We are ready to formulate our main result of this section.  
\begin{Theorem}\label{thm:fixed-points}
Suppose the tracial probability space $(\cA,\trace)$ is equipped with the generating representation $\pi\colon \Sset_\infty \to \cU(\cA)$.
Then it holds 
\begin{eqnarray*}
\cA_{-1} &=& \cZ(\cA),\\
\cA_n &=& \cA_0 \vee \vN_\pi(\Sset_{n+1}),  
\end{eqnarray*}
with $n \in \Nset_0$. Moreover, in terms of the limit cycles $A_0 = E_0(v_1)$ and $C_k$,
\begin{eqnarray*}
\cZ(\cA) &=& \vN(C_k \mid k \ge 1),\\
\cA_0 &=&  \vN(A_0, C_k\mid k \ge 1).
\end{eqnarray*} 
In particular, $\cA_0$ is an abelian von Neumann subalgebra of $\cA$.   
\end{Theorem}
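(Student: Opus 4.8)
The plan is to treat the four assertions in turn, reducing everything to Thoma multiplicativity (Theorem~\ref{thm:thoma-mult}) and the de Finetti type results of Section~\ref{section:multiplicative} (Theorem~\ref{thm:indy}). The equality $\cA_{-1}=\cZ(\cA)$ is already recorded in Lemma~\ref{lem:inclusions}, whose tower of inclusions also gives the easy half $\cA_0\vee\vN_\pi(\Sset_{n+1})\subseteq\cA_n$ (both algebras on the left lie inside $\cA_n$). So the substance is: (i) the concrete descriptions of $\cZ(\cA)$ and $\cA_0$ (which also yields the abelianness of $\cA_0$), and (ii) the reverse inclusion $\cA_n\subseteq\cA_0\vee\vN_\pi(\Sset_{n+1})$.

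For (i) I would use that the $\trace$-preserving conditional expectations $E_{-1}$ and $E_0$ are normal and that $\linh\set{\pi(\sigma)}{\sigma\in\Sset_\infty}$ is weak*-dense in $\cA$; hence $\cA_{-1}=E_{-1}(\cA)$ is the weak*-closed linear span of $\set{E_{-1}(\pi(\sigma))}{\sigma\in\Sset_\infty}$, and similarly $\cA_0$ is the weak*-closed linear span of $\set{E_0(\pi(\sigma))}{\sigma\in\Sset_\infty}$. Writing $A_0=E_0(v_1)$ and $C_k=E_{-1}(A_0^{k-1})$, Theorem~\ref{thm:thoma-mult} gives $E_{-1}(\pi(\sigma))=\prod_{k\ge 2}C_k^{m_k(\sigma)}$; since $\sigma$ and $\sigma^{-1}$ have the same cycle type the $C_k$ are self-adjoint, they are central by construction, every monomial in them is of the form $E_{-1}(\pi(\sigma))$ for a suitable product of disjoint cycles, and therefore $\cZ(\cA)=\vN(C_k\mid k\ge 1)$. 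For $\cA_0$ I would run the cycle computation underlying Theorem~\ref{thm:thoma-mult} one level higher: for a disjoint cycle decomposition $\sigma=s_1\cdots s_n$, full $\cA_0$-independence of $(v_i)$ (Theorem~\ref{thm:indy}\eqref{item:indy-ii}) gives $E_0(\pi(\sigma))=\prod_i E_0(\pi(s_i))$, and Lemma~\ref{lem:thoma-mult-prep} evaluates $E_0(\pi(s_i))$ as $A_0^{k_i-1}$ for the at most one cycle through the point $0$ and as $C_{k_i}$ for all the others. Thus every $E_0(\pi(\sigma))$ is a monomial $A_0^a\prod_k C_k^{b_k}$, all such monomials occur, and these operators commute and are self-adjoint; hence $\cA_0=\vN(A_0,C_k\mid k\ge 1)$ is abelian.

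The main obstacle is (ii), and here the idea is to realise $\cA_n$ as a tail algebra. By Theorem~\ref{thm:indy}\eqref{item:indy-iii} applied with $n$ replaced by $n+1$, the sequence $\big(\alpha_{n+1}^{k}(\vN_\pi(\Sset_{n+2}))\big)_{k\ge 0}$ is exchangeable with tail algebra $\cA_n$, and by \eqref{eq:prop-trace} this is the explicit sequence $\cB_k=\vN\{v_1,\dots,v_n,v_{n+1+k}\}=\vN_\pi(\Sset_{n+1})\vee\vN(v_{n+1+k})$, so that
\[
\cA_n=\bigcap_{m\ge 0}\Big(\vN_\pi(\Sset_{n+1})\vee\vN(v_j\mid j\ge n+1+m)\Big).
\]
Put $\cD:=\cA_0\vee\vN_\pi(\Sset_{n+1})$ and $\cR_m:=\vN(v_j\mid j\ge n+1+m)$. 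Since $\cA_0$ is the tail algebra of $(v_i)$ (Theorem~\ref{thm:indy}\eqref{item:indy-ii}) we have $\cA_0\subseteq\cR_m$, so $\vN_\pi(\Sset_{n+1})\vee\cR_m=\cD\vee\cR_m$ and the intersection above equals $\bigcap_m(\cD\vee\cR_m)$; moreover, full $\cA_0$-independence of $(v_i)$ applied to the disjoint index sets $\{1,\dots,n\}$ and $\{n+1+m,n+2+m,\dots\}$ shows via Lemma~\ref{lem:commuting-square} that $\cA_0\subset\cD$ and $\cR_m\subset\cD\vee\cR_m$ form a commuting square. It then remains to push $\bigcap_m$ through the join by $\vN_\pi(\Sset_{n+1})$: the subspaces $L^2(\cR_m)\ominus L^2(\cA_0)$ decrease to $\{0\}$ by reverse martingale convergence (using $\bigcap_m\cR_m=\cA_0$), and, via the $L^2$-decomposition attached to the commuting square, the orthocomplements $L^2(\cD\vee\cR_m)\ominus L^2(\cD)$ decrease to $\{0\}$ as well; hence each $x\in\cA_n=\bigcap_m(\cD\vee\cR_m)$ satisfies $x=E_\cD(x)\in\cD$. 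This last interchange is the delicate point of the proof; it is exactly where the commuting-square structure supplied by noncommutative independence is needed, and one has to be careful with the $L^2$-decomposition amalgamated over $\cA_0$ rather than over $\Cset$ and with the reverse martingale convergence that drives it.
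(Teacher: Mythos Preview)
Your treatment of part (i) is correct and is exactly how the paper proceeds for $\cA_{-1}$ and $\cA_0$: one computes $E_{-1}(\pi(\sigma))$ and $E_0(\pi(\sigma))$ on the weak*-dense set $\pi(\Sset_\infty)$ via Thoma multiplicativity and Lemma~\ref{lem:thoma-mult-prep}, and reads off the generators.

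For part (ii), however, the paper does \emph{not} switch to an intersection/tail-algebra argument; it simply continues the same computation at level $n$. One observes that $E_n\pi(\Sset_\infty)$ is weak*-total in $\cA_n$, factorises $E_n(\pi(\sigma))=\prod_i E_n(\pi(s_i))$ over the disjoint cycle decomposition using full $\cA_n$-independence of the sequence $\big(\alpha_{n+1}^k(\vN_\pi(\Sset_{n+2}))\big)_{k\ge0}$ from Theorem~\ref{thm:indy}\eqref{item:indy-iii}, and then invokes Proposition~\ref{prop:limit-cycles} to evaluate each limit cycle $E_n(\pi(s_i))$ explicitly as a monomial in $v_0,\ldots,v_n$, $A_0$, and the $C_k$. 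This immediately yields $\cA_n\subseteq\vN(A_0,C_k,\vN_\pi(\Sset_{n+1}))=\cA_0\vee\vN_\pi(\Sset_{n+1})$. In other words, the step you carried out for $E_0$ using Lemma~\ref{lem:thoma-mult-prep} has a direct analogue for $E_n$, namely Proposition~\ref{prop:limit-cycles}, and that is all that is needed.

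Your alternative route for (ii) has a genuine gap at precisely the point you flag. The commuting square $\cA_0\subset\cD$, $\cR_m\subset\cD\vee\cR_m$ gives $e_\cD e_{\cR_m}=e_{\cA_0}$, but it says nothing about $e_{\cD\vee\cR_m}$: there is no ``$L^2$-decomposition attached to the commuting square'' that expresses $L^2(\cD\vee\cR_m)\ominus L^2(\cD)$ in terms of $L^2(\cR_m)\ominus L^2(\cA_0)$. Such a decomposition would amount to a relative tensor product structure $L^2(\cD\vee\cR_m)\cong L^2(\cD)\otimes_{\cA_0}L^2(\cR_m)$, which is much stronger than conditional independence and is not available from the results of the paper. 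Without it, reverse martingale convergence only gives $E_{\cD\vee\cR_m}\to E_{\bigcap_m(\cD\vee\cR_m)}$, and identifying that intersection with $\cD$ is exactly the statement you are trying to prove. To break this circularity one ends up having to compute $E_\cD$ (or $E_{\cD\vee\cR_m}$) on $\pi(\Sset_\infty)$ anyway, which brings you back to the paper's direct method via Proposition~\ref{prop:limit-cycles}.
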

Since its proof requires preparative results on limit cycles, let us first draw some immediate conclusions. Recall that the inclusion of von Neumann algebras $\cN \subset \cM$ is said to be \emph{irreducible} if $\cM \cap \cN^\prime \simeq \Cset$.  
\begin{Corollary}\label{cor:fixed-points}
Let the setting be as in Theorem \ref{thm:fixed-points}.
\begin{enumerate}
\item \label{item:cor-fixed-points-i}
$\cA = \vN_\pi(\Sset_\infty)$ is a factor if and only if all $C_k$'s are trivial. 
\item  \label{item:cor-fixed-points-ii}
The following assertions are equivalent:
\begin{enumerate}
\item
The inclusion $\vN_\pi(\Sset_{2,\infty}) \subset \vN_\pi(\Sset_\infty)$ is irreducible;
\item
$\cA_n = \vN_\pi(\Sset_{n+1})$ for any $n \ge 0$;
\item
the fixed point algebra $\cA_0$ is trivial;
\item
the limit 2-cycle $A_0$ is trivial; 
\item 
$\trace$ is a Markov trace (see Definition \ref{def:markov-trace}).
\item The inclusion $\alpha_1(\vN_\pi(\Sset_\infty)) \subset \vN_\pi(\Sset_\infty)$ is irreducible.
\end{enumerate}
\end{enumerate}
We have the implication \eqref{item:cor-fixed-points-ii} $\Rightarrow$ \eqref{item:cor-fixed-points-i}.
\end{Corollary}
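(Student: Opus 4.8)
The plan is to reduce every assertion to the explicit description of the fixed point algebras obtained in Theorem \ref{thm:fixed-points}, together with the relative-commutant identity $\cA_n = \vN_\pi(\Sset_{n+2,\infty})^\prime \cap \cA$ from Lemma \ref{lem:inclusions}. For statement (i) this is immediate: Theorem \ref{thm:fixed-points} gives $\cZ(\cA) = \cA_{-1} = \vN(C_k \mid k \ge 1)$, so $\cA$ is a factor, i.e.\ $\cZ(\cA) = \Cset\1$, precisely when every $C_k$ is a scalar multiple of $\1$, which is exactly the triviality of all limit cycles $C_k$.

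For statement (ii) I would take condition (c), the triviality of $\cA_0$, as the hub and connect every other condition to it. Equivalence of (a) and (c) is nothing but Lemma \ref{lem:inclusions}: one has $\cA_0 = \vN_\pi(\Sset_{2,\infty})^\prime \cap \cA$, so irreducibility of $\vN_\pi(\Sset_{2,\infty}) \subset \vN_\pi(\Sset_\infty)$ means by definition $\cA_0 \cong \Cset$. For (b) $\Leftrightarrow$ (c) one invokes $\cA_n = \cA_0 \vee \vN_\pi(\Sset_{n+1})$ from Theorem \ref{thm:fixed-points}: if $\cA_0$ is trivial then $\cA_n = \vN_\pi(\Sset_{n+1})$ for every $n \ge 0$, and conversely the instance $n = 0$ of (b) reads $\cA_0 = \vN_\pi(\Sset_1) = \Cset\1$. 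For (c) $\Leftrightarrow$ (d) I would observe that $A_0 = E_0(v_1) \in \cA_0$, so (c) forces $A_0$ trivial; conversely, if $A_0 = c\1$ then $C_k = E_{-1}(A_0^{k-1}) = c^{k-1}\1$ for all $k \ge 1$, whence $\cA_0 = \vN(A_0, C_k \mid k \ge 1) = \Cset\1$ again by Theorem \ref{thm:fixed-points}.

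The equivalence of (c) and (f) requires one extra observation about the endomorphism $\alpha_1$. By Proposition \ref{prop:trace} one has $\alpha_1(v_i) = v_{i+1}$ for $i \ge 1$, hence $\alpha_1(\cA) = \vN_\pi\big(\langle \gamma_i \mid i \ge 2\rangle\big)$. Now $\langle \gamma_i \mid i \ge 2\rangle$ and $\Sset_{2,\infty}$ are the stabilizers in $\Sset_\infty$ of the points $1$ and $0$, respectively, so they are conjugate via $\sigma_1$; since $\pi(\sigma_1) = v_1$ is a self-adjoint unitary in $\cA$, this yields $\alpha_1(\cA) = \Ad v_1\big(\vN_\pi(\Sset_{2,\infty})\big)$ and therefore $\alpha_1(\cA)^\prime \cap \cA = \Ad v_1(\cA_0)$. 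Thus irreducibility of $\alpha_1(\cA) \subset \cA$ is equivalent to $\cA_0 \cong \Cset$. The only link that genuinely draws on later material is (d) $\Leftrightarrow$ (e): once the notion of a Markov trace (Definition \ref{def:markov-trace}) is unwound, $\trace$ being a Markov trace translates into the triviality of the limit $2$-cycle $A_0$, which I would either verify here directly or simply cite from Section \ref{section:Markov}. Finally (ii) $\Rightarrow$ (i) is immediate, since $\cA_0 \cong \Cset$ forces $\cZ(\cA) = \cA_{-1} \subset \cA_0 = \Cset\1$ (equivalently, all $C_k = E_{-1}(A_0^{k-1})$ are then trivial).

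Apart from this bookkeeping on top of Theorem \ref{thm:fixed-points}, the only steps carrying real content are the identification $\alpha_1(\cA) = \vN_\pi(\langle \gamma_i \mid i \ge 2\rangle)$ together with its conjugacy to $\vN_\pi(\Sset_{2,\infty})$ that is needed for (c) $\Leftrightarrow$ (f), and the unpacking of the Markov-trace condition in (e); I expect these to be the main, and in fact rather mild, obstacles.
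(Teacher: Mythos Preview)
Your proposal is correct and follows essentially the same route as the paper: reduce (i) and the equivalences (a)--(d) to the explicit identifications in Theorem~\ref{thm:fixed-points} and Lemma~\ref{lem:inclusions}, defer (c)$\Leftrightarrow$(e) to Proposition~\ref{prop:Markov}, and handle (f) by a conjugation argument. The only cosmetic difference is in that last step: you identify $\alpha_1(\cA)=\vN_\pi(\langle\gamma_i\mid i\ge 2\rangle)$ as the represented stabilizer of the point $1$ and conjugate it to $\vN_\pi(\Sset_{2,\infty})$ via $v_1=u_1$, whereas the paper phrases the same fact as $\alpha_0=\Ad u_1\circ\alpha_1$ (so $x$ commutes with $\alpha_0(y)$ iff $u_1xu_1$ commutes with $\alpha_1(y)$), thereby matching the relative commutants of $\alpha_0(\cA)=\vN_\pi(\Sset_{2,\infty})$ and $\alpha_1(\cA)$ up to $\Ad u_1$.
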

\begin{proof}
\eqref{item:cor-fixed-points-i} is clear since $\cZ(\cA)$ is generated by the $C_k$'s. \eqref{item:cor-fixed-points-ii} For the equivalence of (c) and (e) see Proposition \ref{prop:Markov}. The equivalence of (b) and (f) note that
$x \alpha_0(y) = \alpha_0(y) x$ if and only if $u_1 x u_1 \alpha_1(y) = \alpha_1(y) u_1 x u_1$, where $x,y \in \vN_\pi(\Sset_\infty)$. The rest of the proof is clear.
\end{proof}
\begin{Remark}\normalfont
If $\cA$ is a factor then $\vN(A_0) = \cA_0$. It is natural to ask if
$\vN(A_0) \not= \vN(A_0, C_k \mid k \in \Nset) = \cA_0$ may occur in the
non-factorial case. 
If the center $\cZ(\cA)$ is two-dimensional, disintegration of
$(\cA,\trace)$ into $(\cA^{(1)} \oplus \cA^{(2)}, \trace^{(1)} \oplus
\trace^{(2)})$ transforms $A_0$ into the pair of limit 2-cycles $(A_0^{(1)},
A_0^{(2)})$. If $A_0^{(1)}$ and $A_0^{(2)}$ happen to have a common
eigenvalue
(which can easily be arranged, for example by using the model in Section
\ref{section:Thoma})
then it is clear that the central projection $(\1,0)$ does not belong to
$\vN(A_0)$.
This argument shows that, in the general non-factorial setting, at least
some of the limit cycles $C_k$ are needed to generate $\cA_0$. 
\end{Remark}
We will show next that a limit $k$-cycle $E_n(v_{n_1} v_{n_2} \cdots v_{n_k} v_{n_1})$ equals either the limit cycle $C_k$ or a monomial in star generators $v_{i}$ and the limit 2-cycle $A_0$. Without loss of generality we can assume that the cycle  $v_{n_1} v_{n_2} \cdots v_{n_k} v_{n_1}$ enjoys $n_1 =\min\{n_1,\ldots, n_k\}$. Our next result generalizes Lemma \ref{lem:thoma-mult-prep}. 
\begin{Proposition}\label{prop:limit-cycles}
Suppose $n_1= \min\{n_1,\ldots, n_k\}$ with $k \ge 1$. Then a limit $k$-cycle is of the form
\begin{align*}
E_{n}(v_{n_1} v_{n_2} \cdots v_{n_k} v_{n_1}) &= 
\begin{cases} 
E_{-1}\big(A_0^{k-1}\big)              & \text{if $n_1> n\ge -1$,}\\
w_{n_1} w_{n_2} \cdots w_{n_k} w_{n_1} & \text{if $n \ge n_1 \ge 0$}, 
\end{cases}
\intertext{where} 
w_{n_i} &= 
\begin{cases}
v_{n_i} & \text{if $n_i \le n$},\\
A_0     & \text{if $n_i > n $}.
\end{cases}
\end{align*}
The limit cycles $A_i$ and $C_k$ depend only on the index $i$ and the cycle length $k$, respectively: 
\begin{align}
A_i &= E_{j}(v_i v_l v_i) = v_i A_0 v_i   &&    (0 \le i \le j < l),\label{eq:2limit}\\ 
C_k &=  E_{n}(v_{n_1} v_{n_2} \cdots v_{n_k} v_{n_1})                  
= E_{-1}\big(A_0^{k-1}\big) 
&&    ( -1 \le n < n_1, n_2,\ldots, n_k).  \label{eq:klimit}
\end{align} 
\end{Proposition}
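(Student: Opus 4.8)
The normalization $n_1=\min\{n_1,\dots,n_k\}$ is assumed throughout; it can always be arranged by Lemma~\ref{lem:cycle-1}. The plan is to first compute $E_n$ on a single star generator, then to deduce the two displayed cycle formulas by distinguishing whether $n_1\le n$ or $n_1>n$, and finally to read off \eqref{eq:2limit} and \eqref{eq:klimit}. The decisive preliminary step is the identity $E_n(v_j)=A_0$ for all $j>n\ge 0$, where $A_0=E_0(v_1)$. That $E_n(v_j)=E_n(v_{n+1})$ for $j\ge n+1$ is immediate from $v_j=\alpha_{n+1}^{\,j-n-1}(v_{n+1})$ together with $E_n\circ\alpha_{n+1}=E_n$ (as $\cA^{\alpha_{n+1}}=\cA_n$, Proposition~\ref{prop:trace}). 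To identify this value with $A_0$ I would invoke the noncommutative de Finetti theorem: by Theorem~\ref{thm:indy}\eqref{item:indy-ii} the sequence $(v_i)_{i\in\Nset}$ is conditionally i.i.d.\ over its tail algebra $\cA_0$, so the noncommutative law of large numbers gives $\frac{1}{N}\sum_{i=1}^N v_i\to A_0$ in $L^2(\cA,\trace)$, whence also $\frac{1}{N}\sum_{i=n+1}^{n+N}v_i\to A_0$ since the two Cesàro means differ in norm by at most $2n/N$. Applying the $L^2$-continuous map $E_n$ to the second mean and using $E_n(v_i)=E_n(v_{n+1})$ for $i\ge n+1$, its image equals $E_n(v_{n+1})$ for every $N$ while the limit is $E_n(A_0)=A_0$; hence $E_n(v_{n+1})=A_0$.

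If $n\ge n_1\ge 0$, then $v_{n_1}\in\cA_n$, so by the module property $E_n(v_{n_1}v_{n_2}\cdots v_{n_k}v_{n_1})=v_{n_1}E_n(v_{n_2}\cdots v_{n_k})v_{n_1}$. Since $n_2,\dots,n_k$ are pairwise distinct, iterating full $\cA_n$-factorizability of $(v_i)_{i\in\Nset}$ (Theorem~\ref{thm:indy}\eqref{item:indy-ii},\eqref{item:indy-iv}, via Lemma~\ref{lem:commuting-square}) gives $E_n(v_{n_2}\cdots v_{n_k})=\prod_{j=2}^k E_n(v_{n_j})$, and by the preliminary step each factor is $v_{n_j}$ if $n_j\le n$ and $A_0$ if $n_j>n$, i.e.\ it equals $w_{n_j}$; as $w_{n_1}=v_{n_1}$ this yields $w_{n_1}w_{n_2}\cdots w_{n_k}w_{n_1}$.

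If instead $n_1>n$, all indices exceed $n$ and $v_{n_1}\notin\cA_n$, so I would apply Lemma~\ref{lem:commuting-square}\eqref{item:cs-v} with $\cN=\cA_n$, $\cB_1=\vN(\cA_n,v_{n_1})$, $\cB_2=\vN(\cA_n,v_{n_2},\dots,v_{n_k})$ (a commuting square by full $\cA_n$-independence), together with full $\cA_n$-factorizability and the preliminary step, to obtain $E_n(v_{n_1}v_{n_2}\cdots v_{n_k}v_{n_1})=E_n\!\big(v_{n_1}A_0^{k-1}v_{n_1}\big)$. By \eqref{eq:PR'-exchangeable} and Proposition~\ref{prop:endo-rs} one has $v_{n_1}xv_{n_1}=\alpha_0^{n_1}(x)$ for $x\in\cA_0$, so this equals $E_n\!\big(\alpha_0^{n_1}(A_0^{k-1})\big)$. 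Because $n_1\ge n+1$, the argument $\alpha_0^{n_1}(A_0^{k-1})$ lies in $\alpha_0^{\,n+1}(\cA)$; composing the commuting-square cells of Theorem~\ref{thm:endo-braid-i} down $n+1$ rows shows that $\cA_n$ and $\alpha_0^{\,n+1}(\cA)$ form a commuting square over $\cA_{-1}=\alpha_0^{\,n+1}(\cA_{-1})$ inside $\cA$, so $E_n$ and $E_{-1}$ agree on $\alpha_0^{\,n+1}(\cA)$; hence the expression equals $E_{-1}\!\big(\alpha_0^{n_1}(A_0^{k-1})\big)=E_{-1}(A_0^{k-1})$, using $E_{-1}\circ\alpha_0=E_{-1}$ ($\cA_{-1}=\cA^{\alpha_0}$). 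For $n=-1$ one argues directly as in Lemma~\ref{lem:thoma-mult-prep} (the subcase $n_1=0$ from full $\cA_0$-factorizability, the subcase $n_1\ge1$ from its $s(0)=0$ case). Finally, \eqref{eq:2limit} is read off from the preliminary step: for $0\le i\le j<l$ we have $v_i\in\cA_j$ and $A_i=E_j(v_iv_lv_i)=v_iE_j(v_l)v_i=v_iA_0v_i$, manifestly independent of $j$ and $l$; and \eqref{eq:klimit} is precisely the case $n_1>n$ just treated.

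The main obstacle I anticipate is the preliminary step $E_n(v_{n+1})=A_0$ — equivalently, that a limit $2$-cycle $A_i$ does not see its auxiliary indices $j,l$. The tempting shortcut of proving $E_n(v_{n+1})\in\cA_0$ by testing invariance under the Coxeter conjugations $\rho_0(\sigma_2),\dots,\rho_0(\sigma_{n+1})$ fails, because $\rho_0(\sigma_{n+1})$ does not normalize $\cA_n$ (as $\sigma_{n+1}(\{0,\dots,n\})\ne\{0,\dots,n\}$) and hence does not commute with $E_n$; so the ergodic/law-of-large-numbers input genuinely seems indispensable. A second, more bookkeeping-heavy point is extracting the correct telescoped cell of the commuting-square tower of Theorem~\ref{thm:endo-braid-i} in order to collapse $E_n$ on $\alpha_0^{\,n+1}(\cA)$ down to the centre-valued $E_{-1}$, which is what forces all ``high'' limit cycles into $\cZ(\cA)$.
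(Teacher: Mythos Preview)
Your proof is correct and follows essentially the paper's strategy: the preliminary identity $E_n(v_j)=A_0$ for $j>n$ is obtained by the same mean-ergodic/Ces\`aro argument as in the paper (the paper phrases it as $E_n\alpha_{n+1}^N=E_n$ together with $\alpha_{n+1}=\alpha_1$ on high indices, then $E_nE_0=E_0$), and the case $n\ge n_1$ is handled identically via the module property and full $\cA_n$-independence.

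The one genuine difference is the case $n_1>n\ge 0$. The paper does this in one stroke: since all $n_i>n$, the endomorphisms $\alpha_{n+1}$ and $\alpha_1$ agree on the whole cycle, so $E_n(\text{cycle})=E_n\alpha_1^N(\text{cycle})$ for every $N$, and the mean ergodic theorem gives $E_n(\text{cycle})=E_nE_0(\text{cycle})=E_0(\text{cycle})$, which is $E_{-1}(A_0^{k-1})$ by Lemma~\ref{lem:thoma-mult-prep}. Your route instead first inserts $E_n$ inside the cycle (Lemma~\ref{lem:commuting-square}\eqref{item:cs-v}) to get $E_n(v_{n_1}A_0^{k-1}v_{n_1})=E_n(\alpha_0^{n_1}(A_0^{k-1}))$, and then telescopes through the tower of Theorem~\ref{thm:endo-braid-i} to collapse $E_n$ to $E_{-1}$ on $\alpha_0^{n+1}(\cA)$. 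That telescoping is valid (each horizontal strip gives $E_{\alpha_0^j(\cA_{n-j})}=E_{\alpha_0^{j+1}(\cA_{n-j-1})}$ on $\alpha_0^{j+1}(\cA)$, and after $n{+}1$ steps one lands at $E_{\cA_{-1}}$), but it is the ``bookkeeping-heavy'' step you flagged; the paper's single mean-ergodic reduction avoids it entirely. Your observation that the shortcut via $\rho_0(\sigma_{n+1})$-invariance fails because $\sigma_{n+1}$ does not normalize $\cA_n$ is exactly right, and is why some ergodic input is unavoidable.
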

\begin{proof}
Suppose $n_1> n\ge -1$. If $n=-1$ then our results from Thoma multiplicativity, Theorem \ref{thm:thoma-mult}, apply and yield the claimed formula. So it remains to consider the subcase $n_1 > n \ge 0$. Since $E_n$ is the conditional expectation onto the fixed point algebra of the endomorphism $\alpha_{n+1}$ and
$\alpha_{n+1} (v_i) = v_{i+1} = \alpha_1(v_i)$ for all $i > n$, we conclude that
\begin{eqnarray*}
E_n(v_{n_1} v_{n_2} \cdots v_{n_k} v_{n_1}) 
&=& E_n \alpha_{n+1}^N (v_{n_1} v_{n_2} \cdots v_{n_k} v_{n_1})\\
&=& E_n \alpha_{1}^N (v_{n_1} v_{n_2} \cdots v_{n_k} v_{n_1}) 
\end{eqnarray*}  
for any $N \in \Nset_0$. Thus, by the mean ergodic theorem (see \cite[Theorem 8.3]{Koes10a}) and the $\sot$-$\sot$-continuity of the conditional expectation $E_n$,
\begin{eqnarray*}
E_n(v_{n_1} v_{n_2} \cdots v_{n_k} v_{n_1}) 
&=& \sotlim_{N\to \infty }\frac{1}{N}\sum_{i=0}^{N-1} 
E_n \alpha_1^i(v_{n_1} v_{n_2} \cdots v_{n_k} v_{n_1})\\ 
&=& E_n E_0 (v_{n_1} v_{n_2} \cdots v_{n_k} v_{n_1})\\
&=& E_0 (v_{n_1} v_{n_2} \cdots v_{n_k} v_{n_1})\\
&=& E_{-1}\big(A_0^{k-1}\big),    
\end{eqnarray*}  
since $n_1 >0$ and thus the case $s(0)=0$ of the formula in Lemma \ref{lem:thoma-mult-prep} applies. In other words, the last equality is
due to the $\cA_{-1}$-independence of $\cA_0$ and $\alpha_0^{n_1}(\cA_0)$ 
(see Theorem \ref{thm:indy} \eqref{item:indy-i}). 
Acting on these equations with $E_{-1}$, we see that the limit $k$-cycle $C_k$ depends only on $k$: 
\[
C_k = E_{-1}(v_{n_1} v_{n_2} \cdots v_{n_k} v_{n_1}) 
= E_{-1}\big(A_0^{k-1}\big).
\]
We continue with the case $n \ge n_1 \ge 0$. By the module property of conditional expectations,
\[
E_n(v_{n_1} v_{n_2} \cdots v_{n_k} v_{n_1}) 
= v_{n_1} E_n( v_{n_2} \cdots v_{n_k})v_{n_1}. 
\]
Recall that, by Proposition \ref{prop:trace}, $\cA_n$ is the fixed point algebra of the endomorphism $\alpha_{n+1}$ and 
\[
\alpha_{n+1} (v_i)
=
\begin{cases}
v_i & \text{if $i \le n$,}\\
v_{i+1} & \text{if $i > n$}.
\end{cases}
\]
Thus 
\[
v_i
=
\begin{cases}
E_n(v_i) \in \vN_\pi(\Sset_{n+2})  & \text{if $i \le n$,}\\[2pt]
\alpha_{n+1}^{i-(n+1)} (v_{n+1}) \in \alpha_{n+1}^{i-(n+1)}\big(\vN_\pi(\Sset_{n+2})\big) & \text{if $i > n$}.
\end{cases}
\]
Now the full $\cA_n$-independence of the sequence $\Big(\alpha_{n+1}^k\big(\vN_\pi(\Sset_{n+2})\big)\Big)_{k \ge 0}$ from Theorem \ref{thm:indy} \eqref{item:indy-iii} implies 
\[
E_n(v_{n_2}\cdots v_{n_k}) 
= E_n(v_{n_2}) \cdots E_n(v_{n_k}). 
\]
We are left to show that $E_n(v_i) = A_0$ for $i > n$. As argued at the beginning of the proof,
\[
E_n(v_i) = E_n \alpha_{n+1}^N(v_i) = E_{n+1} \alpha_1^N(v_i) 
\]
for all $N \in \Nset$. As before a mean ergodic argument implies 
$E_n(v_i) = E_n E_0(v_i) = E_0(v_i)$ and further $E_0(v_i) = E_0(v_1)= A_0$.   
The final statements on the limit cycles $C_k$ and $A_i$ are clear.  
\end{proof}
\begin{proof}[Proof of Theorem \ref{thm:fixed-points}]
The fixed point algebra $\cA_{-1}$ equals the center $\cZ(\cA)$, since
\[
\cA_{-1}= \cA^{\Ad \pi(\Sset_\infty)} = \cA \cap \vN_\pi(\Sset_\infty)^\prime = \cA \cap \cA^\prime = \cZ(\cA).
\]
We consider next the fixed point algebras $\cA_n$ for $n \ge 0$.
The set $E_{n} \pi(\Sset_\infty)$ is weak* total in $\cA_n$. Our goal is to show that, for any $\sigma \in \Sset_\infty$, the operator $E_{n} \pi(\sigma)$ can be written as a monomial in terms of $v_0, v_1, \ldots, v_n$, the limit 2-cycle $A_0= E_0(v_1)$ and the limit $k$-cycles $C_k=E_{-1}(A_0^{k-1})$. If the permutation $\sigma$ is a cycle then $E_n\pi(\sigma)$ is a limit cycle and we are done by Proposition \ref{prop:limit-cycles}. The general case reduces to the cycle case along the following arguments. Let $\sigma= s_1 s_2 \cdots s_p$ be the cycle decomposition of the permutation $\sigma$. Here each cycle $s_q$ involves only star generators $\gamma_i$ with $i \in I_q \subset \Nset$ such that the sets $I_q$ are mutually disjoint (see Lemma \ref{lem:cycle-2}). We claim that $E_n(\pi(\sigma))$ is a product of limit cycles, more precisely:
\begin{equation} \label{eq:fixed-points-0}
E_n\pi(\sigma) = E_n \pi(s_1) \cdots E_n \pi(s_p).
\end{equation}
Indeed, this factorization is immediate from Theorem  \ref{thm:indy}\eqref{item:indy-iii}, the full $\cA_n$-independence of
\begin{equation} \label{eq:fixed-points-1}
\Big(\alpha_{n+1}^k\big(\vN_\pi(\Sset_{n+2})\big)\Big)_{k \ge 0}.
\end{equation}
To see this, recall that $\cA_n$ is the fixed point algebra of the endomorphism $\alpha_{n+1}$ and that
\[
\alpha_{n+1}\pi(\gamma_i)
=
\begin{cases}
\pi(\gamma_i) & \text{if $i \le n$,}\\
\pi(\gamma_{i+1}) & \text{if $i > n$}.
\end{cases}
\]
Thus $\pi(s_q) \in \vN(\alpha_{n+1}^{i-n-1}\pi(\Sset_{n+2})\mid i \in I_q, i > n)$. Since the sets $I_q$ are mutually disjoint, we conclude the factorization 
\eqref{eq:fixed-points-0} from the full $\cA_n$-independence of the sequence \eqref{eq:fixed-points-1}. 

At this point we have proven that $\cA_n$ is generated as a von Neumann algebra by the the limit $2$-cycle $A_0$, the limit $k$-cycles $C_k$ and $\vN_\pi(\Sset_{n+1})$. If $n=-1$, then we infer from Thoma multiplicativity or Proposition \ref{prop:limit-cycles} that $\cA_{-1}= \cZ(\cA)= \vN(C_k\mid k \ge 1)$. It remains to show that $\cA_0 = \vN(A_0)$. 
Since $A_0 \in \cA_0$ this ensures $\cA_0 = \vN(A_0)$ and $\cA_n = \cA_0 \vee \vN_\pi(\Sset_{n+1})$. Finally, $\cA_0$ is abelian since $C_k \in \cZ(\cA)$ and thus commutes with the limit cycle $A_0$.  
\end{proof}

Clearly every $k$-cycle is a limit $k$-cycle for $n$ sufficiently large. Conversely, limit $k$-cycles are certain weak limits of $k$-cycles. This will be immediate from conditional independence and strong mixing of $\alpha_n$ (over $\cA_{n-1}$) in the weak operator topology. On the other hand, limit $k$-cycles are certain limits of C\'esaro means of $k$-cycles, now in the strong operator topology. Again, this will be immediate from conditional independence and mean ergodic averages with respect to $\alpha_n$.  Let us illustrate this for the limit $2$-cycles $A_i$. 
\begin{Lemma} \label{lem:average-2}
Let $0 \le i \le j < k$. Then 
\begin{equation}\label{eq:average-1}
A_i = E_{j}(v_i v_k v_i) = v_i A_0 v_i = \alpha_0^i(A_0). 
\end{equation}
Moreover,
\begin{eqnarray}\label{eq:average-2} 
A_i &=& \wotlim_{n \to \infty} v_i v_n v_i  
     = \wotlim_{n \to \infty} \pi\big((i,n)\big)
\end{eqnarray} 
and
\begin{eqnarray}\label{eq:average-3} 
A_i = \sotlim_{n \to \infty} \frac{1}{n} \sum_{j=1}^n v_i v_j v_i
    = \sotlim_{n \to \infty} \frac{1}{n} \sum_{\substack{j=1, \\ j \neq i}}^n \pi\big((i,j)\big).
\end{eqnarray}
\end{Lemma}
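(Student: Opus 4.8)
The plan is to treat the three displays of the lemma separately, each reducing to results already in place. For \eqref{eq:average-1}: the chain $A_i = E_{j}(v_i v_k v_i) = v_i A_0 v_i$ is exactly \eqref{eq:2limit} of Proposition~\ref{prop:limit-cycles}, valid for $0 \le i \le j < k$, so only $v_i A_0 v_i = \alpha_0^{i}(A_0)$ remains. Since $A_0 = E_0(v_1) \in \cA_0$ and, as already used in the proof of Lemma~\ref{lem:thoma-mult-prep}, $\alpha_0^{i}(x) = v_i x v_i$ for every $x \in \cA_0$ (from \eqref{eq:PR'-exchangeable} of Theorem~\ref{thm:exchangeability} together with Proposition~\ref{prop:endo-rs}), we get $\alpha_0^{i}(A_0) = v_i A_0 v_i = A_i$.

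For \eqref{eq:average-2} I would first rewrite the operators involved: by Lemma~\ref{lem:cycles-4}\eqref{item:cycles-i}, $\gamma_i \gamma_n \gamma_i = (i,n)$ for $n \ne i$ (and $\gamma_n = (0,n)$), so $v_i v_n v_i = \pi\big((i,n)\big)$ for all large $n$. The crux is then $v_n \to A_0$ in the weak operator topology. By Theorem~\ref{thm:indy}\eqref{item:indy-ii} the sequence $(v_j)_{j \in \Nset}$ is exchangeable and full $\cA_0$-independent, with tail algebra $\cA_0 = \cA^{\alpha_1}$ and $v_n = \alpha_1^{n-1}(v_1)$, so $E_0(v_n) = E_0(v_1) = A_0$ by stationarity. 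For $a,b$ lying in the $*$-algebra generated by finitely many $v_j$ and $n$ larger than all indices occurring, full $\cA_0$-independence together with the $\cA_0$-module property of $E_0$ and cyclicity of $\trace$ give $\trace(a v_n b) = \trace\big(E_0(ba)\, A_0\big) = \trace(a A_0 b)$; since that $*$-algebra is weak$^*$-dense in $\cA$ and the $v_n$ are uniformly bounded, this extends to all $a,b \in \cA$, i.e.\ $v_n \to A_0$ weak$^*$ (equivalently, $\alpha_1$ is weakly mixing over $\cA_0$). Multiplying on both sides by the fixed unitary $v_i$, which is weak$^*$-continuous, yields $v_i v_n v_i \to v_i A_0 v_i = A_i$, establishing \eqref{eq:average-2}.

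For \eqref{eq:average-3} the idea is to realize the Cesàro sum as a mean-ergodic average for $\alpha_{i+1}$. By Proposition~\ref{prop:trace}, $\cA^{\alpha_{i+1}} = \cA_i$, $\alpha_{i+1}(v_i) = v_i$ and $\alpha_{i+1}^{j}(v_{i+1}) = v_{i+1+j}$, so, $\alpha_{i+1}$ being a homomorphism, $\alpha_{i+1}^{j}(v_i v_{i+1} v_i) = v_i v_{i+1+j} v_i$. The mean ergodic theorem \cite[Theorem~8.3]{Koes10a} then gives
\[
\sotlim_{n\to\infty} \frac1n \sum_{j=0}^{n-1} \alpha_{i+1}^{j}(v_i v_{i+1} v_i) = E_i(v_i v_{i+1} v_i) = A_i ,
\]
the last equality being \eqref{eq:2limit} with $j=i$, $k=i+1$, and the left-hand average equals $\frac1n \sum_{m=i+1}^{i+n} v_i v_m v_i$. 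Passing from the index range $\{i+1,\dots,i+n\}$ to $\{1,\dots,n\}$ changes the average by $\tfrac1n$ times a sum of at most $2i$ unitaries, hence by at most $2i/n$ in norm, so $\tfrac1n\sum_{j=1}^{n} v_i v_j v_i \to A_i$ in the strong operator topology as well; finally $v_i v_j v_i = \pi\big((i,j)\big)$ for $j \ne i$ while the term $j=i$ contributes only $\tfrac1n v_i \to 0$, so the same limit holds for $\tfrac1n\sum_{j \ne i} \pi\big((i,j)\big)$, which finishes \eqref{eq:average-3}.

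The only step that is not purely formal is the weak$^*$-convergence $v_n \to A_0$ in \eqref{eq:average-2}: in contrast to \eqref{eq:average-3} this is convergence of the sequence itself rather than of its averages, and it genuinely rests on full $\cA_0$-independence plus stationarity (equivalently, on the weak mixing of the Bernoulli-type shift $\alpha_1$ over $\cA_0$) together with the density and boundedness argument above. Everything else is routine bookkeeping with the mean ergodic theorem and with the description of the fixed-point algebras $\cA_n$ already established.
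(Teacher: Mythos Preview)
Your proof is correct and follows essentially the same route as the paper. For \eqref{eq:average-2} the paper simply invokes the strong mixing of $\alpha_1$ over its fixed point algebra $\cA_0$ from \cite[Theorem~6.4]{Koes10a}, i.e.\ $\wotlim_{n}\alpha_1^n(x)=E_0(x)$, whereas you reprove this special case by hand via full $\cA_0$-independence and a density argument; your reasoning is exactly what underlies that cited theorem, so the two arguments coincide in substance. For \eqref{eq:average-3} the paper applies the mean ergodic theorem to $\alpha_1$ (averaging $v_i\,\alpha_1^k(v_1)\,v_i$ and using that conjugation by the fixed unitary $v_i$ is \sot-continuous), while you apply it to $\alpha_{i+1}$ and then correct the index range; both are equally valid, with the paper's version marginally shorter since no index shift is needed.
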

\begin{proof}
\eqref{eq:average-1} has already be shown. \eqref{eq:average-2} 
The endomorphism $\alpha_1$ is strongly mixing over its fixed point algebra $\cA_0$ (see \cite[Theorem 6.4]{Koes10a}:
\[
\wotlim_{n\to \infty} \alpha_1^n(x) = E_0(x), \qquad x \in \cA.
\]
Thus, with limits in the sense of the weak operator topology,
\begin{eqnarray*}
A_i &=& v_i E_j(v_k)v_i = v_i E_0(v_1)v_i\\
    &=& \lim_{n \to \infty} v_i \alpha_1^n(v_1) v_i
    = \lim_{n \to \infty} v_i v_n v_i\\
    &=& \lim_{n \to \infty} \pi(\gamma_i \gamma_n\gamma_i)\\
    &=& \lim_{n \to \infty} \pi((i,n)).
\end{eqnarray*}
Alternatively the limit 2-cycle $A_i$ is obtained as the mean ergodic average or limit of C\'esaro means
\[
A_i = \sotlim_{n \to \infty} \frac{1}{n} \sum_{k=0}^{n-1} v_i \alpha_1^k(v_1) v_i
    = \sotlim_{n \to \infty} \frac{1}{n} \sum_{k=1}^{n} \pi((i, k)).
\]
\end{proof}

\begin{Remark}\normalfont \label{rem:average-k}
Similar as for limit 2-cycles, a limit $k$-cycle $C_k$ ($k \ge 1$) can be understood as  multiple mean ergodic averages or as limits of multiple C\'esaro means of $k$-cycles in the strong operator topology:
\[
C_k = \lim_{n_1,\ldots, n_k \to \infty} \frac{1}{n_1 \cdots n_k}
\sum_{i_1=1}^{n_1}\cdots \sum_{i_k=1}^{n_k} v_{i_1} v_{i_2} \cdots v_{i_k}v_{i_1}.
\]
Some terms in these multiple sums are not $k$-cycles. This happens precisely when at least two of the indices $i_1, i_2, \ldots, i_k$ are the same. But an elementary counting argument shows that these terms do not contribute in the limit. Similarly, the limit $k$-cycle $C_k$ can be obtained as a multiple limit of a $k$-cycle in the weak operator topology:
\[
C_k = \lim_{n_1,\ldots, n_k \to \infty} v_{n_1}v_{n_2} \cdots v_{n_k} v_{n_1}.
\]  
For a proof start with Proposition \ref{prop:limit-cycles} and argue similar as in Lemma \ref{lem:average-2}. Moreover we can obtain the limit $k$-cycles $C_k$ as weak limits, or as mean ergodic averages resp.~C\'esaro means, from the limit $2$-cycles $A_i$. More precisely, for $k \in \Nset$, 
\begin{eqnarray*}
C_k &=& \wotlim_{i \to \infty} A_i^{k-1},\\
C_k &=& \sotlim_{n \to \infty} \frac{1}{n} \sum_{i=0}^{n-1} A_i^{k-1}.
\end{eqnarray*}
These equations are evident from $\alpha_0(A_i)= A_{i+1}$, the fixed point characterization $\cA^{\alpha_0} =\cA_{-1}$ and $C_k= E_{-1}(A_0^{k-1})$.  
\end{Remark}

We collect further fundamental properties of limit 2-cycles. Some of them should be compared with the properties of `random cycles' derived from Olshanski semigroups,  especially those from \cite[Proposition 1 in Section 1] {Okou99a}. 
\begin{Proposition}\label{prop:A-prop}
The limit 2-cycles $A_i$ enjoy the following properties:
\begin{align}
\label{eq:A-item1}
\rho_0^{}(\sigma)(A_i) &= A_{\sigma(i)}
&& \text{for all $i \in \Nset_0$ and $\sigma \in \Sset_\infty$};\\ 
\label{eq:A-item2}
A_i A_j &= A_j A_i 
&& \text{for all $i,j \in \Nset_0$};\\
\label{eq:A-item3}
A_i v_j &= v_j A_i  
&& \text{if $0 < i \neq j$}; \\ 
\label{eq:A-item4}
E_{0} (A_i^k) &=  E_{-1}(A_0^k) = C_{k+1}
&&\text{if $i \neq 0$ and $k \in \Nset$};\\ 
\label{eq:A-item5}
E_{-1}\Big(\prod_{r=1}^p A_{i_r}^{k_r}\Big)
&= \prod_{r=1}^p E_{-1}(A_{0}^{k_r}) 
&&  \text{whenever $i_q \neq i_r$ for $q \ne r $ and $k_1, k_2, \ldots,k_p \in \Nset$.} \\
\label{eq:A-item6}
E_{0}\Big(\prod_{r=1}^p A_{i_r}^{k_r}\Big)
&= \prod_{r=1}^p E_{0}(A_{i_r}^{k_r}) 
&&  \text{whenever $i_q \neq i_r$ for $q \ne r $ and $k_1, k_2, \ldots,k_p \in \Nset$.} 
\end{align}
\end{Proposition}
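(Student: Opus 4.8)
The plan is to split the six identities into two groups: \eqref{eq:A-item1}--\eqref{eq:A-item3} follow from the weak-operator-limit description $A_i=\wotlim_{n\to\infty}\pi\big((i,n)\big)$ of Lemma~\ref{lem:average-2} together with disjointness of transpositions in $\Sset_\infty$, while \eqref{eq:A-item4}--\eqref{eq:A-item6} follow from the conditional independence statements of Theorem~\ref{thm:indy} combined with the identities $A_i=v_iA_0v_i=\alpha_0^i(A_0)$. For \eqref{eq:A-item1}: since $\sigma\in\Sset_\infty$ is a finite permutation, $\sigma(n)=n$ for all large $n$, so $\rho_0^{}(\sigma)\pi\big((i,n)\big)=\pi\big(\sigma(i,n)\sigma^{-1}\big)=\pi\big((\sigma(i),n)\big)$ eventually; as $\rho_0^{}(\sigma)=\Ad\pi(\sigma)$ is $\wot$-continuous, passing to the limit gives $\rho_0^{}(\sigma)(A_i)=\wotlim_n\pi\big((\sigma(i),n)\big)=A_{\sigma(i)}$.

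For \eqref{eq:A-item2} and \eqref{eq:A-item3} I use the same mechanism. If $i\neq j$, then for $m,n$ large and distinct the transpositions $(i,n)$ and $(j,m)$ have disjoint supports (shrink by finitely many indices if necessary), hence commute in $\Sset_\infty$, so $\pi\big((i,n)\big)\pi\big((j,m)\big)=\pi\big((j,m)\big)\pi\big((i,n)\big)$; separate $\wot$-continuity of multiplication then yields, taking first the limit in $m$ with $n$ fixed and then the limit in $n$, $A_iA_j=A_jA_i$ (joint continuity is never needed, only one factor is a limit at a time). The case $i=j$ is trivial. Identity \eqref{eq:A-item3} is the same argument applied to $(i,n)$ and $\gamma_j=(0,j)$: these have disjoint supports once $i\geq 1$, $i\neq j$ and $n$ is large, whence $\pi\big((i,n)\big)v_j=v_j\pi\big((i,n)\big)$ and so $A_iv_j=v_jA_i$; the point $0$ must be excluded precisely because $\gamma_j$ moves $0$.

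For the remaining three I record first that $v_i^2=\1$ gives $A_i^k=(v_iA_0v_i)^k=v_iA_0^kv_i$, and that $v_ixv_i=\alpha_0^i(x)$ for $x\in\cA_0$ (as used in the proof of Lemma~\ref{lem:thoma-mult-prep}); hence $A_i^k=\alpha_0^i(A_0^k)\in\alpha_0^i(\cA_0)$. For \eqref{eq:A-item4} with $i\geq 1$: by Theorem~\ref{thm:indy}\eqref{item:indy-i} the algebras $\cA_0$ and $\alpha_0^i(\cA_0)$ are full $\cA_{-1}$-independent, so by Lemma~\ref{lem:commuting-square} one has $E_0\,E_{\alpha_0^i(\cA_0)}=E_{-1}$; applying this to $A_i^k\in\alpha_0^i(\cA_0)$ and using $E_{-1}\circ\alpha_0=E_{-1}$ (valid since $\alpha_0$ is $\trace$-preserving with fixed point algebra $\cA_{-1}$) gives $E_0(A_i^k)=E_{-1}\big(\alpha_0^i(A_0^k)\big)=E_{-1}(A_0^k)=C_{k+1}$. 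For \eqref{eq:A-item5}: since the $i_r$ are distinct, $A_{i_r}^{k_r}\in\alpha_0^{i_r}(\cA_0)$, and the full $\cA_{-1}$-independence of the family $\big(\alpha_0^k(\cA_0)\big)_{k\geq0}$ lets one peel off one factor at a time, $E_{-1}\big(\prod_r A_{i_r}^{k_r}\big)=\prod_r E_{-1}(A_{i_r}^{k_r})=\prod_r E_{-1}(A_0^{k_r})$.

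The one step that needs a different tool is \eqref{eq:A-item6}, because it involves $E_0$ rather than $E_{-1}$, whereas the family $\big(\alpha_0^k(\cA_0)\big)_k$ is only $\cA_{-1}$-independent. Here I instead use that $A_{i_r}^{k_r}=v_{i_r}A_0^{k_r}v_{i_r}\in\vN(\cA_0,v_{i_r})$ together with Theorem~\ref{thm:indy}\eqref{item:indy-ii}, which says the represented star generators $(v_i)_{i\in\Nset}$ are full $\cA_0$-independent; peeling off factors as before handles all indices $i_r\geq 1$. If some $i_r=0$, then $A_0^{k_r}\in\cA_0$ and I move it to the front using \eqref{eq:A-item2} and extract it by the module property of $E_0$. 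The main obstacle is therefore not any single hard estimate but bookkeeping: keeping straight which independence (over $\cA_{-1}$ versus over $\cA_0$) is the relevant one at each point, and treating the index $0$ — distinguished by the star presentation — separately whenever it occurs.
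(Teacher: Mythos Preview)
Your proof is correct and, for \eqref{eq:A-item1}--\eqref{eq:A-item5}, matches the paper's argument essentially line by line: weak-limit approximation plus disjointness of transpositions for the first three, and the identification $A_i^k=\alpha_0^i(A_0^k)$ together with the full $\cA_{-1}$-independence of $\big(\alpha_0^k(\cA_0)\big)_{k\ge 0}$ for \eqref{eq:A-item4} and \eqref{eq:A-item5}.

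For \eqref{eq:A-item6} you take a genuinely different route. The paper stays with the $\cA_{-1}$-independence of $\big(\alpha_0^k(\cA_0)\big)_k$: when all $i_r\ge 1$ it first observes that $E_0$ and $E_{-1}$ agree on $\bigvee_{r}\alpha_0^{i_r}(\cA_0)$ (a commuting-square consequence of that independence), then factors over $\cA_{-1}$, and finally converts each $E_{-1}(A_0^{k_r})$ back to $E_0(A_{i_r}^{k_r})$ via \eqref{eq:A-item4}; the case $i_1=0$ is handled by the module property. You instead invoke the full $\cA_0$-independence of the star generators $(v_i)_{i\in\Nset}$ from Theorem~\ref{thm:indy}\eqref{item:indy-ii}, noting that $A_{i_r}^{k_r}\in\vN(\cA_0,v_{i_r})$, and peel off the factors directly under $E_0$. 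Your approach is more direct---it hits the target conditional expectation $E_0$ without the detour through $E_{-1}$ and back---while the paper's approach has the virtue of reusing a single independence statement (over $\cA_{-1}$) throughout, making \eqref{eq:A-item4}, \eqref{eq:A-item5}, \eqref{eq:A-item6} all instances of the same mechanism. Both are valid; your comment about the bookkeeping of ``which independence when'' is exactly the distinction between the two strategies.
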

\begin{proof}
Ad \eqref{eq:A-item1}:
Clearly $\sigma\big( (i,n) \big) =  \big(\sigma(i),\sigma(n)\big)$ for any 2-cycle $(i,n)$ and $\sigma \in \Sset_\infty$. Now Lemma \ref{lem:average-2} implies
\begin{eqnarray*}
\rho_0^{}(\sigma)(A_i)
&=& \wotlim_{n \to \infty}\rho_0^{}(\sigma)\Big(\pi\big((i,n)\big)\Big) \\
&=& \wotlim_{n \to \infty}\rho_0^{}(\sigma)\Big(\pi\big(\sigma(i),\sigma(n)\big)\Big)\\
&=& \wotlim_{n \to \infty}\rho_0^{}(\sigma)\Big(\pi\big(\sigma(i),n \big)\Big)\\ 
&=& A_{\sigma(i)}.
\end{eqnarray*}

Ad \eqref{eq:A-item2}: 
Since disjoint cycles commute and by Lemma \ref{lem:average-2}, 
\begin{eqnarray*}
A_i A_j
&=& \lim_{N \to \infty}   A_i \,v_j v_N v_j\\
&=& \lim_{N\to \infty} \lim_{M\to \infty}  v_i v_M v_i\, v_j v_N v_j\\
&=& \lim_{N\to \infty} \lim_{M\to \infty}  v_j v_N v_j\, v_i v_M v_i\\
&=& \lim_{N\to \infty}   v_j v_N v_j\, A_i\\
&=& A_j A_i,
\end{eqnarray*}
with all limits in the sense of the weak operator topology. 

Ad \eqref{eq:A-item3}: 
Approximate as before $A_i$ as the weak limit of $(v_i v_n v_i)$ for $n \to \infty$. Since $v_j$ is a cycle disjoint from $v_iv_nv_i$ for sufficiently large $n$, it follows $A_i v_j = v_j A_i$ whenever $0 < i\neq j$. 

Ad \eqref{eq:A-item4}: 
This is a direct consequence of the $\cA_{-1}$-independence of the algebras $\alpha_0^k(\cA_0)$:
\begin{align*}
E_{0}(A_i^k) 
&=  E_0\big(v_i A_0^k v_i\big)  
&& \text{(by \eqref{eq:2limit})}\\      
&= E_0 \alpha_0^i (A_0^k)          
&&\text{(by \eqref{eq:PR'-exchangeable} 
from Theorem \ref{thm:exchangeability})}\\
&= E_{-1}\alpha_0^i (A_0^k) = E_{-1}(A_0^k)
&&\text{(by Theorem \ref{thm:indy} \eqref{item:indy-i})}. 
\end{align*}

Ad \eqref{eq:A-item5}: 
Use $A_{i} = v_{i} A_0 v_{i} = \alpha_0^{i}(A_0)$. The claimed formula is now immediate from the full $\cA_{-1}$-independence of the sequence $(\alpha_0^k(\cA_0))$. 

Ad \eqref{eq:A-item6}:
Since $A_i$ and $A_j$ commute by \eqref{eq:A-item2}, we can assume with out loss of generality that $i_1 < i_2 < \ldots < i_p$. We know from Theorem \ref{thm:endo-braid-i} that $\bigvee_{i \in I} \alpha_0^i(\cA_0)$ and
$\bigvee_{j \in J} \alpha_0^j(\cA_0)$ are $\cA_{-1}$-independent for disjoint subsets $I$ and $J$ of $\Nset_0$. If $i_1\neq 0$, this independence implies 
\begin{eqnarray*}
E_{0}(A_{i_1}^{k_1} A_{i_2}^{k_2} \cdots A_{i_p}^{k_p})
&=& E_{-1}(A_{i_1}^{k_1} A_{i_2}^{k_2} \cdots A_{i_p}^{k_p})\\
&=& E_{-1}(A_{0}^{k_1}) E_{-1}( A_{0}^{k_2}) \cdots E_{-1}( A_{0}^{k_p})\\
&=& E_{0}(A_{0}^{k_1}) E_{0}( A_{0}^{k_2}) \cdots E_{0}( A_{0}^{k_p}).
\end{eqnarray*}
Now \eqref{eq:A-item4} completes the proof for the case $i_1 \neq 0$.
In the case $i_1=0$ we first apply the module property of conditional expectations so that
\[
E_{0}(A_{i_1}^{k_1} A_{i_2}^{k_2} \cdots A_{i_p}^{k_p})
= A_{i_1}^{k_1} E_{0}( A_{i_2}^{k_2} \cdots A_{i_p}^{k_p})
\]
Here the first factor $A_{i_1}^{k_1}$ equals of course $E_{0 }(A_{i_1}^{k_1})$, since $i_1=0$. Now $i_2 \neq 0$ and the remaining factorization is done as before. 
\end{proof}
We continue with a generalization of Thoma multiplicativity from Theorem \ref{thm:thoma-mult} to higher fixed point algebras than $\cA_{-1}$. We need to provide some additional notation. 

Let $n \ge -1$. The cycle $\partial_n(s)$ of the 
the non-trivial $k$-cycle $s:= \gamma_{n_1}\gamma_{n_2}\cdots\gamma_{n_k} \gamma_{n_1} \in \Sset_\infty$ with $n_1= \min\{n_1, n_2, \ldots, n_k\}$ is given by
\[
\partial_n(s) := 
\begin{cases}
\gamma_0 & \text{if $n_1 > n$}\\
\gamma_{n_1}\gamma_{\chi(n_2)} \gamma_{\chi(n_3)}\cdots\gamma_{\chi(n_k)}\gamma_{n_1} 
& \text{if $n_1 \le n$},
\end{cases} 
\]
where here $\chi$ is the characteristic function on $\Nset_0$ for the set $[n]:= \{0,1,2,\ldots,n\}$. We put $\partial_n(\gamma_0):= \gamma_0$. If $\sigma \in \Sset_\infty$ has the disjoint cycle decomposition $\sigma = s_1 s_2 \cdots s_l$, then $\partial_n(\sigma)$ is introduced as the multiplicative extension
\[
\partial_n(\sigma):= \partial_n(s_1) \partial_n(s_2) \cdots \partial_n(s_l).
\]
More intuitively, $\partial_n$ removes all points larger than $n$ from each non-trivial cycle in $\sigma$.
\begin{Definition}\normalfont
$\partial_n(\sigma)$ is  called the \emph{$n$-derivative} of the permutation $\sigma \in \Sset_\infty$.     
\end{Definition}

Further we put, for $n\ge -1$ and $k \in \Nset_0$, 
\[
\ell_{n,k}(\sigma) := 
\begin{cases}
\min\set{p\in \Nset_0}{\sigma^{-(p+1) }(k)\le n}
& \text{if $k \le n$},\\
0 
&\text{if $k >n$}.
\end{cases}
\]
For a permutation $\sigma \in \Sset_\infty$, the number $\ell_{n,k}(\sigma)$ represents the \emph{length of the excursion} (into the set $\{n+1, n+2, \ldots\}$) of a point $k \in [n]$ under the action of $\sigma^{-1}$. Let us illustrate this in the example  
\[
n=6,\qquad \sigma = (1, 8, 7, 4, 10, 5).
\]
If $k=4$, then $\sigma^{-1}(4) = 7$,  $\sigma^{-2}(4)= 8$ and $\sigma^{-3}(4) =1$. Thus the length of the excursion is $\ell_{6,4}(\sigma) = 2$. If $k=1$, then $\sigma^{-1}(1)=5$ and thus $\ell_{6,2}(\sigma) = 0$. Note also that fixed points yield zero excursion length, for example:  $\ell_{6,9}(\sigma) = 0$ and $\ell_{6,3}(\sigma) = 0$. 

Finally, denote by $\Nset_0\slash \langle \sigma \rangle$ the set of all orbits of $\Nset_0$ under the action of the subgroup $\langle \sigma \rangle$ generated by the permutation $\sigma \in \Sset_\infty$. The set of orbits $\Nset_0\slash \langle \sigma \rangle$ forms a partition $\{V_1, V_2,\ldots\}$  of $\Nset_0$. The cardinality of a block $V_i$ of this partition will be denoted by $|V_i|$. Note that only finitely many blocks $V_i$ have a cardinality larger than 1. We make use of the convention $0^0=1$.
\begin{Theorem}[Generalized Thoma multiplicativity]\label{thm:thoma-mult-general}
Let $n \ge -1$ and $\sigma \in \Sset_\infty$. Then
\begin{eqnarray}
\label{eq:thoma-general-1}
E_n\big(\pi(\sigma)\big) 
&=& 
\pi\big(\partial_n(\sigma)\big)     
\Big(
\prod_{\substack{V \in \Nset_0\slash\langle \sigma \rangle \\ \min V > n}}
   C_{|V|} \Big) \,\,
\Big(
\prod_{\substack{V \in \Nset_0\slash\langle \sigma \rangle \\ \min V  \le n}}\,\,
\prod_{k \in V} A_k^{\ell_{n,k}(\sigma)}\Big)\\ 
\label{eq:thoma-general-2}
&=&
\Big(
\prod_{\substack{V \in \Nset_0\slash\langle \sigma \rangle \\ \min V > n}}
   C_{|V|} \Big) \,\,
\Big(
\prod_{\substack{V \in \Nset_0\slash\langle \sigma \rangle \\ \min V  \le n}}\,\,
\prod_{k \in V} A_k^{\ell_{n,k}(\sigma^{-1})}\Big) \,\,\,
\pi\big(\partial_n(\sigma)\big),  
\end{eqnarray}
and
\[
E_n\big(\pi(s_1s_2\cdots s_p)\big)= E_n\big(\pi(s_1))\, E_n\big(\pi(s_2)\big)  \cdots E_n\big(\pi(s_p)) 
\]
for disjoint cycles $s_1, s_2, \ldots, s_p \in \Sset_\infty$. 
\end{Theorem}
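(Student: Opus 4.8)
The plan for the final statement is to read the factorization off from full $\cA_n$-independence, exactly as equation \eqref{eq:fixed-points-0} was read off in the proof of Theorem \ref{thm:fixed-points}; in fact this factorization \emph{is} \eqref{eq:fixed-points-0}, recorded here as part of the theorem because it is also the lemma that reduces the proof of \eqref{eq:thoma-general-1} to the computation of $E_n(\pi(s))$ for a single cycle $s$ (and \eqref{eq:thoma-general-2} then follows from \eqref{eq:thoma-general-1} by the $*$-operation, using $A_k^* = A_k$, $C_k^* = C_k$ and $\partial_n(\sigma^{-1}) = \partial_n(\sigma)^{-1}$).

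First fix $n \ge 0$ and let $\sigma = s_1 s_2 \cdots s_p$ be the decomposition into disjoint non-trivial cycles. By Lemma \ref{lem:cycle-2} there are mutually disjoint $I_1, \ldots, I_p \subset \Nset$ with $\pi(s_q) \in \vN(v_i \mid i \in I_q)$. By Proposition \ref{prop:trace}, $\cA_n = \cA^{\alpha_{n+1}}$, the endomorphism $\alpha_{n+1}$ fixes $v_i$ for $i \le n$ and sends $v_i$ to $v_{i+1}$ for $i > n$; hence $v_i \in \vN_\pi(\Sset_{n+1}) \subset \cA_n$ when $i \le n$, while $v_i = \alpha_{n+1}^{\,i-n-1}(v_{n+1}) \in \alpha_{n+1}^{\,i-n-1}\big(\vN_\pi(\Sset_{n+2})\big)$ when $i > n$. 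Setting $J_q := \{\, i-n-1 \mid i \in I_q,\ i > n\,\}$, which are again mutually disjoint, we get $\pi(s_q) \in \vN\big(\cA_n,\ \alpha_{n+1}^{\,k}(\vN_\pi(\Sset_{n+2})) \mid k \in J_q\big)$. By Theorem \ref{thm:indy}\eqref{item:indy-iii} the sequence $\big(\alpha_{n+1}^{\,k}(\vN_\pi(\Sset_{n+2}))\big)_{k \in \Nset_0}$ is full $\cA_n$-independent; applying the defining identity $E_n(xy) = E_n(x) E_n(y)$ with $x = \pi(s_1) \in \vN(\cA_n, \alpha_{n+1}^{\,k}(\vN_\pi(\Sset_{n+2})) \mid k \in J_1)$ and $y = \pi(s_2 \cdots s_p) \in \vN(\cA_n, \alpha_{n+1}^{\,k}(\vN_\pi(\Sset_{n+2})) \mid k \in J_2 \cup \cdots \cup J_p)$ (these index sets being disjoint) gives $E_n(\pi(\sigma)) = E_n(\pi(s_1))\, E_n(\pi(s_2 \cdots s_p))$, and since $E_n(\pi(s_2 \cdots s_p)) \in \cA_n$ an induction on $p$ completes the case $n \ge 0$. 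For $n = -1$ the statement is immediate from Thoma multiplicativity, Theorem \ref{thm:thoma-mult}: each $k_q$-cycle $s_q$ has $E_{-1}(\pi(s_q)) = C_{k_q}$, so $\prod_q E_{-1}(\pi(s_q)) = \prod_{k \ge 2} C_k^{m_k(\sigma)} = E_{-1}(\pi(\sigma))$ by \eqref{eq:thoma-mult-i}.

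For the final statement there is really only one subtle point, and it is not an obstacle: the letters $v_i$ with $i \le n$ appearing in $\pi(s_q)$ get absorbed into the \emph{common} algebra $\cA_n$ shared by both factors of the independence identity, which is exactly why \emph{full} (as opposed to merely order) $\cA_n$-independence is used here — and that is already built into Theorem \ref{thm:indy}\eqref{item:indy-iii}. The genuine work of the theorem lies instead in the single-cycle formula behind \eqref{eq:thoma-general-1}: for $s = \gamma_{n_1}\cdots\gamma_{n_k}\gamma_{n_1}$ with $n_1 = \min\{n_1,\ldots,n_k\}$ one splits the factors $v_{n_i}$ according to whether $n_i \le n$ (these are kept and reassemble into $\pi(\partial_n(s))$) or $n_i > n$; the conditional expectation $E_n$ then collapses the deleted stretches of $>n$-letters by the mean-ergodic computation of Proposition \ref{prop:limit-cycles}, together with $v_k A_0 v_k = A_k$ (see \eqref{eq:2limit}) and the $\cA_{-1}$-independence of the algebras $\alpha_0^{\,k}(\cA_0)$, producing the central factors $C_{|V|}$ for the orbits $V$ of $\langle\sigma\rangle$ with $\min V > n$ and the powers $A_k^{\ell_{n,k}(\sigma)}$ with exponent the excursion length of $k \in [n]$ under $\sigma^{-1}$. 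Checking that this bookkeeping reproduces the definitions of $\partial_n$ and $\ell_{n,k}$ exactly is the main obstacle; granting it, \eqref{eq:thoma-general-1} follows from the multiplicativity proved above, and \eqref{eq:thoma-general-2} by applying $x \mapsto x^*$ to \eqref{eq:thoma-general-1} for $\sigma^{-1}$.
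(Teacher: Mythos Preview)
Your proposal is correct and follows essentially the same route as the paper: reduce to single cycles via the full $\cA_n$-independence factorization \eqref{eq:fixed-points-0}, then invoke Proposition \ref{prop:limit-cycles} to write $E_n(\pi(s)) = w_{n_1}\cdots w_{n_k}w_{n_1}$ with $w_{n_i}\in\{v_{n_i},A_0\}$, and finally rearrange using $A_0 v_k = v_k A_k$ and the commutation relations \eqref{eq:A-item2}, \eqref{eq:A-item3}. The paper carries out this last rearrangement explicitly (pushing each $A_0$ right past the next surviving $v_{n_{i_0}}$, counting the number of pushes as $\ell_{n,n_{i_0}}(\sigma)$), whereas you leave it as ``bookkeeping\ldots granting it''; since this is exactly where the exponents $\ell_{n,k}(\sigma)$ get pinned down, you should spell it out rather than grant it.

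One genuine difference worth noting: for \eqref{eq:thoma-general-2} the paper reruns the same algorithm pulling factors to the left, while you obtain it by applying the involution to \eqref{eq:thoma-general-1} written for $\sigma^{-1}$, using $A_k^*=A_k$, $C_k^*=C_k$, $\partial_n(\sigma^{-1})=\partial_n(\sigma)^{-1}$ and that the $A_k$ commute among themselves. This is a clean shortcut and perfectly valid.
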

If $\sigma\in \Sset_{n+2}$ then the formula above simplifies to
\begin{equation}\label{eq:thoma-mult-gen}
E_n\big(\pi(\sigma)\big) =
\begin{cases}
\pi(\sigma) & \text{if $\sigma(n+1)=n+1$},\\
\pi\big(\partial_n (\sigma)\big) A_{\sigma(n+1)} =
 A_{\sigma^{-1}(n+1)} \,\pi\big(\partial_n (\sigma)\big)& \text{if $\sigma(n+1)\neq n+1$}.
\end{cases}
\end{equation}
Of course this can be verified more directly starting from \eqref{eq:thoma-mult-gen} and Proposition \ref{prop:limit-cycles}, using the intertwining properties $A_0 v_{\sigma(n+1)}= v_{\sigma(n+1)} A_{\sigma(n+1)}$ and  $v_{\sigma^{-1}(n+1)} A_0 =  A_{\sigma^{-1}(n+1)} v_{\sigma^{-1}(n+1)}$    from \eqref{eq:2limit} if the point $n+1$ is not fixed under $\sigma \in \Sset_{n+2}$. Note that $\pi\big(\partial_n (\sigma)\big)$ and $A_{\sigma(n+1)}$ do not commute whenever the limit $2$-cycle $A_{\sigma(n+1)}$ is non-central (see Remark \ref{rem:central-lim-cycles}).   

We infer also from Theorem \ref{thm:thoma-mult-general} that all elements of the form   
\[
\pi(\sigma) \prod_{l=1}^p C_{k_l}  \prod_{j=1}^{q} A_{i_j} \qquad(\sigma \in \Sset_{n+1}; k_1,\ldots, k_p \in \Nset; i_1, \ldots, i_q \in \Nset_0; p,q \in \Nset_0)
\]
give a weak*-total set in $\cA_n$, with the convention $\prod_{r=1}^{0}X_r= 1$.    
\begin{proof}
We consider first the case of a non-trivial $k$-cycle $\sigma=(n_1, n_2, \ldots, n_k)$ with $n_1 = \min\{n_1,\ldots,n_k\}$. If $n_1 > n \ge -1$ then
$\partial_n(\sigma)= \sigma_0$, since all points are removed from the cycle. 
Moreover
$
E_n\big(\pi(\sigma)\big)= C_k
$
by Proposition \ref{prop:limit-cycles}. Since $\sigma$ has precisely one orbit with cardinality $k$ and $C_1=1$ we have verified the product of the $C_{|V|}$'s. By the usual convention, the products inside the second parenthesis are indexed by the empty set and thus are one. Thus \eqref{eq:thoma-general-1} is verified in this case.

We turn our attention to the case $n \ge n_1 \ge 0$ of \eqref{eq:thoma-general-1}. Again by Proposition \ref{prop:limit-cycles}, 
\begin{align*}
E_{n}\big(\pi(\sigma)\big) = 
w_{n_1} w_{n_2} \cdots w_{n_k} w_{n_1}  
\qquad
\text{with} 
\quad
w_{n_i} = 
\begin{cases}
v_{n_i} & \text{if $n_i \le n$},\\
A_0     & \text{if $n_i > n $}.
\end{cases}
\end{align*}
Consider next some fixed $n_{i_0} \le n$ with $n_{{i_0}-1}>n$ (where the subscript $i_0$ is understood to be $\operatorname{mod} k$). Thus we have
\[
E_n\big(\pi(\sigma)\big) = w_{n_1}w_{n_2}\cdots w_{n_{{i_0}-2}} A_0 v_{n_{i_0}} \cdots w_{n_k}w_{n_1}.
\] 
Since $A_0 v_{n_{i_0}} = v_{n_{i_0}}A_{n_{i_0}}$ (by \eqref{eq:average-1}) and $A_{n_{i_0}}w_j = w_j A_{n_{i_0}}$ for all $j \neq i_0$ (by \eqref{eq:A-item3}), the factor $A_{n_{i_0}}$ can be moved to the right,
\[
E_n\big(\pi(\sigma)\big) = w_{n_1}w_{n_2}\cdots w_{n_{i-2}} v_{n_i} \cdots w_{n_k}w_{n_1} \, A_{n_{i_0}}.
\]
We iterate this procedure for $n_{i_0}$ until the next factor $w_{n_{j_0}}$ with $n_{j_0} \le n$, i.e.~$w_{n_{j_0}}= v_{n_{j_0}}$, appears as the predecessor of the factor $v_{n_{i_0}}$. At this point we have pulled out the factor $A_{n_{i_{0}}}^{i_{0}-j_{0}-1}$, where the number $(i_{0}-j_{0}-1)$ equals $\ell_{n,n_{i_0}}(\sigma)$, the length of the past excursion of the point $n_{i_0} \in [n]$. We repeat this procedure until all factors $w_i$ of the form $A_0$ are pulled out to the right. Clearly, after removing all these factors we are left with the $n$-derivative $\pi\big(\partial_n(\sigma)\big)$. Note also that we can always reorder products $A_{p}^{\ell_{n,p}(\sigma)} A_{q}^{\ell_{n,q}(\sigma)}$  on the right, since $A_q A_p = A_p A_q$ for $p\neq q$ by \eqref{eq:A-item2}, and so the form of the factor inside the right parenthesis of \eqref{eq:thoma-general-1} is easily verified.   

We are left to verify \eqref{eq:thoma-general-1} for the general case 
of a disjoint product of non-trivial cycles $\sigma = s_1 s_2 \cdots s_p$.
Due to \eqref{eq:fixed-points-0} we have already at hands the factorization
\[
E_n(\pi(\sigma))= E_n(\pi(s_1)) \,  E_n(\pi(s_2)) \cdots E_n(\pi(s_p)).  
\]   
So we are left to show that all appearing factor can be arranged in the order stated in \eqref{eq:thoma-general-1}. But this is evident from $C_k\in \cZ(\cA)$ and \eqref{eq:A-item3}. The proof of \eqref{eq:thoma-general-2} is done in the same manner, now pulling out factors $A_0$ to the left instead of the right.
\end{proof}
\begin{Remark} \normalfont \label{rem:central-lim-cycles}
If the star generator $v_1$ is central, so are all star generators $v_{i+1}$ and limit $2$-cycles $A_i$ for $i \in \Nset_0$. Indeed, $\cZ(\cA) \subset \cA^{\alpha_1}$ and $\alpha_1(v_i) = v_{i+1}$ implies $v_1 = v_i \in \cZ(\cA)$ for all $i\in \Nset$. Thus $A_0 = \wotlim_{n \to \infty} v_n \in \cZ(\cA)$ and consequently $A_0 = v_i A_0 v_i = A_i \in \cZ(\cA)$ for all $i \in \Nset_0$. Altogether the centrality of $v_1$ implies $v_{i+1} = v_1 = A_0 = A_i$ for all $i \in \Nset_0$. Such a situation occurs in direct sums of the trivial and sign representation of $\Sset_\infty$. Note also for a non-central limit $2$-cycle $A_1$ that $\alpha_0(A_i) = A_{i+1}$ and $\cA^{\alpha_0} = \cZ(\cA)$ entails $A_i \neq A_j$ whenever $i \neq j$. We finally remark that the centrality of the $A_i$'s does not imply the centrality of the $v_j$'s. Such a situation occurs for Markov traces, where $A_0$ is trivial and $v_1$ non-central, see Section \ref{section:Markov}.        
\end{Remark}
\begin{Remark}\normalfont \label{rem:olshanski}
For the convenience of the reader, we briefly relate our approach to that of Okounkov in \cite{Okou99a} via the representation theory of Olshanski semigroups. Working in the GNS representation of $(\cA,\trace)$ with GNS Hilbert space $L^2(\cA,\trace)$ and the cyclic separating vector $\1_{L^2(\cA,\trace)}$, we obtain the quadruple (compare \cite[Section 1]{Okou99a}) 
\begin{equation}\label{eq:olshanski}
\Big\langle \pi(\Sset_\infty)J\pi(\Sset_\infty)J, A_0, C_j, e_k \Big\rangle_{j\ge 1, k \ge -1}.
\end{equation}
Here $J$ is the modular conjugation on $L^2(\cA,\trace)$ and $e_k$ is the extension of the conditional expectation $E_k$ with  $e_k x e_k = E_k(x) e_k$ for $x \in \cA$.  Note further that the representation $\rho_0^{}\colon \Sset_\infty \to \Aut{\cA,\trace}$ with $\rho_0^{}(\sigma) = \Ad \pi(\sigma)$  extends to the representation $U \colon \Sset_\infty \to \cU(L^2(\cA, \trace))$
such that
\[
U_\sigma = \pi(\sigma) J \pi(\sigma)J.
\] 
In other words: $U_\sigma$ is a unitary in the diagonal subgroup of  $\pi(\Sset_\infty) J \pi(\Sset_\infty) J$.   
Altogether, the quadruple \eqref{eq:olshanski} gives rise to a semigroup as it emerges from spherical representations on Hilbert spaces of certain Olshanski semigroups in \cite{Okou99a}.   
\end{Remark}
\section{Noncommutative Markov shifts and commuting squares from unitary representations of $\Sset_\infty$}
\label{section:bernoulli-markov}

Our analysis of representations of $\Sset_\infty$ presented so far is motivated by ideas from noncommutative probability. By making this more explicit the resulting structures become more transparent.
Hence in this section we have a closer look especially at the commuting squares obtained in the previous sections and we will discuss them with respect to noncommutative (unilateral) versions of Bernoulli shifts and Markov shifts. 
We start with some general concepts.

\begin{Definition} \normalfont \label{def:m-b}

Let $(\cM,\psi)$ be a probability space and $\cB_0$ a $\psi$-conditioned subalgebra of $\cM$. Suppose further that $\alpha$ is a $\psi$-preserving endomorphism of $\cM$ such that the subalgebras
\[
\cB_{[m,n]} := \vN(\alpha^k(\cB_0) \colon m \le k \le n)
\]
are also $\psi$-conditioned. We denote the $\psi$-preserving conditional expectation from $\cM$ onto $\cB_{[m,n]}$ by $\mathcal{E}_{[m,n]}$ and we define an endomorphism $\beta$ as the restriction of $\alpha$ to $\cB := \vN\set{\alpha^k(\cB_0}{k \in \Nset_0}$.

\begin{enumerate}
\item[(i)] \label{item:def-m-b-i}
The endomorphism $\beta$ is called a {\emph{Markov shift} with generator $\cB_0$ if the following \emph{Markov property} is valid:
\[
\mathcal{E}_{[0,n]}\;\beta^k(x) = \mathcal{E}_{[n,n]}\;\beta^k(x)
\]
for all $n,k \in \Nset_0$ with $n \le k$ and all $x \in \cB_0$.
In this case
\[
R := \mathcal{E}_{[0,0]}\; \beta\; \mathcal{E}_{[0,0]}
\]
is called the transition operator. 
\item[(ii)] \label{item:def-m-b-ii}
The endomorphism $\beta$ is called a \emph{(full/ordered) Bernoulli shift} over $\cN$ with generator $\cB_0$} if $\cN \subset \cB_0 \cap \cM^\alpha$ is a $\psi$-conditioned subalgebra and $\big(\alpha^n(\cB_0)\big)_{n \in \Nset_0}$ is (full/order) $\cN$-independent. Compare Definition \ref{def:independence}. 
\end{enumerate}
\end{Definition}

Note that it is sufficient to verify
the Markov property for $k=n+1$ and that for all $k \in \Nset_0$ and $x \in \cB_0$
we have
\[
\mathcal{E}_{[0,0]}\; \beta^k(x) = R^k(x).
\]
Further it is easy to check that a Bernoulli shift over $\cN$ is a Markov shift with transition operator $R = E_{\cN}$, the conditional expectation onto $\cN$. 
If $\cB$ is commutative then a Markov shift (Bernoulli shift) is called classical and all the concepts and properties above are very familiar from classical probability theory. More detailed discussions and surveys about the noncommutative generalizations used here can be found in \cite[Chapter 2]{Gohm04a} and \cite[Appendix]{GoKo09a}.
\\

Let us now go back to representations of $\Sset_\infty$, using the notations introduced in previous sections.
For convenience we put $C \equiv (C_k)_{k \in \Nset}$ and $\lara{(X_i)_{i\in I}}:= \vN(X_i| i \in I)$.  
\begin{Theorem}\label{thm:cs-star}
Suppose the tracial probability space $(\cA,\trace)$ is equipped with a representation $\pi\colon \Sset_\infty \to \cU(\cA)$ such that 
$\cA = \vN_\pi(\Sset_\infty)$ and consider the $1$-shifted representation 
\[
\rho_1 = \Ad \pi \circ \sh \colon  \Sset_\infty \to \Aut{\cA, \trace}. 
\]
We arrive at the following conclusions:
\begin{enumerate} 
\item \label{item:cs-star-i}
$ \rho_1$ has the generating property and, for all $n \in \Nset_0$,  
\begin{eqnarray*}
 \cA_{-1} &=& \vN(C) = \cZ(\cA),\\
\cA^{\rho_1(\Sset_\infty)} = \cA_{0} &= &\vN(C,A_0),\\
\cA^{\rho_1(\Sset_{n+2,\infty})} = \cA_{n+1}&= & \vN\big(C,A_0,v_0,v_1, \ldots, v_{n+1}\big).
\end{eqnarray*}
Moreover 
\[
\alpha_1(x) = \sotlim_{n \to \infty} \rho_1(\sigma_1 \sigma_2 \cdots \sigma_n)(x)
\] 
is a $\trace$-preserving endomorphism of $\cA$ with fixed point algebra $\cA_0$ such that, for $i,k \in \Nset$, 
\[
\alpha_1(C_{k}) =  C_{k}, \qquad 
\alpha_1(A_0) =  A_{0}, \qquad
\alpha_1(v_i) =  v_{i+1}.
\]
\item \label{item:cs-star-ii}
The sequence $(v_i)_{i \in \Nset}$ is minimal, exchangeable and has the tail algebra 
\[
\bigcap_{n \ge 0} \vN(v_k \mid k \ge n) = \cA_0 = \vN(C,A_0).
\] 
In particular this sequence is spreadable, stationary and full $\cA_0$-independent.   
\item \label{item:cs-star-iii}
$\alpha_1$ is a full Bernoulli shift over $\cA_0 = \vN(C, A_0)$ with generator $\cA_1= \vN(C, A_0,v_1)$.
\item \label{item:cs-star-iv} 
Each cell of the triangular tower of inclusions is a commuting square: 
\begin{eqnarray*}
\setcounter{MaxMatrixCols}{20}
\small
\begin{matrix} 
\cA_0 &\subset&
\cA_1 &\subset&
\cA_2 &\subset&
\cA_3  &\subset\cdots\subset& 
\cA_\infty\\
    \shortparallel
 && \shortparallel
 && \shortparallel
 && \shortparallel
 && \shortparallel\\
\lara{C,A_0} && 
\lara{C,A_0,v_1} &&   
\lara{C,A_0,v_1,v_2} && 
\lara{C,A_0,v_1,v_2,v_3} && 
\cA\\ 
 && \cup
 && \cup
 && \cup
 && \cup\\ 
           &&                 
\lara{C,A_0} &\subset& 
\lara{C,A_0,v_2} &\subset&   
\lara{C,A_0,v_2,v_3} &\subset\cdots\subset& 
\alpha_1 (\cA)\\
 && 
 && \cup
 && \cup
 && \cup\\
           && 
           &&           
\lara{C,A_0} &\subset& 
\lara{C,A_0,v_3} &\subset\cdots\subset& 
\alpha_1^2 (\cA)\\
 && 
 && 
 && \cup
 && \cup\\
 && 
 && 
 && \vdots
 && \vdots\\
\end{matrix}
\end{eqnarray*}
\setcounter{MaxMatrixCols}{10}
\item \label{item:cs-star-v} 
The following are equivalent:
\begin{enumerate}
\item  \label{item:cs-star-v-a} 
The limit $2$-cycle $A_0$ is central.
\item  \label{item:cs-star-v-b} 
The sequence $(v_i)_{i \in \Nset}$ has the tail   
algebra $\vN(C) = \cZ(\cA)$.
\item \label{item:cs-star-v-c} 
The triangular tower of commuting squares from \eqref{item:cs-star-iv} equals 
\begin{eqnarray*}
\setcounter{MaxMatrixCols}{20}
\small
\begin{matrix} 
\lara{C} &\subset& 
\lara{C,v_1} &\subset&   
\lara{C,v_1,v_2} &\subset& 
\lara{C,v_1,v_2,v_3} &\subset\cdots\subset& 
\cA\\ 
 && \cup
 && \cup
 && \cup
 && \cup\\ 
           &&                 
\lara{C} &\subset& 
\lara{C,v_2} &\subset&   
\lara{C,v_2,v_3} &\subset\cdots\subset& 
\alpha_1 (\cA)\\
 && 
 && \cup
 && \cup
 && \cup\\
           && 
           &&           
\lara{C} &\subset& 
\lara{C,v_3} &\subset\cdots\subset& 
\alpha_1^2 (\cA)\\
 && 
 && 
 && \cup
 && \cup\\
 && 
 && 
 && \vdots
 && \vdots\\
\end{matrix}
\end{eqnarray*}
\setcounter{MaxMatrixCols}{10}
\end{enumerate}
\item \label{item:cs-star-vi} 
The following are equivalent:
\begin{enumerate}
\item \label{item:cs-star-vi-a} 
The limit $2$-cycle $A_0$ is trivial.
\item \label{item:cs-star-vi-b} 
The sequence $(v_i)_{i \in \Nset}$ has the tail algebra $\Cset$.  
\item \label{item:cs-star-vi-c} 
The triangular tower of commuting squares from \eqref{item:cs-star-iv} equals: 
\begin{eqnarray*}
\small
\setcounter{MaxMatrixCols}{20}
\begin{matrix} 
\Cset &\subset& 
\lara{v_1} &\subset&   
\lara{v_1,v_2} &\subset& 
\lara{v_1,v_2,v_3} &\subset\cdots\subset& 
\cA\\ 
 && \cup
 && \cup
 && \cup
 && \cup\\
           &&                 
\Cset &\subset& 
\lara{v_2} &\subset&   
\lara{v_2,v_3} &\subset\cdots\subset& 
\alpha_1 (\cA)\\
 && 
 && \cup
 && \cup
 && \cup\\
           && 
           &&           
\Cset &\subset& 
\lara{v_3} &\subset\cdots\subset& 
\alpha_1^2 (\cA)\\
 && 
 && 
 && \cup
 && \cup\\
 && 
 && 
 && \vdots
 && \vdots\\
\end{matrix}.
\end{eqnarray*}
\setcounter{MaxMatrixCols}{10}
\item \label{item:cs-star-vi-d} 
$\alpha_1(\cA) \subset \cA$ is an irreducible inclusion of subfactors.
\end{enumerate}
\end{enumerate}
\end{Theorem}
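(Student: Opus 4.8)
The plan is to prove the four-way equivalence in part \eqref{item:cs-star-vi} on top of the structural results already obtained in parts \eqref{item:cs-star-i}--\eqref{item:cs-star-iv}, the key observation being that triviality of the limit $2$-cycle $A_0$ forces triviality of every central limit cycle $C_k$: if $A_0 = \lambda\1$ then $C_1 = \1$ and, by \eqref{eq:klimit} of Proposition \ref{prop:limit-cycles}, $C_k = E_{-1}(A_0^{k-1}) = \lambda^{k-1}\1$ for $k \ge 2$, so that $\lara{C,A_0} = \Cset$. With this in hand the equivalence of (a) and (b) is immediate: by part \eqref{item:cs-star-ii} the tail algebra of $(v_i)_{i\in\Nset}$ equals $\cA_0 = \lara{C,A_0}$, which is $\Cset$ exactly when $A_0$ is trivial (for the converse, $A_0 \in \cA_0 = \Cset$). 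Throughout I shall use that $\alpha_1$ is injective --- it is $\trace$-preserving on a finite von Neumann algebra with faithful trace --- so $\alpha_1(\cA) \cong \cA$.

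For the equivalence of (b) and (c): if $\cA_0 = \Cset$, then $A_0$ and all $C_k$ are scalars, so every vertex $\lara{C,A_0,v_{p+1},\dots,v_{p+q}}$ of the tower in part \eqref{item:cs-star-iv} collapses to $\lara{v_{p+1},\dots,v_{p+q}}$, and $\alpha_1^p(\cA) = \lara{C,A_0,v_{p+1},v_{p+2},\dots}$ collapses to $\lara{v_{p+1},v_{p+2},\dots}$; the commuting-square property of each cell survives this relabelling of vertices, and the resulting tower is exactly the one displayed in (c). Conversely, if the tower of \eqref{item:cs-star-iv} coincides with that of (c), comparison of the top-left vertices yields $\cA_0 = \lara{C,A_0} = \Cset$, which is (b).

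Finally, for the equivalence of (b) and (d): from $\alpha_n = \lim_{N\to\infty}\Ad(u_{n+1}\cdots u_N)$ and $u_1 = u_1^{-1}$ one obtains $\alpha_0 = \Ad u_1 \circ \alpha_1$, hence $\alpha_1(\cA) = u_1\,\alpha_0(\cA)\,u_1$ and $\alpha_1(\cA)' \cap \cA = u_1\big(\alpha_0(\cA)' \cap \cA\big)u_1$. By normality of $\alpha_0$ together with \eqref{eq:prop-trace} we have $\alpha_0(\cA) = \vN_\pi(\Sset_{2,\infty})$, so $\alpha_0(\cA)' \cap \cA = \vN_\pi(\Sset_{2,\infty})' \cap \cA = \cA_0$ by Lemma \ref{lem:inclusions}, and therefore $\alpha_1(\cA)' \cap \cA = u_1\cA_0 u_1$. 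Since $\Ad u_1$ is an automorphism of $\cA$, this relative commutant is $\Cset$ precisely when $\cA_0 = \Cset$, i.e.\ precisely under (b); and when it is $\Cset$ the inclusion is automatically one of factors, since $\cZ(\cA) \subseteq \alpha_1(\cA)' \cap \cA$ and $\cZ(\alpha_1(\cA)) = \alpha_1(\cA)\cap\alpha_1(\cA)' \subseteq \alpha_1(\cA)'\cap\cA$. (Alternatively, one may simply quote the equivalence of (d) and (f) in Corollary \ref{cor:fixed-points}\eqref{item:cor-fixed-points-ii}.) The one step requiring genuine care is this relative-commutant identity --- in particular, confirming that $\alpha_0(\cA)$ is exactly $\vN_\pi(\Sset_{2,\infty})$, neither larger nor smaller, so that Lemma \ref{lem:inclusions} applies --- while everything else is bookkeeping with results already in place.
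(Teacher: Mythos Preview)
Your proof of part \eqref{item:cs-star-vi} is correct and follows essentially the same approach as the paper. The paper's proof is terse: it simply observes that $\vN(C,A_0)\simeq\Cset$ if and only if $A_0$ is trivial and then invokes Corollary~\ref{cor:fixed-points}\eqref{item:cor-fixed-points-ii} for the subfactor statement~(d). You spell out both points explicitly---deriving $C_k=\lambda^{k-1}\1$ from $A_0=\lambda\1$ via \eqref{eq:klimit}, and computing the relative commutant $\alpha_1(\cA)'\cap\cA=u_1\cA_0 u_1$ directly from $\alpha_0=\Ad u_1\circ\alpha_1$ and $\alpha_0(\cA)=\vN_\pi(\Sset_{2,\infty})$---but this is precisely the content behind the paper's citation of Corollary~\ref{cor:fixed-points}\eqref{item:cor-fixed-points-ii}, whose proof of the equivalence (b)$\Leftrightarrow$(f) uses the same conjugation identity.
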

\begin{proof}
\eqref{item:cs-star-i} is the content of Lemma \ref{lem:inclusions}, Proposition \ref{prop:trace} and Theorem \ref{thm:fixed-points}. 
For \eqref{item:cs-star-ii} see Theorem \ref{thm:indy}\eqref{item:indy-ii} and use the identification of the tail algebra $\cA_0$ from \eqref{item:cs-coxeter-i} above. Finally the properties stated additionally for $(v_i)_{i\in \Nset}$ follow from the extended de Finetti theorem, Theorem \ref{thm:de-finetti}.
Note for \eqref{item:cs-star-iii} that $\vN(C,A_0,v_1)$ is a generator for $\alpha_1$ and that \eqref{item:cs-star-ii} gives the full $\cA_0$-independence of the sequence of subalgebras $\big(\vN(C,A_0,v_i)\big)_{i \in \Nset}$. 
\eqref{item:cs-star-iv} follows from Corollary \ref{cor:endo-braid-i}.
All equivalences in \eqref{item:cs-star-v} are immediate from
\eqref{item:cs-star-i}, \eqref{item:cs-star-ii}, \eqref{item:cs-star-iv}   
and the fact that $\vN(C,A_0) = \vN(C)$ if and only if $A_0$ is central. 
Similarly, all equivalences in \eqref{item:cs-star-vi} follow from the fact
that $\vN(C,A_0) \simeq \Cset$ if and only if $A_0$ is trivial (see Corollary \ref{cor:fixed-points} \eqref{item:cor-fixed-points-ii}).    
\end{proof}
Theorem \ref{thm:cs-star} is formulated in the framework of $1$-shifted representations $\rho_1 = \rho_0^{} \circ \sh$ and its parts \eqref{item:cs-star-i} to \eqref{item:cs-star-iv} transfer appropriately to the more general setting of an $N$-shifted representation $\rho_N = \rho_0^{} \circ \sh^N$ with $N > 1$ (see also Remark \ref{rem:b-m-shifted}). 
We continue with the counterpart of Theorem \ref{thm:cs-star} for the $0$-shifted representation $\rho_0$.  
\begin{Theorem}\label{thm:cs-coxeter}
Suppose the tracial probability space $(\cA,\trace)$ is equipped with a representation $\pi\colon \Sset_\infty \to \cU(\cA)$ such that 
$\cA = \vN_\pi(\Sset_\infty)$. Consider the $0$-shifted representation 
\[
\rho_0^{} = \Ad \pi \colon  \Sset_\infty \to \Aut{\cA, \trace}. 
\]
We arrive at the following conclusions:
\begin{enumerate}
\item \label{item:cs-coxeter-i}
$\rho_0^{}$ has the generating property and, for all $n \in \Nset_0$,  
\begin{eqnarray*}
\cA^{\rho_0^{}(\Sset_\infty)} &=& \cA_{-1} = \vN(C) = \cZ(\cA),  \\ 
\cA^{\rho_0^{}(\Sset_{n+2,\infty})} 
&=& \cA_n = \vN\big(C,A_i,u_0,u_1, \ldots, u_n\big) \qquad (0 \le i \le n).
\end{eqnarray*}
Moreover 
\[
\alpha_0(x) = \sotlim_{n \to \infty} \rho_0^{}(\sigma_1 \sigma_2 \cdots \sigma_n)(x)
\] 
is a $\trace$-preserving endomorphism of $\cA$ with fixed point algebra $\cA_{-1}$ such that, for $i,k \in \Nset$, 
\[
\alpha_0(C_k) =  C_k, \qquad 
\alpha_0(A_i) =  A_{i+1}, \qquad
\alpha_0(u_i) =  u_{i+1}.
\]
\item \label{item:cs-coxeter-ii} 
The sequence $(u_i)_{i \in \Nset}$ is minimal, stationary and has the tail algebra 
\[
\bigcap_{n \ge 0} \vN(u_k \mid k \ge n) = \cA_{-1} = \vN(C).
\] 
This sequence may be neither (order/full) $\cA_{-1}$-independent, spreadable nor exchangeable.
\item \label{item:cs-coxeter-iii}
$\alpha_0$ is a Markov shift with generator $\cA_1 = \vN(C,A_0, u_1)$ 
and transition operator 
\[
R_0 := E_1 \alpha_0 E_1 = \alpha_0 E_0.  
\] 
More explicitly, the transition operator $R_0$ is given by
\begin{eqnarray*}
R_0\Big(C_{k_1}^{n_1}\cdots C_{k_p}^{n_p} A_0^{n} u_\epsilon\Big)
&=& C_{k_1}^{n_1}\cdots C_{k_p}^{n_p} A_1^{n+\epsilon} \qquad (\epsilon = 0,1)
\end{eqnarray*}
for  $k_1,\ldots, k_p \ge 1$ and $n_1, \ldots, n_p,n \ge 0$, with $p \in \Nset$. 
\item  \label{item:cs-coxeter-iv}
Each cell of the triangular tower of inclusions is a commuting square: 
\begin{eqnarray*}
\setcounter{MaxMatrixCols}{20}
\small
\begin{matrix}
\cA_{-1} &\subset& 
\cA_0 &\subset& 
\cA_1 &\subset&   
\cA_2 &\subset& 
\cA_3 &\subset\cdots\subset& 
\cA_\infty\\ 
    \shortparallel
 && \shortparallel
 && \shortparallel
 && \shortparallel
 && \shortparallel
 && \shortparallel\\
\lara{C} && 
\lara{C,A_0} && 
\lara{C,A_0,u_1} &&   
\lara{C,A_0,u_1,u_2} && 
\lara{C,A_0,u_1,u_2,u_3} && 
\cA\\ 
 && \cup
 && \cup
 && \cup
 && \cup
 && \cup\\
           && 
\lara{C} &\subset&                
\lara{C,A_1} &\subset& 
\lara{C,A_1,u_2} &\subset&   
\lara{C,A_1,u_2,u_3} &\subset\cdots\subset& 
\alpha_0 (\cA)\\
 && 
 && \cup
 && \cup
 && \cup
 && \cup\\
           && 
           &&
\lara{C} &\subset&           
\lara{C,A_2} &\subset& 
\lara{C,A_2,u_3} &\subset\cdots\subset& 
\alpha_0^2  (\cA)\\
 && 
 && 
 && \cup
 && \cup
 && \cup\\
 && 
 && 
 && \vdots
 && \vdots
 && \vdots\\
\end{matrix}
\end{eqnarray*}
\setcounter{MaxMatrixCols}{10}
\item \label{item:cs-coxeter-v}
The following are equivalent:
\begin{enumerate}
\item \label{item:cs-coxeter-v-a}
The limit $2$-cycle $A_0$ is central. 
\item \label{item:cs-coxeter-v-b}
$E_{-1}$ is a center-valued Markov trace (compare Section \ref{section:Markov}),
i.e.
\[
E_{-1} (x\,u_n) = E_{-1} (x)\, E_{-1} (u_n)
\]
for all $n \in \Nset$ and $x \in \vN_{\pi}(\Sset_n)$.
\item \label{item:cs-coxeter-v-c}
The sequence $(u_i)_{i \in \Nset}$ is full $\cZ(\cA)$-independent. 
\item \label{item:cs-coxeter-v-d}
The Markov shift $\alpha_0$ 
is a full Bernoulli shift over the center $\cZ(\cA) = \cA_{-1}$. In particular, the transition operator $R_0$ reduces to
\[
R_0 = \alpha_0 E_0 = E_{-1}.
\]
\item \label{item:cs-coxeter-v-e}
The triangular tower of commuting squares from \eqref{item:cs-coxeter-iv} equals
\begin{eqnarray*}
\setcounter{MaxMatrixCols}{20}
\small
\begin{matrix}
\lara{C} &\subset& 
\lara{C} &\subset& 
\lara{C,u_1} &\subset&   
\lara{C,u_1,u_2} &\subset& 
\lara{C,u_1,u_2,u_3} &\subset\cdots\subset& 
\cA\\ 
 && \cup
 && \cup
 && \cup
 && \cup
 && \cup\\
           && 
\lara{C} &\subset&                
\lara{C} &\subset& 
\lara{C,u_2} &\subset&   
\lara{C,u_2,u_3} &\subset\cdots\subset& 
\alpha_0 (\cA)\\
 && 
 && \cup
 && \cup
 && \cup
 && \cup\\
           && 
           &&
\lara{C} &\subset&           
\lara{C} &\subset& 
\lara{C,u_3} &\subset\cdots\subset& 
\alpha_0^2  (\cA)\\
 && 
 && 
 && \cup
 && \cup
 && \cup\\
 && 
 && 
 && \vdots
 && \vdots
 && \vdots\\
\end{matrix}
\end{eqnarray*}
\setcounter{MaxMatrixCols}{10} 
\end{enumerate}
\item \label{item:cs-coxeter-vi}
The following are equivalent:
\begin{enumerate}
\item \label{item:cs-coxeter-vi-a}
The limit 2-cycle $A_0$ is trivial.
\item \label{item:cs-coxeter-vi-b}
$\trace$ is a Markov trace (compare Section \ref{section:Markov}),
i.e.
\[
\trace (x\,u_n) = \trace (x)\, \trace (u_n)
\]
for all $n \in \Nset$ and $x \in \vN_{\pi}(\Sset_n)$.
\item \label{item:cs-coxeter-vi-c}
The sequence $(u_i)_{i \in \Nset}$ is full $\Cset$-independent.  
\item \label{item:cs-coxeter-vi-d}
The Markov shift $\alpha_0$ is a full Bernoulli shift over $\Cset$. 
\item \label{item:cs-coxeter-vi-e}
The triangular tower of commuting squares from \eqref{item:cs-coxeter-iv} equals 
\begin{eqnarray*}
\setcounter{MaxMatrixCols}{20}
\small
\begin{matrix}
\Cset &\subset& 
\Cset &\subset& 
\lara{u_1} &\subset&   
\lara{u_1,u_2} &\subset& 
\lara{u_1,u_2,u_3} &\subset\cdots\subset& 
\cA\\ 
 && \cup
 && \cup
 && \cup
 && \cup
 && \cup\\
           && 
\Cset &\subset&                
\Cset &\subset& 
\lara{u_2} &\subset&   
\lara{u_2,u_3} &\subset\cdots\subset& 
\alpha_0 (\cA)\\
 && 
 && \cup
 && \cup
 && \cup
 && \cup\\
           && 
           &&
\Cset &\subset&           
\Cset &\subset& 
\lara{u_3} &\subset\cdots\subset& 
\alpha_0^2 (\cA)\\
 && 
 && 
 && \cup
 && \cup
 && \cup\\
 && 
 && 
 && \vdots
 && \vdots
 && \vdots\\
\end{matrix}
\end{eqnarray*}
\setcounter{MaxMatrixCols}{10}
\item \label{item:cs-coxeter-vi-f}
$\alpha_0(\cA)\subset \cA $ is an irreducible inclusion of subfactors. 
\end{enumerate}
\item \label{item:cs-coxeter-vii}
The restrictrion $\beta$ of the Markov shift $\alpha_0$ to $\cB:=\vN(C,A_i\mid i \in \Nset_0)$ is a classical Bernoulli shift over $\cZ(\cA) = \vN(C)$. The triangular tower of commuting squares from \eqref{item:cs-coxeter-iv} restricts to 
\begin{eqnarray*}
\setcounter{MaxMatrixCols}{20}
\small
\begin{matrix}
\lara{C} &\subset& 
\lara{C,A_0} &\subset& 
\lara{C,A_0,A_1} &\subset&   
\lara{C,A_0,A_1,A_2} &\subset& 
\lara{C,A_0,A_1,A_2,A_3} &\subset\cdots\subset& 
\cB\\ 
 && \cup
 && \cup
 && \cup
 && \cup
 && \cup\\
           && 
\lara{C} &\subset&                
\lara{C,A_1} &\subset& 
\lara{C,A_1,A_2} &\subset&   
\lara{C,A_1,A_2,A_3} &\subset\cdots\subset& 
\beta (\cB)\\
 && 
 && \cup
 && \cup
 && \cup
 && \cup\\
           && 
           &&
\lara{C} &\subset&           
\lara{C,A_2} &\subset& 
\lara{C,A_2,A_3} &\subset\cdots\subset& 
\beta^2  (\cB)\\
 && 
 && 
 && \cup
 && \cup
 && \cup\\
 && 
 && 
 && \vdots
 && \vdots
 && \vdots\\
\end{matrix}
\end{eqnarray*}
\setcounter{MaxMatrixCols}{10}
The sequence $(A_i)_{i \in \Nset_0}$ is exchangeable with tail algebra $\vN(C)$. The representation $\rho_0$ restricts to a representation of $\Sset_\infty$ in the automorphisms of $\cB$.  
\end{enumerate}
\end{Theorem}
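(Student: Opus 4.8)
The backbone of the theorem is assertion (i), from which the commuting-square diagram (iv) follows by substitution. The generating property of $\rho_0^{}$ and the relative-commutant identity $\cA_n = \vN_\pi(\Sset_{n+2,\infty})' \cap \cA$ are Lemma \ref{lem:inclusions}; that $\alpha_0$ is a $\trace$-preserving endomorphism with $\cA^{\alpha_0} = \cA_{-1}$ and $\alpha_0 \circ \pi = \pi \circ \sh$ is Proposition \ref{prop:trace}. Together with $C_k \in \cZ(\cA) = \cA^{\alpha_0}$ and the definition of the $A_i$ this gives $\alpha_0(C_k) = C_k$, $\alpha_0(A_i) = A_{i+1}$, $\alpha_0(u_i) = u_{i+1}$. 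Theorem \ref{thm:fixed-points} supplies $\cA_{-1} = \vN(C) = \cZ(\cA)$ and $\cA_n = \cA_0 \vee \vN_\pi(\Sset_{n+1}) = \vN(C, A_0, u_1, \ldots, u_n)$; since $A_i = v_i A_0 v_i$ with $v_i \in \vN(u_1, \ldots, u_i)$, any $A_i$ with $0 \le i \le n$ may replace $A_0$ in this generating set, yielding the stated form $\vN(C, A_i, u_0, \ldots, u_n)$. For (iv) I apply Corollary \ref{cor:endo-braid-i} with $n = 0$ to obtain the triangular commuting-square tower for $\alpha_0$, then read off $\alpha_0^k(\cA_n) = \vN(C, A_k, u_{k+1}, \ldots, u_{k+n})$ to match the diagram.

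For (ii), minimality is $\cA = \vN(u_i : i \ge 1)$ and stationarity follows from $u_{n+1} = \alpha_0^n(u_1)$ with $\alpha_0$ $\trace$-preserving. The tail $\cT := \bigcap_n \vN(u_k : k \ge n) = \bigcap_n \vN_\pi(\Sset_{n,\infty})$ equals $\cZ(\cA)$: each $C_k = \wotlim_i A_i^{k-1}$ lies in $\vN_\pi(\Sset_{n,\infty})$ for every $n$, so $\vN(C) \subseteq \cT$; conversely $\cT \subseteq \vN_\pi(\Sset_{m+2,\infty})$ commutes with $\cA_m = \vN_\pi(\Sset_{m+2,\infty})' \cap \cA$ for all $m$, hence $\cT \subseteq (\bigvee_m \cA_m)' \cap \cA = \cA' \cap \cA = \cZ(\cA)$ by the generating property. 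That the sequence need not be exchangeable or spreadable is witnessed by $\trace(u_1 u_2) = \trace(C_3)$ against $\trace(u_1 u_3) = \trace(C_2^2)$ (via Theorem \ref{thm:thoma-mult}), which differ already for the extremal character with a single non-zero Thoma parameter $a_1 \in (0,1)$, where they equal $a_1^3$ and $a_1^4$. For (iii), taking $\cB_0 = \cA_1$ one identifies $\cB_{[0,n]} = \cA_{n+1}$ and $\cB_{[n,n]} = \alpha_0^n(\cA_1) = \vN(C, A_n, u_{n+1})$; the commuting squares of (iv) then yield the Markov property by the general theory of \cite{Gohm04a}. The transition operator obeys $R_0 = E_1 \alpha_0 E_1 = \alpha_0 E_0$, checked on the weak*-total set $C_{k_1}^{n_1}\cdots C_{k_p}^{n_p} A_0^m u_\epsilon$ using $E_0(v_1) = A_0$ and $\alpha_0(A_0) = A_1$, which simultaneously produces the displayed formula.

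Assertions (v) and (vi) are two dichotomies governed by $R_0 = \alpha_0 E_0$. If $A_0$ is central then $A_i = \alpha_0^i(A_0) = A_0$ for all $i$, so $\cA_0 = \vN(C, A_0) = \vN(C) = \cA_{-1}$, whence $E_0 = E_{-1}$ and $R_0 = \alpha_0 E_{-1} = E_{-1}$; by the Markov structure (iii) and the discussion after Definition \ref{def:m-b}, transition operator $E_{-1}$ makes $\alpha_0$ a full Bernoulli shift over $\cZ(\cA)$, giving in particular the full $\cZ(\cA)$-independence of $(u_i)$, and $\vN(C, A_i) = \vN(C)$ collapses (iv) to (e). Conversely, full $\cZ(\cA)$-independence forces $C_3 = E_{-1}(u_1 u_2) = E_{-1}(u_1) E_{-1}(u_2) = C_2^2$, i.e. $E_{-1}(A_0^2) = E_{-1}(A_0)^2$, so $A_0 \in \cZ(\cA)$ by faithfulness of $\trace$; this closes the equivalence of (a), (c), (d), (e). The center-valued Markov trace identity (b) is the same factorization $E_{-1}(x u_n) = E_{-1}(x) E_{-1}(u_n)$ read on $x \in \vN_\pi(\Sset_n)$, its converse being Proposition \ref{prop:Markov} of Section \ref{section:Markov}. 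Item (vi) is the scalar specialization: by Corollary \ref{cor:fixed-points}\eqref{item:cor-fixed-points-ii} triviality of $A_0$ forces $\cA$ to be a factor with $\cA_0 = \Cset$, so $E_{-1} = \trace(\cdot)\1 = E_{\Cset}$ and each statement of (v) descends to $\Cset$; the new point (f) uses $\alpha_0(\cA) = \vN_\pi(\Sset_{2,\infty})$ (from $\alpha_0 \circ \pi = \pi \circ \sh$), whence $\alpha_0(\cA)' \cap \cA = \cA_0$ by Lemma \ref{lem:inclusions} and irreducibility of $\alpha_0(\cA) \subset \cA$ is exactly $\cA_0 = \Cset$.

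Finally (vii): the algebra $\cB = \vN(C, A_i : i \in \Nset_0)$ is abelian, since the $A_i$ commute (Proposition \ref{prop:A-prop}, \eqref{eq:A-item2}) and the $C_k$ are central; it is $\alpha_0$-invariant with $\beta = \alpha_0|_{\cB}$ acting by $A_i \mapsto A_{i+1}$, and the full $\cZ(\cA)$-independence of $(A_i)$ from \eqref{eq:A-item5}--\eqref{eq:A-item6} makes $\beta$ a classical Bernoulli shift over $\vN(C)$; the restricted commuting-square tower is inherited from (iv), and exchangeability of $(A_i)$ with tail $\vN(C)$ follows from Theorem \ref{thm:de-finetti}. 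I expect the main obstacle to be (iii): verifying the Markov property and pinning the transition operator to $R_0 = \alpha_0 E_0$, and in particular showing this differs from $E_{-1}$ exactly when $A_0$ is non-central — the precise point separating this Markov (non-Bernoulli) picture from the Bernoulli star-generator picture of Theorem \ref{thm:cs-star}. This rests on the module property of $E_0$, the intertwining $E_0(v_1) = A_0$, and the alignment $\cB_{[0,n]} = \cA_{n+1}$ of the Markov filtration with the fixed-point tower.
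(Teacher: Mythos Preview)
Your treatment of items (i)--(iv), (vi) and (vii) matches the paper's proof closely; the citations to Lemma~\ref{lem:inclusions}, Proposition~\ref{prop:trace}, Theorem~\ref{thm:fixed-points}, Lemma~\ref{lem:average-2} and Corollary~\ref{cor:endo-braid-i} are the same ones the paper uses, and your identification $\cB_{[0,n]}=\cA_{n+1}$ in (iii) agrees with the paper's (after noting $A_1,\dots,A_n\in\vN(A_0,u_1,\dots,u_n)$). Your tail-algebra argument in (ii) and your explicit spreadability counterexample are slightly different in flavour but equivalent to the paper's forward reference to Remark~\ref{rem:lack-u}.

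For (v) your argument for $(\mathrm{c})\Rightarrow(\mathrm{a})$ is genuinely different from the paper's and more economical. The paper proves $(\mathrm{b})\Rightarrow(\mathrm{a})$ by conjugating to star generators, showing order $\cZ(\cA)$-factorizability of $(v_i)_{i\in\Nset}$, and invoking the fixed-point characterization Theorem~\ref{thm:fixed-point}\eqref{item:fixed-point-iii}. Your variance trick --- from $E_{-1}(u_1u_2)=E_{-1}(u_1)E_{-1}(u_2)$ deduce $E_{-1}(A_0^2)=E_{-1}(A_0)^2$, hence $\trace\big((A_0-E_{-1}(A_0))^2\big)=0$ and $A_0\in\cZ(\cA)$ --- bypasses all of this and works equally well starting from (b) (take $n=2$, $x=u_1$). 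This buys a one-line proof where the paper spends a paragraph; the paper's route, on the other hand, is closer in spirit to the de Finetti machinery developed in Section~\ref{section:exchange-indep}.

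There is one gap to fix. In your $(\mathrm{a})\Rightarrow(\mathrm{d})$ step you write that ``by the discussion after Definition~\ref{def:m-b}, transition operator $E_{-1}$ makes $\alpha_0$ a full Bernoulli shift''. That discussion only states the converse (Bernoulli $\Rightarrow$ Markov with $R=E_{\cN}$), and the implication you need is not automatic. The correct route is the paper's: from $\cA_0=\cA_{-1}$ the tower (iv) collapses to (e), whose commuting squares give \emph{order} $\cZ(\cA)$-independence of $(u_i)_{i\in\Nset}$; this is then upgraded to \emph{full} independence using the commutation relations~\eqref{eq:B2}, as in \cite[Theorem 5.6(vi)]{GoKo09a}. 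Also, your reference to Proposition~\ref{prop:Markov} for the center-valued Markov trace (v)(b) is misplaced --- that proposition treats the scalar case (vi)(b) and its proof cites back to the present theorem; your own variance argument already closes $(\mathrm{b})\Rightarrow(\mathrm{a})$ without it.
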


\begin{proof}
\eqref{item:cs-coxeter-i} This is immediate from Lemma \ref{lem:inclusions}, Proposition \ref{prop:trace}, Theorem \ref{thm:fixed-points} and Lemma \ref{lem:average-2}. 

\eqref{item:cs-coxeter-ii} The minimality and stationarity are clear by Proposition \ref{prop:trace}. Let $\cN:= \bigcap_{n \ge 0} \vN(u_k \mid k \ge n)$ denote the tail algebra of the sequence $(u_i)_{i \in \Nset}$. It follows 
$\cN \subset \cZ(\cA) = \cA_{-1} = \cA^{\alpha_0}$ from \eqref{eq:B2} and 
Theorem \ref{thm:indy}\eqref{item:indy-i}. We infer further from $\alpha_0(u_i) = u_{i+1}$ (by Proposition \ref{prop:trace}) that $\cN$ equals the tail algebra $\cA^{\tail,\alpha_0}$ of $\alpha_0$, where $\cA^{\tail,\alpha_0} := \bigcap_{n \ge 0} \alpha_0^n(\cA)$. Now $\cA^{\alpha_0} \subset \cA^{\tail,\alpha_0} = \cN \subset \cZ(\cA) = \cA^{\alpha_0}$ implies that $(u_i)_{i \in \Nset}$ has the tail algebra $\cZ(\cA)= \cA_{-1}$. That $(u_i)_{i\in \Nset}$ may lack the properties listed 
in \eqref{item:cs-coxeter-ii} is concluded from the equivalences in \eqref{item:cs-coxeter-v} and Remark \ref{rem:lack-u} below.  

\eqref{item:cs-coxeter-iii}
We want to check the Markov property for $\alpha_0$ with generator $\cA_1$ and hence we must consider the subalgebras $\cB_{[m,n]} := \vN(\alpha_0^k(\cA_1) \colon 0 \le k \le n)$ with conditional expectations $\mathcal{E}_{[m,n]}$.
Inspection of \eqref{item:cs-coxeter-iv} shows that
\[
\begin{matrix}
\cB_{[0,n]} = \vN(C, A_0,\ldots, A_n, u_{1},\ldots, u_{n+1}) &\subset & \vN_\pi(\Sset_\infty)  \\
\cup & & \cup\\
\cB_{[n,n]} = \vN(C, A_n, u_{n+1}) & \subset & \alpha_0^n(\vN_\pi(\Sset_\infty))& = \cB_{[n, \infty)}  
\end{matrix}
\]
is a commuting square. Consequently, $\mathcal{E}_{[0,n]}\mathcal{E}_{[n,\infty)} = \mathcal{E}_{[n,n]}$ by Lemma \ref{lem:commuting-square}\eqref{item:cs-iii} which clearly implies the Markov property. We need to identify the transition operator $R_0$. With
$\alpha_0 = \lim_{k \to \infty} \rho_0 (\sigma_1\sigma_2 \cdots \sigma_k)$
(see Corollary \ref{cor:endo-braid-i}) we can compute
\begin{align*}
R_0 = E_{1} \alpha_0 E_{1} 
&= E_{1} \rho_0^{}(\sigma_1\sigma_2) E_{1} 
&&\text{(since $\rho_0^{}(\sigma_i) E_{1} = E_1 $ for $i \ge 3$)}\\
&= \rho_0^{}(\sigma_1)E_{1} \rho_0^{}(\sigma_2) E_{1}
&&\text{(since $E_{1} \rho_0^{}(\sigma_1) = \rho_0^{}(\sigma_1) E_{1}$)}\\
&= \rho_0^{}(\sigma_1) E_{1} \alpha_1 E_{1} 
&&\text{(since $\rho_0^{}(\sigma_i) E_{1} = E_1 $ for $i \ge 3$)} \\
&= \rho_0^{}(\sigma_1) E_{0}
&&\text{(by Theorem \ref{thm:cs-star}\eqref{item:cs-star-iii})} \\
&= \alpha_0 E_{0} 
&& \text{(since $\rho_0^{}(\sigma_i) E_{0} = E_0 $ for $i \ge 2$).}
\end{align*}
Next we identify the action of $R_0$ on a weak*-dense set in $\cA_1 = \vN(C,A_0,u_1)$. By the module property of conditional expectations, using $A_0^\epsilon = E_0(u_\epsilon)$ for $\epsilon= 0,1$, 
\begin{eqnarray*}
R_0\Big(C_{k_1}^{n_1}\cdots C_{k_p}^{n_p} A_0^{n} u_\epsilon\Big)
&=& \alpha_0 E_0\Big(C_{k_1}^{n_1}\cdots C_{k_p}^{n_p} A_0^{n} u_\epsilon\Big)\\
&=& \alpha_0 \Big(C_{k_1}^{n_1}\cdots C_{k_p}^{n_p} A_0^{n} E_0(u_\epsilon)\Big)\\
&=& \alpha_0 \Big(C_{k_1}^{n_1}\cdots C_{k_p}^{n_p} A_0^{n+\epsilon}\Big)\\
&=& C_{k_1}^{n_1}\cdots C_{k_p}^{n_p} A_1^{n+\epsilon}.
\end{eqnarray*}

\eqref{item:cs-coxeter-iv} 
We apply Theorem \ref{thm:endo-braid-i} where $\alpha$ is chosen to be the $0$-shifted endomorphism $\alpha_0$. Combined with Theorem \ref{thm:fixed-points} and \eqref{item:cs-coxeter-i} above we obtain this triangular tower of commuting squares. 

\eqref{item:cs-coxeter-v} 
By Theorem \ref{thm:fixed-points} the center $\cZ(\cA)$ is equal to $\vN(C)$ and hence the equivalence of \eqref{item:cs-coxeter-v-a} and \eqref{item:cs-coxeter-v-e} is clear. From \eqref{item:cs-coxeter-v-e}, namely from the commuting squares
\[
\begin{matrix}
\vN(C, u_{1},\ldots, u_n) &\subset & \cA  \\
\cup & & \cup\\
\vN(C) & \subset & \alpha_0^n(\cA) 
\end{matrix}
\]
we obtain the order $\cZ(\cA)$-independence of the sequence $(u_i)_{i \in \Nset}$. 
This can be upgraded to full $\cZ(\cA)$-independence, that is  \eqref{item:cs-coxeter-v-c}, by additionally using the commutation relations 
(\ref{eq:B2}) of the Coxeter generators $u_i$. These arguments are so similar to
those proving the corresponding statement in \cite[Theorem 5.6(vi)]{GoKo09a} that we refer the reader to this source. 

Now \eqref{item:cs-coxeter-v-d} is just a reformulation of \eqref{item:cs-coxeter-v-c}
using the terminology of Bernoulli shifts and clearly \eqref{item:cs-coxeter-v-c} or
\eqref{item:cs-coxeter-v-d} imply the apparently weaker property \eqref{item:cs-coxeter-v-b}. Hence we can finish the proof of \eqref{item:cs-coxeter-v}
by showing that \eqref{item:cs-coxeter-v-b} implies \eqref{item:cs-coxeter-v-a}. 
First note that for any $g\in \Sset_n$
\[
      E_{-1}(\pi(g) \, u_n)
= E_{-1} \big((u_1 \ldots u_{n-1} \,\pi(g)\, u_{n-1} \ldots u_1)
              (u_1 \ldots u_{n-1} u_n u_{n-1} \ldots u_1)\big)
     = E_{-1}( \pi(h) \, v_{n})
\]
where $h := \sigma_1 \ldots \sigma_{n-1} \,g\,\sigma_{n-1} \ldots \sigma_1 \in \Sset_n$.
Note that $E_{-1}$ as a $\trace$-preserving conditional expectation onto the center $\cZ(\cA)$ is a center-valued trace (compare \cite[Chapter 8]{KaRi2}). If additionally $E_{-1}$ is Markov, from \eqref{item:cs-coxeter-v-b}, we conclude that
\[
E_{-1}( \pi(h) \, v_n) =
        E_{-1}(\pi(g) u_n)
        = E_{-1}(\pi(g)) E_{-1}(u_n)
       = E_{-1}(\pi(h)) E_{-1}(v_n)
\]
This is valid for all $g$ and hence for all $h$ in $\Sset_n$.
We show now that this implies order $\cZ(\cA)$-factorizability of the sequence $(v_i)_{i \in \Nset}$.

In fact, from the disjoint cycle decomposition together with Lemma \ref{lem:cycle-1} it is clear that every permutation can be written as a
product $\gamma_{n_1} \ldots \gamma_{n_k}$ of star generators where the maximal subscript $n_\ell$ appears only once. Hence with
$n_\ell = n$
\[
E_{-1}(v_{n_1} \ldots v_{n_k}) = E_{-1}( \pi(h) \, v_{n_\ell})
= E_{-1}(\pi(h)) E_{-1}(v_{n_\ell})
\]
where $h \in \Sset_n$.
By iterating this argument and extending it to the weak closures of linear spans the order $\cZ(\cA)$-factorizability of the sequence
$(v_i)_{i \in \Nset}$ follows.

But we know from Theorem \ref{thm:cs-star}(\ref{item:cs-star-ii})
that the minimal sequence $(v_i)_{i \in \Nset}$ always has the tail
algebra $\cA_0 \supset \cZ(\cA)$ and we conclude by Theorem \ref{thm:fixed-point} (\ref{item:fixed-point-iii}) that $\cA_0 = \cZ(\cA)$. This implies \eqref{item:cs-coxeter-v-a}.

\eqref{item:cs-coxeter-vi} 
This follows from \eqref{item:cs-coxeter-v} by taking
Corollary \ref{cor:fixed-points} into account.

\eqref{item:cs-coxeter-vii} 
Since $\alpha_0(C_k) = C_k$ and $\alpha_0(A_i)= A_{i+1}$, the endomorphism $\alpha_0$ restricts to an endomorphism $\beta$ on $\cB:=\vN(C, A_i \mid i \in \Nset_0)$. Clearly $\vN(C,A_0)$ is a generator of $\beta$. Inspecting \eqref{item:cs-coxeter-iv} we see that the sequence $\Big(\beta^{k}\big(\vN(C,A_0)\big)\Big)_{k \in \Nset_0}$ is order $\vN(C)$-independent. Thus $\beta$ is a Bernoulli shift over $\cZ(\cA) = \vN(C)$ with generator $\vN(C,A_0)$. Clearly $\cB$ is abelian since all limit cycles
$C_k$ and $A_i$ mutually commute, see Proposition \ref{prop:A-prop}. Hence $\beta$ is a classical Bernoulli shift. The rest is now immediate.

\end{proof}
\begin{Remark} \normalfont \label{rem:lack-u}
The following are equivalent:
\begin{enumerate}
\item \label{item:cs-coxeter-viii-a}
The unitary $u_1$ is central.
\item \label{item:cs-coxeter-viii-b}
The sequence $(u_i)_{i \in \Nset_0}$ is spreadable. 
\end{enumerate}
In fact, if $u_1$ is central then all $u_i$ are central. This implies $u_i = \alpha^{i-1}_0(u_1) = u_1$ and spreadability (in a commutative algebra $\cA$). Conversely assume that $(u_i)_{i \in \Nset_0}$ is spreadable. Then we have
\begin{eqnarray*}
\trace( |u_1 u_2 - u_2 u_1|^2) 
&=& 2 - \trace(u_1 u_2 u_1 u_2) - \trace(u_2 u_1 u_2 u_1) 
= 2 - \trace(u_1 u_3 u_1 u_3) - \trace(u_3 u_1 u_3 u_1) \\
&=& 2 - \trace(u_1 u_1 u_3 u_3) - \trace(u_1 u_1 u_3 u_3) 
= 2 - 2 \trace(\1) = 0,
\end{eqnarray*}
so $u_1$ and $u_2$ commute. Similarly it follows that $u_1$ commutes with all $u_i$. 

The situation characterized above is a very special one, compare Remark \ref{rem:central-lim-cycles}. More interesting is the situation when $A_0$ is central (resp. trivial) and $u_1$ is non-central, see Section \ref{section:Markov} for concrete examples. Then we conclude with Theorem \ref{thm:cs-coxeter} \eqref{item:cs-coxeter-v} (resp. \eqref{item:cs-coxeter-vi}) that $(u_i)_{i\in \Nset}$ is a full $\cZ(\cA)$ (resp. $\Cset$)-independent stationary sequence which fails to be spreadable. This exemplifies the failure of the implication (c) $\Rightarrow$ (b) in the noncommutative extended de Finetti theorem, Theorem \ref{thm:de-finetti}. 
\end{Remark}
\begin{Remark}\normalfont \label{rem:b-m-shifted}
Exploiting the full power of $N$-shifted representations $\rho_0^{} \circ \sh^N$, with $N \in \Nset_0$, each $N$-shifted endomorphism $\alpha_N$ is a Markov shift with generator $\cA_{N+1}= \vN(C,A_0,v_1, \ldots, v_{N+1})$. The transition operator $R_N$ of $\alpha_{N}$ is determined by a calculation similar to the one in the proof of Theorem \ref{thm:cs-coxeter} \eqref{item:cs-coxeter-iii}
\[
R_N 
= E_{N+1} \alpha_N E_{N+1}
= \alpha_N E_{N}.
\] 
If $N>0$ then, simultaneously, the endomorphism $\alpha_N$ is a Bernoulli shift over $\cA_{N-1}$
with generator $\cA_{N}= \vN(C,A_0,v_1, \ldots, v_{N})$. 
For $N=0$ this is also true except that we have to restrict $\alpha_0$ to obtain the classical Bernoulli shift described in 
Theorem \ref{thm:cs-coxeter} \eqref{item:cs-coxeter-vii}.
\end{Remark}
\section{Commuting squares over $\Cset$ with a normality condition}
\label{section:commsquares}
We have seen in the previous sections that the star generators $\gamma_i = (0,i)$ yield an exchangeable sequence such that its tail algebra is generated by the limit $2$-cycle $A_0$ for an extremal character of $\Sset_\infty$. A goal of present section is to identify this tail algebra as an abelian atomic von Neumann algebra. The related spectral analysis of $A_0$  
in Proposition \ref{prop:mu-discrete} involves noncommutative independence in a crucial manner. Our approach is inspired by Okounkov's proofs in  \cite{Okou99a} and, roughly speaking, replaces properties coming from a certain Olshanski semigroup by properties of limit cycles. As we will see in 
the main results of this section, Theorem \ref{thm:discrete-spec} and Corollary \ref{cor:discrete-spec}, our approach carries over to a general setting of certain commuting squares over $\Cset$. This will be briefly illustrated by Hecke algebras in Example \ref{example:cs3}.

We start with the spectral analysis of the limit $2$-cycle $A_0$. Because  $u_1$ is selfadjoint,
\[
A_0 = E_{0}(u_1)
\] 
is a selfadjoint contraction. Denote by $\chi_B(A_0)$ the spectral projection of $A_0$ corresponding to the Borel set $B \subset [-1,1]$.
Let the measure $\mu$ be the spectral measure of $A_0$ with respect to the state $\trace$, which is uniquely determined by the moments
\[
c_{k+1} := \int t^k \mu(dt) = \trace(A_0^k). 
\]
Note that if $\cA_{-1} = \Cset$, i.e., if $\cA$ is a factor, then $c_k \1 = C_k$, one of the limit cycles considered in Section \ref{section:limit-cycles}.
Our next result is inspired by \cite[Theorem 1]{Okou99a}.
\begin{Proposition}\label{prop:mu-discrete} 
Suppose $\cA$ is a factor. Then the spectral measure $\mu$ is discrete and its atoms can only accumulate at $0 \in [-1,1]$.
\end{Proposition}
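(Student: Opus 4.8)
### Proof proposal

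The plan is to show that the measure $\mu$ on $[-1,1]$ has no continuous part and that its atoms accumulate only at $0$, by exploiting the factorization properties of the star generators together with a trace estimate of the kind used by Okounkov. Throughout we work with the sequence $(v_i)_{i\in\Nset}$ of represented star generators, which by Theorem \ref{thm:indy}\eqref{item:indy-ii} is exchangeable and full $\cA_0$-independent with tail algebra $\cA_0 = \vN(A_0)$ (using that $\cA$ is a factor, so that all $C_k$ are trivial and $\cA_0 = \vN(A_0)$ by Theorem \ref{thm:fixed-points}). First I would record the intertwining identities from Proposition \ref{prop:A-prop} and Lemma \ref{lem:average-2}: $A_i = v_i A_0 v_i = \rho_0^{}((0,i))(A_0)$, the $A_i$ mutually commute, $A_i$ commutes with $v_j$ for $0<i\neq j$, and $A_i = \wotlim_n v_i v_n v_i$. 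Since $\alpha_0(A_i)=A_{i+1}$ and $\cA^{\alpha_0}=\cZ(\cA)=\Cset$, the sequence $(A_i)_{i\in\Nset_0}$ is exchangeable with trivial tail; by Theorem \ref{thm:indy}\eqref{item:indy-i} the algebras $\alpha_0^k(\cA_0)=\vN(A_k)$ are full $\Cset$-independent. In particular $A_0$ and $A_1$ are classically independent copies of the same random variable, and more generally $A_0, A_1, A_2, \dots$ is an i.i.d.\ sequence (in the classical sense, since they commute) with common distribution $\mu$.

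The key estimate is the following. Fix $\epsilon>0$ and let $p:=\chi_{[-1,1]\setminus(-\epsilon,\epsilon)}(A_0)$ be the spectral projection of $A_0$ onto the complement of a neighbourhood of $0$, with $\trace(p)=:\mu([-1,1]\setminus(-\epsilon,\epsilon))=:\delta$. The aim is to bound the number of atoms of $\mu$ outside $(-\epsilon,\epsilon)$, equivalently to bound $\delta$ away from being spread over too many atoms, by producing from $A_0, A_1,\dots,A_{N-1}$ a family of $N$ "almost orthogonal" vectors or projections in $L^2(\cA,\trace)$ and comparing traces. Concretely, for each eigenvalue $\lambda$ of $A_0$ with $|\lambda|\ge\epsilon$ and eigenprojection $q_\lambda=\chi_{\{\lambda\}}(A_0)$, the translates $\rho_0^{}((0,i))(q_\lambda) = v_i q_\lambda v_i = \chi_{\{\lambda\}}(A_i)$ are eigenprojections of the independent copies $A_i$; independence gives $\trace\bigl(\prod_{i=0}^{N-1}\chi_{\{\lambda\}}(A_i)\bigr) = \trace(q_\lambda)^N$, and one estimates the rank/trace of products and alternating products of the $q_\lambda$ against $1$. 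Following Okounkov's argument one derives an inequality of the form: the sum over eigenvalues $|\lambda|\ge\epsilon$ of some positive power of $\trace(q_\lambda)$ is bounded, which forces the number of such eigenvalues to be finite for each $\epsilon>0$; letting $\epsilon\to0$ shows $\mu$ is purely atomic with atoms accumulating only at $0$. I would organize this as two lemmas inside the proof: (a) for any $\epsilon>0$ the set $\{\lambda\in\spec(A_0): |\lambda|\ge\epsilon\}$ is finite; (b) $\mu$ has no continuous part, obtained from (a) by a covering/limiting argument on $[-1,1]\setminus(-\epsilon,\epsilon)$ together with the fact that a bounded selfadjoint operator whose spectrum off any neighbourhood of $0$ is finite has purely atomic spectral measure there.

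The main obstacle, and the place where the argument has to be done carefully rather than quoted, is step (a): making precise the combinatorial trace estimate that turns classical independence of the $A_i$ (and the noncommutative relations $v_i A_0 v_i = A_i$, $A_i v_j = v_j A_i$) into an effective upper bound on the number of large atoms. In Okounkov's proof this is where the Olshanski semigroup enters; here I would instead compute directly with the conditional expectations $E_0$, $E_{-1}$ and the commuting-square relation $E_0\alpha_0^{n}E_0 = E_{-1}$ from Theorem \ref{thm:indy}\eqref{item:indy-i}, to evaluate expressions like $\trace\bigl((v_0 v_n v_0) A_0^{k}\bigr)$ and their higher analogues, and to bound $\trace\bigl((q_\lambda A_1 q_\lambda A_2 \cdots)\bigr)$-type quantities. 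Once the right positive quantity is identified (a weighted count of atoms), the estimate $\delta^{N}$-type decay from independence closes the argument. I expect the rest — traciality, faithfulness of $\trace$, the spectral-theoretic deduction that finiteness of large atoms plus contractivity yields a discrete measure accumulating only at $0$ — to be routine. The axiomatization of this estimate into a statement about abstract commuting squares over $\Cset$ is then exactly Theorem \ref{thm:discrete-spec}, which this Proposition motivates.
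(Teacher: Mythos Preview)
Your setup is correct: in the factor case $\cA_0=\vN(A_0)$, the sequence $(A_i)_{i\ge 0}$ with $A_i=\alpha_0^i(A_0)$ is a classical i.i.d.\ sequence with common law $\mu$, and the commuting-square identity $E_0\alpha_0^nE_0=E_{-1}=\trace(\,\cdot\,)\1$ is exactly the independence you need. But there is a genuine gap in the argument, and it lies precisely at the step you flag as ``the main obstacle''.

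The classical independence of the $A_i$ alone places \emph{no} restriction on $\mu$: any probability measure on $[-1,1]$ can be the law of an i.i.d.\ sequence of commuting selfadjoint contractions. The quantities you propose to compute all live in this commutative world. Since $v_0=\pi(\gamma_0)=\1$, the expression $\trace\bigl((v_0v_nv_0)A_0^k\bigr)$ is just $\trace(v_nA_0^k)=\trace(E_0(v_n)A_0^k)=\trace(A_0^{k+1})$, i.e.\ a moment of $\mu$. Similarly, because $q_\lambda\in\vN(A_0)$ commutes with every $A_i$ for $i\ge 1$, the expressions $\trace(q_\lambda A_1 q_\lambda A_2\cdots)$ factor as $\trace(q_\lambda)\trace(A_0)^m$. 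None of this can force discreteness. Relatedly, your step~(b) as stated does not work: finiteness of the \emph{point} spectrum away from $0$ does not by itself exclude continuous spectrum there.

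What is missing is a genuinely noncommutative input linking $A_0$ back to the unitary $u_1$. The paper's argument hinges on the single quantity $\trace(\chi_B\,u_1)$ for a Borel set $B\subset[\epsilon,1]$ (and symmetrically in $[-1,-\epsilon]$). On one side, $E_0(u_1)=A_0$ gives $\trace(\chi_B u_1)=\int_B t\,d\mu(t)\ge\epsilon\,\mu(B)$. On the other side, traciality and $u_1^2=\1$ yield
\[
\trace(\chi_B u_1)=\trace\bigl((u_1\chi_B u_1)\,\chi_B\cdot\chi_B u_1\bigr)
\le \trace\bigl(\alpha_0(\chi_B)\chi_B\bigr)^{1/2}\,\trace(\chi_B)^{1/2},
\]
and now the independence of $\cA_0$ and $\alpha_0(\cA_0)$ over $\Cset$ gives $\trace(\alpha_0(\chi_B)\chi_B)=\mu(B)^2$, hence $\trace(\chi_B u_1)\le\mu(B)^{3/2}$. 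Combining, $\epsilon\,\mu(B)\le\mu(B)^{3/2}$, so for every Borel $B\subset[\epsilon,1]$ either $\mu(B)=0$ or $\mu(B)\ge\epsilon^2$. This is a statement about arbitrary Borel sets, not just atoms, and it immediately gives both discreteness and the bound of at most $\epsilon^{-2}$ atoms in $[\epsilon,1]$ (and similarly in $[-1,-\epsilon]$). The step you were missing is the appearance of the \emph{noncommuting} unitary $u_1$ in $\trace(\chi_B u_1)$ and the Cauchy--Schwarz manoeuvre that converts $u_1\chi_B u_1$ into $\alpha_0(\chi_B)$; only at that point does the $\Cset$-independence of $\cA_0$ and $\alpha_0(\cA_0)$ bite.
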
 
\begin{proof}
Let $B$ be a Borel subset in $[\epsilon, 1]$ where $1 > \epsilon >0$ and denote by $\chi_B$ its characteristic function. We claim that
\[
\epsilon\, \mu(B) \le \mu(B)^{3/2}.
\] 
To this end, we prove the two inequalities 
\[
\epsilon\, \mu(B) \le \trace(\chi_{B}\, u_1 ) \le  \mu(B)^{3/2}, 
\]
where we have abbreviated the spectral projection $\chi_{B}(A_0)$ by $\chi_{B}$. The first inequality is immediate from   
\begin{eqnarray*}
 \trace(\chi_{B}\, u_1 ) 
 =  \trace(\chi_{B}\, E_{0}(u_1))
 = \trace(\chi_{B}\, A_0)
 = \int_B t\, \mu(dt) \ge \epsilon\, \mu(B).
\end{eqnarray*}
The second inequality follows from 
\begin{eqnarray*}
 \trace(\chi_{B}\, u_1) 
&=& \trace(\chi_{B}\, u_1\, \chi_{B})\\
&=& \trace(u_1\chi_{B}\, u_1\, \chi_{B}\, u_1)\\
&=& \trace(u_1\,\chi_{B}\, u_1 \,\chi_{B}\, \chi_{B}\, u_1)\\  
&=& \trace(\alpha(\chi_{B})\,\chi_{B}\,  \chi_{B}\, u_1)\\
&\le& \trace(|\alpha(\chi_{B})\,\chi_{B}|^2)^{1/2}
    \trace(|\chi_{B}\, u_1|^2) ^{1/2} 
\end{eqnarray*}
The second factor becomes
\[
 \trace(|\chi_{B}\, u_1|^2) = \trace(\chi_{B} ) = \mu(B).
\] 
For the first factor, we use that $\alpha_0(x)$ and $x$ are $\cA_{-1}$-independent for $x\in \cA_0$. So
\begin{eqnarray*}
\trace(|\alpha(\chi_{B})\,\chi_{B}|^2)
&=& \trace(\alpha(\chi_{B})\,\chi_{B})\\
&=& \trace( E_{-1}(\chi_{B}) E_{-1}(\chi_{B})).
\end{eqnarray*}
because $E_{-1} \circ \alpha = E_{-1}$. 
At this place we make use of the assumption that $\cA$ is a factor.
Thus $\cA_{-1} = \cZ(\cA) \simeq \Cset$ and $E_{-1}(x) = \trace(x)\1_\cA$ for $x \in \cA$. Consequently,  
\begin{eqnarray*}
\trace(|\alpha(\chi_{B})\,\chi_{B}|^2)
&=& \trace( E_{-1}(\chi_{B}) E_{-1}(\chi_{B}))\\
&=& (\trace(\chi_{B}))^2 = \mu(B)^2.  
\end{eqnarray*}
Altogether this proves the second inequality
\[
 \trace(\chi_{B}\, u_1)  \le \mu(B)^{3/2}. 
\]
It follows from the inequality $\epsilon \mu(B) \le \mu(B)^{3/2}$ that either
$\mu(B) = 0$ or $\mu(B) \ge  \epsilon^2$. A similar estimate holds for $B \subset [-1,-\epsilon]$. This implies that the measure $\mu$ is discrete. Since $\mu$ is a probability measure, each of the two intervals $[\epsilon, 1]$ and $[-1, -\epsilon]$ contains no more than $1/\epsilon^2$ atoms of $\mu$. Thus $0$ is the only possible accumulation point of these atoms.  This finishes the proof.
\end{proof}
Our estimate is slightly different from Okounkov's but it leads in a similar way to Thoma's theorem, as we will see later. The proof of Property \ref{prop:mu-discrete} suggests the following generalization to an axiomatic setting involving commuting squares of von Neumann algebras.
\begin{Theorem}\label{thm:discrete-spec}
Let $(\cM,\psi)$ be a probability space and suppose that  $E_{\cM_0}$ is a $\psi-$preserving conditional expectation from $\cM$ onto a von Neumann subalgebra $\cM_0$ of $\cM$. Suppose a unitary $u$ in the centralizer $\cM^\psi$ satisfies the following two conditions:
\begin{enumerate}
\item
$u$ implements the commuting square 
\begin{eqnarray}\label{eq:discrete-cs}
\begin{matrix}
u \cM_0 u^*   & \subset &  \cM\\
\cup        &         &   \cup \\ 
\Cset       & \subset & \cM_0
\end{matrix}\,\,,
\end{eqnarray}
or, equivalently, $\cM_0$ and $u\cM_0 u^*$ are $\Cset$-independent;
\item 
the contraction $E_{\cM_0}(u)$ is normal. 
\end{enumerate}
Then $E_{\cM_0}(u)$ has discrete spectrum which can only accumulate at the point $0$.  
\end{Theorem}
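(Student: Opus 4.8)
The plan is to transcribe the proof of Proposition~\ref{prop:mu-discrete} into this axiomatic setting, with the self-adjoint operator $A_0=E_0(u_1)$ there replaced by the positive contraction $a^*a$, where $a:=E_{\cM_0}(u)\in\cM_0$. First I would record that $a$ lies in the centralizer $\cM^\psi$: for $y\in\cM_0$ one has $\psi(ay)=\psi(E_{\cM_0}(uy))=\psi(uy)=\psi(yu)=\psi(ya)$ because $u\in\cM^\psi$, and the case of a general $x\in\cM$ follows by the module property, since $a\in\cM_0$ gives $\psi(ax)=\psi(E_{\cM_0}(ax))=\psi(a\,E_{\cM_0}(x))$ and likewise $\psi(xa)=\psi(E_{\cM_0}(x)\,a)$. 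As $a$ is assumed normal, $\vN(a)$ is a subalgebra of both $\cM_0$ and $\cM^\psi$, so every spectral projection $\chi_B(a)$ of $a$ lies in $\cM_0\cap\cM^\psi$; in particular $\chi_B(a)$ commutes with $a^*a$ and satisfies $\psi(\chi_B(a)\,X)=\psi(X\,\chi_B(a))$ for all $X\in\cM$. Let $\mu$ be the distribution of $a$ with respect to $\psi$, i.e. the Borel probability measure on the closed unit disc of $\Cset$ determined by $\mu(B)=\psi(\chi_B(a))$.

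The heart of the argument is the following estimate. Fix $0<\epsilon\le 1$ and a Borel set $B\subseteq\{z:|z|\ge\epsilon\}$; write $p:=\chi_B(a)$, $m:=\mu(B)=\psi(p)$, $\|x\|_2:=\psi(x^*x)^{1/2}$, and $\alpha:=\Ad u$. I would show
\[
\epsilon^2 m\ \le\ \psi(p\,a^*a)\ \le\ m^2 .
\]
The lower bound is immediate, since $p$ commutes with $a^*a$ and $\psi(p\,a^*a)=\psi(p\,a^*a\,p)=\int_B|z|^2\,d\mu(z)\ge\epsilon^2\mu(B)$. For the upper bound, note that $p\in\cM_0$ commutes with $a$, so $ap=E_{\cM_0}(u)\,p=E_{\cM_0}(pup)$; since $E_{\cM_0}$ is a contraction for $\|\cdot\|_2$ (Kadison--Schwarz), this gives
\[
\psi(p\,a^*a)=\|ap\|_2^2=\|E_{\cM_0}(pup)\|_2^2\le\|pup\|_2^2=\psi\big(p\,\alpha^{-1}(p)\,p\big).
\]
Now $p\in\cM^\psi$ permits the cyclic move $\psi(p\,\alpha^{-1}(p)\,p)=\psi(\alpha^{-1}(p)\,p)$, and applying the $\psi$-preserving automorphism $\alpha$ gives $\psi(\alpha^{-1}(p)\,p)=\psi(p\,\alpha(p))$. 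Since $p\in\cM_0$ and $\alpha(p)=upu^*\in u\cM_0 u^*$, the commuting square hypothesis~\eqref{eq:discrete-cs} (that is, $\Cset$-independence of $\cM_0$ and $u\cM_0u^*$) yields $\psi(p\,\alpha(p))=\psi(p)\,\psi(\alpha(p))=m^2$. Combining the two bounds gives $\epsilon^2 m\le m^2$, hence $m=0$ or $m\ge\epsilon^2$.

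Finally I would read off the spectral statement. Applying the dichotomy to all Borel subsets of $\{|z|\ge\epsilon\}$ shows that $\mu$ restricted to $\{|z|\ge\epsilon\}$ is purely atomic with at most $\lfloor\epsilon^{-2}\rfloor$ atoms (a continuous part, or infinitely many atoms, would produce sets of arbitrarily small positive measure). Letting $\epsilon\downarrow 0$, the measure $\mu$ is purely atomic on $\Cset\setminus\{0\}$ with atoms accumulating only at $0$; since $\psi$ is faithful, $\spec\big(E_{\cM_0}(u)\big)=\operatorname{supp}\mu$, so every nonzero point of the spectrum is an atom of $\mu$, i.e. an eigenvalue of the normal operator $E_{\cM_0}(u)$. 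Thus the spectrum of $E_{\cM_0}(u)$ is discrete away from $0$, with $0$ the only possible accumulation point. I expect the only genuinely delicate point to be the bookkeeping of which elements belong to the centralizer $\cM^\psi$: this is exactly where the normality hypothesis is used, both to define the spectral projections $\chi_B(a)$ and to keep them inside $\cM^\psi$, so that the cyclic rearrangement above is legitimate despite $\psi$ not being tracial; everything else is a direct transcription of the proof of Proposition~\ref{prop:mu-discrete}, in fact slightly shorter since no intermediate $3/2$-power is needed.
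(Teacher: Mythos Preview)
Your proof is correct and in fact streamlines the paper's argument. Both proofs establish a dichotomy $\mu(B)=0$ or $\mu(B)\ge\epsilon^2$ for Borel sets $B$ bounded away from the origin, and both exploit the commuting square via a factorization $\psi(p\,\alpha(p))=\psi(p)^2$. The difference lies in the quantity being estimated: the paper bounds $|\psi(\chi_B u)|$, which forces a case split into the four segments $S^{\pm}_{\Re}(\epsilon)$, $S^{\pm}_{\Im}(\epsilon)$ for the lower bound and a Cauchy--Schwarz step yielding the $3/2$-power $|\psi(\chi_B u)|\le\mu(B)^{3/2}$ for the upper bound. You instead bound $\psi(p\,a^*a)=\|ap\|_2^2=\|E_{\cM_0}(pup)\|_2^2$, so the lower bound $\epsilon^2 m$ comes directly from the annulus $\{|z|\ge\epsilon\}$ with no segment decomposition, and the upper bound $m^2$ follows from the $L^2$-contractivity of $E_{\cM_0}$ rather than Cauchy--Schwarz. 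The net effect is the same inequality $\epsilon^2 m\le m^2$ with a shorter derivation; your observation that the $3/2$-power is not needed is accurate. Your centralizer bookkeeping ($a\in\cM^\psi$, hence $\chi_B(a)\in\cM_0\cap\cM^\psi$, and $\Ad u$ is $\psi$-preserving) is handled correctly and is indeed the only place where the normality hypothesis enters.
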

\begin{Remark}\normalfont
It is presently an open problem to construct a commuting square \eqref{eq:discrete-cs} where $E_{\cM_0}(u)$ fails to be normal, if possible at all. 
\end{Remark}
\begin{proof}[Proof of Theorem \ref{thm:discrete-spec}]
Let $\{\chi_B\}_B$ denote the spectral resolution of the normal operator $E_{\cM_0}(u)$, where $B \subset \Cset$ is a Borel set. Further let $\mu$ be the probability measure associated to $\{\chi_B\}_B$ with respect to the state $\psi$, i.e.
\[
\mu(B) := \psi(\chi_B).
\]
Since $u \in \cM^\psi$ and the modular automorphism group of $(\cM,\psi)$ commutes with $E_{\cM_0}$, we have $E_{\cM_0}(u) \in \cM^\psi$. Thus $E_{\cM_0}(u) \in \cM_0 \cap \cM^\psi$ and $\psi$ restricts to a trace on the von Neumann subalgebra generated by $u$ and $E_{\cM_0}(u)$. Moreover this subalgebra contains the projections $\chi_B$. 

Since $E_{\cM_0}(u)$ is a contraction, its spectrum $\spec (E_{\cM_0}(u))$ is contained in $D_1:=\set{z \in \Cset}{|z|\le 1}$.  For $0 < \epsilon < 1$, 
let $S(\epsilon)$ be one of the four segments
\begin{eqnarray*}
S_{\Re}^{+}(\epsilon)&:=& \set{z \in D_1}{\epsilon \le \Re z \le 1},\\
S_{\Re}^{-}(\epsilon)&:=& \set{z \in D_1}{-1 \le \Re z \le -\epsilon},\\
S_{\Im}^{+}(\epsilon)&:=& \set{z \in D_1}{\epsilon \le \Im z \le 1},\\
S_{\Im}^{-}(\epsilon)&:=& \set{z \in D_1}{-1 \le \Im z \le -\epsilon}.
\end{eqnarray*} 
We will show the estimates 
\begin{eqnarray}\label{eq:discretemeasure}
\epsilon\, \psi(\chi_B) \le |\psi(\chi_B u )| \le  \psi(\chi_B)^{3/2}   \end{eqnarray} 
are valid for any Borel set $B \subset S(\epsilon)$. This ensures
\[
\epsilon\, \mu(B) \le \mu(B)^{3/2}
\] 
and thus either $\mu(B) = 0$ or $\epsilon^2 \le \mu(B)$. We conclude from this that the measure $\mu$ is discrete when restricted to $S(\epsilon)$. Since $\mu$ is a probability measure, the segment $S(\epsilon)$ contains no more than $1/\epsilon^2$ atoms of $\mu$. Consequently,  $E_{\cM_0}(u)$ has pure point spectrum which can only accumulate at $0 \in D_1$. \\
We are still left to ensure the validity of \eqref{eq:discretemeasure}.

By the spectral calculus for normal operators and the Pythagoras theorem,
\begin{eqnarray*}
|\psi(\chi_B \, u)|^2 
&=& |\psi(\chi_B \, E_{\cM_0}(u))|^2 \\
&=& \left| \int_B z \, \mu(dz)\right|^2 
= \left| \int_B \Re z \,\mu(dz)\right|^2 + \left| \int_B \Im z\, \mu(dz)\right|^2 \\ 
&\ge& \left|\epsilon \,\mu(B) \right|^2.
\end{eqnarray*}
This proves the first inequality of \eqref{eq:discretemeasure},
\begin{eqnarray*}
\epsilon \psi(\chi_B) \le |\psi(\chi_B\, u )|. 
 \end{eqnarray*} 
We turn our attention to the second inequality of \eqref{eq:discretemeasure},
\[
|\psi(\chi_B \, u)| \le \psi(\chi_B)^{3/2}.
\]
Indeed,
\begin{eqnarray*}
\psi(\chi_B \,u) 
&=& \psi(\chi_B\, u \,\chi_B)\\ 
&=& \psi(u \,\chi_B\, u \,\chi_B\, u^*) \\
&=&  \psi\Big((u \,\chi_B) (\chi_B \, u \, \chi_B \, u^*)\Big)
\end{eqnarray*} 
is immediate since $\chi_B$ is an orthogonal projection in $\cA^\psi$ and
$u$ is a unitary in $\cA^\psi$. We apply the Cauchy-Schwarz inequality to obtain 
\begin{eqnarray*}
|\psi(\chi_B \, u)|^2 
&\le& \psi(u \,\chi_B \,\chi_B\, u^*)  \cdot
     \psi\Big((u \,\chi_B\, u^* \,\chi_B) (\chi_B\, u\,\chi_B\,  u^*)\Big)\\
&=& \psi(\chi_B)  \cdot
     \psi\Big((u \,\chi_B \,u^*)\cdot \chi_B \cdot (u\,\chi_B \, u^*)\Big).    \end{eqnarray*}
Since $\chi_B$ and $ u \, \chi_B \, u^*$ are $\Cset$-independent, we conclude further for the second factor that 
\begin{eqnarray*}
 \psi\Big((u \chi_B u^*) \chi_B (u\chi_B  u^*)\Big) 
 &=& \psi\Big((u \chi_B u^*) \psi(\chi_B) (u\chi_B  u^*)\Big) \\
 &=& \psi(\chi_B) \psi\Big((u \chi_B u^*) (u\chi_B  u^*)\Big) \\
 &=& \psi(\chi_B) \psi(\chi_B).
\end{eqnarray*}
Thus 
\[
|\psi(\chi_B u)|^2 \le \psi(\chi_B)^3, 
\]
which establishes the second inequality of \eqref{eq:discretemeasure}. 
\end{proof}
We can restrict to the abelian von Neumann algebra $\cB_0$ of $\cM_0$ generated by the operator $E_{\cM_0}(u)$ from Theorem \ref{thm:discrete-spec}:
\[
\cB_0 := \vN\{E_{\cM_0}(u)\}.
\]
We infer from  $u \in \cM^\psi$ that  
\[
 \cB := \cB_0 \vee \vN\{u\} 
\]
is contained in $\cM^\psi$.  Thus the state $\psi$ on $\cM$ restricts to a faithful normal trace on $\cB$, denoted by $\trace$. Let us identify $\Cset \1_\cB$ and $\Cset$ for notational simplicity. 
\begin{Corollary} \label{cor:discrete-spec}
Under the assumptions of Theorem \ref{thm:discrete-spec} and with $\cB_0$, $\cB$ and $\trace$ as introduced above, the commuting square \eqref{eq:discrete-cs} restricts to the commuting square
\begin{eqnarray}\label{eq:discrete-cs-min}
\begin{matrix}
u \cB_0 u^*   & \subset & \cB \\
\cup        &         &   \cup \\ 
\Cset       & \subset &   \cB_0
\end{matrix}.
\end{eqnarray}
In particular, $\cB_0$ is an abelian atomic von Neumann subalgebra of $\cB$ generated by the normal contraction $E_{\cB_0}(u)$.
\end{Corollary}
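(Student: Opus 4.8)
The plan is to derive Corollary~\ref{cor:discrete-spec} directly from Theorem~\ref{thm:discrete-spec} by observing that all the relevant operators already live inside the small algebra $\cB$, so nothing is lost upon restriction. First I would note that $E_{\cM_0}(u)$ is a normal contraction by hypothesis, that it lies in $\cM_0 \cap \cM^\psi$ (as shown in the proof of Theorem~\ref{thm:discrete-spec}, using that the modular automorphism group of $(\cM,\psi)$ commutes with $E_{\cM_0}$ because $u\in\cM^\psi$), and hence that $\cB_0 = \vN\{E_{\cM_0}(u)\}$ is a von Neumann subalgebra of $\cM_0$ with $\cB_0 \subset \cM^\psi$. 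Consequently $\cB = \cB_0 \vee \vN\{u\} \subset \cM^\psi$, and $\psi|_\cB =: \trace$ is a faithful normal trace; in a tracial setting every von Neumann subalgebra is automatically $\trace$-conditioned, so the conditional expectation $E_{\cB_0}\colon \cB \to \cB_0$ exists.

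Next I would verify that $E_{\cB_0}$ is the restriction of $E_{\cM_0}$ to $\cB$, i.e. $E_{\cB_0} = E_{\cM_0}|_{\cB}$. Since $\cB \subset \cM$ and $\cB_0 \subset \cM_0$, for $x \in \cB$ the element $E_{\cM_0}(x)$ need not a priori lie in $\cB_0$; however it does lie in $\cM_0 \cap \cM^\psi$, and one checks that $E_{\cM_0}$ maps $\cB = \vN(\cB_0, u)$ into $\cB_0$. This is where a short argument is needed: it suffices to see that $E_{\cM_0}$ maps a weak*-total set of $\cB$ into $\cB_0$. Monomials in $\cB_0$ and $u$ reduce, via the module property of $E_{\cM_0}$ over $\cM_0 \supset \cB_0$, to expressions of the form $b_0 E_{\cM_0}(u b_1 u b_2 \cdots) $; and $E_{\cM_0}(u b u^*) = \psi(b)\1$ for $b \in \cB_0$ by the $\Cset$-independence of $\cB_0 \subset \cM_0$ and $u\cB_0 u^*$ (condition (i)), while $E_{\cM_0}(u b) $ and $E_{\cM_0}(u b u)$ collapse similarly. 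In every case the output is a scalar multiple of a power of $E_{\cM_0}(u)$, hence in $\cB_0$. By uniqueness of the $\trace$-preserving conditional expectation onto $\cB_0$, $E_{\cB_0} = E_{\cM_0}|_\cB$, and in particular $E_{\cB_0}(u) = E_{\cM_0}(u)$.

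With this in hand the commuting square \eqref{eq:discrete-cs-min} follows: $u\cB_0 u^* \subset u\cM_0 u^* \cap \cB$ and $\cB_0 \subset \cM_0 \cap \cB$, so by Lemma~\ref{lem:commuting-square} (say condition \eqref{item:cs-iii} or \eqref{item:cs-v}) the $\Cset$-independence of $u\cB_0 u^*$ and $\cB_0$ inside $(\cB,\trace)$ is inherited from the $\Cset$-independence of $u\cM_0 u^*$ and $\cM_0$ inside $(\cM,\psi)$ — a subsquare of a commuting square is a commuting square, since the relevant conditional expectations are restrictions and $\trace = \psi|_\cB$. Finally, Theorem~\ref{thm:discrete-spec} (applied either to $(\cM,\psi)$ directly, or now to $(\cB,\trace)$) tells us that $E_{\cB_0}(u) = E_{\cM_0}(u)$ has discrete spectrum accumulating only at $0$; since $\cB_0$ is by definition the von Neumann algebra generated by this normal operator and its spectrum is a countable set of atoms plus possibly $0$, $\cB_0$ is abelian and atomic. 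The main obstacle — really the only point requiring care — is the verification that $E_{\cM_0}(\cB) \subset \cB_0$, i.e. that $\cB$ is invariant under $E_{\cM_0}$ in a way compatible with the subalgebra $\cB_0$; everything else is bookkeeping about restrictions of traces and conditional expectations.
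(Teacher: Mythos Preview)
Your overall plan is sound and the final paragraph is correct, but the detour through $E_{\cM_0}|_\cB = E_{\cB_0}$ is both unnecessary and not fully justified. You verify $E_{\cM_0}(ubu^*) = \psi(b)\1$ and $E_{\cM_0}(ub) \in \cB_0$ correctly, but the claim that ``$E_{\cM_0}(u b u)$ collapses similarly'' is hand-waved: for a general unitary $u$ (not an involution) there is no evident reason why $E_{\cM_0}(u^2)$, or more generally $E_{\cM_0}$ of a longer alternating word in $u$ and $\cB_0$, should land in $\vN(E_{\cM_0}(u))$ rather than merely in $\cM_0$. So the inclusion $E_{\cM_0}(\cB)\subset\cB_0$ --- which you yourself flag as ``the only point requiring care'' --- remains unproved.

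The paper's ``elementary'' argument bypasses this entirely. For the commuting square~\eqref{eq:discrete-cs-min} one uses Lemma~\ref{lem:commuting-square}\eqref{item:cs-i}: with $\cN=\Cset$, full $\Cset$-independence of $\cB_0$ and $u\cB_0 u^*$ is simply the factorization $\trace(xy)=\trace(x)\trace(y)$ for $x\in\cB_0$, $y\in u\cB_0 u^*$, and since $\trace=\psi|_\cB$, $\cB_0\subset\cM_0$ and $u\cB_0 u^*\subset u\cM_0 u^*$, this is inherited verbatim from the $\Cset$-independence of $\cM_0$ and $u\cM_0 u^*$ in \eqref{eq:discrete-cs}. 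For the last sentence one only needs $E_{\cB_0}(u)=E_{\cM_0}(u)$, not the global identity $E_{\cB_0}=E_{\cM_0}|_\cB$: since $E_{\cM_0}(u)\in\cB_0$ by definition of $\cB_0$ and $\trace(b\,E_{\cM_0}(u))=\psi(bu)=\trace(bu)=\trace(b\,E_{\cB_0}(u))$ for every $b\in\cB_0$, the two agree at $u$. Commutativity and atomicity of $\cB_0$ then follow from Theorem~\ref{thm:discrete-spec} exactly as you say.
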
   
\begin{proof}
This is elementary.
\end{proof}
%

A classification of the commuting squares \eqref{eq:discrete-cs-min} is of course of general interest, but would go beyond the scope of present paper.  
Instead let us now look at a few examples. We start with an example which underlies the representations of the symmetric group from Thoma's theorem, Theorem \ref{thm:ethoma}. 
\begin{Example}\label{example:cs1}\normalfont
Define the probability space $(\cM, \psi)$ in Theorem \ref{thm:discrete-spec} as follows. Consider $\cM:= \bfM_n(\Cset) \otimes \bfM_n(\Cset)$ and $\cM_0:= \bfM_n(\Cset) \otimes \1$, with $1\le n \le \infty$.  Let $e_{ij}$ be the standard matrix units of $\bfM_n(\Cset)$ and $\Trace$ the (non-normalized) trace on $\bfM_n(\Cset)$ with $\Trace(e_{ii})=1$. The tensor product state 
$\psi$ on $\cM$ is defined by the density operator $D \otimes D$ with the diagonal density operator $D = \diag(a_1,a_2,\ldots)$ so that $a_i >0$ for all $i$ and $\sum_i a_i = 1$. Further consider a unitary $u$ which implements a tensor flip
\[
u = \sum_{i,j} e_{ij} \otimes e_{ji}.
\]
Here $e_{ij}$ are the standard matrix units.
A short computation yields
\begin{equation}  \label{eq:example-cs-1} 
E_{\cM_0}(u) = \sum_i a_i\, e_{ii}
\end{equation}
and both the assumptions and the conclusion of Theorem
\ref{thm:discrete-spec} are easily verified in this example. 
%
\end{Example}
Example \ref{example:cs1} has the special property that the ratio between the state applied to a spectral projection of $E_{\cM_0}(u)$ and the corresponding eigenvalue is an integer, namely the geometric multiplicity of the eigenvalue.  This property plays a role in the proof of Thoma's theorem and we will take up this issue in Section \ref{section:okounkov}. Let us give a trivial example for the fact that this special property is not automatic from the assumptions of Theorem \ref{thm:discrete-spec}. 
\begin{Example}\label{example:cs2}\normalfont
Let $u$ be any unitary on a separable Hilbert space $\cH$
and denote by $\cB$ the abelian von Neumann algebra generated by $u$. Now consider as $\cB_0$ the one-dimensional subalgebra $\Cset \1_{\cB}$ of $\cB$. Let $\trace$ be a faithful normal trace on $\cB$. Then the conditional expectation $E_{\cB_0}$ from $\cB$ onto $\cB_0$ is given by $E_{\cB_0}(x) = \trace(x) \1$. The assumptions of Theorem \ref{thm:discrete-spec} or Corollary
\ref{cor:discrete-spec} are trivially satisfied. The only spectral projection of $E_{\cB_0}(u)$ is $\1$. But the eigenvalue is $\trace(u)$
for which we may choose any number in the unit disk by an appropriate choice of the unitary $u$. Thus we do not always have an integer ratio as in Example 
\ref{example:cs1}.
\end{Example}
If $u$ is selfadjoint then the compression $E_{\cM_0}(u)$ is automatically selfadjoint and hence normal. This will be satisfied in our application to the symmetric group $\Sset_\infty$. But there are other interesting examples coming from certain Hecke algebras considered in subfactor theory where the unitary $u$ is non-selfadjoint and $E_{\cM_0}(u)$ is still normal. 
\begin{Example}\label{example:cs3}\normalfont
The Hecke algebra $H_{q,n}$ over $\Cset$ with parameter $q\in \Cset\backslash \{0\}$ is the unital algebra with generators $g_1,\ldots, g_{n-1}$ and relations
\begin{align}
&&&&g^2_i &= (q-1) g_i + q, &&&&&\label{eq:hecke-1}\\ 
&&&&g_i g_j &= g_i g_j  & \text{if $\mid i-j \mid \geq 2$,}&&&&&\notag\\
&&&&g_i g_{j} g_i &= g_{j} g_i g_{j} & \text{if $\mid i-j \mid = 1$.}&&&&&\notag
\end{align}
Note that $H_{1,n}$ is the group algebra $\Cset\, \Sset_n $. Let $H_{q,\infty} = \bigcup_{n \ge 2} H_{q,n}$ be the inductive limit of the Hecke algebras.  
If $q$ is a root of unity then it is possible to define an involution and a Markov trace $\trace$ such that the $g_n$ become unitaries $u_n$ and the generated von Neumann algebra $\cM$ is isomorphic to the hyperfinite II$_1$-factor, see \cite{Wenz88a,Jone91a,Jone94a} or \cite[Example 6.3]{GoKo09a}. Compare also Section \ref{section:Markov} below.

Now let $\cM_0$ be generated by $u_1$ and consider the unitary $u=u_1 u_2$. Then one has 
\[
u u_1 u^* = u_1 u_2 u_1 u_2^* u_1^* = u_2 u_1 u_2 u_2^* u_1^*= u_2.
\]
It follows from the definition of a Markov trace that $\cM_0$ and $u \cM_0 u^*$ are the corners of a commuting square over $\Cset$.  In other words, $\cM_0$ and $u \cM_0 u^*$ are $\Cset$-independent.
Now it is readily checked that
\begin{equation}\label{eq:example-3}
E_{\cM_0}(u) = E_{\cM_0}(u_1 u_2)= u_1 E_{\cM_0}(u_2) =  \trace(u_2) u_1\,.    \end{equation}
Hence $E_{\cM_0}(u)$ is normal.
From the quadratic relation \eqref{eq:hecke-1} we find that the eigenvalues of $u_1$ are $q$ and $-1$. We conclude from \eqref{eq:example-3} that
if $q$ is not real then $E_{\cM_0}(u)$
is not selfadjoint and hence $u$ is not selfadjoint too. 
Also we infer from \eqref{eq:hecke-1} and \eqref{eq:example-3} that $E_{\cM_0}(u)$ generates a two-dimensional (von Neumann) subalgebra.  
\end{Example}

\section{Thoma measures -- Okounkov's argument revisited}
\label{section:okounkov}
This section is devoted to the complete identification of the spectrum of the limit cycle $A_0$ in the factorial case. We know already from Proposition \ref{prop:mu-discrete} or Theorem \ref{thm:discrete-spec} that $\spec A_0$ is discrete. A first hint on its additional properties was provided in Example \ref{example:cs1}, where we have observed that the ratio between a state applied to a spectral projection of $A_0$ and the corresponding eigenvalue is a positive integer. Here we will take up this crucial issue in a systematic way. Our approach is self-contained and will follow closely the arguments of Okounkov in \cite{Okou99a}.      
\begin{Definition}\normalfont
A discrete probability measure $\mu$ on the interval $[-1,1]$ is said to be a \emph{Thoma measure} if 
\[
\frac{\mu(t)}{|t|} \in \Nset_0 \qquad (t \neq 0). 
\]

\end{Definition}
\begin{Theorem}\label{thm:thoma}
Let $\mu$ be the spectral measure of the $2$-cycle $A_0$ w.r.t.~the faithful (normal) tracial state $\trace$ on the factor $\vN_\pi(\Sset_\infty)$. Then $\mu$ is a Thoma measure. 
\end{Theorem}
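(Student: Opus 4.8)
The plan is to follow Okounkov's argument from \cite{Okou99a}, phrased entirely in terms of the limit $2$-cycle $A_0$, the symmetries $u_i = \pi(\sigma_i)$, and the conditional independence from Sections \ref{section:multiplicative}--\ref{section:limit-cycles}. Since $\cA$ is a factor, $\cA_{-1}=\Cset$, $E_{-1}=\trace(\cdot)\1$, $\cA_0=\vN(A_0)$, and the operators $A_i:=\alpha_0^i(A_0)$ ($i\ge 0$) pairwise commute, are each $\mu$-distributed, and are full $\Cset$-independent (Theorem \ref{thm:indy}\eqref{item:indy-i}, Theorem \ref{thm:cs-coxeter}\eqref{item:cs-coxeter-vii}); moreover $\rho_0^{}(\sigma)(A_i)=A_{\sigma(i)}$ (Proposition \ref{prop:A-prop}), $E_{i-1}(u_i)=A_{i-1}$ (by \eqref{eq:2limit}, since $u_i=v_{i-1}v_iv_{i-1}$), and $\trace(\chi_B(A_0)\,u_1)=\int_B t\,\mu(dt)$ (as in the proof of Proposition \ref{prop:mu-discrete}). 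I would fix an atom $t\neq 0$ of $\mu$, put $m:=\mu(\{t\})>0$ and $\beta:=t/m$, and aim to show $m/|t|=1/|\beta|\in\Nset$.

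For each $k\in\Nset$ the plan is to introduce the projection $p^{(k)}:=\prod_{j=0}^{k-1}\chi_{\{t\}}(A_j)$. Independence gives $\trace(p^{(k)})=m^k>0$, and since $\rho_0^{}$ permutes $A_0,\dots,A_{k-1}$ via the action of $\Sset_k=\langle\sigma_1,\dots,\sigma_{k-1}\rangle$ on $\{0,\dots,k-1\}$, the projection $p^{(k)}$ commutes with $\pi(\Sset_k)$. Hence $\sigma\mapsto\pi(\sigma)p^{(k)}$ is a unitary representation of $\Sset_k$ on $p^{(k)}L^2(\cA,\trace)$, and $\chi_k(\sigma):=m^{-k}\trace(\pi(\sigma)p^{(k)})$ is a normalized (positive definite, central) character of $\Sset_k$. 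The central computation I must carry out is
\[
\trace\big(\pi(\sigma)\,p^{(k)}\big)=t^{\,k-c(\sigma)}m^{\,c(\sigma)},\qquad\text{equivalently}\qquad \chi_k(\sigma)=\beta^{\,k-c(\sigma)},
\]
where $c(\sigma)$ is the number of cycles of $\sigma$ (fixed points included). By Lemma \ref{lem:cycle-2} disjoint cycles lie in $\Cset$-independent subalgebras, so this reduces to the case of a single $\ell$-cycle $s$ with support $V$, $|V|=\ell$, where one needs $\trace\big(\pi(s)\prod_{j\in V}\chi_{\{t\}}(A_j)\big)=t^{\ell-1}m$; after conjugating $s$ (via $\rho_0^{}$-invariance of $\trace$) so that $V$ consists of consecutive indices, this is proved exactly as in Lemma \ref{lem:thoma-mult-prep} and Proposition \ref{prop:limit-cycles}, peeling star generators off the cycle and using $E_0(v_j)=A_0$, $v_{n_1}xv_{n_1}=\alpha_0^{n_1}(x)$ for $x\in\cA_0$, and $E_0(\chi_{\{t\}}(A_j))=m\1$, now carrying the spectral projections along by the module property. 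The case $\ell=2$ is the identity $\trace(u_i\chi_{\{t\}}(A_{i-1})\chi_{\{t\}}(A_i))=tm$, which follows from $E_{i-1}(u_i)=A_{i-1}$ and cyclicity of $\trace$.

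Granting this formula, the conclusion is short. I would observe that
\[
g^{(k)}:=\frac1{k!}\sum_{\sigma\in\Sset_k}\pi(\sigma)p^{(k)},\qquad f^{(k)}:=\frac1{k!}\sum_{\sigma\in\Sset_k}\operatorname{sgn}(\sigma)\,\pi(\sigma)p^{(k)}
\]
are orthogonal projections in $p^{(k)}\cA p^{(k)}$ (they project onto the trivial- and sign-isotypic parts of $p^{(k)}\pi(\Sset_k)$), so $\trace(g^{(k)}),\trace(f^{(k)})\ge 0$. Using $\operatorname{sgn}(\sigma)=(-1)^{k-c(\sigma)}$ and the classical identity $\sum_{\sigma\in\Sset_k}x^{k-c(\sigma)}=\prod_{j=0}^{k-1}(1+jx)$ (the rising-factorial generating function for cycle counts), these inequalities become
\[
\prod_{j=0}^{k-1}(1+j\beta)\;=\;\frac{k!\,\trace(g^{(k)})}{m^k}\;\ge\;0,\qquad \prod_{j=0}^{k-1}(1-j\beta)\;=\;\frac{k!\,\trace(f^{(k)})}{m^k}\;\ge\;0\qquad(k\in\Nset).
\]
If $\beta>0$ and $1/\beta\notin\Nset$, taking $k=\lceil 1/\beta\rceil+1$ makes the second product have exactly one negative factor, a contradiction; if $\beta<0$ the first product plays the same role. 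Hence $1/|\beta|=m/|t|\in\Nset$, i.e.\ $\mu$ is a Thoma measure.

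The main obstacle will be the identity $\chi_k(\sigma)=\beta^{k-c(\sigma)}$, specifically the single long-cycle trace $\trace\big(\pi(s)\prod_{j\in V}\chi_{\{t\}}(A_j)\big)=t^{\ell-1}m$: setting up the corner algebra is routine, but the cycle computation requires care with the $\min$-index conventions of Lemma \ref{lem:cycle-1} and with the right choice of conditional expectation, as in the proofs of Lemma \ref{lem:thoma-mult-prep} and Proposition \ref{prop:limit-cycles}. Everything else is either already available in Sections \ref{section:multiplicative}--\ref{section:limit-cycles} or an elementary combinatorial step.
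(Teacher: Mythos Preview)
Your proposal is correct and follows essentially the same route as the paper: compute $\trace\big(\pi(\sigma)\prod_{j=0}^{k-1}\chi_{\{t\}}(A_j)\big)=t^{k-c(\sigma)}m^{c(\sigma)}$, then feed this into the symmetric and antisymmetric averagers and invoke the rising-factorial identity to force $m/|t|\in\Nset$. The combinatorial endgame (your $g^{(k)},f^{(k)}$ versus the paper's $p_\pm^{(n)}$, your $\beta=t/m$ versus the paper's $\nu=m/|t|$) is identical up to bookkeeping.

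The only substantive difference is in how the single-cycle identity $\trace\big(\pi(s)\prod_{j\in V}\chi_{\{t\}}(A_j)\big)=t^{\ell-1}m$ is obtained. The paper packages this inside a more general formula (Lemma~\ref{lem:thoma-1}): it first proves $E_{-1}\big(\pi(\sigma)\prod_i f_i(A_i)\big)=\prod_{V}E_{-1}\big(A_0^{|V|-1}\prod_{j\in V}f_j(A_0)\big)$ for \emph{monomials} $f_i(t)=t^{k_i}$, by replacing each $A_i^{k_i}=v_iA_0^{k_i}v_i$ inside $E_0$ with an honest cycle $v_iv_{N_{i,1}}\cdots v_{N_{i,k_i}}v_i$ and then recombining cycles; the extension to characteristic functions is then a one-line Borel-approximation argument. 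Your direct ``peeling'' route (move each $P_j=\chi_{\{t\}}(A_j)$ past the $v_i$'s using $A_jv_i=v_iA_j$ for $0<j\neq i$ and $P_jv_j=v_jP_0$, collapse, and finish with $E_0(v_{n_2}\cdots v_{n_\ell})=A_0^{\ell-1}$) also works, but it needs a touch more care than ``exactly as in Lemma~\ref{lem:thoma-mult-prep}'': the projections $P_0$ produced along the way do \emph{not} commute with the remaining $v_i$'s, so the order in which you absorb the $P_j$'s matters. The paper's monomials-then-approximation trick sidesteps this bookkeeping entirely.
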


We recall some notation from Section \ref{section:limit-cycles} for our next results. $\Nset_0\slash \langle \sigma \rangle$ denotes the set of all orbits of $\Nset_0$ under the action of the subgroup $\langle \sigma \rangle$ generated by the permutation $\sigma \in \Sset_\infty$. The set of orbits $\Nset_0\slash \langle \sigma \rangle$ forms a partition $\{V_1, V_2,\ldots\}$  of $\Nset_0$. The cardinality of a block $V_i$ of this partition will be denoted by $|V_i|$. Note that only finitely many blocks $V_i$ have a cardinality larger than 1. 

The following lemma is slightly more general than we will need it. Recall that
$E_{-1}$ is the $\trace$-preserving conditional expectation from the finite von Neumann algebra $\vN_\pi(\Sset_\infty)$ onto its center.
\begin{Lemma}\label{lem:thoma-1}  
Let $f_i(t)$, $i=0,1,2,\ldots$, be bounded Borel functions on $[-1,1]$, all but finitely many equal to $1$. Then, for $\sigma \in \Sset_\infty$,
\[
E_{-1}\Big(\pi(\sigma) \prod_{i=0}^\infty f_i(A_i)\Big) 
= \prod_{V \in \Nset_0\slash \langle \sigma \rangle} 
   E_{-1}\Big( A_0^{|V|-1} \prod_{j \in V} f_j(A_0)\Big).   
\]  
\end{Lemma}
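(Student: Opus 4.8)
The strategy is to reduce the left-hand side to a product over the cycles of $\sigma$ by exploiting the full $\cA_{-1}$-independence of the sequence $\bigl(\alpha_0^k(\cA_0)\bigr)_{k\in\Nset_0}$ from Theorem~\ref{thm:indy}\eqref{item:indy-i}, and then to recognise each cycle-factor as a limit cycle computation already carried out in Proposition~\ref{prop:limit-cycles} and Theorem~\ref{thm:thoma-mult-general}. First I would decompose $\sigma=s_1 s_2\cdots s_p$ into disjoint non-trivial cycles; by Lemma~\ref{lem:cycle-2}, $\pi(s_q)$ lies in $\vN_\pi(\langle\gamma_i\mid i\in I_q\rangle)$ for pairwise disjoint index sets $I_q\subset\Nset$. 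Since $A_i=\alpha_0^i(A_0)=v_iA_0v_i$, each $f_i(A_i)$ also lies in $\alpha_0^i(\cA_0)$; and $\pi(s_q)$, being a product of star generators $v_j$ with $j\in I_q$ (using Lemma~\ref{lem:cycle-1}), can be expressed within $\vN\bigl(\alpha_0^j(\cA_0)\mid j\in I_q\bigr)$ after absorbing the relevant $A_j$'s. The key bookkeeping point is that the fixed points of $\sigma$ (singleton orbits $V$ with $|V|=1$) contribute only the factor $f_j(A_j)$ with $j$ a fixed point, and $A_0^{|V|-1}=A_0^0=1$, consistent with the claimed formula; while each non-trivial orbit $V$ corresponds to exactly one cycle $s_q$ whose support is $V$.

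The central computational step is: for a single non-trivial $k$-cycle $s$ on the orbit $V=\{n_1,n_2,\ldots,n_k\}$ with $n_1=\min V$, show that
\[
E_{-1}\Bigl(\pi(s)\prod_{j\in V} f_j(A_j)\Bigr)
= E_{-1}\Bigl(A_0^{k-1}\prod_{j\in V} f_j(A_0)\Bigr).
\]
To prove this I would write $\pi(s)=v_{n_1}v_{n_2}\cdots v_{n_k}v_{n_1}$ and use the intertwining relation $A_0 v_m = v_m A_m$ from \eqref{eq:2limit} (equivalently \eqref{eq:average-1}) together with the commutation $A_i v_j=v_jA_i$ for $i\neq j$ from \eqref{eq:A-item3}, exactly as in the proof of Theorem~\ref{thm:thoma-mult-general}, to move the spectral functions $f_j(A_j)$ past the $v$'s. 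Since $f_j$ is a norm-limit of polynomials in $A_j$ and $A_j^{-1}$ — actually $A_j$ need not be invertible, so one works with the spectral projections directly, but the relations $A_0 v_m = v_m A_m$ and $A_i v_j = v_j A_i$ pass to arbitrary bounded Borel functions by the spectral theorem applied in the abelian von Neumann algebra $\vN(A_i\mid i\in\Nset_0)$ of Theorem~\ref{thm:cs-coxeter}\eqref{item:cs-coxeter-vii} — one obtains $\pi(s)\prod_{j\in V}f_j(A_j) = \bigl(\prod_{j\in V}f_j(A_{\tau(j)})\bigr)\cdot\bigl(\text{monomial in }v\text{'s}\bigr)$ for a suitable reindexing, and after compressing by $E_{-1}$ (where the $v$-monomial becomes a limit cycle $C_k$-type expression $E_{-1}(A_0^{k-1}\cdot\ldots)$) one lands on the claimed single-orbit identity. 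Here the mean-ergodic/strong-mixing argument of Proposition~\ref{prop:limit-cycles} (using $E_{-1}\circ\alpha_0=E_{-1}$ and $\cA^{\alpha_0}=\cA_{-1}$) does the actual evaluation.

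Finally I would assemble the pieces: write $\prod_{i}f_i(A_i)=\prod_q\bigl(\prod_{j\in I_q}f_j(A_j)\bigr)\cdot\prod_{j\text{ fixed}}f_j(A_j)$, distribute these among the cycles $s_q$, and apply full $\cA_{-1}$-independence of the $\alpha_0^k(\cA_0)$'s (Theorem~\ref{thm:indy}\eqref{item:indy-i}, via Lemma~\ref{lem:commuting-square}) to split $E_{-1}$ of the total product into the product of $E_{-1}$ of the per-orbit factors — this is legitimate because $\pi(s_q)\prod_{j\in I_q}f_j(A_j)\in\vN\bigl(\alpha_0^j(\cA_0)\mid j\in I_q\bigr)$ and the $I_q$ are disjoint. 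The fixed-point factors $f_j(A_j)$ with $j$ a singleton orbit are handled the same way, contributing $E_{-1}(f_j(A_0))=E_{-1}(A_0^{0}f_j(A_0))$. I expect the main obstacle to be purely notational: carefully tracking the reindexing of the spectral functions $f_j(A_j)\mapsto f_j(A_{\sigma(j)})$ under conjugation inside a cycle and verifying that after compression the orbit really contributes $E_{-1}(A_0^{|V|-1}\prod_{j\in V}f_j(A_0))$ rather than some permuted variant — but since the conditional expectation $E_{-1}$ onto the center is $\alpha_0$-invariant and $\rho_0(\sigma)(A_i)=A_{\sigma(i)}$ by \eqref{eq:A-item1}, the relabelling washes out and the formula holds as stated.
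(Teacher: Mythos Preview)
There is a genuine gap in your factorization step. You claim that
\[
\pi(s_q)\,\prod_{j\in I_q}f_j(A_j)\in\vN\bigl(\alpha_0^j(\cA_0)\mid j\in I_q\bigr),
\]
and then invoke the full $\cA_{-1}$-independence of $\bigl(\alpha_0^k(\cA_0)\bigr)_k$ from Theorem~\ref{thm:indy}\eqref{item:indy-i} to split $E_{-1}$ over orbits. But the algebra on the right is \emph{abelian}: it is generated by the commuting operators $A_j=\alpha_0^j(A_0)$ (Proposition~\ref{prop:A-prop}, \eqref{eq:A-item2}) together with the central $C_k$'s. On the other hand $\pi(s_q)$ permutes the $A_j$'s nontrivially via \eqref{eq:A-item1}, so $\pi(s_q)\prod_j f_j(A_j)$ does not commute with the individual $A_j$'s and therefore cannot lie in that abelian algebra. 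Your proposed factorization over orbits thus has no justification as written.

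The paper circumvents this obstacle by a different route. It first restricts to monomials $f_i(t)=t^{k_i}$ and then, using the full $\cA_0$-independence of the star generators $(v_i)_{i\in\Nset}$ (Theorem~\ref{thm:indy}\eqref{item:indy-ii}), \emph{unfolds} each factor $A_0^{k_i}$ back into a product $v_{N_{i,1}}\cdots v_{N_{i,k_i}}$ of fresh star generators inside $E_0$. After this substitution, $\pi(\sigma)\prod_i f_i(A_i)$ becomes (under $E_0$) an honest product of represented cycles in $\Sset_\infty$; the orbit factorization then follows from the $\cA_0$-independence of the $v_i$'s (not the $\cA_{-1}$-independence of the $A_i$'s), and the single-cycle contribution is read off from Lemma~\ref{lem:thoma-mult-prep}. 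Only at the very end does one pass from monomials to continuous functions and then to bounded Borel functions by an approximation argument. The correct independence to invoke is thus at the level of the $v_i$'s, where the per-orbit factor lives in $\vN(\cA_0, v_j\mid j\in I_q)$ --- a nonabelian algebra that actually contains $\pi(s_q)$. Your intertwining relations are all correct but do not by themselves reduce the per-orbit expression; the monomial-unfolding trick is the missing ingredient.
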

Note at this point that the spectrum of $A_0$ may contain continuous parts since $\vN_\pi(\Sset_\infty)$ is not assumed to be factorial and, consequently, Theorem \ref{thm:discrete-spec} does not apply directly.  
\begin{proof}
The formula is immediate for the identity $\sigma = \sigma_0$ from the $\cA_{-1}$-independence of $\big(\alpha^k(\cA_0)\big)_{k \ge 0}$, Theorem \ref{thm:indy} \eqref{item:indy-i}, Lemma \ref{lem:average-2} and the functional calculus of bounded Borel functions (as explained in more detail at the end of the proof). From now on assume $\sigma \neq \sigma_0$. 
  
Suppose the functions $f_i$ are of the form $f_i(t) = t^{k_i}$, where $k_i \in \Nset_0$. Then, with Lemma \ref{lem:average-2}, 
\[
f_i(A_i) = v_i A_0^{k_i} v_i = \big(v_i A_0 v_i\big)^{k_i} 
= \big(v_i E_0(v_1) v_i\big)^{k_i}.
\]
The factors $A_i = v_i E_0(v_1) v_i$ commute by \eqref{eq:A-item2} in Proposition \ref{prop:A-prop}. Further note that, by full $\cA_0$-independence of 
the sequence $(v_i)_{i \in \Nset}$ (see Theorem \ref{thm:indy} \eqref{item:indy-ii}),
\[
E_0(x v_i A_0 v_i y) = E_0(x v_i v_{N_i} v_i y)  
\] 
for $x,y \in \vN(A_0, v_n \mid n \le N)$ and $N_i > N$. More general, we can replace powers of limit 2-cycles by cycles such that 
\[
E_0(x v_i A_0^{k_i} v_i y) = E_0(x v_i v_{N_{i,1}} \cdots v_{N_{i, k_i}} v_i y),  
\] 
where all $N_{i,l}$ are pairwise distinct and larger than $N$. 

Now choose $N \in \Nset$ such that $\sigma \in \Sset_N$ and $i < N$ whenever $f_i$ is non-trivial. Suppose further that $\pi(\sigma)$ has the cycle decomposition $\pi(\sigma) = w_1w_2 \cdots w_{m(\sigma)}$, where $m(\sigma)$ is the number of non-trivial cycles in $\sigma$. Let $V_l$ be the orbit in $\Nset_0$ corresponding to the cycle $w_l$; that means: $i \in V_l$ if and only if $v_i$ appears in the cycle $w_l$.  Further let $W = \bigcup_{l=1}^{m(\sigma)} V_l$. Since disjoint cycles commute, we can regroup the factors and apply $\cA_0$-independence to find
\begin{eqnarray*}
E_{0}\Big(\pi(\sigma) \prod_{i=0}^\infty f_i(A_i)\Big)
&=& E_{0}\Big(\prod_{l=1}^{m(\sigma)} w_l  \cdot \prod_{i=0}^\infty v_i A_0^{k_i} v_i\Big)\\ 
&=& E_{0}\Big(\prod_{l=1}^{m(\sigma)} w_l  \cdot \prod_{i=0}^\infty v_i v_{N_{i,1}} \cdots v_{i, N_{i, k_i}} v_i\Big)\\
&=& E_{0}\Big(\prod_{l=1}^{m(\sigma)} 
\big(w_l  \prod_{i \in V_l} v_i v_{N_{i,1}} \cdots v_{i, N_{i, k_i}} v_i \big) 
\prod_{i \not \in W} v_i A_0^{k_i}v_i\Big)\\
&=& \prod_{l=1}^{m(\sigma)} E_{0}\Big(w_l  
    \prod_{i \in V_l} v_i v_{N_{i,1}} \cdots v_{i, N_{i, k_i}} v_i 
   \Big) \cdot  \prod_{i\not \in W} E_0\Big(v_i A_0^{k_i} v_i\Big)\\
&=& \prod_{l=1}^{m(\sigma)} E_{0}\Big(w_l  \prod_{i \in V_l} v_i A_0^{k_i} v_i \Big) \cdot  \prod_{i\not \in W} E_0\Big(v_i A_0^{k_i} v_i\Big).
\end{eqnarray*}
We consider next each factor of the $m(\sigma)$-fold product separately. Note for the following that, for $p,q \ge 2$, the product of a $p$-cycle $\sigma$ and a $q$-cycle $\tau$ with a single common point is a $(p+q-1)$-cycle. We claim that 
\[
E_{0}\Big(w_l  \prod_{i \in V_l} v_i A_0^{k_i} v_i \Big)
= 
E_{0}\Big(\tilde{w}_l \Big),
\]
where $\tilde{w}_l$ is a $|\tilde{V}_l|$-cycle with orbit
\[
\tilde{V}_l:= V_l \cup \set{N_{i,j}}{i \in V_l, j =1, \ldots, k_i}.
\] 
Indeed, we can replace in this expression each of the limit cycles $v_i A_0^{k_i} v_i$ by the corresponding $(k_i+1)$-cycle $v_i v_{N_{i,1}} \cdots v_{N_{i, k_i}}v_i$. Since this  $(k_i+1)$-cycle and the $|V_l|$-cycle $w_l$ have only the point $i$ in common, their product is a $(|V_l|+k_i)$-cycle.  Iterating this replacement for each $(k_i+1)$-cycle involved we arrive at a $(|V_l| + \sum_{i \in V_l} k_i)$-cycle $\tilde{w_l}$ which is compressed by $E_0$. Finally, the formula for $\tilde{V}_l$ and  $|\tilde{V}_l| = |V_l| + \sum_{i \in V_l} k_i$ is clear from above iteration. 
\[
E_{0}\Big(w_l  \prod_{i \in V_l} v_i A_0^{k_i} v_i \Big)
= E_{0}(\tilde{w}_l).
\]
By construction, the orbits $\tilde{V_l}$ are mutually disjoint and thus 
the cycles $\tilde{w}_l$ are $\cA_0$-independent. Also the $f_i(A_i)$'s with $i \notin W$ and $\tilde{w}_l$ are $\cA_0$-independent. We conclude from this that
\begin{eqnarray*}
E_{0}\Big(\pi(\sigma) \prod_{i=0}^\infty f_i(A_i)\Big)
&=& \prod_{l=1}^{m(\sigma)} E_{0}(\tilde{w}_l)  \cdot \prod_{i \notin W}  E_{0} \Big(f_i(A_i) \Big).
\end{eqnarray*}
We compress this equation with the conditional expectation $E_{-1}$ and calculate, as in the proof of Theorem \ref{thm:thoma-mult} (Thoma multiplicativity), 
\begin{eqnarray*}
E_{-1}\Big(\pi(\sigma) \prod_{i=0}^\infty f_i(A_i)\Big)
= \prod_{l=1}^{m(\sigma)} E_{-1}(\tilde{w}_l)  \cdot \prod_{i \notin W}  E_{-1} \Big(f_i(A_i) \Big).
\end{eqnarray*}      
Now 
\begin{eqnarray*}
E_{-1}(\tilde{w}_l)  
&=& E_{-1} \Big(A_0^{|\tilde{V}_l|-1}\Big)\\
&=& E_{-1}\Big(A_0^{|V_l|-1} \prod_{i \in V_l} A_0^{k_i}\Big)\\
&=& E_{-1}\Big(A_0^{|V_l|-1} \prod_{i \in V_l} f_i(A_0)\Big)
\end{eqnarray*}
and
\begin{eqnarray*}
E_{-1} \Big(f_i(A_i) \Big) 
&=& E_{-1} (v_i A_0^{k_i} v_i) 
= E_{-1}(A_0^{k_i})\\ 
&=& E_{-1}(f_i(A_0)). 
\end{eqnarray*}
Altogether we have arrived for monomials $f_i$ at the equation 
\begin{equation}\label{eq:thoma-1}
E_{-1}\Big(\pi(\sigma) \prod_{i=0}^\infty f_i(A_i)\Big) 
= \prod_{V \in \Nset_0\slash \langle \sigma \rangle} 
   E_{-1}\Big( A_0^{|V|-1} \prod_{j \in V} f_j(A_0)\Big).   
\end{equation}
We can extend this formula from monomials $f_i(t)= t^{k_i}$ to continuous functions and then to bounded Borel functions on $[-1,1]$. In fact, any bounded Borel function can be approximated by continuous functions in the sense of (bounded) pointwise convergence a.e.~(see the corollary to Lusin's theorem in \cite[2.24 ]{Rudi87a}). This translates into $\sot$-convergence in the functional calculus for bounded Borel functions of the selfadjoint contractions $A_j$, by dominated convergence. Hence by approximation \eqref{eq:thoma-1} is valid for bounded Borel functions as an equation in the von Neumann algebra $\cA$.

\end{proof}
Returning to the setting where $\vN_\pi(\Sset_\infty)$ is a factor we can replace $E_{-1}$ by the trace $\trace$. Now, with the spectral measure $\mu$ of $A_0$ (w.r.t.~$\trace$), Lemma \ref{lem:thoma-1} corresponds to 
\cite[Lemma 2 a) in Section 2]{Okou99a} and
writes as follows.  
\begin{Corollary}\label{cor:thoma}
Under the assumptions of Lemma \ref{lem:thoma-1} and if $\vN_\pi(\Sset_\infty)$ is a factor, then
\[
\trace\Big(\pi(\sigma) \prod_{i=0}^\infty f_i(A_i)\Big) 
= \prod_{V \in \Nset_0\slash \langle \sigma \rangle} 
   \int_{\spec{A_0}} t^{|V|-1} \prod_{j \in V} f_j(t) d\mu(t).   
\]  
\end{Corollary}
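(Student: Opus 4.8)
The plan is to deduce Corollary \ref{cor:thoma} from Lemma \ref{lem:thoma-1} by specializing to the factorial case. Since $\vN_\pi(\Sset_\infty)$ is assumed to be a factor, its center is trivial, so the $\trace$-preserving conditional expectation $E_{-1}$ onto the center is simply the state $\trace$ itself, viewed as a map into $\Cset\1 \simeq \Cset$. Thus every occurrence of $E_{-1}$ in the identity of Lemma \ref{lem:thoma-1} may be replaced by $\trace$, and the product over orbits $V \in \Nset_0\slash\langle\sigma\rangle$ becomes an ordinary product of complex numbers.

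The remaining point is to rewrite the scalar $\trace\big(A_0^{|V|-1}\prod_{j\in V} f_j(A_0)\big)$ as the integral $\int_{\spec A_0} t^{|V|-1}\prod_{j\in V} f_j(t)\,d\mu(t)$. This is exactly the definition of the spectral measure $\mu$ of the selfadjoint contraction $A_0$ with respect to the state $\trace$: for any bounded Borel function $g$ on $[-1,1]$ one has $\trace\big(g(A_0)\big) = \int_{\spec A_0} g(t)\,d\mu(t)$, by the Borel functional calculus. Here we apply this with $g(t) = t^{|V|-1}\prod_{j\in V} f_j(t)$, which is a bounded Borel function since all but finitely many of the $f_j$ equal $1$ and each $f_j$ is bounded Borel by hypothesis; note also that the product over $j\in V$ is genuinely finite, being empty or a singleton for all but finitely many blocks $V$, and that the integrand is supported on $\spec A_0 \subset [-1,1]$.

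Putting these two observations together yields
\[
\trace\Big(\pi(\sigma) \prod_{i=0}^\infty f_i(A_i)\Big)
= \prod_{V \in \Nset_0\slash \langle \sigma \rangle}
   \trace\Big( A_0^{|V|-1} \prod_{j \in V} f_j(A_0)\Big)
= \prod_{V \in \Nset_0\slash \langle \sigma \rangle}
   \int_{\spec{A_0}} t^{|V|-1} \prod_{j \in V} f_j(t)\, d\mu(t),
\]
which is the assertion. There is essentially no obstacle here: the content is entirely contained in Lemma \ref{lem:thoma-1}, and the corollary is a routine reformulation using factoriality and the definition of $\mu$. The only mild care needed is to check that the finitely many nontrivial factors are the only ones contributing and that the Borel functional calculus applies to the possibly non-polynomial integrands $t \mapsto t^{|V|-1}\prod_{j\in V}f_j(t)$ — but this is immediate from the corresponding final approximation step already carried out in the proof of Lemma \ref{lem:thoma-1}.
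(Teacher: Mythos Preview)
Your proof is correct and matches the paper's approach: the paper gives no explicit proof of this corollary, treating it as immediate from Lemma~\ref{lem:thoma-1} once one replaces $E_{-1}$ by $\trace$ in the factor case and invokes the definition of the spectral measure $\mu$. Your write-up makes these two steps explicit and is entirely in line with what the paper intends.
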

In particular we can choose $f_i$ to be the characteristic function $\chi_{B}$, where $B$ is some Borel set in $[-1,1]$. Since we know already that $A_0$ has discrete spectrum we are of course interested in $\chi_{\{t\}}(A_0)$ for $t \in \spec{A_0}$. 
\begin{proof}[Proof of Theorem \ref{thm:thoma}]
Fix $t \in \spec{A_0} \backslash\{0\}$ and let $\mu:= \mu(\{t\}) = \trace\big(\chi_{\{t\}}(A_0)\big)$.  Note that $\mu >0$. For $n \in \Nset$ let 
\[
f_i(s) = 
\begin{cases}
\chi_{\{t\}}(s) & \text{if $0 \le i < n$,}\\
1           & \text{if  $i \ge n$}
\end{cases}
\]
in Corollary \ref{cor:thoma}. Then for $\sigma \in \Sset_n$ Thus
\begin{eqnarray*}
\trace\Big(\pi(\sigma) \prod_{i=0}^{n-1} \chi_{\{t\}}(A_i)\Big) 
&=& \prod_{V \in \{0,1,\ldots,n-1\}\slash \langle \sigma \rangle} 
   \big(t^{|V|-1} \mu\big)  
= t^{n- \ell(\sigma)} \mu^{\ell(\sigma)}, 
\end{eqnarray*}
where $\ell(\sigma)$ is the number of orbits of $\sigma$ in  $\{0,1,\ldots,n-1\}$, in other words: $ \ell(\sigma)$ is the cardinality of
the set of orbits $\{0,1,\ldots,n-1\}\slash \langle \sigma \rangle$.  

We split the remaining discussion into the two cases of positive and negative spectral value $t$.

Let $t>0$ and consider the orthogonal projection 
\[
p_{-}^{(n)}:= \frac{1}{n!} \sum_{\sigma \in \Sset_n} \operatorname{sgn}(\sigma)\pi(\sigma). 
\]
(Since the operators $\big(f_i(A_i)\big)_{i=0}^{n-1}$ mutually commute we can think of $p_{-}^{(n)}$ to be the projection onto antisymmetric functions in $n$ variables. Compare also the tensor product model in Section \ref{section:Thoma}.) 
We deduce 
\[
\trace\Big(p_{-}^{(n)} \prod_{i=0}^{n-1} \chi_{\{t\}}(A_i)\Big) \ge 0
\]
from traciality. One the other hand one computes that
\begin{eqnarray*}
\trace\Big(p_{-}^{(n)} \prod_{i=0}^{n-1} \chi_{\{t\}}(A_i)\Big) 
&=& \frac{1}{n!} \sum_{\sigma \in \Sset_n} \operatorname{sgn}(\sigma) 
    \trace\Big(\pi(\sigma) \prod_{i=0}^{n-1} \chi_{\{t\}}(A_i)\Big) \\
&=& \frac{t^n}{n!} \sum_{\sigma \in \Sset_n} \operatorname{sgn}(\sigma)
    t^{- \ell(\sigma)} \mu^{\ell(\sigma)}\\
&=& \frac{t^n}{n!} \sum_{\sigma \in \Sset_n} \operatorname{sgn}(\sigma)
    \nu^{\ell(\sigma)},    
    \end{eqnarray*}
where we have put $\nu := \mu/|t|$. Note that $\nu>0$. Since $\operatorname{sgn}(\sigma)= (-1)^{n+\ell(\sigma)}$ and, by a well known combinatorial formula (compare \cite[Proposition 1.3.4]{Stan86a}),
\[
\sum_{\sigma \in \Sset_n} x^{\ell(\sigma)} = x(x+1) \cdots (x+n-1),
\]
we calculate further that 
\begin{eqnarray*}
\trace\Big(p_{-}^{(n)} \prod_{i=0}^{n-1} \chi_{\{t\}}(A_i)\Big) 
&=& \frac{(-t)^n}{n!} \sum_{\sigma \in \Sset_n} 
    (-\nu)^{\ell(\sigma)}\\
&=&  \frac{t^n}{n!} 
      \nu 
     \big( \nu-1\big)
     \cdots
     \big(\nu-n+1\big).       
\end{eqnarray*}
Because the original expression is positive and the formula above is true for all $n \in \Nset$, the product must terminate, in other words: $\nu \in \Nset$. 

We consider next the case $t< 0$. For this purpose let  
\[
p_{+}^{(n)}:= \frac{1}{n!} \sum_{\sigma \in \Sset_n} \pi(\sigma). 
\]
A similar computation as before yields
\begin{eqnarray*}
\trace\Big(p_{+}^{(n)} \prod_{i=0}^{n-1} \chi_{\{t\}}(A_i)\Big) 
&=& \frac{1}{n!} \sum_{\sigma \in \Sset_n}  
    \trace\Big(\pi(\sigma) \prod_{i=0}^{n-1} \chi_{\{t\}}(A_i)\Big) \\
&=& \frac{t^n}{n!} \sum_{\sigma \in \Sset_n} 
    t^{- \ell(\sigma)} \mu^{\ell(\sigma)}\\
&=& \frac{t^n}{n!} \sum_{\sigma \in \Sset_n} 
    (-\nu)^{\ell(\sigma)}\\    
&=& \frac{t^n}{n!} (-\nu) (-\nu+1) \cdots (-\nu+n-1)\\
&=& \frac{|t|^n}{n!} \nu (\nu-1) \cdots (\nu-n+1). 
    \end{eqnarray*}
Now the positivity of the original expression implies again that $\nu \in \Nset$. Consequently the spectral measure $\mu$ is a Thoma measure.
\end{proof}

\section{Completion of the proof of Thoma's theorem and the connection with Powers factors}
\label{section:Thoma}
Let us summarize what we have achieved so far with respect to the proof of the Thoma theorem, Theorem \ref{thm:ethoma}. We know from Proposition \ref{prop:extremal-char} that an extremal character of the group $\Sset_\infty$ gives rise to a unitary representation such that the weak closure is a factor $\cA$ with a finite faithful (normal) trace $\trace$. By Thoma multiplicativity from Theorem \ref{thm:thoma-mult}, with $\cA_{-1} = \cZ(\cA) = \Cset$, we obtain for $\sigma \in \Sset_\infty$
\begin{eqnarray*}
\trace\big( \pi(\sigma)\big) 
= \prod_{k =2}^{\infty}  \left(\trace
  \left( \big(E_0(v_1)\big)^{k-1}\right) \right)^{m_k(\sigma)}. 
\end{eqnarray*} 
If $\sigma$ is a $k$-cycle then
\[
\trace\big(\pi(\sigma)\big) 
= \trace\left( \big(E_0(v_1)\big)^{k-1}\right)
= \trace(A^{k-1}_0) = \int^1_{-1} t^{k-1}\,\mu(dt)
\]
where $\mu$ is the spectral measure of $A_0$. From Proposition \ref{prop:mu-discrete} we know that $\mu$ is discrete and from 
Theorem \ref{thm:thoma} that $\mu$ is a Thoma measure, i.e., 
$\frac{\mu(\{t\})}{|t|} \in \Nset_0$ for $t \neq 0$. Hence there exists
a function $\nu: [-1,1] \rightarrow \Nset_0$, nonzero for at most countably many points, such that
\[
\trace\big(\pi(\sigma)\big) = \sum_{t\in[-1,1]} t^{k-1} |t|\, \nu(t)
= \sum_{t>0} t^k\, \nu(t) + (-1)^{k-1} \sum_{t<0} |t|^k \,\nu(t)
\]
and from that we can read off the $a_i$ and $b_j$ (which may include repetitions) from the classical formulation of Thoma's theorem in Theorem \ref{thm:ethoma}.

To complete our proof of Thoma's theorem we have to show the existence of the representation, given the $a_i$ and $b_j$,
with the required properties. There are several ways to do this. 
The first explicit construction of the von Neumann factor corresponding to a Thoma trace has been given by Vershik and Kerov in \cite{VeKe81a}, from a groupoid point of view. They also mention the embedding into a Powers factor (in the case $a_1 + \ldots + a_M =1$) which is the basic idea of the construction given here. Further variations and modifications of such constructions can be found in \cite{Wass81a,Olsh89a,TsilVers07a}, for example. 

From the point of view of this paper the realization by embedding into an infinite tensor product is the most natural
because it is instructive to revisit our constructions in this model.  

Given the two sequences of positive parameters $(a_i)_{i=1}^M$ and $(b_j)_{j=1}^N$ with $0 \le M,N \le \infty$, as in Thoma's theorem, we put $c := 1 - \sum_i a_i - \sum_j b_j \ge 0$. By convention, $(a_i)_{i=1}^0$ and $(b_j)_{j=1}^0$ are understood to denote the empty set. We also include the possibilities $M=\infty$ or $N=\infty$ in which case some of the formulas below are an abuse of notation with an obvious interpretation. 
Let us form a Hilbert space
\[
\cH = \cH^+ \oplus \cH^- \oplus \cH^0
\]
where $\cH^+ = \{0\}$ if $M=0$ and has orthonormal basis $(\delta^+_i)_{i=1}^M$ otherwise, $\cH^- = \{0\}$ if $N=0$ and has orthonormal basis $(\delta^-_j)_{j=1}^M$ otherwise,
and $\cH^0 = \{0\}$ if $c=0$ and has countably infinite orthonormal basis $(\delta^0_k)_{k=1}^\infty$ otherwise. 
We write $\delta_r$ for an element of the orthonormal basis of $\cH$ formed by all $\delta^+_i, \delta^-_j, \delta^0_k$.
and from now on we identify $\cB(\cH)$ with the matrix algebra induced by this basis. 
We define a state $\psi_\ell$ on $\cB(\cH)$ by the diagonal density operator
\[
\diag(a_1,a_2,\ldots, b_1, b_2,\ldots, \underbrace{\frac{c}{\ell},\ldots,\frac{c}{\ell}}_{\ell \, \text{times}},
0,0,\ldots)
\]
and a state $\psi$ as any accumulation point of the sequence $\psi_\ell$ in the weak* topology on $\cB(H)^*$. Note that this is not a normal state if $\cH^0$ is nontrivial but this will not cause problems. 

Let us further represent a transposition on $\cH \otimes \cH$ by a unitary operator which is a (signed) flip

\[
U: \delta_r \otimes \delta_s \mapsto
\begin{cases}
-\delta_s \otimes \delta_r & \text{if }\; \delta_r, \delta_s \in \cH^- ,\\
\phantom{-}\delta_s \otimes \delta_r & \text{otherwise}.
\end{cases}
\]
Because $\Sset_\infty$ is generated by transpositions we obtain a unitary representation $\pi$ of $\Sset_\infty$ in the infinite 
(C$^*$)-tensor product $\bigotimes^\infty_0 \cB(\cH)$ by representing the Coxeter generators $\sigma_i$ by $\pi(\sigma_i) = u_i$ which by definition is the unitary $U$ from above acting at the tensor positions $i-1$ and $i$. 
It is not difficult to check that the restriction of the infinite product state $\bigotimes^\infty_0 \psi$ gives Thoma's formula on the represented $\Sset_\infty$. In fact, because the state is diagonal, this amounts to counting fixed points of the corresponding non-free action of 
$\Sset_\infty$ on $\{1,\ldots,M+N\}^{\Nset_0}$ with product measure
of $(a_1, \ldots, a_M, b_1, \ldots, b_N)$. The $c$-part does not give a contribution as can be shown by approximation.
If we now form the GNS-representation with respect to $\bigotimes^\infty_0 \psi$ and consider the von Neumann algebra $\cA$ generated by this representation of $\Sset_\infty$ then we obtain a tracial probability space $(\cA,\trace)$, where
$\trace = \bigotimes^\infty_0 \psi |_{\cA}$ is the Thoma trace. The fact that it is indeed a trace follows immediately from the fact that it is constant on conjugacy classes of $\Sset_\infty$ which are given by cycle types.

Let us revisit some of the constructions of this paper in this setting. First of all note that the
endomorphism $\alpha$ is nothing but the restriction of the tensor shift to $\cA$. To go further let us simplify notation by assuming for the moment that we have $a_1 + \ldots + a_M = 1$. Then the GNS-space for $\big( \cB(\cH),\psi \big)$ can be written as $\cH \otimes \cH$ with cyclic unit vector 
$\Omega = \sum^M_{i=1} \sqrt{a_i}\; \delta^+_i \otimes \delta^+_i$ and the GNS-space of 
$\big( \bigotimes^\infty_{n=0} \cB(\cH), \bigotimes^\infty_{n=0}\psi \big)$
is the infinite tensor product 
$\bigotimes^\infty_{n=0} (\cH \otimes \cH)$
of Hilbert spaces along the distinguished sequence of unit vectors $\Omega_n$ (that is, $\Omega$ at position $n$). The algebra $\cA$ acts on the left component. We can speak of localized elements if the expression differs from $\Omega$ (for vectors in the GNS-space) or from $\1$ (for operators) only at finitely many positions of the infinite tensor product. For example let $q_t$ be the orthogonal projection from $\cH$ onto the subspace spanned by all $\delta^+_i$ such that 
$a_i = t$. Then we have a localized projection 
\[
Q_t = \big( q_t \otimes \1 \big)_0 \otimes (\1 \otimes \1)_{[1,\infty)}
\in \cB\Big((\cH \otimes \cH) \otimes\bigotimes^\infty_{n=1} (\cH \otimes \cH)\Big)
\]
Let us identify it with an object already known. As usual let 
\[
v_n = \pi(\gamma_n) = \pi\big((0,n)\big)
\] 
which now is a flip of positions $0$ and $n$ in the tensor product.
Hence if $\xi, \eta$ are in the GNS-space $\bigotimes^\infty_{n=0} (\cH \otimes \cH)$ such that $Q_t\, \eta = \eta$ we can check, by approximating $\xi, \eta$ with localized vectors and then using the formula for $\Omega$, that
\[
\lim_{n\to\infty} \langle \xi,\,v_n\, \eta \rangle
\;=\; t\;\langle \xi,\, \eta \rangle
\]
In fact, if $n$ is big enough and if we only write down positions $0$ and $n$ (as the rest is not changed by $v_n$) then we have $\xi = \xi_0 \otimes \ldots \otimes \Omega_n \otimes \ldots$ and $\eta = \eta_0 \otimes \ldots \otimes \Omega_n \otimes \ldots$. Because $Q_t\, \eta = \eta$ we may write $\eta_0 = \delta^+_i \otimes \zeta$ such that
$a_i = t$ (or a linear combination of such terms). Then (for $n$ big enough)
\begin{eqnarray*}
& & \lim_{n\to\infty} \langle \xi,\,v_n\, \eta \rangle \\
&=& \langle \xi_0 \otimes \ldots \otimes (\sum^M_{j=1} \sqrt{a_j}\; \delta^+_j \otimes \delta^+_j)_n \otimes \ldots,
\,v_n\, (\delta^+_i \otimes \zeta)_0 \otimes \ldots \otimes (\sum^M_{k=1} \sqrt{a_k}\; \delta^+_k \otimes \delta^+_k)_n \otimes \ldots \rangle \\
&=& \langle \xi_0 \otimes \ldots \otimes (\sum^M_{j=1} \sqrt{a_j}\; \delta^+_j \otimes \delta^+_j)_n \otimes \ldots,
\,(\delta^+_k \otimes \zeta)_0 \otimes \ldots \otimes (\sum^M_{k=1} \sqrt{a_k}\; \delta^+_i \otimes \delta^+_k)_n \otimes \ldots \rangle \\
&=& a_i \;\langle \xi,\, \eta \rangle = t \;\langle \xi,\, \eta \rangle
\end{eqnarray*}

Similarly we can include $\cH^-$ and $\cH^0$ into the picture and find the same formula with $t= -\beta_i$ resp. $t=0$. For $t<0$ we have to take the minus sign in the representation of transpositions into account, for $t=0$ the result can be obtained by first considering the defining approximation of the singular state and then going to the limit.
Because, by (\ref{eq:average-2}), 
\[
A_0 = E_0(v_1) = \wotlim_{n\to\infty} v_n
\]
and because $\bigoplus_t Q_t = \1$ we conclude that $Q_t$ is exactly the (represented) spectral projection of the limit $2$-cycle $A_0$ with eigenvalue $t$. This means that we can identify the projection $Q_t$ with $\chi_{\{t\}}(A_0)$ in the notation of Section \ref{section:okounkov}. In particular $Q_t \in \cA$. 

Our proof of Thoma's theorem is finally completed by the following result.
\begin{Proposition}\label{prop:factor}
The von Neumann algebra $\cA$ constructed above is a factor.
\end{Proposition}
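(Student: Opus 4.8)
The cleanest route is to use the fixed-point characterization of factoriality built into the machinery of the paper rather than a direct infinite-tensor-product computation. By Proposition~\ref{prop:extremal-char} (or directly from Theorem~\ref{thm:thoma-mult} / Corollary~\ref{cor:fixed-points}), $\cA$ is a factor if and only if $\cZ(\cA) = \cA_{-1} \simeq \Cset$, equivalently if and only if all the limit cycles $C_k$ are trivial, equivalently (by Theorem~\ref{thm:fixed-point}\eqref{item:fixed-point-iii}, the noncommutative Hewitt--Savage zero-one law) if and only if the minimal exchangeable sequence $(v_i)_{i\in\Nset}$ of represented star generators is order $\Cset$-factorizable. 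So the plan is: first identify the tail algebra of $(v_i)_{i\in\Nset}$ inside the concrete model, then show the relevant factorization of $\trace$ over $\Cset$ holds, and conclude $\cZ(\cA)=\Cset$.

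\textbf{Key steps.} First I would record that, in the model, $\alpha = \alpha_0$ is the restriction of the tensor shift on $\bigotimes_0^\infty \cB(\cH)$ to $\cA$, and that $(v_i)_{i\in\Nset}$ is minimal and exchangeable by Theorem~\ref{thm:indy}\eqref{item:indy-ii}, with tail algebra $\cA_0 = \vN(C,A_0)$. Next, I would compute $A_0$ explicitly: by \eqref{eq:average-2}, $A_0 = \wotlim_n v_n$, and the computation carried out in the excerpt just above the Proposition shows $A_0 = \sum_t t\, Q_t$, where $Q_t = \chi_{\{t\}}(A_0)$ is the localized projection $\big(q_t\otimes\1\big)_0\otimes\1_{[1,\infty)}$ (with the obvious modifications for $\cH^-$ and $\cH^0$). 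In particular $A_0$ lies in the ``first-leg'' algebra $\bigl(\cB(\cH)\otimes\1\bigr)_0 \otimes \1_{[1,\infty)}$, hence in $\alpha^0(\cC_0)$ for the natural generator $\cC_0$. The crucial point is then to show that $\trace$ restricted to the von Neumann algebra generated by $(v_i)_{i\in\Nset}$ factorizes over $\Cset$: for this it suffices, by Lemma~\ref{lem:commuting-square}, to show $E_{\cA_0}$ composed with a shifted copy collapses to $\trace(\cdot)\1$, i.e.\ that the tail algebra $\cA_0$ is in fact $\Cset$. Concretely, I would argue that $C_k = \wotlim_i A_i^{k-1}$ (Remark~\ref{rem:average-k}); since $A_i = v_i A_0 v_i$ is, in the model, a copy of $A_0$ moved to tensor position $i$, and the shift is a tensor Bernoulli shift with respect to the product state, the weak limit $\wotlim_i A_i^{k-1}$ equals $\trace(A_0^{k-1})\,\1 = c_k\,\1$, a scalar. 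Hence all $C_k$ are trivial, so $\cZ(\cA) = \vN(C_k\mid k\ge1) = \Cset$ by Theorem~\ref{thm:fixed-points}.

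\textbf{Alternative / supporting argument.} If one prefers to avoid invoking the abstract zero-one law, the same conclusion follows directly from the tensor-product structure: $\cA \subset \bigotimes_0^\infty \cB(\cH)$ and the trace $\trace$ is the restriction of the product state, which is a factor state on the infinite tensor product of type-I factors (a Powers-type state). One checks that the tensor shift $\alpha$ is $\trace$-ergodic on $\cA$ because the product state is; more precisely, an element of $\cZ(\cA)$ would be fixed by $\rho_0^{}(\Sset_\infty) = \Ad\pi$, hence by all the $v_n$ and the shift $\alpha$, and the standard argument for product states (approximate by localized operators, push the localization to infinity with $\alpha^N$, and use asymptotic $\trace$-independence of far-separated tensor legs) forces it to be scalar. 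Either way, the main obstacle I anticipate is the bookkeeping for the degenerate parts of $\cH$: when $\cH^0\neq\{0\}$ the state $\psi$ is not normal, so one must argue by the approximation $\psi = \text{w*-}\lim_\ell \psi_\ell$ and be careful that the ``$c$-part'' of the limit cycles $A_i$ genuinely contributes a scalar (eigenvalue $0$) and does not spoil the convergence $A_i^{k-1} \to c_k\1$; this is exactly the ``obtained by approximation'' step already flagged in the model construction, and it needs to be spelled out carefully but presents no real difficulty.
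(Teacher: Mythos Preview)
Your concrete argument works, but the plan you state around it contains a genuine conceptual error that you must remove.

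The error is the last ``equivalently'' in your Plan paragraph: you assert that $\cA$ being a factor is equivalent, via Theorem~\ref{thm:fixed-point}\eqref{item:fixed-point-iii}, to the sequence $(v_i)_{i\in\Nset}$ being order $\Cset$-factorizable. This is false. The tail algebra of $(v_i)$ is $\cA_0$, not $\cA_{-1}$ (Theorem~\ref{thm:indy}\eqref{item:indy-ii}), so order $\Cset$-factorizability of that minimal sequence would force $\cA_0=\Cset$, which is strictly stronger than $\cZ(\cA)=\Cset$; see Corollary~\ref{cor:fixed-points}\eqref{item:cor-fixed-points-ii} and Section~\ref{section:Markov}. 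For generic Thoma parameters $A_0$ is nontrivial, so $\cA_0\supsetneq\Cset$ and your stated goal ``the tail algebra $\cA_0$ is in fact $\Cset$'' is simply wrong. The same confusion recurs in your Key steps paragraph.

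Fortunately, the computation you actually perform does not use this. Once you know $A_0$ is localized at tensor position $0$ (shown in the model just above the Proposition), $A_i^{k-1}=\alpha_0^i(A_0^{k-1})$ is localized at position $i$, and the elementary product-state mixing argument gives $C_k=\wotlim_i A_i^{k-1}=\trace(A_0^{k-1})\,\1$; then $\cZ(\cA)=\vN(C_k\mid k\ge 1)=\Cset$ by Theorem~\ref{thm:fixed-points}. This is correct and is in fact a more direct route than the paper's. The paper instead introduces the tensor conditional expectation $E$ onto position $0$, proves (Lemma~\ref{lem:factor}) that $E$ maps $\cA$ onto $\vN(A_0)$, uses the tensor factorization of $E$ to verify order $\vN(A_0)$-factorizability of $(v_i)$, and then invokes Theorem~\ref{thm:fixed-point}\eqref{item:fixed-point-iii} to conclude $\cA_0=\vN(A_0)$; finally, any element of this position-$0$ diagonal algebra that also commutes with the flip $u_1$ must be scalar, whence $\cZ(\cA)=\Cset$. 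The paper's route is a bit longer but delivers the extra structural information $\cA_0=\vN(A_0)$ in the model; your route reaches factoriality faster but does not identify $\cA_0$. So: strip out the misstatement about $\cA_0=\Cset$, keep your $C_k$ computation, and the proof stands.
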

Let $E$ be the $\bigotimes^\infty_0 \psi$-preserving conditional expectation
from $\bigotimes^\infty_{n=0} \cB(\cH)$ onto the $0$-th component. $E$ is
obtained by evaluating the state at all tensor positions $n \ge 1$. In the
following we will use repeatedly the elementary fact that we obtain
factorizations $E(xy) = E(x)E(y)$ for such a tensor conditional expectation
$E$ whenever $x$ and $y$ are localized at tensor positions $\{0\} \cup I$ resp. $\{0\} \cup J$ where $I, J \subset \Nset$ and $I \cap J = \emptyset$.
\begin{Lemma}\label{lem:factor}
The conditional expectation $E$ maps $\cA$ onto $\vN(A_0) \subset \cA$.
\end{Lemma}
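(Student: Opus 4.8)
The plan is to show that $E(\cA)$ is exactly $\vN(A_0)$ by a two-sided containment, using the tensor-product structure of the model together with the identification $A_0 = \wotlim_n v_n$ and the explicit description of the spectral projections $Q_t = \chi_{\{t\}}(A_0)$ obtained above.

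First I would establish $\vN(A_0) \subset E(\cA)$. Since $A_0 = E_0(v_1) \in \cA_0 \subset \cA$ and $A_0$ is localized at tensor position $0$ (indeed $A_0 = \bigoplus_t t\, Q_t$ with each $Q_t$ localized at position $0$), we have $E(A_0) = A_0$, and more generally $E$ fixes every element of the $0$-th component that lies in $\cA$. In particular $E(\vN(A_0)) = \vN(A_0)$, and since $\vN(A_0) \subset \cA$, this gives $\vN(A_0) \subset E(\cA)$.

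Next, the harder inclusion $E(\cA) \subset \vN(A_0)$. Here I would use that $\cA$ is generated as a von Neumann algebra by the represented permutations $\pi(\sigma)$, so by normality and the module property of $E$ it suffices to show $E(\pi(\sigma)) \in \vN(A_0)$ for every $\sigma \in \Sset_\infty$. Writing $\sigma = s_1 s_2 \cdots s_p$ as a product of disjoint non-trivial cycles and using that the $s_q$ involve mutually disjoint sets of star generators $v_i$ (Lemma~\ref{lem:cycle-2}, Lemma~\ref{lem:cycle-rep}), together with the elementary factorization property of the tensor conditional expectation $E$ recorded before the lemma, we get $E(\pi(\sigma)) = \prod_{q=1}^p E(\pi(s_q))$; after arranging cycles so that at most one of them, say $s_1$, moves the point $0$, every other $s_q$ is localized at tensor positions in $\Nset$ only (it does not touch position $0$), so $E(\pi(s_q)) = \bigotimes_0^\infty \psi\,(\pi(s_q))$ is a scalar for $q \ge 2$. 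Thus it remains to analyse $E(\pi(s))$ for a single $k$-cycle $s$. If $s(0) = 0$ then $\pi(s)$ is again localized away from position $0$ and $E(\pi(s))$ is a scalar; if $s(0) \ne 0$, write $s = (0, n_2, \ldots, n_k)$, so $\pi(s) = v_{n_2} v_{n_3} \cdots v_{n_k}$, and I would compute $E(\pi(s))$ directly in the infinite tensor product. Since each $v_{n_j}$ is a flip of tensor positions $0$ and $n_j$, a short computation with the diagonal density operator — exactly of the type carried out above for $\lim_n \langle \xi, v_n \eta\rangle$ — shows that evaluating the state at all positions $\ge 1$ collapses $v_{n_2} \cdots v_{n_k}$ to the operator $\bigoplus_t t^{k-1} Q_t = A_0^{k-1}$ at position $0$; hence $E(\pi(s)) = A_0^{k-1} \in \vN(A_0)$.

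Combining the two inclusions gives $E(\cA) = \vN(A_0)$. The main obstacle is the last computation: verifying cleanly that $E(v_{n_2}\cdots v_{n_k}) = A_0^{k-1}$, i.e. that the state-evaluation at the outlying tensor positions produces precisely the $(k-1)$-st power of the diagonal operator $A_0$. This is the same bookkeeping as in the displayed calculation for $\lim_n\langle\xi,v_n\eta\rangle$ but iterated $k-1$ times along a chain of flips through position $0$; one must be careful that the intermediate positions $n_2,\ldots,n_k$ are pairwise distinct (which holds since $s$ is a genuine $k$-cycle) so that each evaluated tensor leg contributes an independent factor $\diag(a_1,a_2,\ldots;-b_1,-b_2,\ldots;0,\ldots)$, and that the sign convention in the representation of transpositions on $\cH^-$ reproduces the correct sign $(-1)$ per negative eigenvalue, yielding $t^{k-1}$ on the $Q_t$-block. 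Once this is in hand, the identification $Q_t = \chi_{\{t\}}(A_0)$ already proved makes the conclusion $E(\pi(s)) = A_0^{k-1}$ immediate, and the proof of the lemma is complete.
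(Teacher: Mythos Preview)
Your proof is correct and follows essentially the same route as the paper: reduce to generators $\pi(\sigma)$, factor over disjoint cycles via the tensor factorization of $E$, note that cycles fixing $0$ give scalars, and for the cycle through $0$ write $\pi(s)=v_{n_2}\cdots v_{n_k}$ and show $E$ sends this to $A_0^{k-1}$.

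The one place where the paper is cleaner is the step you flag as the ``main obstacle'', computing $E(v_{n_2}\cdots v_{n_k})$. You propose an iterated direct calculation in the tensor model; the paper instead reuses the same elementary factorization of $E$ one more time. Each $v_{n_j}$ is localized at tensor positions $\{0\}\cup\{n_j\}$, and the $n_j$ are pairwise distinct, so the factorization property gives
\[
E(v_{n_2}\cdots v_{n_k}) \;=\; E(v_{n_2})\cdots E(v_{n_k}) \;=\; A_0^{k-1},
\]
after computing $E(v_i)=A_0$ once (which is the single-flip computation already carried out, and also matches \eqref{eq:example-cs-1} in Example~\ref{example:cs1}). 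This avoids the iterated bookkeeping entirely.
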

Note that $\vN(A_0)$ is generated by the spectral projections
$\chi_{\{t\}}(A_0)$ with $t \in \spec(A_0)$ and hence, by the discussion
above which identified these spectral projections in our model, $\vN(A_0)$ is also a subalgebra of the diagonal of the $0$-th tensor position.
\begin{proof}
Because $v_i$ for $i \ge 1$ is realized as a (signed) tensor flip we can
verify by a direct computation similar to the one in Example 5.5 that
$E(v_i) = A_0$.  If $\sigma$ is any permutation with disjoint cycle decomposition $\sigma = s_1 \ldots s_m$ then
\[
E( \pi(\sigma) ) = E( \pi(s_1) ) \cdots  E( \pi(s_m) ).
\]
For cycles $s$ with $s(0)=0$ the factor $E( \pi(s) )$ is a multiple of the
identity. If for one of the cycles we have $s(0) \not= 0$ then we can write
\[
\pi(s) = v_{n_2} \cdots v_{n_k}
\]
with distinct $n_2, \ldots, n_k$. Then
\[
E( \pi(s) ) = E( v_{n_2} ) \cdots E( v_{n_k} ) = A^{k-1}_0 \in \vN(A_0).
\]
Because the $\pi(\sigma)$ with $\sigma \in \Sset_\infty$ generate $\cA$ we
are done.
\end{proof}
\begin{proof}[Proof of Proposition \ref{prop:factor}]
The sequence $(v_i)_{i \in \Nset}$ is exchangeable and, because 
$\cA = \vN \set{v_i}{i \in \Nset}$, it is minimal. We observe that it is
also order $\cN$-factorizable with $\cN = \vN(A_0)$. In fact, if
$x \in \cA_I = \vN\set{v_i}{i \in I}$ and $y \in \cA_J = \vN\set{v_i}{i
\in J}$ with $I < J$ then, with Lemma \ref{lem:factor},
\[
E_\cN (xy) = E (xy) = E(x) E(y) = E_\cN (x) E_\cN (y).
\]
Further note that $\cN \subset \cA_0$ because by definition $\cA_0$ is the
relative commutant in $\cA$ of the $u_i$'s with $i \ge 2$ (see Lemma \ref{lem:inclusions}) and these $u_i$ trivially commute with operators which are localized at the $0$-th tensor position. (Alternatively this inclusion also follows from Theorem \ref{thm:fixed-points}.) But $\cA_0$ is also the tail algebra of the sequence $(v_i)_{i \in \Nset}$, see Theorem \ref{thm:indy}\eqref{item:indy-ii}. So we are in a position to apply the fixed point characterization from Theorem \ref{thm:fixed-point} \eqref{item:fixed-point-iii} and find that $\cN = \cA_0$.

We have identified $\cA_0$ as a subalgebra of the diagonal of the $0$-th
tensor position. If $z \in \cZ(\cA) \, (= \cA_{-1} \subset \cA_0$) then in
addition $z$ also commutes with $u_1$ which is a (signed) flip of the tensor
positions $0$ and $1$. Hence $z$ is a multiple of the identity. It follows
that $\cZ(\cA) = \Cset$ and $\cA$ is a factor.
\end{proof}
\begin{Remark}\normalfont
In retrospect it turns out that the tensor conditional expectation $E$
restricted to $\cA$ is nothing but the conditional expectation $E_0$ and
that some of the formulas obtained above coincide with those in Proposition
4.4 about $E_0$. Hence in the factor case the tensor realization provides a
concrete model for those.
\end{Remark}
\begin{Remark} \normalfont
We also see that (in this factorial case) $\cA_0$ is spanned by the $Q_t$ and that it is localized at the tensor position $0$. The commutative Bernoulli shift which maps $A_i$ to $A_{i+1}$ for all $i \in \Nset_0$ (compare (\ref{eq:average-3})) is nothing but the restriction of the tensor shift to certain diagonal operators. Note however that for a projection $q$ strictly between $0$ and $q_t$ the corresponding projection $Q = \big( q \otimes \1 \big)_0 \otimes (\1 \otimes \1)_{[1,\infty)}$
does not belong to $\cA$. In fact, suppose for example that 
$a_1 = a_2 = t$ and let $q^{(1)}$ resp. $q^{(2)}$
project onto $\Cset \delta^+_1$ resp. $\Cset \delta^+_2$. Let 
$\kappa$ be a state-preserving automorphism of $\cB(\cH)$ which interchanges $q^{(1)}$ and $q^{(2)}$. Then the infinite tensor product $\bigotimes^\infty_0 \kappa$ fixes the represented $\Sset_\infty$ and hence $\cA$ pointwise but it interchanges the corresponding projections $Q^{(1)}$ and $Q^{(2)}$.
\end{Remark}
\begin{Remark} \normalfont
It is also natural to think of $\cA$ as the fixed point algebra of an infinite tensor product group action (in a generalized Schur-Weyl duality). For example the argument in the previous remark goes along these lines. To use this theory, see \cite{Pric82a,BaPo83a,BaPo83b,BaPo86a}, to prove factoriality (Proposition \ref{prop:factor}) seems to be rather difficult technically. Our argument which is ultimately based on the probabilistic de Finetti-type arguments from \cite{Koes10a} seems to give it more directly. 
\end{Remark}
\section{Irreducible subfactors and Markov traces}
\label{section:Markov}
Suppose $\cZ(\cA) = \cA_{-1} = \Cset$, so $\cA$ is a factor and the
trace is given by Thoma's formula with parameters $a_i$ and $b_j$.
Let us study the case $\cA_0 = \Cset$ in more detail. From Corollary \ref{cor:fixed-points} this is equivalent to the fact that $\cA \cap \alpha(\cA)'=\Cset$, i.e., we have an irreducible subfactor
\[
\alpha(\cA) = \{u_2,u_3,\ldots \}'' \quad \subset \quad
\cA = \{u_1,u_2,u_3,\ldots \}''.
\]
\begin{Proposition}\label{prop:irreducible}
We have $\cA_0 = \Cset$ if and only if
\begin{eqnarray*}
&&1) \;\; a_i = b_j = 0 \quad \mbox{for all}\;\; i,j \\
& \mbox{or} & 2a) \;\; a_1 = \ldots = a_M = \frac{1}{M} \quad
\mbox{for some}\; M \in \Nset \\
& \mbox{or} & 2b) \;\; b_1 = \ldots = b_N = \frac{1}{N} \quad
\mbox{for some}\; N \in \Nset \\
\end{eqnarray*}
\end{Proposition}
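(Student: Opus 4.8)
The plan is to translate the condition $\cA_0=\Cset$ into a statement about the spectral measure $\mu$ of the limit $2$-cycle $A_0=E_0(v_1)$, and then to read that statement off from Thoma's formula. First I would note that, since $\cA$ is a factor, all the central limit cycles $C_k$ are scalars, so Theorem~\ref{thm:fixed-points} gives $\cA_0=\vN(A_0)$; by Corollary~\ref{cor:fixed-points}\eqref{item:cor-fixed-points-ii} this makes $\cA_0=\Cset$ equivalent to $A_0$ being trivial, i.e.\ $A_0=t_0\1$ for some $t_0\in[-1,1]$ (recall $A_0$ is a selfadjoint contraction, $u_1=v_1$ being a transposition). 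Since $\trace$ restricts to a faithful normal state on the abelian algebra $\vN(A_0)\cong L^\infty(\mu)$, this is in turn equivalent to $\mu$ being a single Dirac mass $\delta_{t_0}$.

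Next I would make $\mu$ explicit in terms of the Thoma parameters $a_i,b_j$ of the given trace. By Thoma multiplicativity (Theorem~\ref{thm:thoma-mult}, with $\cA_{-1}=\Cset$) applied to a $k$-cycle $\sigma$, $\int t^{k-1}\,d\mu(t)=\trace(A_0^{k-1})=\trace(\pi(\sigma))=\sum_i a_i^{\,k}+(-1)^{k-1}\sum_j b_j^{\,k}$ for $k\ge 2$; rewriting the right-hand side as $\sum_i a_i\,a_i^{\,k-1}+\sum_j b_j\,(-b_j)^{k-1}$, and combining with $\int d\mu=1$ and the fact that a measure on the compact interval $[-1,1]$ is determined by its moments, one obtains
\[
\mu \;=\; c\,\delta_0\;+\;\sum_i a_i\,\delta_{a_i}\;+\;\sum_j b_j\,\delta_{-b_j},\qquad c:=1-\sum_i a_i-\sum_j b_j\ge 0,
\]
which is precisely the way the $a_i,b_j$ were recovered from $\mu$ in Section~\ref{section:Thoma}: for $t>0$ one has $\mu(\{t\})=t\cdot\#\{i:a_i=t\}$, symmetrically at $-t$, and $\mu(\{0\})=c$.

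Finally I would run the (elementary) case analysis on $\mu=\delta_{t_0}$. If $t_0=0$ then $c=1$ and all $a_i=b_j=0$, which is case~1). If $t_0>0$, then $\mu$ carries no mass at $0$ or at negative points, forcing $c=0$ and all $b_j=0$, while $\sum_i a_i\,\delta_{a_i}=\delta_{t_0}$ forces every nonzero $a_i$ to equal $t_0$; if there are $M$ of them, then $M t_0=\mu(\{t_0\})=1$, so $t_0=1/M$ with $M\in\Nset$ (in particular finite), i.e.\ $a_1=\dots=a_M=1/M$ and the remaining $a_i=0$, which is case~2a). The case $t_0<0$ is symmetric and yields case~2b). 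The converse is immediate: for parameters of each of these three forms the displayed formula collapses to $\delta_0$, $\delta_{1/M}$, resp.\ $\delta_{-1/N}$, so $A_0$ is a scalar and $\cA_0=\vN(A_0)=\Cset$. I expect no genuine obstacle here; the only points requiring a little care are the bookkeeping around the atom $c\,\delta_0$ that absorbs the slack $\sum_i a_i+\sum_j b_j\le 1$, allowing repetitions among the parameters, and checking that $M$ (resp.\ $N$) must be finite, which is forced by the identity $M t_0=1$.
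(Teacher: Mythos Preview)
Your proposal is correct and follows essentially the same approach as the paper: both reduce $\cA_0=\Cset$ to $A_0$ being scalar, compute the moments of the spectral measure via Thoma multiplicativity and Thoma's formula, invoke moment determinacy on $[-1,1]$ to identify the measure as a sum of Dirac masses, and then run the same case analysis on the location $t_0$ of the single atom. Your version is slightly more explicit about the atom $c\,\delta_0$ and about the converse direction, but the argument is the same.
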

\begin{proof}
From Theorem \ref{thm:fixed-points} we know that $\cA_0 = \Cset$ means that $\cA_0$ is spanned by $E_0(u_1) = \trace(u_1) \1$. Hence with $t = \trace(u_1)$ we have the following cases:

$1)\; t=0$. In this case we find the regular representation of $\Sset_\infty$, i.e., for all elements $g \in \Sset_\infty$ except the identity we have $\trace(g) = 0$. This situation can also be described by $a_i = b_j = 0$ for all $i,j$.

$2)\; t \not=0 $. Using Thoma multiplicativity (Theorem \ref{thm:thoma-mult}) on the one hand and Thoma's formula (Theorem \ref{thm:ethoma})
on the other hand we obtain the value of $\trace(g)$ for a $k$-cycle
$g$ (for all $k \in \Nset$) as
\[
t^{k-1} = \left(\sum_{i=1}^\infty a_i^k + (-1)^{k-1} \sum_{j=1}^\infty b_j^k\right)
\]
We can think of these quantities as $(k-1)$th moments of finite positive Borel measures in the interval $[-1,1]$. Because such measures are uniquely determined by their moments we conclude that
\[
\delta_t = \sum_i  a_i\, \delta_{a_i} + \sum_j b_j\, \delta_{-b_j} 
\]
where the $\delta_x$ are Dirac measures.
If $t>0$ this implies that for all $i,j$
\[
b_j=0, \quad a_i=t,\quad \sum_i a_i = 1
\]
and hence there exists some $M \in \Nset$ such that
\[
a_1 = \ldots = a_M = \frac{1}{M}\,.
\]
Similarly if $t<0$ then we obtain for all $i,j$
\[
a_i=0, \quad -b_j=t,\quad \sum_j b_j = 1
\]
and hence there exists some $N \in \Nset$ such that
\[
b_1 = \ldots = b_N = \frac{1}{N}\,.
\]
\end{proof}
\begin{Remark} \normalfont
Note that for $\trace(u_1)=\pm 1$ we get the one dimensional identical and sign representations but for
$|t|<1$ we have an irreducible inclusion of hyperfinite II$_1$-factors. The case $t=0$ is the group factor of the ICC-group $\Sset_\infty$ in the usual sense, see for example 
\cite{JoSu97a}, while cases $(2a)$ and $(2b)$ can also be characterized by the fact that in these cases the embedding constructed in Section \ref{section:Thoma} actually embeds into the hyperfinite II$_1$-factor constructed by an infinite tensor product of $M\times M$- resp. $N \times N$-matrices and weak closure with respect to the trace.
\end{Remark}
\begin{Definition}\label{def:markov-trace}
A tracial functional $\trace$ on the group algebra $\Cset \Sset_\infty$ is called a Markov trace if for all $n \in \Nset$ and $g\in \Sset_n$
\[
\trace(g\, \sigma_n) = \trace(g)\,\trace(\sigma_n)\,.
\]
\end{Definition}
\begin{Proposition}\label{prop:Markov}
The trace on $\cA$ is a Markov trace on $\Cset \Sset_\infty$ if and only if $\cA_0 = \Cset$.
\end{Proposition}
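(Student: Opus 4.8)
The plan is to establish both implications using the characterizations already collected in Theorem \ref{thm:cs-coxeter}\eqref{item:cs-coxeter-vi} and Corollary \ref{cor:fixed-points}\eqref{item:cor-fixed-points-ii}. Recall that the trace $\trace$ on $\cA = \vN_\pi(\Sset_\infty)$ arises from a character of $\Sset_\infty$, so it restricts to a tracial functional on the group algebra $\Cset\Sset_\infty$, and that $\sigma_n$ is represented by the Coxeter generator $u_n = \pi(\sigma_n)$. The Markov condition of Definition \ref{def:markov-trace} is therefore precisely the requirement that $\trace(x\,u_n) = \trace(x)\,\trace(u_n)$ for all $n\in\Nset$ and all $x\in\vN_\pi(\Sset_n)$, which is exactly condition \eqref{item:cs-coxeter-vi-b} of Theorem \ref{thm:cs-coxeter}. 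Indeed, the linear span of $\pi(\Sset_n)$ is weak*-dense in $\vN_\pi(\Sset_n)$, and both sides of the displayed identity are weak*-continuous in $x$ (using normality of $\trace$), so the group-algebra formulation and the von Neumann algebra formulation coincide.

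First I would record this equivalence carefully: the Markov trace property on $\Cset\Sset_\infty$ in the sense of Definition \ref{def:markov-trace} holds if and only if $\trace$ is a Markov trace in the sense of Theorem \ref{thm:cs-coxeter}\eqref{item:cs-coxeter-vi-b}. Then by Theorem \ref{thm:cs-coxeter}\eqref{item:cs-coxeter-vi}, condition \eqref{item:cs-coxeter-vi-b} is equivalent to \eqref{item:cs-coxeter-vi-a}, namely that the limit $2$-cycle $A_0 = E_0(v_1)$ is trivial. Finally, by Corollary \ref{cor:fixed-points}\eqref{item:cor-fixed-points-ii}, the triviality of $A_0$ (item (d) there) is equivalent to the triviality of the fixed point algebra $\cA_0$ (item (c) there). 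Chaining these equivalences gives: $\trace$ is a Markov trace on $\Cset\Sset_\infty$ $\iff$ $A_0$ is trivial $\iff$ $\cA_0 = \Cset$, which is the assertion.

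The only genuine point requiring attention — and the step I expect to be the main (if modest) obstacle — is the passage between the group-algebra statement and the von Neumann algebra statement, i.e.\ checking that verifying the Markov identity on the generating set $\pi(\Sset_n)$ suffices to obtain it for all $x\in\vN_\pi(\Sset_n)$. This is handled by the density argument above together with the observation that $u_n = \pi(\sigma_n)$ lies in the centralizer of $\trace$ (since $\trace$ is a trace), so that $x\mapsto \trace(x\,u_n)$ is a weak*-continuous functional, and $x\mapsto\trace(x)\trace(u_n)$ is as well. Once this is in place the proposition is immediate from the cited results, so the write-up can be kept to a few lines invoking Theorem \ref{thm:cs-coxeter}\eqref{item:cs-coxeter-vi} and Corollary \ref{cor:fixed-points}.
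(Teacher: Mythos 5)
Your proof is correct and follows essentially the same route as the paper, which simply cites Theorem \ref{thm:cs-coxeter}\eqref{item:cs-coxeter-vi} (where the equivalence of the Markov property with triviality of $A_0$, hence of $\cA_0$, is established). The only addition is that you spell out the weak*-density argument identifying the group-algebra formulation of Definition \ref{def:markov-trace} with the von Neumann algebraic condition \eqref{item:cs-coxeter-vi-b}, a step the paper leaves implicit.
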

\begin{proof}
This has already been shown in Theorem \ref{thm:cs-coxeter} \eqref{item:cs-coxeter-vi}.
\end{proof}
\begin{Remark} \normalfont
Markov traces can be defined more generally for Hecke algebras and our definition is a special case of that. 
Hence the result above reproduces a part of Ocneanu's classification described in \cite{Wenz88a,Jone91a}.
But all the treatments we have seen emphasize the $q$-deformed situations and we have not found a straightforward argument for $\Sset_\infty$ such as the one above. Our parametrization by $\trace(u_1) = t = \pm \frac{1}{N}$
with $N\in \Nset$ resp. $t=0$ corresponds in \cite{Jone91a} to the case $\tau = \frac{1}{4}$ with $\eta = \frac{t+1}{2} = \frac{\pm 1 + N}{2N}$ resp. 
$\eta = \frac{1}{2}$. 
\end{Remark}
\begin{Remark}\label{rem:irreducible-tower} \normalfont 
With $\cA_0 = \Cset$ we can combine Theorem \ref{thm:endo-braid-i}
and Theorem \ref{thm:fixed-points} to obtain the tower of commuting squares
\begin{eqnarray*}
\setcounter{MaxMatrixCols}{20}
\begin{matrix}
\Cset &\subset&   \Cset & \subset & \Cset S_1 & \subset & \Cset S_2 & \subset & \Cset S_3 & \subset  & \cdots & \subset & \cA \\
        &&          \cup  &         & \cup  &         & \cup &         & \cup  &       & & & \cup  \\
        &&   
\Cset &\subset&\Cset&\subset&\alpha(\Cset S_1)&\subset&\alpha(\Cset S_2)&\subset& \cdots & \subset & \alpha(\cA)\\
\end{matrix}
\setcounter{MaxMatrixCols}{10}
\end{eqnarray*}
We conclude that the index of 
the irreducible subfactor $\alpha(\cA) \subset \cA$
can be computed as the limit $n\to\infty$ of the 
Pimsner-Popa index \cite{PiPo86a} of the finite dimensional inclusions $\big(\alpha(\Cset \Sset_{n-1}) \subset \Cset \Sset_n \big)
\simeq \big(\Cset \Sset_{n-1} \subset \Cset \Sset_n \big)$
(with respect to the given Markov trace $\trace(u_1)=\pm \frac{1}{N})$, see \cite[Proposition 5.1.9]{JoSu97a}.
It would be interesting to find an efficient way to finish the computation of the index within the toolkit of this paper. The result is known to be $N^2$ as can be deduced from Wassermann's invariance principle for subfactors \cite{Wass88a}. In the case of the regular representation $\big(\trace(u_1)=0\big)$ the index is infinite.
We would like to thank Hans Wenzl for orienting us in the relevant 
literature.
\end{Remark}
\begin{Remark} \normalfont
After deriving the results about irreducible subfactors in this section with our methods we learned that M.~Yamashita in the recent preprint \cite{Yama09a} obtained some of these results by combinatorial methods, in particular Proposition \ref{prop:irreducible} and the commuting squares in Remark \ref{rem:irreducible-tower}. 
\end{Remark} 
\bibliographystyle{alpha}                 
\label{section:bibliography}

\begin{thebibliography}{KOV04}

\bibitem[BGS02]{BeSc02a}
A.~Ben~Ghorbal and M.~Sch{\"u}rmann.
\newblock Non-commutative notions of stochastic independence.
\newblock {\em Math. Proc. Cambridge Philos. Soc.}, 133:531--561, 2002.

\bibitem[Bia96]{Bian96a}
P.~Biane.
\newblock Minimal factorizations of a cycle and central multiplicative
  functions on the infinite symmetric group.
\newblock {\em J. Combin. Theory Ser. A}, 76(2):197--212, 1996.

\bibitem[Bia98]{Bian98a}
P.~Biane.
\newblock Representations of symmetric groups and free probability.
\newblock {\em Adv. Math.}, 138(1):126--181, 1998.

\bibitem[BP83a]{BaPo83b}
B.~M. Baker and R.~T. Powers.
\newblock Product states and $c\sp{\ast} $-dynamical system of product type.
\newblock {\em J. Funct. Anal}, 50(2):229--266, 1983.

\bibitem[BP83b]{BaPo83a}
B.~M. Baker and R.~T. Powers.
\newblock Product states of the gauge invariant and rotationally invariant
  ${\rm car}$ algebras.
\newblock {\em J. Operator Theory}, 10(2):365--393, 1983.

\bibitem[BP86]{BaPo86a}
B.~M. Baker and R.~T. Powers.
\newblock Product states of certain group-invariant ${\rm af}$-algebras.
\newblock {\em J. Operator Theory}, 16(1):3--50, 1986.

\bibitem[EL77]{EvLe77a}
D.E. Evans and J.T. Lewis.
\newblock Dilations of irreversible evolutions in algebraic quantum theory.
\newblock {\em Comm. Dublin Inst. Adv. Studies Ser. A}, (24), 1977.

\bibitem[GHJ89]{GHJ89a}
F.M. Goodman, P.~de~la Harpe, and V.F.R. Jones.
\newblock {\em Coxeter Graphs and Towers of Algebras}.
\newblock Springer-Verlag, 1989.

\bibitem[GK]{GoKo10bPP}
R.~Gohm and C.~K{\"o}stler.
\newblock De finetti goes noncommutative.
\newblock In preparation.

\bibitem[GK09]{GoKo09a}
R.~Gohm and C.~K{\"o}stler.
\newblock Noncommutative independence from the braid group {$\Bset_\infty$}.
\newblock {\em Commun. Math. Phys.}, 289(2):435--482, 2009.

\bibitem[Goh04]{Gohm04a}
R.~Gohm.
\newblock {\em Noncommutative Stationary Processes}, volume 1839 of {\em
  Lecture Notes in Mathematics}.
\newblock Springer, 2004.

\bibitem[HHH09]{HiHiHo09a}
T.~Hirai, E.~Hirai, and A.~Hora.
\newblock Limits of characters of wreath products $\mathbb{S}_n(t)$ of a
  compact group $t$ with the symmetric groups and characters of
  $\mathbb{S}_\infty(t)$, i.
\newblock {\em Nagoya Math. J.}, 193:1--93, 2009.

\bibitem[Hir91]{Hirai91a}
T.~Hirai.
\newblock Construction of irreducible unitary representations of the infinite
  symmetric group $\mathbb{S}_\infty$.
\newblock {\em J. Math. Kyoto Univ.}, 31:495--541, 1991.

\bibitem[HO07]{HoOb07a}
A.~Hora and N.~Obata.
\newblock {\em Quantum probability and spectral analysis of graphs}.
\newblock Theoretical and Mathematical Physics. Springer, 2007.

\bibitem[IR09]{IrRa09a}
J.~Irving and A.~Rattan.
\newblock Factorizations of permutations into star transpositions.
\newblock {\em Discrete Mathematics}, 309:1435–1442, 2009.

\bibitem[Jon91]{Jone91a}
V.F.R. Jones.
\newblock {\em Subfactors and Knots}.
\newblock CBMS Regional Conference Series in Mathematics. American Mathematical
  Society, 1991.

\bibitem[Jon94]{Jone94a}
V.F.R. Jones.
\newblock On a family of almost commuting endomorphisms.
\newblock {\em J. Functional Analysis}, 119:84--90, 1994.

\bibitem[JS97]{JoSu97a}
V.~Jones and V.S. Sunder.
\newblock {\em Introduction to Subfactors}, volume 234 of {\em London
  Mathematical Society Lecture Note Series}.
\newblock Cambridge University Press, Cambridge, 1997.

\bibitem[Kal05]{Kalle05a}
O.~Kallenberg.
\newblock {\em Probabilistic Symmetries and Invariance Principles}.
\newblock Probability and Its Applications. Springer-Verlag, 2005.

\bibitem[K{\"o}s]{Koes10b}
C.~K{\"o}stler.
\newblock On {L}ehner's `free' noncommutative {d}e {F}inetti theorem.
\newblock {\em Proc. Amer. Math. Soc.}
\newblock To appear.

\bibitem[K{\"o}s10]{Koes10a}
C.~K{\"o}stler.
\newblock A noncommutative extended {d}e {F}inetti theorem.
\newblock {\em J. Funct. Anal.}, 258:1073–--1120, 2010.

\bibitem[KOV04]{KOV04a}
S.~Kerov, G.~Olshanski, and A.M. Vershik.
\newblock Harmonic analysis on the infinite symmetric group.
\newblock {\em Invent. math.}, 158:551--642, 2004.

\bibitem[KR86]{KaRi2}
K.V. Kadison and J.R. Ringrose.
\newblock {\em Fundamentals of the Theory of Operator Algebras}, volume~2.
\newblock Academic Press, 1986.

\bibitem[MvN36]{MuNe36a}
F.J. Murray and J.~von Neumann.
\newblock On rings of operators.
\newblock {\em Ann. of Math.}, 37(2):116--229, 1936.

\bibitem[MvN43]{MuNe43a}
F.J. Murray and J.~von Neumann.
\newblock On rings of operators. iv.
\newblock {\em Ann. of Math.}, 44(2):716--808, 1943.

\bibitem[Oko99]{Okou99a}
A.~Okounkov.
\newblock On representations of the infinite symmetric group.
\newblock {\em J. Math. Sci. (New York)}, 96(5):3550–--3589, 1999.
\newblock {\textsl{arXiv:math/9803037v1 [math.RT]}}.

\bibitem[Ols89]{Olsh89a}
G.~I. Olshanski.
\newblock Unitary representations of $(g,k)$-pairs that are connected with the
  infinite symmetric group $s(\infty)$. [russian].
\newblock {\em Algebra i Analiz}, 1(4):178--209, 1989.
\newblock Translation in Leningrad Math. J. 1 (1990), no. 4, 983--1014.

\bibitem[Ols91a]{Olsh91b}
G.~I. Olshanski.
\newblock On semigroups related to infinite-dimensional groups.
\newblock In {\em Topics in representation theory}, volume~2 of {\em Adv.
  Soviet Math.}, pages 67--101, Providence, RI, 1991.

\bibitem[Ols91b]{Olsh91a}
G.~I. Olshanski.
\newblock Representations of infinite-dimensional classical groups, limits of
  enveloping algebras, and {Y}angians.
\newblock In {\em Topics in representation theory}, volume~2 of {\em Adv.
  Soviet Math.}, pages 1--66, Providence, RI, 1991.

\bibitem[Ols03]{Olsh06a}
G.~Olshanski.
\newblock An introduction to harmonic analysis on the infinite symmetric group.
\newblock In {\em Asymptotic combinatorics with applications to mathematical
  physics (St. Petersburg, 2001)}, volume 1815 of {\em Lecture Notes in Math.
  Springer}, pages 127--160, Berlin, 2003. Cambridge Univ. Press.

\bibitem[Pak99]{Pak99a}
I.~Pak.
\newblock Factorizations of permutations into star transpositions.
\newblock {\em Discrete Mathematics}, 204:329--335, 1999.

\bibitem[PP86]{PiPo86a}
M.~Pimsner and S.~Popa.
\newblock Entropy and index for subfactors.
\newblock {\em Ann. Sci. {\'E}cole Norm. Sup.}, 19:57--106, 1986.

\bibitem[Pri82]{Pric82a}
G.~Price.
\newblock Extremal traces on some group-invariant $c\sp{\ast} $-algebras.
\newblock {\em J. Funct. Anal}, 49(2):145--151, 1982.

\bibitem[Rud87]{Rudi87a}
W.~Rudin.
\newblock {\em Real and Complex Analysis}.
\newblock International Editions Mathematical Series. McGraw-Hill, Singapure,
  3rd edition edition, 1987.

\bibitem[Ser93]{Serg93a}
V.~Sergiescu.
\newblock Graphes planaires et pr´esentations des groupes des tresses.
\newblock {\em Math. Zeitschr.}, 214:477–--490, 1993.

\bibitem[Spe97]{Spei97a}
R.~Speicher.
\newblock On universal products.
\newblock In {\em Free probability theory ({W}aterloo, ON, 1995)}, Fields Inst.
  Commun., Providence, RI, 1997. Amer. Math. Soc.,.

\bibitem[Sta86]{Stan86a}
R.~Stanley.
\newblock {\em Enumerative Combinatorics}, volume~1.
\newblock Wadsworth \& Brooks/Cole, Monterey, California, 1986.

\bibitem[SV75]{StVo75a}
R.~Stratila and D.~Voiculescu.
\newblock {\em Representations of {AF}-Algebras and of the Group $U(\infty)$},
  volume 486 of {\em Lecture Notes in Mathematics}.
\newblock Springer, 1975.

\bibitem[Tho64]{Thom64a}
E.~Thoma.
\newblock {D}ie unzerlegbaren, positiv-definiten {K}lassenfunktionen der
  abz\"ahlbar unendlichen symmetrischen {G}ruppe [\{G\}erman].
\newblock {\em Math. Zeitschr.}, 85(1):40--61, 1964.

\bibitem[Tsi06]{TsilVers07a}
A.~M. Tsilevich, N. V.;~Vershik.
\newblock On different models of representations of the infinite symmetric
  group.
\newblock {\em Adv. in Appl. Math.}, 37(4):526--540, 2006.

\bibitem[VK81a]{VeKe81b}
A.M. Vershik and S.V. Kerov.
\newblock Asymptotic theory of the characters of a symmetric group.
\newblock {\em Funkts. Anal. Prilozhen.}, 15(4):15--24, 1981.

\bibitem[VK81b]{VeKe81a}
A.M. Vershik and S.V. Kerov.
\newblock Characters and factor representations of the infinite symmetric
  group. [russian].
\newblock {\em Dokl. Akad. Nauk SSSR}, 257(5):1037--1040, 1981.
\newblock English translation: Soviet Math. Dokl. 23 (1981), no. 2, 389--392.

\bibitem[Was81]{Wass81a}
A.~Wassermann.
\newblock {\em Automorphic actions of compact groups on operator algebras}.
\newblock Ph.d. dissertation, University of Pennsylvania, 1981.

\bibitem[Was88]{Wass88a}
A.~Wassermann.
\newblock Coactions and {Y}ang-{B}axter equations for ergodic actions and
  subfactors.
\newblock In {\em Operator algebras and applications, Vol. 2}, volume 136 of
  {\em London Math. Soc. Lecture Note Ser.}, pages 203--236, Cambridge, 1988.
  Cambridge Univ. Press.

\bibitem[Wen88]{Wenz88a}
H.~Wenzl.
\newblock {H}ecke algebras of type $a\sb n$ and subfactors.
\newblock {\em Invent. Math.}, 92(2):349--383, 1988.

\bibitem[Yam09]{Yama09a}
M.~Yamashita.
\newblock On subfactors arising from asymptotic representations of symmetric
  groups.
\newblock (electronic) ar{X}iv:0912.0820v1 [math.OA], 2009.

\end{thebibliography}

\end{document}